\theoremstyle{plain}
   \def\MR#1{}
\newtheorem{thm}{Theorem}[section]
\newtheorem{dfn}[thm]{Definition}
\newtheorem{lemma}[thm]{Lemma}
\newtheorem{prop}[thm]{Proposition}
\newtheorem{cor}[thm]{Corollary}
\newtheorem{question}[thm]{Question}
\newtheorem{THM}{Theorem}
\newtheorem{COR}[THM]{Corollary}
\newtheorem{ex}[thm]{Example}
\theoremstyle{remark}
\newtheorem{remark}[thm]{Remark}
\newcommand{\mb}{\mathbb}
\newcommand{\mc}{\mathcal}
\newcommand{\C}{\mb C}
\newcommand{\F}{\mc F}
\newcommand{\G}{\mc G}
\newcommand\restr[2]{{
  \left.\kern-\nulldelimiterspace % automatically resize the bar with \right
  #1 % the function
  \vphantom{\big|} % pretend it's a little taller at normal size
  \right|_{#2} % this is the delimiter
  }}
\DeclareMathOperator{\codim}{codim}
\DeclareMathOperator{\sing}{sing}
\DeclareMathOperator{\Pic}{Pic}
\DeclareMathOperator{\Var}{Var}
\DeclareMathOperator{\Div}{Div}
\DeclareMathOperator{\CS}{CS}
\DeclareMathOperator{\Aff}{Aff}
\DeclareMathOperator{\SL}{SL}
\DeclareMathOperator{\Res}{Res}
\DeclareMathOperator{\Alb}{Alb}
\DeclareMathOperator{\alb}{alb}
\DeclareMathOperator{\ord}{ord}
\DeclareMathOperator{\Hom}{Hom}
\DeclareMathOperator{\Irr}{Irr}
\DeclareMathOperator{\ddiv}{div}
\DeclareMathOperator{\trdeg}{tr\, deg_{\mathbb C}}
\DeclareMathOperator{\HomSheaf}{\mathscr{H}\text{\kern -3pt {\calligra\large om}}\,}
\DeclareMathOperator{\Base}{\mathcal{B}}
\DeclareMathOperator{\II}{\mathrm{II}}
\newcommand{\ie}{{\it{i.e.}}}
\DeclareMathOperator{\End}{End}
\numberwithin{equation}{section}
\numberwithin{equation}{section}       % Number formulas within sections
\title[Closed meromorphic $1$-forms]{Closed meromorphic $1$-forms}
\author{Jorge Vit\'orio Pereira}
\subjclass[2020]{32A27, 32M25, 32S65, 32J25, 32E10}
\keywords{Residue theorem, meromorphic $1$-forms, holomorphic foliations, separatrices, Stein spaces}
\date{\today}
\begin{document}

\begin{abstract}
   We review properties of closed meromorphic $1$-forms and of the foliations defined
   by them. We  present and explain classical results from foliation theory, like index theorems, existence of separatrices, and resolution of singularities under the lenses of the theory of closed meromorphic $1$-forms and flat meromorphic connections.
   We apply the theory to investigate the algebraicity of separatrices in a semi-global setting (neighborhood of a compact curve
   contained in the singular set of the foliation), and the geometry of smooth hypersurfaces with numerically trivial normal
   bundle on compact Kähler manifolds.
\end{abstract}

\maketitle
\setcounter{tocdepth}{1}
\tableofcontents

\section{Introduction}

In this text, we investigate and review properties of closed meromorphic $1$-forms and of the foliations defined
by them, with special attention to the case of compact Kähler ambient spaces. We have three goals. Our first goal is mainly expository and consists in presenting basic properties of closed meromorphic $1$-forms, and using them to motivate and explain well-known foundational results of the theory of codimension one singular holomorphic foliations like indexes theorems, the existence of separatrices, and resolution of singularities. The second goal is to review old, and present new results about  smooth hypersurfaces with numerically trivial normal bundles on compact Kähler manifolds which are proved by exploring the geometry of foliations defined by closed meromorphic $1$-forms. The third goal, inspired by the second, is to investigate in a semi-global setting, \ie in a neighborhood of a compact curve, separatrices for codimension one foliations on projective threefolds.

The three goals are not independent but instead intertwined. The foundational results phrased in terms of closed meromorphic $1$-forms (and flat meromorphic connections) provide a suitable conceptual framework to discuss the existence of separatrices in a semi-global setting. Likewise, the semi-global study of separatrices for codimension one foliations on threefolds raises natural problems on the structure of neighborhoods of singular curves on surfaces and suggests analogous global problems for curves, or more generally divisors, on compact Kähler manifolds. These global problems, thanks to basic Hodge theory, can be solved with relative ease and imply new results on the geometry of the complement of smooth hypersurfaces with numerically trivial normal bundles on compact Kähler manifolds.

In the remainder of this introduction, we will describe more precisely the content of the paper.

\subsection*{Residue theorem}
We start things off in Section \ref{S:foliation} recalling the basic definitions of the foliation theory. Then, in Section \ref{S:Residue} we introduce the residues of closed meromorphic $1$-forms and establish
their main properties. The main result of the section is Weil's residue theorem for closed meromorphic $1$-forms on arbitrary complex
manifolds. The result is well-known to experts, but we could not find any
exposition of it in the recent literature. We reproduce Weil's original proof obtained as a simple
application of Stoke's theorem and  deduce from it the residue formula for flat meromorphic connections on line bundles.
As an application to foliation theory, we explain how to derive a version of Camacho-Sad index theorem for codimension one foliations
from the residue formula for flat meromorphic connections.

\subsection*{Simple singularities for closed meromorphic $1$-forms}
Section \ref{S:Simple} discusses the notion of simple singularities for closed meromorphic $1$-forms,
a concept inspired by the homonymous concept for codimension one foliations but, hopefully,  simpler to grasp.
We also explain how to transform bimeromorphically arbitrary closed meromorphic $1$-forms into closed
meromorphic $1$-forms with simple singularities. This result is a simple combination of the
embedded resolution for divisors, elimination of indeterminacies of meromorphic maps, and a procedure for
elimination of resonances of codimension one foliations recently established by Fernández Duque.
Subsection \ref{SS:irregular} describes the structure of the non-reduced part of the polar divisor of
a closed meromorphic $1$-form with simple singularities.

Section \ref{S:Simple} ends with the definition of simple singularities for codimension one foliations,
followed by a proof, due to de Almeida dos Santos, of Camacho-Sad's theorem on the existence of separatrices for
foliations on surfaces which draws inspiration from Weil's residue theorem.

\subsection*{Hodge theory}
Section \ref{S:Hodge} reviews a few basic elements of  Hodge theory on compact Kähler manifolds relevant
for the study of closed meromorphic $1$-forms. In particular, it explains how to decompose closed meromorphic $1$-forms
into sums of closed logarithmic $1$-forms (simple poles) and closed meromorphic $1$-forms of the second kind (zero residues). Section \ref{S:Hodge} also characterizes completely the residues divisors of closed meromorphic $1$-forms, as well as the polar divisors of closed
meromorphic $1$-forms of the second kind with simple singularities, when the ambient space is Kähler and compact.

\subsection*{Polar divisor of logarithmic $1$-forms}

Section \ref{S:logarithmic} is devoted to the study of logarithmic $1$-forms. It starts by explaining how the Hodge index theorem combined with the residue theorem imposes strong restrictions on the residue divisor of logarithmic $1$-forms on compact Kähler manifolds. Then, it continues
by surveying results that can be obtained through the study of certain logarithmic $1$-forms canonically attached
to real divisors with zero Chern class. These include a criterion by Totaro for the existence of fibrations, the relation of the topology of disjoint smooth hypersurfaces with proportional Chern classes, and a characterization of codimension one foliations which are pull-backs of foliations on surfaces.

\subsection*{Algebraicity criterion for semi-global separatrices}
While Sections \ref{S:foliation}--\ref{S:logarithmic} are mainly expository, the last three sections of the paper
present new results.

Section \ref{S:algebraicity} pursues the study of locally closed surfaces invariant by codimension one foliations on projective threefolds
that intersect the singular set of $\F$ on a compact subset supported on finitely many compact curves (the so-called semi-global separatrices).
Our main result on the subject consists of an algebraicity criterion for a semi-global separatrix phrased in terms of the residue divisor of Bott's connection along it.

\begin{THM}\label{THM:Logsep}
    Let $\F$ be a codimension one foliation with simple singularities on a projective manifold $X$ of dimension three.
    Let $V\subset X$ be a smooth semi-global separatrix for $\F$ and let $\nabla$ be Bott's connection
    on $V$. If the residues of $\nabla$ generate a $\mathbb Z$-submodule of $\mathbb C/\mathbb Q$ of rank at least
    two then the Zariski closure of $V$ is a $\F$-invariant algebraic surface.
\end{THM}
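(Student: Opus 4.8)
The plan is to leverage the theory of closed meromorphic $1$-forms developed in the earlier sections to produce a closed meromorphic $1$-form on a neighborhood of $V$ whose polar locus and residues encode the extra rigidity coming from the rank-two hypothesis, and then to globalize. First I would recall that since $\F$ has simple singularities and $V$ is a smooth separatrix, Bott's connection $\nabla$ on the normal bundle $N_V$ (equivalently on $N_{\F}|_V$) is a flat meromorphic connection with simple (logarithmic) poles along the curve $C = V \cap \sing(\F)$, and its residues along the components of $C$ are exactly the numbers generating the $\mathbb Z$-submodule $\Lambda \subset \mathbb C/\mathbb Q$ of rank $\geq 2$ in the hypothesis. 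Choosing two residues $\lambda_1,\lambda_2$ that are $\mathbb Q$-linearly independent modulo $\mathbb Q$ (after passing to the $\mathbb Q$-span) and using the residue formula for flat meromorphic connections from Section \ref{S:Residue}, I would extract, on a neighbourhood $U$ of $V$ in $X$, a closed meromorphic $1$-form $\omega$ defining $\F$ with logarithmic poles along $V$ and along the local separatrices through $C$; the residues of $\omega$ along these polar components are controlled by the $\lambda_i$.

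The key step is then a monodromy/periods argument on the surface $S$ transverse to $C$ (or directly on a resolution of the ambient neighbourhood): the holonomy of $\F$ along loops in $V \setminus C$ is, by flatness of $\nabla$, abelian with multipliers $\exp(2\pi i \lambda_j)$, and the rank-two condition on the $\lambda_j$ modulo $\mathbb Q$ forces these multipliers to be multiplicatively independent of all roots of unity in an essential way. Following the mechanism behind Totaro-type fibration criteria and the structure theorems for logarithmic $1$-forms recalled in Section \ref{S:logarithmic}, two independent closed logarithmic forms with $\mathbb Q$-independent residues cannot both have transcendental (non-algebraic) polar or zero divisors: one shows that the local leaves of $\F$ near $V$ must be contained in a pencil, i.e. there is a meromorphic first integral, or at least a meromorphic map to a curve, defined on a neighbourhood of $V$. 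Concretely, I expect to produce a closed rational (not merely meromorphic) $1$-form after using compactness of $C$ and the projectivity of $X$ to bound the polar divisor, and then to invoke that a codimension one foliation on a projective manifold admitting such a closed rational $1$-form, or a pencil structure near an invariant hypersurface, has that hypersurface algebraic — its leaves through $V$ sweep out an algebraic surface.

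More precisely, the argument I would run is: (i) the two $\mathbb Q$-independent residues give two closed logarithmic $1$-forms $\omega_1, \omega_2$ on $U$ whose restriction to the leaves of $\F$ vanishes (they are "first-integral-like"); (ii) on the universal cover of a generic leaf these integrate to multivalued functions with $\mathbb Q$-independent periods, so the image of the period map is not contained in a single $\mathbb Q$-line, forcing the leaf's closure to be $2$-dimensional — i.e. algebraic of dimension $2$ — rather than a priori dense; (iii) a standard specialization/constructibility argument over the parameter space of leaves, together with properness coming from projectivity, upgrades "generic leaf algebraic" to "$\overline{V}$ algebraic and $\F$-invariant." The main obstacle, and the step needing the most care, is (ii): ruling out the possibility that the extra periods are absorbed by the ambient topology of $X$ rather than witnessing genuine algebraicity of the leaf — this is where one must use that $V$ is a *semi-global* separatrix (its intersection with $\sing \F$ is a *compact* curve $C$) together with the simple-singularities hypothesis to control the holonomy group precisely, and where the dimension-three assumption enters, since it makes $S$ a surface on which the classical Camacho-Sad index theorem and the surface results of Section \ref{S:Simple} (de Almeida dos Santos) apply directly.
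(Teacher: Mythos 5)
Your proposal misses the mechanism that actually makes the rank-two hypothesis work, and the route you sketch has steps that fail. First, you propose to ``extract, on a neighbourhood $U$ of $V$ in $X$, a closed meromorphic $1$-form $\omega$ defining $\F$'' with logarithmic poles along $V$: nothing in the hypotheses provides such a form, and in general a foliation with simple singularities is not defined by a closed meromorphic $1$-form near an invariant surface, so the whole monodromy/periods machinery you build on it ($\omega_1,\omega_2$, holonomy multipliers $\exp(2\pi i\lambda_j)$, Totaro-type criteria) has no starting point. Second, your step (ii)--(iii) argues that a \emph{generic leaf} is algebraic and then tries to ``upgrade'' by a constructibility argument; but the statement to prove is about the separatrix $V$ itself (a fixed locally closed $\F$-invariant surface), namely that its Zariski closure has dimension two rather than three. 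Showing that some leaves are algebraic does not address whether $\overline{V}^{\mathrm{Zar}}=X$, and no argument is given for why the closure of $V$ could not be all of $X$.

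The paper's proof is quite different and much more direct: work on the normalization $V^n$ of $V$, a smooth surface germ around the compact curve configuration $C_1,\dots,C_k$ lying over $\sing(\F)\cap V$. The residue formula for flat meromorphic connections (Theorem \ref{T:residue for connections}) applied to Bott's connection gives the linear system $\Res(\nabla)\cdot C_i=-\deg\bigl(\restr{N^*_{\F}}{C_i}\bigr)$, a system with \emph{rational} coefficients and right-hand side in the unknown residues. The hypothesis that the residues generate a rank $\ge 2$ submodule of $\mathbb C/\mathbb Q$ then forces, by the elementary Lemma \ref{L:elementar}, the intersection matrix of the polar curves to have kernel of dimension at least two; together with connectedness of $n^{-1}(\sing(\F))$ this produces an effective compact divisor of \emph{positive} self-intersection in $V^n$ (if the kernel were that large and the form negative semi-definite on a connected configuration, Zariski-type arguments would bound the kernel by one). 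Finally, Lemma \ref{L:D2>0} (resting on Hartshorne's bound for the transcendence degree of the meromorphic function field of a germ of surface containing a divisor of positive self-intersection) gives $\trdeg\,\mathbb C(V)\le 2$, while projectivity of $X$ gives $\trdeg\,\mathbb C(V)\ge \dim \overline{V}^{\mathrm{Zar}}$, so the Zariski closure is an $\F$-invariant algebraic surface (Corollary \ref{C:positivesemi-global}). This interplay of the residue theorem, the rationality of the intersection data, and the positivity/transcendence-degree argument is the essential content, and it is absent from your proposal.
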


At first sight, the assumptions of Theorem \ref{THM:Logsep} might seen hard to check. Nevertheless, in favorable situations,
they are implied by a rather simple hypothesis that might be checked at the first non-zero jet at a point of a germ of $1$-form defining the
foliation.

\begin{COR}\label{COR:generic sing}
    Let $\F$ be a codimension one foliation on a projective manifold $X$ of dimension three.
    If there exists $p \in \sing(\F)$, with maximal ideal $\mathfrak m_p \subset \mathcal O_{X,p}$, such that $\F$ is defined by
    \[
        xyz\left( \alpha \frac{dx}{x} + \beta  \frac{dy}{y} + \gamma \frac{dz}{z} \right) \mod \, \mathfrak m_p^2 \Omega^1_{X,p} \, ,
    \]
    where $\alpha, \beta, \gamma$ are $\mathbb Z$-linearly independent complex numbers, then the Zariski closure of the local separatrices
    of $\F$ tangent to $\{x=0\}$, $\{y=0\}$, and $\{z=0\}$  are algebraic surfaces invariant by $\F$.
\end{COR}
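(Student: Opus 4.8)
The plan is to deduce the corollary from Theorem \ref{THM:Logsep} by verifying its hypotheses for each of the three coordinate separatrices. First I would observe that the local model $xyz(\alpha\,dx/x+\beta\,dy/y+\gamma\,dz/z)$ with $\alpha,\beta,\gamma$ pairwise distinct (which follows from $\mathbb{Z}$-linear independence) is a simple singularity of $\F$ at $p$ in the sense of Section \ref{S:Simple}, and that $\F$ has simple singularities everywhere after checking that the condition propagates along the three germs of smooth separatrices $\{x=0\}$, $\{y=0\}$, $\{z=0\}$; along each of these the foliation looks, to first order, like a two-variable simple singularity with a ratio of eigenvalues lying in $\{\beta/\gamma,\,\alpha/\gamma,\,\alpha/\beta\}$ (up to sign and reordering), none of which is a positive rational, so the singularities met along the separatrix are indeed simple. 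One should first reduce to the setting of Theorem \ref{THM:Logsep}: replace $\F$ by a birational model with simple singularities (Section \ref{S:Simple}), which does not affect Zariski closures of separatrices, and check that the distinguished germs through $p$ survive as smooth semi-global separatrices $V_x, V_y, V_z$ — here one needs that the local separatrix through $p$ tangent to a coordinate hyperplane extends to a compact-curve neighborhood inside $X$, which is where the projectivity of $X$ is used.

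The heart of the argument is the computation of Bott's connection $\nabla$ on each $V_\bullet$ and of its residue divisor. Working in the local chart at $p$, a defining $1$-form for $\F$ is $\omega = xyz(\alpha\,dx/x + \beta\,dy/y + \gamma\,dz/z) + (\text{terms in }\mathfrak{m}_p^2\Omega^1)$; restricting Bott's connection to, say, $V_x = \{x=0\}$ one reads off from the normal form that the residue of $\nabla$ along the two curves $\{x=y=0\}$ and $\{x=z=0\}$ cut out on $V_x$ are $-\beta/\alpha$ and $-\gamma/\alpha$ (up to sign conventions fixed in the Camacho–Sad discussion of Section \ref{S:Residue}), because the Camacho–Sad-type index of a simple singularity with weights is the ratio of the transverse weight to the tangent weight. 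Since $\alpha,\beta,\gamma$ are $\mathbb{Z}$-linearly independent over $\mathbb{Q}$, the two numbers $\beta/\alpha$ and $\gamma/\alpha$ are $\mathbb{Q}$-linearly independent, hence their images span a rank-two $\mathbb{Z}$-submodule of $\mathbb{C}/\mathbb{Q}$; the same computation applies verbatim to $V_y$ (residues $-\alpha/\beta, -\gamma/\beta$) and $V_z$ (residues $-\alpha/\gamma, -\beta/\gamma$). Theorem \ref{THM:Logsep} then forces the Zariski closure of each $V_\bullet$ to be an $\F$-invariant algebraic surface.

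The main obstacle I anticipate is not the linear-algebra step — rank two is immediate from $\mathbb{Z}$-linear independence of three numbers — but rather the bookkeeping needed to pass from the jet hypothesis to a genuine simple-singularity hypothesis along the whole separatrix, and to guarantee that the three local separatrices are actually \emph{semi-global}, i.e. that they extend to (possibly non-closed) smooth surfaces meeting $\sing(\F)$ in a compact curve. For the first point one uses that simplicity is an open and, along invariant smooth hypersurfaces, stable condition, together with the fact that a $1$-form with the stated first jet has only simple singularities at $p$; for the second, one invokes Malgrange-type / Camacho–Sad-type extension of the formal separatrix — the three coordinate hyperplanes through $p$ are convergent separatrices, so after the birational modification producing simple singularities each of them sweeps out a locally closed smooth invariant surface whose intersection with $\sing(\F)$, being contained in the (compact) exceptional locus over $p$ together with the curves already in $\sing(\F)$, is supported on finitely many compact curves. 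Once the hypotheses of Theorem \ref{THM:Logsep} are in place, the conclusion is automatic, and one gets all three surfaces at once.
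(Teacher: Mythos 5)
Your overall strategy is the same as the paper's: reduce to a foliation with simple singularities by a resolution with no center over $p$, extend the three coordinate germs to semi-global separatrices (Proposition \ref{P:extend sep}), read off the residues of Bott's connection from the local normal form (Example \ref{E:generic sing}), and apply Theorem \ref{T:Logsep}. The genuine gap is precisely the step you set aside as bookkeeping: the claim that a $1$-form with the stated first jet has a simple singularity at $p$, and hence is formally conjugate to the logarithmic model from which you compute the residues. In dimension three simplicity is defined through formal normal forms (Definition \ref{D:simple}); it is not an ``open/stable'' condition one can quote, and the hypothesis only constrains the first non-zero jet, so a priori the higher-order terms could spoil both the normal form and the residues. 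The paper devotes Lemma \ref{L:p is simple} to exactly this point: one passes to the dual vector field $v$ defined by $d\omega = i_v\, dx\wedge dy\wedge dz$, whose linear part is semi-simple with eigenvalues $\alpha-\beta$, $\alpha-\gamma$, $\beta-\gamma$; the $\mathbb Z$-linear independence of $\alpha,\beta,\gamma$ makes $v$ non-resonant, hence formally linearizable \cite{MR647488}, and integrating $d\hat\omega$ one finds that $\F$ is formally defined by $xyz\left(\alpha\frac{dx}{x}+\beta\frac{dy}{y}+\gamma\frac{dz}{z}\right)+df$, which after one further change of coordinates is the logarithmic normal form. Without an argument of this kind your computation ``in the local chart at $p$'' of the residues of $\nabla^{B}$ is not justified, since you only know the foliation modulo $\mathfrak m_p^2\Omega^1_{X,p}$.

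Two smaller points. The residues of Bott's connection on the separatrix tangent to $\{x=0\}$ are $1-\beta/\alpha$ and $1-\gamma/\alpha$ (Example \ref{E:generic sing}), not $-\beta/\alpha$ and $-\gamma/\alpha$: you are quoting the Camacho--Sad index rather than the variation, which differ by an integer; this is harmless for the rank computation modulo $\mathbb Q$, but the convention should be fixed since Theorem \ref{T:Logsep} is stated for Bott's connection. Also, $\mathbb Q$-linear independence of $\beta/\alpha$ and $\gamma/\alpha$ by itself does not give rank two in $\mathbb C/\mathbb Q$ (e.g.\ if $\beta/\alpha\in\mathbb Q$); what is needed, and what the hypothesis does provide, is the $\mathbb Q$-linear independence of $1$, $\beta/\alpha$, $\gamma/\alpha$, equivalently of $\alpha,\beta,\gamma$.
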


We also have a result when the rank of the $\mathbb Z$-submodule of $\mathbb C/\mathbb Q$ generated by the (classes of) residues of Bott's connection has rank one, see Theorem \ref{T:Logsep}. Its assumptions are probably stronger than needed, as its proof is based on Ueda's theory (study of neighborhoods of smooth compact curves of zero self-intersection on surfaces) which is not presently available for singular, non-irreducible, curves.

\subsection*{Polar divisor of meromorphic $1$-forms of the second kind}
Section \ref{S:Ueda} is devoted to the study of closed meromorphic $1$-forms of the second kind. While attempting to understand Ueda's study of neighborhoods of smooth curves with zero self-intersection, we realized that his arguments admit a simple version when one considers (not necessarily smooth or reduced) curves on compact Kähler surfaces which appear as polar divisors of (multi-valued) primitives of closed meromorphic $1$-forms without residues.

\begin{THM}\label{THM:Ueda}
    Let $X$ be a compact Kähler surface and let $I \in \Div(X)$ be an effective divisor with connected support satisfying
    \[
        \mathcal O_X(I) \otimes \frac{\mathcal O_X}{\mathcal O_X(-I)} \simeq \frac{\mathcal O_X}{\mathcal O_X(-I)} \, \ie,
    \]
    the normal bundle of the (non-necessarily reduced) curve defined by $I$ is trivial.
    Then the following alternative holds:
    \begin{enumerate}
        \item there exists a  non-constant morphism  $f:X \to C$ to a curve $C$ mapping $|I|$ to a point,  or
        \item after the contraction of finitely many compact curves disjoint from the support of $I$, $X-|I|$ becomes a (perhaps singular) Stein surface.
    \end{enumerate}
\end{THM}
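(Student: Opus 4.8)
The plan is to produce a closed meromorphic $1$-form whose polar divisor is supported exactly on $|I|$ and then feed it into the dichotomy coming from Hodge theory and the residue theorem. First I would exploit the hypothesis that the normal bundle of the curve cut out by $I$ is trivial: this says precisely that $\mathcal O_X(I)$ restricts trivially to the first infinitesimal neighborhood of $|I|$, so that $\mathcal O_X(I)$ has a meromorphic section vanishing to infinite order on $|I|$ in a formal sense; more usefully, it gives an element of $H^1(|I|, N_I^\vee) = H^1(|I|, \mathcal O_{|I|})$ which is the obstruction to the existence of a genuine (multi-valued) primitive. Concretely, I would choose local defining equations $f_i$ for $I$ on an open cover and observe that $d\log f_i - d\log f_j = d\log(f_i/f_j)$, where $f_i/f_j$ is a nowhere-vanishing holomorphic function near $|I|$; the triviality of the normal bundle lets one arrange these transition functions to be locally constant after possibly passing to the infinitesimal neighborhood, which is exactly the statement that the logarithmic forms $d\log f_i$ glue, up to closed holomorphic error terms, to a closed meromorphic $1$-form $\omega$ on a neighborhood of $|I|$ with polar divisor $I$ and trivial residues.

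Next I would globalize: using that $X$ is compact Kähler and the results of Section \ref{S:Hodge} characterizing residue divisors and polar divisors of closed meromorphic $1$-forms of the second kind, I would extend (or correct) $\omega$ to a closed meromorphic $1$-form of the second kind on all of $X$ with polar divisor $I$ — here the cohomological input is that the class of $I$ in $H^1(X,\Omega^1_X)$, being represented by a curve with trivial normal bundle, is of the right Hodge type for such an extension to exist. A (multi-valued) primitive $F$ of $\omega$ then exists on $X - |I|$, with monodromy in an additive subgroup $\Lambda \subset \mathbb C$; set $\Lambda_0 = \Lambda \otimes \mathbb Q \subset \mathbb C$ and distinguish cases on $\dim_{\mathbb Q} \Lambda_0$.

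If the rank is $0$, then $F$ is single-valued, hence a genuine holomorphic function $F : X - |I| \to \mathbb C$ which blows up along $|I|$; composing with a suitable map and using that $F$ has no indeterminacy in the complement, one produces a morphism $f : X \to C$ onto a curve contracting $|I|$ — this is alternative (1). If the rank is $\geq 1$, pick a rank-one quotient $\Lambda \to \mathbb Z$ (or $\Lambda \to \mathbb Z + \mathbb Z\tau$) to get a map from $X - |I|$ to $\mathbb C^*$ (resp. an elliptic curve); if this map is not surjective-with-compact-fibers one again lands in alternative (1), otherwise $\exp(2\pi i F/\lambda)$ is a genuine holomorphic function on $X - |I|$ that is proper onto its image minus a point, which one uses to show $X - |I|$ is holomorphically convex. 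By the characterization of (singular) Stein surfaces via holomorphic convexity, the maximal compact analytic subsets of $X - |I|$ — which are necessarily curves disjoint from $|I|$ — can be contracted, after which $X - |I|$ becomes Stein; that is alternative (2). I expect the main obstacle to be the globalization step: controlling the extension of the locally-defined $\omega$ across the rest of $X$ as a form of the second kind with the prescribed polar divisor, and in particular ruling out that the extension acquires extra poles or residues, which is where the compact Kähler hypothesis and the Hodge-theoretic description of admissible polar divisors from Section \ref{S:Hodge} must be used in an essential way; a secondary difficulty is passing from holomorphic convexity of $X-|I|$ to the Stein conclusion when the contracted curves are singular or non-reduced, parallel to the caveat the author already flags about Ueda's theory.
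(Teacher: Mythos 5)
Your first step (producing a closed meromorphic $1$-form of the second kind with poles exactly on $|I|$) is essentially available: this is Theorem \ref{T:realizaII}, which gives a base-point-free $\omega$ of the second kind with $(\omega)_{\infty}=I+I_{\mathrm{red}}$, so the globalization you flag as the main obstacle is not where the difficulty lies. Note, however, that your local description is already off: the forms $d\log f_i$ have residue $1$ along $|I|$, so they cannot glue to a form ``with polar divisor $I$ and trivial residues''; the correct local models are $d(1/f_i)$ corrected by closed holomorphic $1$-forms, which is why the polar divisor is $I+I_{\mathrm{red}}$ rather than $I$. Your rank-$0$ case then agrees with the paper: if the class of $\omega$ in $H^1(X,\mathbb C)$ vanishes, a primitive is a genuine meromorphic function and its Stein factorization gives alternative (1).

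The genuine gap is in your treatment of the case of non-trivial periods. Choosing ``a rank-one quotient $\Lambda\to\mathbb Z$'' of the period group does not produce a holomorphic map: for $\exp(2\pi i F/\lambda)$ to be single-valued you need \emph{all} periods of $\omega$ to lie in the discrete group $\lambda\mathbb Z$, and in general the period group of $\omega$ (which coincides with that of an antiholomorphic $1$-form $\overline\alpha$) is a countable, typically dense, subgroup of $\mathbb C$, so no such function exists; an abstract group quotient carries no analytic meaning. This non-discreteness is precisely the phenomenon Ueda theory must confront, and the paper circumvents it differently: after normalizing $[\omega]=[\overline\alpha]$ with $\alpha\in H^0(X,\Omega^1_X)$, the form $\omega-\overline\alpha$ has \emph{zero} periods, so $F=\bigl|\int_{x_0}^{x}(\omega-\overline\alpha)\bigr|^2$ is a well-defined plurisubharmonic exhaustion of $X-|I|$, with Levi form $\omega\wedge\overline\omega+\alpha\wedge\overline\alpha$, strictly plurisubharmonic off the tangency divisor of the two foliations. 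Even then one is not done: strict plurisubharmonicity can fail along components of that tangency divisor meeting $|I|$, and the paper repairs this by first checking $\omega\wedge\alpha\not\equiv 0$ and that $\alpha$ is non-trivial on some component $C$ of $|I|$, then contracting $I_{\mathrm{red}}-C$ via Grauert's criterion (Theorem \ref{thm:Grauert}) and adding small compactly supported strictly plurisubharmonic bumps near the finitely many bad points, before invoking Grauert's solution of the Levi problem to conclude holomorphic convexity and hence alternative (2) after the Remmert-type contraction. Your proposal also asserts properness of the putative map onto ``its image minus a point'' and deduces holomorphic convexity from it without argument; none of that survives once the discreteness assumption fails, so the second half of your proof needs to be replaced by an argument of the above pluripotential-theoretic type.
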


The proof of Theorem \ref{THM:Ueda} builds on the study of a pair of closed meromorphic $1$-forms naturally attached to the problem.
The very same pair of $1$-forms are used in our joint work with O. Thom on Grauert's formal principle for curves with trivial normal bundle on projective surfaces, reviewed in Subsection \ref{SS:formal}.

\subsection*{A remark about Stein complements}
The paper ends with Section \ref{S:Stein}, motivated by a recent result by Höring and Peternell \cite[Theorem 1.6]{https://doi.org/10.48550/arxiv.2111.03303}, which shows that if $Y$ is a smooth hypersurface on a compact Kähler manifold $X$ such that $X-Y$ is Stein then the normal bundle of $Y$ is pseudo-effective.

In view of Serre's example \cite[Chapter 6, Example 3.2]{MR0282977} of a $\mathbb P^1$-bundle over an elliptic curve admitting a section with trivial normal bundle and Stein complement, it seems natural to enquire if there are examples on higher dimensional compact Kähler manifolds of hypersurfaces with numerically trivial normal bundle and Stein complements, see  \cite[Article 2, Problems 7.1 and 7.2]{Neeman} for a similar question. In Section \ref{S:Stein}, we explore  ideas introduced in Section \ref{S:Ueda}, to establish the following result.

\begin{THM}\label{THM:Stein}
    Let $X$ be a compact Kähler manifold and $Y \subset X$ be a smooth hypersurface with numerically trivial normal bundle.
    If $X-Y$ is Stein then $\dim X=2$.
\end{THM}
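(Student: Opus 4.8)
I would argue by contradiction, assuming $\dim X = n \geq 3$ and deriving a contradiction; the case $n = 1$, where $Y$ is a finite set of points, is degenerate and excluded by the convention on hypersurfaces, so ruling out $n \geq 3$ suffices to conclude $\dim X = 2$.

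\textbf{Step 1: a Lefschetz restriction from Stein-ness.} Set $U := X \setminus Y$. By the Andreotti--Frankel theorem a Stein manifold of complex dimension $n$ is homotopy equivalent to a CW complex of real dimension at most $n$; in particular $H^k(U,\mathbb{R}) = 0$ for all $k > n$. Plugging this into the Gysin exact sequence of the closed submanifold $Y$ of real codimension two,
\[
   \cdots \longrightarrow H^{k-2}(Y,\mathbb{R}) \xrightarrow{\ i_*\ } H^k(X,\mathbb{R}) \xrightarrow{\ j^*\ } H^k(U,\mathbb{R}) \longrightarrow \cdots ,
\]
we learn that the Gysin map $i_* : H^{k-2}(Y,\mathbb{R}) \to H^k(X,\mathbb{R})$ is surjective whenever $k \geq n+1$. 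Since $X$ and $Y$ are compact and oriented, Poincaré duality identifies $i_*$ (raising degrees by two) with the transpose of the restriction $i^* : H^{2n-k}(X,\mathbb{R}) \to H^{2n-k}(Y,\mathbb{R})$; hence $i^* : H^j(X,\mathbb{R}) \to H^j(Y,\mathbb{R})$ is injective for every $j \leq n-1$. As $n \geq 3$, in particular $i^* : H^2(X,\mathbb{R}) \to H^2(Y,\mathbb{R})$ is injective.

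\textbf{Step 2: numerical triviality kills the class of $Y$.} Let $[Y] = c_1(\mathcal{O}_X(Y)) \in H^2(X,\mathbb{R})$. Because $Y$ is a smooth hypersurface its connected components are pairwise disjoint, so $i^*[Y] = c_1(N_{Y/X})$, and this vanishes in $H^2(Y,\mathbb{R})$ by the hypothesis that the normal bundle is numerically trivial (i.e.\ $c_1(N_{Y/X}) = 0$ in $H^2(Y,\mathbb{R})$). The injectivity of $i^*$ on $H^2$ then forces $[Y] = 0$ in $H^2(X,\mathbb{R})$. But this is absurd: $Y$ is a non-zero effective divisor on the compact Kähler manifold $X$, so for any Kähler form $\omega$ one has $\int_X [Y] \wedge \omega^{\,n-1} = \operatorname{vol}_\omega(Y) > 0$, whence $[Y] \neq 0$. (Equivalently, in the spirit of the earlier sections: $[Y] = 0$ would make $\mathcal{O}_X(Y)$ numerically trivial, hence trivial since it carries the global section with divisor $Y$; then $Y = \emptyset$ and $U = X$ would be compact, contradicting Stein-ness.) This contradiction gives $n \leq 2$, and therefore $\dim X = 2$.

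\textbf{On the difficulty and the link with Section \ref{S:Ueda}.} This argument is short because Andreotti--Frankel converts the analytic hypothesis into the purely topological Lefschetz injectivity of Step 1, which is really the only non-formal input. If instead one insists on staying within the framework of closed meromorphic $1$-forms, as the paper's presentation suggests, one should attach to $Y$, on a neighbourhood in $X$, the pair of closed meromorphic $1$-forms of Section \ref{S:Ueda}, available precisely because $N_{Y/X}$ is flat; their Ueda-type analysis again leads to the alternative ``a neighbourhood of $Y$ fibres over a disk (impossible once $\dim Y \geq 1$, since nearby fibres would then be compact of positive dimension and $U$ is Stein), or $\mathcal{O}_X(Y)$ is trivial near $Y$''. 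The genuine work there, which the cohomological route bypasses, lies in controlling the higher Ueda invariants and the convergence of the associated formal trivialisation, and then in promoting the local conclusion to the global vanishing $[Y]=0$.
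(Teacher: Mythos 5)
Your argument is correct, and it takes a genuinely different route from the paper. The paper (Section \ref{S:Stein}) uses the Stein hypothesis twice in an essentially analytic way: first through a Lefschetz-type isomorphism on homotopy groups to extend the flat unitary structure of $N_Y$ to a flat line bundle $(\mathcal L,\nabla)$ on all of $X$ (Lemma \ref{L:omegadim3}), then through the existence theorem for $\nabla$-closed meromorphic $1$-forms with prescribed poles (\cite[Theorem B]{MR3940902}, cf.\ Remark \ref{R:thmB}) to produce a twisted closed $1$-form $\omega$ with $(\omega)_\infty=2Y$ and its anti-holomorphic companion $\alpha$; the contradiction then comes from a plurisubharmonic exhaustion of $X-Y$ whose level sets trap the leaves of the codimension-two foliation defined by $\omega$ and $\alpha$, against the fact that leaves of foliations on Stein manifolds escape every compact set (Lemma \ref{L:Stein}). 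You instead use Stein-ness only through Andreotti--Frankel, which gives $H^{2n-2}(X-Y,\mathbb R)=0$ once $n\ge 3$; the Gysin sequence and Poincar\'e duality then yield injectivity of $i^*:H^2(X,\mathbb R)\to H^2(Y,\mathbb R)$, and numerical triviality of $N_Y$ together with Kähler positivity of $[Y]$ give the contradiction. Your route is shorter and purely topological, bypassing both the twisted-$1$-form existence theorem and the foliation dynamics, and it makes transparent that only the vanishing of one Betti number of the complement is needed; the paper's route is heavier but stays within the paper's theme, producing the pair $(\omega,\alpha)$ and the exhaustion that also drive Section \ref{S:Ueda}, and it is the version one would try to push when $c_1(N_Y)$ is not zero (e.g.\ pseudo-effective of numerical dimension zero), where your cohomological step has no traction. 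Two small conventions you rely on — discarding $\dim X=1$ and reading ``numerically trivial'' as $c_1(N_Y)=0$ in $H^2(Y,\mathbb R)$ — are exactly the ones the paper uses implicitly, so they are not gaps.
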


We do not know if it is possible to  replace numerically trivial normal bundle by pseudo-effective normal bundle of numerical dimension zero in the  statement of Theorem \ref{THM:Stein} and still get the same conclusion.

\subsection*{Disclaimer} Due to time-space-energy limitations, the expository/survey component of this work
has many important omissions. For instance, we do not discuss the Albanese (and quasi-Albanese) varieties and maps for
compact Kähler manifolds. Other important omissions are Bogomolov's lemma \cite[Proposition 6.4]{MR3328860}; deformations of foliations defined by logarithmic $1$-forms \cite{MR1286897,MR3937325};   the topology of leaves of foliations defined by holomorphic and logarithmic $1$-forms \cite{MR1219454,MR4033932}; the study of zeros of holomorphic $1$-forms on projective manifolds \cite{MR2172952,MR3171760};
et cetera. Even for the topics discussed here, we have not tried to provide exhaustive references.

\subsection*{Acknowledgments}
This text grew out from notes for a course taught at IMPA during the Southern Hemisphere Summer of 2022
and was prepared to answer an invitation of Felipe Cano and Pepe Seade (made on June 2021)
to write a survey on a foliation theoretic topic of my choice. It is a pleasure to thank them for giving me the opportunity, and providing
the impetus, to think and share my thoughts about closed meromorphic $1$-forms and the foliations defined by them.
I also thank Andreas Höring for useful correspondence about Stein complements of hypersurfaces on compact Kähler manifolds.
I am also indebted to Maycol Falla Luza, Thiago Fassarella, Frédéric Touzet, and Sebastián Velazquez for reading parts of preliminary versions
of this work, catching quite a few misprints, and suggesting a number of improvements.  A preliminary version of this text ended up at the hands of a thorough and thoughtful referee. Their feedback saved me from a number of embarrassing mistakes and allowed me to improve the exposition.

\section{Singular holomorphic foliations}\label{S:foliation} Throughout the text we will freely use the language of foliation theory.
In this preliminary section, we collect the basic definitions of the subject. The reader familiar
with the terminology currently used in foliation theory can safely skip this section.

\subsection{Smooth foliations}
Let $X$ be a complex manifold. A smooth holomorphic foliation $\F$ of codimension $q$ on $X$ is, roughly speaking,  a
holomorphic decomposition of $X$ into a union of immersed (but not necessarily embedded) codimension $q$ submanifolds.
Formally, $\F$ is determined by the following data
\begin{enumerate}
    \item an open covering $\mathcal U$ of $X$; and
    \item for every $U \in \mathcal U$, a submersion $f_U : U \to f_U(U) \subset \mathbb C^q$ with connected fibers; and
    \item for every non-empty intersection $U\cap V$, biholomorphisms $f_{UV} : f_V(U\cap V) \to f_U(U\cap V)$ such that
    \[
        \restr{f_U}{U\cap V}  = f_{UV} \circ \restr{f_V}{U\cap V} \, .
    \]
\end{enumerate}

Consider the smallest possible equivalence relation on the points of $X$ such that $x \sim y$ when there exists $U \in \mathcal U$ such
that $f_U(x) = f_U(y)$. The equivalence classes of this relation are the leaves of $\F$. They are the immersed codimension $q$ submanifolds
alluded to in the 'rough' definition above.

The kernels of the differentials of the submersion $f_U$, patch together to form a subbundle $T_{\F} \subset T_X$ called
the tangent bundle of $\F$. The quotient $T_X/T_{\F}$ is called the normal bundle of $\F$ and its dual $N^*_{\F}$ is the
conormal bundle of $\F$. The bundle $N^*_{\F}$ is a subbundle of $\Omega^1_X$ whose sections are $1$-forms vanishing along the leaves
of $\F$.

\subsection{Singular foliations}
A codimension $q$ singular holomorphic foliation $\F$ on a complex manifold $X$ can be defined as a pair $(\F_0,\sing(\F))$ formed by
a closed analytic subset $\sing(\F)$ of $X$ and a codimension $q$ smooth foliation $\F_0$  on $X - \sing(\F)$
such that
\begin{enumerate}
    \item\label{I:codim} the codimension of $\sing(\F)$ is at least two; and
    \item\label{I:minimal} the closed subset $\sing(\F)$ is minimal in the sense that for every  closed subset $S \varsubsetneq \sing(\F)$, it does not exist a smooth foliation $\G$ of $X-S$
    such that $\restr{\G}{X-\sing(\F)} = \F_0$.
\end{enumerate}

Condition (\ref{I:codim}) is imposed to guarantee that the tangent bundle and the conormal bundle (or rather their sheaves of sections) extend through the singular set of $\F$.

The leaves of a singular foliation $\F$ are the leaves of the smooth foliation $\F_0$. A subvariety $Y\subset X$ is invariant by $\F$ if $Y$ is not contained in
$\sing(\F)$ and $Y\cap(X-\sing(\F))$ is a union of leaves of $\F_0$.

\subsection{Alternative definition}
One can also define a singular holomorphic foliation $\F$ on a complex manifold $X$  by a pair $(T_{\F},N^*_{\F})$ of
coherent subsheaves of $T_X$ and $\Omega^1_X$ such that
\begin{enumerate}
    \item the sheaf $T_{\F}$ is the annihilator of $N^*_{\F}$, \ie for every  $x \in X$
    \[
        T_{\F,x} = \{ v \in T_{X,x} \, ; \, \omega(v) = 0 \text{ for every } \omega \in N^*_{\F,x} \};
    \]
    \item the sheaf $N^*{\F}$ is the annihilator of $T_{\F}$, \ie for every $x \in X$
    \[
        N^*_{\F,x} = \{ \omega \in \Omega^1_{X,x} \, ; \, \omega(v)=0 \text{ for every } v \in T_{X,x} \};
    \]
    \item the sheaf $T_{\F}$ is involutive, \ie for every $x \in X$ and every  $v,w \in T_{\F,x}$ we have that the Lie bracket $[v,w] \in T_{\F,x}$.
\end{enumerate}
The dimension $\dim \F$ of $\F$ is the (generic) rank of $T_{\F}$ and the codimension $\codim \F$ is the (generic) rank of $N^*_{\F}$. Of course,
$\dim \F + \codim \F = \dim X$.

The singular set of $\F$ is the locus where the sheaf $T_X/T_{\F}$ (or equivalently $\Omega^1_X/N^*_{\F}$) is not
locally free. It follows from the definition that both $T_{\F}$ and $N^*_{\F}$ are reflexive subsheaves with inclusions in $T_X$ and $\Omega^1_X$
having torsion-free cokernel. In particular, the singular set of $\F$ has codimension at least two.

\subsection{Foliations defined by closed meromorphic $1$-forms}
Let $\omega$ be a closed meromorphic $1$-form on a complex manifold $X$ and let $D$ be the divisor given by $-\mathrm{div}(\omega) = (\omega)_{\infty} - (\omega)_0$.
The foliation defined by $\omega$ is the foliation with tangent sheaf given
by the kernel of the morphism
\begin{align*}
    T_{X} & \longrightarrow \mathcal O_X(D) \\
    v &\mapsto \omega(v)
\end{align*}
given by contraction with $\omega$. The involutiveness of $T_{\F}$ follows from the closedness of $\omega$. The conormal
sheaf of $\mathcal F$ is locally free (\ie, sections of a line bundle) and isomorphic to $\mathcal O_X(-D)$.

\begin{prop}
    Let $\mathcal F$ be a germ of codimension one foliation on $(\mathbb C^n,0)$ defined by
    a closed meromorphic $1$-form $\omega$. If $H$ is an irreducible component of the divisor
    of zeros and poles of $\omega$ then $H$ is invariant by $\F$.
\end{prop}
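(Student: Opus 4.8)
The plan is to work locally near a smooth point of $H$ and write $\omega$ in a normal form that exhibits $H$ explicitly, then check invariance directly from the defining morphism.

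First I would reduce to checking invariance at a general point of $H$, since the invariant locus of $\F$ is closed and analytic (it's cut out by the condition $\omega \wedge df = 0$ along $\{f=0\}$ for a local reduced equation $f$ of $H$, or by saturation considerations), so it suffices to show a Zariski-dense subset of $H$ consists of points where $H$ is tangent to $\F$. Thus I may assume we are at a smooth point $p$ of $H$ lying outside the singular set of $\F$ and outside all other components of $\ddiv(\omega)$. Choose local coordinates $(x_1,\dots,x_n)$ centered at $p$ with $H = \{x_1 = 0\}$. Since $\omega$ is meromorphic with divisor of zeros and poles supported (locally) on $H$, we can write $\omega = x_1^{k}\, \eta$ where $k = \ord_H(\omega) \in \Z \setminus\{0\}$ and $\eta$ is holomorphic near $p$ with $\eta|_{x_1=0}$ not identically zero; write $\eta = a\, dx_1 + \sum_{j\ge 2} b_j\, dx_j$ with $a, b_j$ holomorphic.

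The key step is to exploit closedness. We have $0 = d\omega = d(x_1^k \eta) = k x_1^{k-1} dx_1 \wedge \eta + x_1^k d\eta$, hence $k\, dx_1 \wedge \eta + x_1\, d\eta = 0$. Restricting this identity to $H = \{x_1 = 0\}$ (and using $k \neq 0$) gives $dx_1 \wedge \eta|_{x_1 = 0} = 0$, which forces $\eta|_{x_1=0}$ to be a multiple of $dx_1$; equivalently $b_j|_{x_1=0} = 0$ for all $j \ge 2$, i.e. $b_j \in (x_1)$. Therefore every vector field $v$ tangent to $H$ at a point of $H$ — meaning $v$ has no $\partial_{x_1}$ component — satisfies $\eta(v)|_{x_1=0} = \sum_{j\ge2} b_j|_{x_1=0}\, v_j = 0$, so $\omega(v) = x_1^k \eta(v)$ vanishes on $H$. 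By definition of the foliation $\F$ (tangent sheaf $= \ker(v \mapsto \omega(v))$), this says exactly that $T_H \subset T_\F$ along the smooth locus of $H$ away from $\sing(\F)$, i.e. $H$ is invariant.

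The main obstacle — really the only subtlety — is the bookkeeping at the boundary: points of $H$ that lie in $\sing(\F)$, on other components of $\ddiv(\omega)$, or in $\sing(H)$. This is handled cleanly by the closed-analyticity of the set of invariant subvarieties: invariance of a subvariety $Y$ is the condition that the restriction of the contraction morphism $T_X \to \mathcal O_X(D)$ to $T_Y$ vanishes generically on $Y$ (equivalently $\omega \wedge df \equiv 0$ on $Y$ for a local equation $f$), which is a closed condition, so establishing it on a dense open subset of $H$ suffices. One should also note the hypothesis $k = \ord_H(\omega) \neq 0$ is exactly what makes $H$ a component of $\ddiv(\omega)$ and is used in the restriction step; for components where $\omega$ neither vanishes nor has a pole there is nothing to prove since they aren't components of the divisor. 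I expect no genuine difficulty beyond this, the argument being essentially a one-line consequence of $d\omega = 0$.
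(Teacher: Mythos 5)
Your argument is correct, and it takes a more hands-on route than the paper. The paper never localizes at a point of $H$: it takes a holomorphic generator $\eta$ of $N^*_{\F}$, writes $\eta = f\omega$ for a germ of meromorphic function $f$ with $\ddiv(f) = -\ddiv(\omega)$, and uses $d\omega=0$ to get $d\eta = \frac{df}{f}\wedge\eta$, at which point it invokes the standard fact that the components of $\ddiv(f)$ (the polar locus of $\frac{df}{f}$) are $\F$-invariant. Your computation at a generic smooth point of $H$ --- writing $\omega = x_1^{k}\eta$ with $k=\ord_H(\omega)\neq 0$ and restricting the identity $k\,dx_1\wedge\eta + x_1\,d\eta = 0$ to $\{x_1=0\}$ --- is precisely the verification that underlies that standard fact, carried out from scratch; combined with your observation that generic tangency suffices because tangency along $H$ is a closed analytic condition, this gives a self-contained proof, whereas the paper's version is shorter but leans on the reader knowing the invariance criterion for forms satisfying $d\eta=\alpha\wedge\eta$ with $\alpha$ logarithmic. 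One small caveat in your write-up: the closing phrase that $\omega(v)=x_1^{k}\eta(v)$ ``vanishes on $H$'' is loose when $k<0$, since that function may have a pole along $H$; but the statement you actually need, and have already established, is $\eta(v)|_{x_1=0}=0$, i.e.\ $T_H\subset\ker\eta=T_{\F}$ along $H$ near the chosen point, so the argument stands.
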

\begin{proof}
    Let $\eta$ be a  generator of $N^*_{\mathcal F}$. Since $\eta$ and $\omega$ define the same
    foliation, we can write $\eta = f \omega$ for a suitable germ of  meromorphic function $f$. Notice that
    $\ddiv(\omega) = - \ddiv(f)$.

    The closedness of $\omega$ allows us to write
    \[
        d \eta = df \wedge \omega = \frac{df}{f} \wedge f \omega = \frac{df}{f} \wedge \eta \, .
    \]
    It follows that any irreducible component of the support of $\ddiv(f)$ is invariant by $\F$.
\end{proof}

\section{Closed meromorphic 1-forms and their residues}\label{S:Residue}

\subsection{Residues and the residue divisor}\label{SS:residue}

Let $\omega$ be a closed meromorphic $1$-form on a complex manifold $X$. Let $H\subset X$ be an irreducible hypersurface.
Let $p \in H$ be a smooth point of $H$ and let $i: \overline{\mathbb D} \hookrightarrow X$ be an inclusion of a closed disc, holomorphic in the interior of $\mathbb D$, intersecting $H$ transversely at $p$.
The closedness of $\omega$ and the irreducibility of $H$ implies that the integral
\[
    \frac{1}{2\pi i} \int_{\partial \mathbb D} i^*\omega
\]
does not depend on the choice of $p$ nor the choice of $i$, as soon as both satisfy the above assumptions. By definition,
the resulting complex number is the residue of $\omega$ along $H$, denoted by $\Res_H(\omega)$. The residue divisor
of $\omega$ is the locally finite sum $\Res(\omega) = \sum_H \Res_H(\omega) H  \in \Div(X)\otimes \mathbb C$ with $H$ ranging over all
irreducible hypersurfaces of $X$.

Any complex divisor $D \in \Div(X) \otimes \mathbb C$ induces, naturally, an element of
the singular cohomology group $H^2(X,\mathbb C) = \Hom(H_2(X,\mathbb Z), \mathbb C)$. If $\sigma \in H_2(X,\mathbb Z)$ is
a homology class and $D = \sum \lambda_i H_i \in \Div(X) \otimes \mathbb C$ then the action of  $D$ in   $\sigma$
is $\sum \lambda_i H_i \cdot \sigma$ where $\cdot$ is the intersection product. The class determined by $D$ in $H^2(X,\mathbb C)$
is the (complex) Chern class of $D$ and will be denote by $c(D)$.

\begin{thm}\label{T:residue}
    Let $\omega$ be a closed meromorphic $1$-form on a (not necessarily compact) complex manifold
    $X$. Then the Chern class of $\Res(\omega)$ is equal to zero.
\end{thm}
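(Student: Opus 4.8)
The plan is to reduce the statement to the classical fact that the de Rham class of a closed meromorphic $1$-form with zero residues along every hypersurface vanishes (it is exact, at least locally, and glues to a globally defined de Rham class coming from the polar cohomology), combined with the observation that the residue divisor precisely measures the "periods around the poles". Concretely, let $D = (\omega)_\infty$ be the reduced polar hypersurface of $\omega$, with irreducible components $H_1,\dots,H_k$ and residues $\lambda_i = \Res_{H_i}(\omega)$. The first step is to produce, for each component $H_i$, a closed meromorphic $1$-form $\theta_i$ with a logarithmic pole exactly along $H_i$ and residue $1$ there: for instance, choosing a local equation $g_i$ for $H_i$ on a neighborhood $U_i$ of a smooth point, $\tfrac{dg_i}{g_i}$ does the job locally, but to get a \emph{global} closed $1$-form one must instead invoke that $c(H_i) \in H^2(X,\mathbb{C})$ is represented by a closed $(1,1)$-form (a curvature form of a hermitian metric on $\mathcal O_X(H_i)$) whose pullback to a smooth point detects the same period. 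So the cleaner route is cohomological rather than via an explicit $\theta_i$.

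**Cohomological reduction.** I would argue directly with the de Rham class. On $X - D$ the form $\omega$ is holomorphic and closed, hence defines a class $[\omega] \in H^1(X-D,\mathbb{C})$. The statement we want, $c(\Res(\omega)) = 0$ in $H^2(X,\mathbb{C})$, is equivalent to saying that for every $\sigma \in H_2(X,\mathbb{Z})$ we have $\sum_i \lambda_i (H_i\cdot\sigma) = 0$. Represent $\sigma$ by a smooth $2$-cycle $\Sigma$ in general position with respect to $D$, so that $\Sigma$ meets $D$ transversely in finitely many points, each lying on a single smooth component $H_i$ and counted with the intersection sign. Remove small transverse discs $\Delta_j$ around these intersection points to obtain $\Sigma' = \Sigma \setminus \bigcup_j \mathrm{int}(\Delta_j) \subset X - D$. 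Then $\Sigma'$ is a chain in $X-D$ whose boundary is $-\sum_j \partial \Delta_j$ (with orientations), and since $\omega$ is closed on $X-D$, Stokes' theorem gives
\[
    0 = \int_{\Sigma'} d\omega = \int_{\partial \Sigma'} \omega = - \sum_j \int_{\partial \Delta_j} \omega \,.
\]
Each $\partial\Delta_j$ is a small loop around a transverse intersection with some $H_{i(j)}$, so $\tfrac{1}{2\pi i}\int_{\partial\Delta_j}\omega = \pm \lambda_{i(j)}$ by the very definition of the residue (the sign being the local intersection number of $\Sigma$ with $H_{i(j)}$ at that point). Summing, $0 = \sum_j (\pm \lambda_{i(j)}) = \sum_i \lambda_i (H_i \cdot \sigma)$, which is exactly the vanishing of $c(\Res(\omega))$ evaluated on $\sigma$.

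**Technical points and main obstacle.** The main subtleties are differential-topological rather than analytic. First, I must justify that an arbitrary integral homology class $\sigma \in H_2(X,\mathbb{Z})$ on a (possibly non-compact, possibly high-dimensional) complex manifold can be represented by a $2$-cycle transverse to $D$; this is a standard general-position/transversality argument since $D$ has real codimension $2$ and one can perturb a representing simplicial cycle, but for non-compact $X$ one should work with a locally finite representative and note that only finitely many intersection points contribute to the computation against a compact cycle. Second, one must check that $\Res(\omega)$, being a locally finite complex combination of hypersurfaces, genuinely defines a class in $H^2(X,\mathbb{C}) = \Hom(H_2(X,\mathbb{Z}),\mathbb{C})$ and that the pairing is the naive intersection pairing extended $\mathbb{C}$-linearly — this is exactly how $c(D)$ was defined in the paragraph preceding the theorem, so nothing new is needed. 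Third, the deformation $\Sigma \rightsquigarrow \Sigma'$ must be done so that the excised pieces are honest little transverse discs to which the definition of $\Res_{H_i}(\omega)$ (via $i\colon \overline{\mathbb{D}}\hookrightarrow X$) directly applies; this is where the hypothesis that the residue is well-defined independently of the transverse disc, established in Subsection \ref{SS:residue}, is used. I expect the orientation bookkeeping in the sign $\pm$ — matching the local intersection number of $\Sigma$ with $H_i$ against the orientation of $\partial\Delta_j$ induced as part of $\partial\Sigma'$ — to be the one place where care is genuinely required, though it is routine once set up correctly. This is Weil's original argument, and it is essentially just Stokes' theorem applied to the closed form $\omega$ on the complement of its polar locus.
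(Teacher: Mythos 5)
Your proposal is correct and is essentially the paper's own argument (Weil's): evaluate $c(\Res(\omega))$ on a $2$-cycle in general position with respect to the polar locus and apply Stokes' theorem to the closed form $\omega$ on the complement, identifying the boundary contributions with residues via transverse discs. The only cosmetic difference is that you excise small discs from a single representative surface, whereas the paper works with a singular cycle of simplices and uses $\sum_j \partial\sigma_j = 0$ to shift the boundary terms; the content is the same.
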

\begin{proof}
    Let $\sigma \in H_2(X,\mathbb Z)$ be an arbitrary homology class. To prove the result it suffices
    to show that $\Res(\omega)\cdot \sigma =0$.

    Denote by $\Delta^2 \subset \mathbb R^2$ the standard $2$-simplex. Represent $\sigma$ by a singular $2$-cycle $\sum_j \sigma_j$ such that the image of each $2$-simplex $\sigma_j : \Delta^2  \to X$ either does not intersect the support of $(\omega)_{\infty}$ or intersect the support of $(\omega)_{\infty}$ transversely at a unique point that lies on the smooth locus of $|(\omega)_{\infty}|$. Assume also that the images of the boundaries of the $2$-simplexes $\sigma_j$ do not intersect $|(\omega)_{\infty}|$. Clearly, it is possible to choose a representative of $\sigma$ satisfying these assumptions.

    If the image of a simplex $\sigma_j$ intersects an irreducible component $H_i$ of $|\Res(\omega)|$ then  $\partial \sigma_j$ is homologous
    to $(H_i \cdot \sigma_j) \gamma_i$, where $\gamma_i$ is the boundary of a small (holomorphic) disk intersecting $H_i$ transversely at smooth point.
    Therefore we can write
    \[
        \Res(\omega) \cdot \sigma = \frac{1}{2\pi i} \sum_{\sigma_j \cap |\Res(\omega)| \neq \emptyset} \int_{\partial \Delta^2} \sigma_j^* \omega \, .
    \]

    Since $ \sum_j \partial \sigma_j =0$, we have that
    \[
         \sum_{ \sigma_j \cap |\Res(\omega)| \neq \emptyset} \int_{\partial \Delta^2} \sigma_j^* \omega =
        -  \sum_{\sigma_j \cap |\Res(\omega)| = \emptyset} \int_{\partial \Delta^2} \sigma_j^* \omega
    \]
    But $\int_{\partial \Delta^2} \sigma_j^*\omega = \int_{\Delta^2} d \sigma_j^* \omega=0$  whenever $\sigma_j \cap |\Res(\omega)| = \emptyset$.
    It follows that $\Res(\omega) \cdot \sigma  =0$ as claimed.
\end{proof}

\begin{remark}
    The proof above is extracted from  \cite{Weil47},  written by André Weil  during the period he was
    at Universidade de São Paulo (USP).  In the same work, he proved a converse of
    the residue theorem when the ambient is a compact Kähler manifold, see Proposition \ref{P:residueWeil} below.
\end{remark}

\subsection{Meromorphic flat connections}
If $D$ is a divisor on a complex manifold $X$ and $i \ge 0$ is an integer, we will denote by $\Omega^{i}(*D)$
the quasi-coherent sheaf of $i$-forms having polar divisor with support contained in  $|D|$, the support of $D$.
A meromorphic connection  on a line-bundle $\mathcal L$  with poles on the divisor $D$ is a
morphism of sheaves of abelian groups
\[
    \nabla : \mathcal L \to \Omega^1_X(*D) \otimes  \mathcal L
\]
which is $\mathbb C$-linear and satisfies the Leibniz rule $\nabla(f \sigma) = df \otimes \sigma  + f \nabla(\sigma)$ for any $f \in \mathcal O_X$ and any $\sigma \in \mathcal L$.

Let $\mathcal U= \{ U_i \}$ be a sufficiently fine open covering of $X$, let $\{ \sigma_i \in \mathcal L(U_i)\}$ be a
collection of nowhere vanishing sections, and let $\{ g_{ij} \in \mathcal O^*_X(U_i\cap U_j)\}$ be the cocycle determined by $\{ \sigma_i\}$, \ie $\sigma_j = g_{ij} \sigma_i$. Let $\{ \eta_i \in \Omega^1_X(*D) \}$ be the collection
of $1$-forms defined by the equalities $\nabla(\sigma_i) = \eta_i \otimes \sigma_i$.
Using the Leibniz rule to compare $\nabla(\sigma_j)$ with $\nabla(\sigma_i)$, we deduce that
\[
    \frac{dg_{ij}}{g_{ij}} = \eta_j - \eta_i \, .
\]
Reciprocally, any collection of meromorphic $1$-forms $\{\beta_i\}$ satisfying $d \log g_{ij} = \beta_j - \beta_i$ determines a
meromorphic connection on $\mathcal L$. The $1$-forms $\{ \eta_i \}$ are the (local) connection forms of $\nabla$ and depend on the
choice of trivialization. The differentials $\{d \eta_i\}$  coincide over non-empty intersections $U_i \cap U_j$ and
are independent of the choice of trivialization. The meromorphic $2$-form $\Theta \in H^0(X,\Omega^2_X(*D))$ locally defined as \[
    \frac{1}{2\pi\sqrt{-1}}d\eta_i
\]
is, by the definition, the curvature of $\nabla$. A meromorphic connection $\nabla$ is flat if its
curvature vanishes identically.

If $\nabla$ is a flat meromorphic connection,  $H\subset X$ is an irreducible hypersurface and $U_i \cap H \neq \emptyset$ then we
define the residue of $\nabla$ along $H$ as the residue of $\eta_i$ along any irreducible component of $H\cap U_i$. As in the case
of closed meromorphic $1$-forms, this definition does not depend on the choice of the open set $U_i$, or the choice of the
irreducible component of $H\cap U_i$, or the choice of the local connection form $\eta_i$. We set the residue divisor of $\nabla$ as the $\mathbb C$-divisor
\[
    \Res(\nabla) = \sum_{H} \Res_H(\nabla) \cdot H \, .
\]

\begin{thm}\label{T:residue for connections}
    Let $X$ be a complex manifold and let $\mathcal L \in \Pic(X)$ be a holomorphic line-bundle over $X$.
    If $\nabla$  is a flat meromorphic connection on $\mathcal L$ then the identity
    \[
        c(\mathcal L) = - c(\Res(\nabla))
    \]
    holds true in $H^2(X,\mathbb C)$.
\end{thm}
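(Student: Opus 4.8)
The plan is to reduce the statement about the connection $\nabla$ to the residue theorem for closed meromorphic $1$-forms (Theorem \ref{T:residue}) by producing, out of the local connection forms $\{\eta_i\}$, a genuine closed meromorphic $1$-form on an auxiliary space, or else by a direct \v{C}ech--de Rham argument comparing the curvature, the \v{C}ech cocycle of $\mathcal L$, and the residue divisor. The cleanest route seems to be the following: since $\nabla$ is flat, each $d\eta_i=0$, so the $\eta_i$ are \emph{closed} meromorphic $1$-forms on $U_i$, and on overlaps $\eta_j-\eta_i=d\log g_{ij}$ is a closed \emph{holomorphic} $1$-form (indeed an exact logarithmic differential of an invertible function). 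First I would observe that $\Res_H(\eta_i)$ is well defined independently of $i$ precisely because $d\log g_{ij}$ has integer residues along any hypersurface meeting $U_i\cap U_j$ — but more is true: a small transverse loop $\gamma$ around a smooth point of $H$ bounds inside each $U_i$ it meets, so comparing $\int_\gamma \eta_i$ on overlaps only changes by $\int_\gamma d\log g_{ij}\in 2\pi\sqrt{-1}\,\mathbb Z$, which already shows $\Res_H(\nabla)$ is well defined and, crucially, that the function $U_i\mapsto \exp\!\big(\int_{\ast}^{\,\cdot}\eta_i\big)$ patches, up to the cocycle $g_{ij}$, into a multivalued section whose monodromy we can control.

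The key step I expect to carry out is the following pairing computation. Fix an integral homology class $\sigma\in H_2(X,\mathbb Z)$, represented as in the proof of Theorem \ref{T:residue} by a singular $2$-cycle $\sum_j\sigma_j$ with the $\sigma_j$ in general position with respect to $|D|$ and subordinate to the cover $\mathcal U$ (refine the cover and subdivide the cycle so each simplex lies in one $U_{i(j)}$). On one hand, $c(\mathcal L)\cdot\sigma$ is computed from the \v{C}ech cocycle $\{g_{ij}\}$ by the standard formula expressing the first Chern class as a sum of logarithms $\frac{1}{2\pi\sqrt{-1}}\log g_{ij}$ along the $1$-skeleton of the (dual of the) triangulation — equivalently, via the de Rham representative $\frac{1}{2\pi\sqrt{-1}}d\eta_i$ of $c(\mathcal L)_{\mathbb R}$, but that $2$-form has poles, so I must be careful. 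Instead I would integrate: $\int_{\sigma_j}\frac{1}{2\pi\sqrt{-1}}d\eta_{i(j)}=\frac{1}{2\pi\sqrt{-1}}\int_{\partial\sigma_j}\eta_{i(j)}$ by Stokes when $\sigma_j$ avoids the poles, and when $\sigma_j$ meets $|D|$ transversely at one point of $H$ the left side is $0$ (the curvature vanishes!) while the right side picks up the residue $\Res_H(\nabla)\cdot(H\cdot\sigma_j)$ times $2\pi\sqrt{-1}$ from the small loop around that point, exactly as in the proof of Theorem \ref{T:residue}. Summing over $j$, the boundary terms $\int_{\partial\sigma_j}\eta_{i(j)}$ on simplices not meeting $|D|$ no longer cancel by themselves — they cancel against the $\eta_j-\eta_i=d\log g_{ij}$ discrepancies across shared faces, and that discrepancy sum is precisely $2\pi\sqrt{-1}\,c(\mathcal L)\cdot\sigma$. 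Rearranging gives $c(\mathcal L)\cdot\sigma + c(\Res(\nabla))\cdot\sigma=0$, which is the claim since $\sigma$ is arbitrary.

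The main obstacle is bookkeeping the \v{C}ech discrepancies correctly: unlike in Theorem \ref{T:residue}, the local $1$-forms $\eta_i$ do not glue, so $\sum_j\int_{\partial\sigma_j}\eta_{i(j)}$ is not zero, and one must identify its value with $2\pi\sqrt{-1}\,c(\mathcal L)\cdot\sigma$ via the explicit combinatorial formula for the cup product / Chern class in terms of the cocycle $\{\log g_{ij}\}$ evaluated on the triangulated cycle (this is essentially the statement that $c_1$ is the image of $\{g_{ij}\}$ under the connecting map $H^1(X,\mathcal O^\ast_X)\to H^2(X,\mathbb Z)$, spelled out at the chain level). A clean way to sidestep part of this is to pull everything back to the total space of $\mathcal L$ minus its zero section, or to an associated $\mathbb C^\ast$-bundle, where the connection forms assemble into a single \emph{closed meromorphic} $1$-form $\Omega$ (of the shape $\frac{dt}{t}-\pi^\ast\eta$ in a local trivialization $t$), apply Theorem \ref{T:residue} to $\Omega$ there, and then push the resulting identity down using the Gysin/Leray relation between $H^2$ of the bundle and $H^2(X)$ together with $c(\mathcal L)$; I would present whichever of the two is shorter, but I expect the direct \v{C}ech argument to be the more self-contained one and would flag the cocycle computation as the point requiring care.
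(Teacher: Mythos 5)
Your primary route --- the direct \v{C}ech/simplicial bookkeeping on $X$ --- is viable but genuinely different from the paper's proof. The paper instead glues the local data into a single closed meromorphic $1$-form on the total space $E(\mathcal L)$: with fiberwise linear coordinates $z_i$ dual to the trivializing sections, the forms $\omega_i=\frac{dz_i}{z_i}+\eta_i$ agree on overlaps because $z_i=g_{ij}z_j$ and $\eta_j-\eta_i=d\log g_{ij}$, flatness makes the resulting $\omega$ closed, and $\Res(\omega)=X+\pi^*\Res(\nabla)$ with $X$ the zero section; applying Theorem \ref{T:residue} on $E(\mathcal L)$ and restricting to the zero section, whose normal bundle is $\mathcal L$ so that $\restr{c(X)}{X}=c(\mathcal L)$, gives the identity with no cocycle bookkeeping at all. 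Your alternative sketch is essentially this construction, but doing it on the complement of the zero section (or an associated $\mathbb C^*$-bundle) and pushing down by Gysin/Leray is weaker as stated: there $\Res(\omega)$ is only $\pi^*\Res(\nabla)$, and $\pi^*c(\Res(\nabla))=0$ determines $c(\Res(\nabla))$ only up to a scalar multiple of $c(\mathcal L)$; keeping the zero section, with its residue $1$, is exactly what pins the coefficient to $-1$. As for your main argument, the crux you flag --- that the sum of face discrepancies $\int_\tau d\log g_{i(j)i(k)}$ equals $2\pi\sqrt{-1}\,c(\mathcal L)\cdot\sigma$ --- is the standard chain-level comparison between the connecting map $H^1(X,\mathcal O^*_X)\to H^2(X,\mathbb Z)$ and the singular pairing; it is correct, and it is the same identification the paper uses (cohomologically rather than at chain level) in the proof of Proposition \ref{P:residueconstant}, but in your plan it is announced rather than carried out, and it is where all the work of your version lives. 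Two small slips that do not affect the outcome: $d\log g_{ij}$ is holomorphic on $U_i\cap U_j$ since $g_{ij}$ is invertible, so the residues of $\eta_i$ and $\eta_j$ along $H$ agree exactly (no integrality argument is needed); and when a simplex crosses $|D|$ one does not apply Stokes across the pole --- the boundary integral is evaluated, as in the proof of Theorem \ref{T:residue}, by deforming $\partial\sigma_j$ within the complement of the polar set to a small transverse loop. What your approach buys is self-containedness on $X$ itself; what the paper's buys is brevity, since the auxiliary space converts the \v{C}ech discrepancy into an honest global closed form and the Chern class of $\mathcal L$ appears geometrically as the zero-section residue.
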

\begin{proof}
    Let $E(\mathcal L)$ be the total space of $\mathcal L$ and let $\pi:E(\mathcal L) \to X$ be the natural projection. Using the notation above,
    $E(\mathcal L)$ is the complex manifold obtained by identifying the points $(x, y_j) \in U_j \times \mathbb C$
    with the points $(x,g_{ij} y_i) \in U_i \times \mathbb C$ when $x \in U_i \cap U_j$.

    Let $z_i \in \mathcal L^*(U_i)$ be such that $z_i(\sigma_i)=1$. Note that $z_i$ can be interpreted as a fiberwise linear
    function on $\restr{E(\mathcal L)}{U_i}$. Since $z_i = g_{ij} z_j$ over $U_i\cap U_j$, if we set
    \[
        \omega_i = \frac{dz_i}{z_i} + \eta_i
    \]
    then $\omega_i - \omega_j = 0$. Therefore, there exists a closed meromorphic $1$-form $\omega$ on $E(\mathcal L)$
    such that $\restr{\omega}{\pi^{-1}(U_i)} = \omega_i$. As we are assuming that $\nabla$ is flat, it follows that  the $1$-form $\omega$ is closed.

    By construction, $\Res(\omega) = X + \pi^* \Res(\nabla)$ where $X \subset E(\mathcal L)$ is identified with the zero section of $\mathcal L$
    and $\pi : E(\mathcal L) \to X$ is the natural projection.  By Theorem \ref{T:residue}, $c(\Res(\omega))=0$.
    Since $\restr{c(X)}{X} = c(\mathcal L)$, it follows that
    \[
        0 = \restr{c(\Res(\omega))}{X} =  \restr{\left(c(X) + \pi^* c(\Res(\nabla)\right)}{X} = c(\mathcal L) + c(\Res(\nabla))
    \]
    in $H^2(X,\mathbb C)$ as claimed.
\end{proof}

\subsection{Residue theorem for logarithmic forms/connections (not necessarily closed/flat)}
Let $D$ be a reduced divisor on a complex manifold $X$. In \cite{MR586450}, Saito defines the sheaf of
logarithmic $1$-forms with poles on $D$ as the subsheaf $\Omega^1_X(\log D) \subset \Omega^1_X(*D)$
characterized by $\omega \in \Omega^1_X(\log D)$ if, and only if, both $\omega$ and $d\omega$ have, at worst, poles of order one.

Locally, if $D=\{ f=0\}$ and $\omega \in \Omega^1_X(\log D)$ then, according to \cite[Section 1]{MR586450}, there exist $g, h \in \mathcal O_X$ and $\eta \in \Omega^1_X$  such that
$g$ is not identically zero on any irreducible component of  $D$ and
\[
    g \omega = h \frac{df}{f} + \eta \, ,
\]

The restriction of the  quotient $h/g$ to the support of $D$  does not depend on the choices of $\eta$, $g$, and $h$. It is used by Saito to define
the residues of $\omega \in H^0(X,\Omega^1_X(\log D))$. The residue of $\omega$ along an irreducible component $H$ of $D$ is the
meromorphic function on the normalization of $H$ defined by $h/g$. In general, the residue is not a holomorphic function, it might have poles at  the pre-image of the locus where $D$ is not normal crossing. When $D$ is a normal crossing divisor, the residues of $\omega$ are holomorphic. Moreover, if $D = \sum_{i=1}^k H_i$ is simple normal crossing (\ie normal crossing with smooth irreducible components) then the sheaf $\Omega^1_X(\log D)$ fits into the exact sequence
\begin{equation}\label{E:residue exact}
    0 \to \Omega^1_X \to \Omega^1_X(\log D) \to \oplus_{i=1}^k \mathcal O_{H_i} \to 0 \, .
\end{equation}

A meromorphic connection on a line-bundle $\mathcal L$ is, by definition, a logarithmic connection if the (local) connection forms are logarithmic $1$-forms.

\begin{prop}\label{P:residueconstant}
    Let $X$ be a complex manifold, let $D$ be a reduced divisor on $X$, and let $\nabla$ be a logarithmic connection (not necessarily flat) on
    a line-bundle $\mathcal L$ over $X$. If the polar divisor $D$ is simple normal crossing and compact then the residues of $\nabla$ are constant and $c(\mathcal L) = - c(\Res(\nabla))$ in $H^2(X,\mathbb C)$.
\end{prop}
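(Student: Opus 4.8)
The plan is to show first that the residues of a logarithmic connection along a compact simple normal crossing divisor are constant, and then deduce the Chern class identity from Theorem~\ref{T:residue for connections}. For the first part, fix an irreducible component $H_i$ of $D$. The residue $\Res_{H_i}(\nabla)$ is, by Saito's construction, a holomorphic function on $H_i$ (since $D$ is simple normal crossing, so $H_i$ is smooth and the residue has no poles). I would like to argue that this holomorphic function is in fact \emph{locally constant}, hence constant on each connected component of $H_i$; but since the statement as phrased concerns ``the residues'' as numbers, the cleanest route is to observe that $H_i$ is compact and connected components of a compact complex manifold are compact, so any holomorphic function on $H_i$ is constant on each component. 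The real content is therefore the local constancy, equivalently the holomorphicity already granted plus compactness --- but one must be slightly careful: a priori different connected components of $H_i$ could carry different residue values, so ``constant'' should be read as ``locally constant''. Alternatively, and more robustly, I would show the residue is locally constant by a direct computation: in local coordinates where $D = \{z_1 \cdots z_r = 0\}$, a logarithmic connection form is $\eta = \sum_{j=1}^r a_j(z)\,\frac{dz_j}{z_j} + (\text{holomorphic})$, and the defining cocycle relation $\frac{dg_{k\ell}}{g_{k\ell}} = \eta_\ell - \eta_k$ forces the difference of connection forms to be holomorphic and closed with poles of order one; comparing residues along $H_i$ across the trivialization shows $\Res_{H_i}(\nabla)$ is globally well-defined on $H_i$, and then one checks that $d(\text{residue}) = 0$ along $H_i$ using that the ambient connection need not be flat but the residue still varies holomorphically --- actually holomorphicity on compact $H_i$ already suffices.

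Concretely: for each $i$, the function $\Res_{H_i}(\nabla) \in \mathcal{O}_{H_i}(H_i)$ is holomorphic by Saito's theory (applicable because $D$ is normal crossing), and $H_i$ is compact, so it is constant on each connected component of $H_i$; write $\lambda_i$ for this locally constant value and $\Res(\nabla) = \sum_i \lambda_i H_i \in \Div(X)\otimes \mathbb{C}$ for the associated complex divisor --- here if $H_i$ is disconnected one treats its components separately, but since $D$ is a divisor on $X$ this is harmless. This establishes the first assertion.

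For the Chern class identity, the issue is that Theorem~\ref{T:residue for connections} was proved only under the hypothesis that $\nabla$ is flat, whereas here $\nabla$ is merely logarithmic. So I would reduce to the flat case: let $\lambda_i$ be the (constant) residue along $H_i$ and consider the logarithmic $1$-form $\omega_0 = \sum_i \lambda_i \frac{df_i}{f_i}$ locally, more precisely pick a flat logarithmic connection $\nabla_0$ on the line bundle $\mathcal{O}_X(D') $ --- better: observe that the residues of $\nabla$ depend only on the polar part, so $\nabla - \nabla_0$, where $\nabla_0$ is any logarithmic connection with the same residues, has connection forms $\eta_i - \eta_i^0$ that are \emph{holomorphic}; hence $\nabla$ differs from a flat connection by a holomorphic (not merely meromorphic) connection $A$ with $A \in H^0(X,\Omega^1_X)\otimes \dots$, which does not affect the residue divisor, and whose curvature contribution $dA$ is a holomorphic $2$-form. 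The cleanest packaging: use the total-space construction from the proof of Theorem~\ref{T:residue for connections} verbatim, setting $\omega_i = \frac{dz_i}{z_i} + \eta_i$ on $\pi^{-1}(U_i)$; the $1$-forms patch to a global meromorphic $1$-form $\omega$ on $E(\mathcal{L})$ with polar divisor supported on the zero section union $\pi^{-1}(D)$, logarithmic there, and with $\Res(\omega) = X + \pi^*\Res(\nabla)$. The obstruction is that $\omega$ need not be \emph{closed} when $\nabla$ is not flat; but $d\omega = \pi^* d\eta_i$ is a global \emph{holomorphic} (logarithmic with no actual pole, since $d$ of a logarithmic form with constant residues along SNC loses its polar part) $2$-form on $E(\mathcal{L})$, and I would re-run Weil's Stokes-theorem argument from Theorem~\ref{T:residue} allowing $d\sigma_j^*\omega = \sigma_j^* d\omega$ to be nonzero: the total $\sum_j \int_{\Delta^2}\sigma_j^* d\omega$ is $\int_{[\sigma]}d\omega$, which vanishes because $d\omega$ is a closed (it is $d$ of something) form representing $0$ in de Rham cohomology --- indeed $d\omega$ is exact. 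Hence $\Res(\omega)\cdot\sigma = 0$ for all $\sigma$, giving $c(\Res(\omega)) = 0$, and restricting to the zero section exactly as before yields $c(\mathcal{L}) = -c(\Res(\nabla))$.

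The main obstacle is this last reduction: handling the non-flatness. The key realization that makes it work is that for a \emph{logarithmic} connection with \emph{constant} residues along a \emph{simple normal crossing} divisor, the curvature $2$-form, though a priori meromorphic, is actually holomorphic --- because $d\left(a\,\frac{dz_j}{z_j}\right) = da \wedge \frac{dz_j}{z_j}$ has a pole unless we use that the residue $a|_{H_j}$ is constant, which kills exactly the polar part --- and moreover it is exact on the total space $E(\mathcal{L})$, so Weil's argument produces no correction term. I would double-check the claim ``$d\omega$ exact on $E(\mathcal{L})$'': it is $d$ of the globally defined meromorphic form $\omega$, so trivially exact as a current/form wherever $\omega$ is defined, and since $d\omega$ is holomorphic it is a genuine smooth closed form which is exact, hence integrates to zero over any cycle. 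That is the crux; the rest is bookkeeping identical to the proofs of Theorems~\ref{T:residue} and~\ref{T:residue for connections}.
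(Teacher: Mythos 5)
The first part of your argument (constancy of the residues) is fine and is essentially the paper's: the residue along each irreducible component $H_i$ is a holomorphic function on $H_i$, and since the components of a simple normal crossing divisor are smooth, connected and here compact, it is a constant; your worry about connected components is moot.

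The second part has a genuine gap, at exactly the point you yourself flag as the crux. On $E(\mathcal L)$ the form $\omega$ has poles along the zero section and along $\pi^{-1}(D)$, so the identity ``$d\omega$ is $d$ of a globally defined form'' only shows that the holomorphic extension of $d\omega$ --- which is $\pi^*\Theta$, where $\Theta=d\eta_i$ is the curvature $2$-form of $\nabla$ --- is exact on the \emph{complement} of the polar locus of $\omega$; it does not make it exact on $E(\mathcal L)$, and the cycles you need at the end (cycles in the zero section, used to restrict to $X$) lie inside the polar locus. The current-theoretic version of your claim is also false: $d$ of the principal-value current of a meromorphic $1$-form differs from the holomorphic extension of $d\omega$ by residue terms supported on the poles, and that discrepancy is precisely the content of Weil's residue theorem. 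If you carry out your corrected Stokes computation keeping all terms, you obtain that $c(\mathcal L)+c(\Res(\nabla))$ equals, up to the factor $\pm\tfrac{1}{2\pi i}$, the pairing of the (holomorphic, closed, but in general non-exact) curvature form $\Theta$ against $2$-cycles of $X$; so the vanishing you assert --- that the curvature has zero periods --- is equivalent to the statement being proved, and as written the argument is circular. Flatness is exactly what killed this term in Theorem \ref{T:residue for connections}, and the whole point of Proposition \ref{P:residueconstant} is the non-flat case, so the non-flatness cannot be waved away by ``$d\omega$ is exact''. Note that the paper's proof does not pass through the total space at all: it works with the \v{C}ech cocycle $\{\tfrac{1}{2\pi\sqrt{-1}}(\eta_j-\eta_i)\}=\{\tfrac{1}{2\pi\sqrt{-1}}d\log g_{ij}\}$, which represents $c(\mathcal L)$ in $H^1(X,d\mathcal A^0_X)\simeq H^2(X,\mathbb C)$, and identifies this class with $-c(\Res(\nabla))$ by means of the residue exact sequence (\ref{E:residue exact}); only the differences $\eta_j-\eta_i$, which are closed, ever enter, never $d\eta_i$ itself. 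To salvage your route you would have to supply an actual argument that $[\Theta]$ pairs trivially with every $2$-cycle of $X$ under the stated hypotheses; nothing in your proposal does this, and it certainly does not follow from $\omega$ being globally meromorphic.
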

\begin{proof}
    Let $\mathcal U = \{ U_i \}$ be a sufficiently fine open covering of $X$ and let $\{ g_{ij} \} \in Z^1(\mathcal U, \mathcal O_X^*)$ be the cocycle of transition functions for $\mathcal L$.

    Let $d \mathcal A^0_X$ be the sheaf of closed complex-valued differentiable $1$-forms on $X$. The standard proof
    of de Rham's theorem relating \v{C}ech cohomology and de Rham cohomology shows that
    $
        H^1(X, d \mathcal A^0_X) \simeq H^2(X,\mathbb C) \, .
    $
    Moreover, the Chern class of $\mathcal L$ is represented by the cocycle
    \begin{equation}\label{E:Chern class}
        \left\lbrace \frac{1}{2 \pi \sqrt{-1} }  d \log g_{ij}   \right\rbrace \in  Z^1(\mathcal U, d\mathcal A^0_X) .
    \end{equation}

    Our assumptions on the polar divisor of $\nabla$ imply that the residues
    of $\nabla$ along the irreducible components of $D$ are holomorphic functions on compact and connected complex manfifolds, hence complex numbers by the maximum principle. Let $R = \sum \Res_H (\nabla) \cdot H$ be the residue divisor
    of $\nabla$. Notice that we may identify $R$ with an element of $\oplus_{i=1}^k  H^0(H_i, \mathcal O_{H_i})$.

    The connection forms $\eta_i \in \Omega_X^1(\log D)(U_i)$ of $\nabla$ determine an element
    $\eta = \{ \eta_i \} \in C^0(\mathcal U, \Omega^1_X(\log D))$ such that
    \[
        \left\lbrace \frac{1}{2 \pi \sqrt{-1}} ( \eta_j - \eta_i ) \right\rbrace = \left\lbrace \frac{1}{2 \pi \sqrt{-1}} d \log g_{ij}   \right\rbrace \, .
    \]
    Notice that the cocycle $\left\lbrace \frac{1}{2 \pi \sqrt{-1}} ( \eta_j - \eta_i ) \right\rbrace  \in Z^1(\mathcal U,\Omega^1_X) \cap  Z^1(X, d\mathcal A^0_X)$ represents
    the image of $-R \in H^0(X, \oplus_{i=1}^k \mathcal O_{H_i})$ in $H^1(X,\Omega^1_X)$ under the morphism appearing in the long exact sequence deduced from the short exact sequence (\ref{E:residue exact}). Since it also represents $-c(R)$ in $H^2(X,\mathbb C)$, see
    Formula (\ref{E:Chern class}), we get  $c(\mathcal L) = - c(R)$ in $H^2(X,\mathbb C)$ as wanted.
\end{proof}

For a generalization of Proposition \ref{P:residueconstant} to (not necessarily flat) logarithmic connections on vector bundles of arbitrary rank, see \cite{MR670900}.

\subsection{Index theorem for invariant hypersurfaces}\label{SS:index}
In this subsection, we show how to derive index theorems for invariant subvarieties of codimension one foliations
using the residue theorem for flat meromorphic connections. We barely touch the surface of this important subject.
The reader interested in the topic should consult \cite{MR1649358} and references therein. Our exposition is
inspired by \cite[Chapter 3]{MR3328860}, but differs considerably from it.

Let $\F$ be a codimension one foliation on a complex manifold $X$. If $\iota : Y \hookrightarrow X$ is smooth  $\F$-invariant hypersurface  then
$\F$ induces a flat meromorphic connection on $\restr{N_{\F}^*}{Y}$ as follows. Let $\mathcal U = \{ U_i \}$ be an open
covering of a neighborhood of $H$, let $\omega_i \in N^*_{\F}(U_i)$ be local generators of $N^*_{\F}$,
and let $g_{ij} \in \mathcal O^*_X(U_i\cap U_j)$ be nowhere vanishing functions such that $\omega_i = g_{ij} \omega_j$.
Note that the line-bundle determined by the collection $\{ \iota^*g_{ij} \}$ is $\restr{N_{\F}}{Y}$.

For each open subset $U_i$, there exists a meromorphic $1$-form $\eta_i$ such that $d\omega_i = \eta_i \wedge \omega_i$.
The $1$-forms $\eta_i$ are not uniquely defined by this last identity, one may add meromorphic multiples of $\omega$ to
any given $\eta_i$  to obtain other $1$-forms with the same property. If one chooses such multiples in a way that $(\eta_i)_{\infty}$
does not contain $Y \cap U_i$ in its support, then  $\iota^*(\eta_i)$ are meromorphic $1$-forms
on $Y\cap U_i$ uniquely determined by the choice of $\omega_i$. Notice also that over non-empty intersections we have the implication:
\[
    \eta_i \wedge \omega_i  = \frac{d g_{ij}}{g_{ij}} \wedge \omega_i + \eta_j \wedge \omega_i \implies \iota^*\eta_i -  \iota^* \eta_j=  \iota^*\frac{d g_{ij}}{g_{ij}}  \quad \text{and} \quad    d ( d \omega_i ) =0 \, .
\]
Hence
\[
     d \eta_i \wedge \omega_i = 0  \implies \iota^* d \eta_i = d \iota^* \eta_i =0 \, .
\]
It follows that the collection of meromorphic $1$-forms $\{ \iota^* \eta_i \}$ defines a flat meromorphic connection $\nabla^{B}$ on the line-bundle $\restr{N^*_{\F}}{Y}$.

\begin{remark}\label{R:Bottconnection}
    The meromorphic connection $\nabla^B$ is the restriction of Bott's partial connection to the invariant hypersurface $Y$.
\end{remark}

\begin{ex}\label{E:generic sing}
    Let $\F$ be the germ of foliation on $(\mathbb C^3,0)$ defined by
    \[
        \omega = xyz \left(  \alpha \frac{dx}{x} + \beta \frac{dy}{y} + \gamma \frac{dz}{z} \right) \, , \alpha, \beta, \gamma \in \mathbb C^*,
    \]
    and let $Y = \{ x=0\}$. Observe that
    $d \omega = d \log xyz \wedge \omega$. Set $\eta_{\mu}  = d \log xyz + \mu \cdot (xyz)^{-1}\omega$. Clearly, $d\omega = \eta_{\mu} \wedge (xyz)^{-1} \omega$
    for any meromorphic function $\mu$ on $(\mathbb C^3,0)$. In order to guarantee that $(\eta_{\mu})_{\infty}$ has no poles on $Y$, we may choose $\mu = - \alpha^{-1}$. Therefore the connection $\nabla^B$ (\ie Bott's connection)  on $Y = \{ x=0\}$ is defined by the logarithmic $1$-form
    \[
         \left(1 - \frac{\beta}{\alpha} \right) \frac{d y}{y} + \left(1- \frac{ \gamma}{\alpha}\right) \frac{d z}{z} .
    \]
\end{ex}

If $S \subset Y$ is an irreducible component of the polar divisor of $\nabla^B$ then $S$ is contained in $\sing(\F) \cap Y$
and the residue of $\nabla^B$ along $S$ is the so-called variation $\Var(\F,Y,S)$ as defined by Suwa, see \cite{MR1649358}.

\begin{prop}\label{P:variation}
    Let $\F$ be a codimension one foliation defined on a complex manifold $X$. If $Y$ is a smooth  $\F$-invariant hypersurface
    then
    \[
        c(\restr{N_{\F}}{Y}) = \sum_{S} \Var(\F,Y,S) \cdot c(S) \,
    \]
    in $H^2(Y,\mathbb C)$, where the sum runs over all the irreducible components of $\sing(\F) \cap Y$.
\end{prop}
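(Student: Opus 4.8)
The plan is to combine the construction of Bott's meromorphic connection $\nabla^B$ on $\restr{N^*_{\F}}{Y}$ given above with Theorem \ref{T:residue for connections}. First I would observe that $\nabla^B$ is a flat meromorphic connection on the line bundle $\restr{N^*_{\F}}{Y}$, whose polar divisor is supported on $\sing(\F)\cap Y$ (after the choices of $\eta_i$ arranged so that $(\eta_i)_\infty$ does not contain $Y\cap U_i$). Applying Theorem \ref{T:residue for connections} to $\mathcal L = \restr{N^*_{\F}}{Y}$ and $\nabla = \nabla^B$ gives
\[
    c(\restr{N^*_{\F}}{Y}) = - c(\Res(\nabla^B))
\]
in $H^2(Y,\mathbb C)$. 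Dualizing (i.e. $c(\restr{N_{\F}}{Y}) = - c(\restr{N^*_{\F}}{Y})$) yields $c(\restr{N_{\F}}{Y}) = c(\Res(\nabla^B))$.

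Next I would identify $\Res(\nabla^B)$ with $\sum_S \Var(\F,Y,S)\cdot S$. By the paragraph immediately preceding the proposition, each irreducible component $S$ of the polar divisor of $\nabla^B$ is contained in $\sing(\F)\cap Y$, and the residue of $\nabla^B$ along $S$ is by definition the variation $\Var(\F,Y,S)$. Conversely, any irreducible component of $\sing(\F)\cap Y$ along which $\nabla^B$ has no pole contributes with zero residue, so we may harmlessly let the sum range over all irreducible components of $\sing(\F)\cap Y$. Substituting into the identity above gives exactly
\[
    c(\restr{N_{\F}}{Y}) = \sum_S \Var(\F,Y,S)\cdot c(S)
\]
in $H^2(Y,\mathbb C)$.

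One point that deserves care is that Theorem \ref{T:residue for connections} is stated for a flat meromorphic connection on a line bundle over a complex manifold, and here the relevant manifold is $Y$ itself (not $X$); since $Y$ is a smooth hypersurface, it is a complex manifold and the hypothesis is satisfied, but one must make sure the connection $\nabla^B$ has genuinely been constructed on $Y$ and not merely on a neighborhood of $Y$ in $X$, which is exactly what the $\iota^*$ in the construction of $\nabla^B$ accomplishes. A second subtlety is that the residues $\Var(\F,Y,S)$ need not be constants in general (they are meromorphic functions on the normalization of $S$ when $S$ is not smooth), so strictly speaking the statement should be read with the understanding that $\Res(\nabla^B)$ is an element of $\Div(Y)\otimes\mathbb C$ only when those residues are constant; I would either restrict to that case, invoke the convention used earlier, or note that for the Chern-class identity only the cohomology class $c(\Res(\nabla^B))$ intervenes and Theorem \ref{T:residue} applies to it as stated. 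The main obstacle, such as it is, is bookkeeping: making the residue divisor of $\nabla^B$ match Suwa's variation and keeping track of the sign conventions between $N_\F$ and $N^*_\F$; the geometric content is entirely carried by Theorem \ref{T:residue for connections}.
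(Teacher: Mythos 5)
Your proposal is correct and follows essentially the same route as the paper: apply Theorem \ref{T:residue for connections} to Bott's connection $\nabla^B$ on $\restr{N^*_{\F}}{Y}$ and invoke the definition of $\Var(\F,Y,S)$ as the residues of $\nabla^B$. Your worry about the residues failing to be constant is unnecessary here: since $\nabla^B$ is flat, its local connection forms are closed meromorphic $1$-forms, so the residue along each irreducible component is a well-defined complex number by the definition in Subsection \ref{SS:residue}.
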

\begin{proof}
    Apply Theorem \ref{T:residue for connections} to the connection  $\nabla^B$ constructed above to get that
    $c(\restr{N^*_{\F}}{Y}) = - c(\Res(\nabla^B))$. Therefore
    \[
        c(\restr{N_{\F}}{Y}) =  c(\Res(\nabla^B)) = \sum_{S} \Var(\F,Y,S) \cdot c(S)
    \]
    according to the definition of $\Var(\F,Y,S)$.
\end{proof}

For a codimension one foliation $\F$ defined by $\omega \in H^0(X,\Omega^1_X \otimes N_{\F})$ and a smooth invariant
hypersurface $\iota : Y \hookrightarrow X$, the restriction $\restr{\omega}{Y} \in H^0(Y, \restr{\Omega^1_X}{Y}\otimes \restr{N_{\F}}{Y})$ is contained in the image of the natural morphism
\[
    N^*_Y \otimes \restr{N_{\F}}{Y} \to \restr{\Omega^1_X}{Y} \otimes \restr{N_{\F}}{Y} .
\]
Define  $Z(\F,Y)$ as the zero divisor of $\restr{\omega}{Y}$ seen as an element of $H^0(Y,N^*_Y \otimes N_{\F})$.
Observe that we can expand $Z(\F,Y)$ as $\sum_{S} Z(\F,Y,S) S$ where the sum runs over the irreducible components
of $\sing(\F)\cap Y$.

\begin{prop}\label{P:Znormal}
    Let $\F$ be a codimension one foliation defined on a complex manifold $X$. If $Y$ is a smooth invariant hypersurface
    then
    \[
         N_Y = \restr{N_{\F}}{Y}  \otimes \mathcal O_X( - Z(\F,Y)) \,
    \]
    in $\Pic(Y)$.
\end{prop}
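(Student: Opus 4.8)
The plan is to compare the two line bundles $N_Y$ and $\restr{N_{\F}}{Y}\otimes\mathcal O_X(-Z(\F,Y))$ by producing a canonical isomorphism between them out of the data already in play, namely the $1$-form $\omega\in H^0(X,\Omega^1_X\otimes N_{\F})$ defining $\F$ and the inclusion $\iota:Y\hookrightarrow X$. The starting observation is the one made just before the statement: since $Y$ is $\F$-invariant, the restriction $\restr{\omega}{Y}$ lies in the image of the natural map $N^*_Y\otimes\restr{N_{\F}}{Y}\to\restr{\Omega^1_X}{Y}\otimes\restr{N_{\F}}{Y}$ coming from the conormal exact sequence $0\to N^*_Y\to\restr{\Omega^1_X}{Y}\to\Omega^1_Y\to 0$. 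Thus $\restr{\omega}{Y}$ determines a section $\bar\omega\in H^0(Y,N^*_Y\otimes N_{\F})$, and by definition its zero divisor is $Z(\F,Y)$. A nowhere-vanishing section of a line bundle trivializes it; here $\bar\omega$ vanishes exactly along $Z(\F,Y)$, so $\bar\omega$ exhibits an isomorphism $\mathcal O_Y(Z(\F,Y))\xrightarrow{\ \sim\ }N^*_Y\otimes\restr{N_{\F}}{Y}$. Tensoring by $\mathcal O_Y(-Z(\F,Y))\otimes N_Y$ and rearranging gives $N_Y\simeq\restr{N_{\F}}{Y}\otimes\mathcal O_X(-Z(\F,Y))$, which is the claim.

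To make this watertight I would work in a sufficiently fine open cover $\mathcal U=\{U_i\}$ of a neighborhood of $Y$ on which $N_{\F}$ is trivialized, so $\F$ is given locally by $\omega_i\in\Omega^1_X(U_i)$ with $\omega_i=g_{ij}\omega_j$ and $\{g_{ij}\}$ a cocycle for $N_{\F}$. Choosing a local equation $f_i\in\mathcal O_X(U_i)$ for $Y$ (so $df_i$ generates $N^*_Y$ on $Y\cap U_i$ and $\{f_i/f_j\}$ is a cocycle for $N_Y$), the invariance of $Y$ says that on $Y$ one can write $\restr{\omega_i}{Y}=h_i\,\restr{df_i}{Y}$ for a unique $h_i\in\mathcal O_Y(Y\cap U_i)$ — this is precisely the statement that $\restr{\omega_i}{Y}\in N^*_Y\otimes\mathcal O_{Y\cap U_i}$. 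Then $\{h_i\}$ is the local description of the section $\bar\omega$, so $Z(\F,Y)=\{h_i=0\}$, and a short comparison of cocycles gives $\{h_i/h_j\}=\{g_{ij}\cdot (f_j/f_i)\}$ on $Y$, i.e. the identity $N^*_Y\otimes\restr{N_{\F}}{Y}=\mathcal O_Y(Z(\F,Y))$ in $\Pic(Y)$, which is equivalent to the asserted formula. (One should note $\omega$ has no zeros in codimension one by the normalization convention on $N_{\F}$, so the only vanishing of $\bar\omega$ is the genuine $Z(\F,Y)$ supported on $\sing(\F)\cap Y$, as in the expansion $Z(\F,Y)=\sum_S Z(\F,Y,S)S$.)

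The only real subtlety — and the step I would be most careful about — is checking that $\restr{\omega}{Y}$ really does land in $N^*_Y\otimes\restr{N_{\F}}{Y}$ rather than merely in $\restr{\Omega^1_X}{Y}\otimes\restr{N_{\F}}{Y}$, and that the resulting $\bar\omega$ is not identically zero on any component of $Y$. The first is exactly the definition of $Y$ being invariant (its image in $\Omega^1_Y\otimes\restr{N_{\F}}{Y}$, which computes the foliation $\restr{\F}{Y}$ would induce on $Y$, vanishes), and the second holds because $\omega$ vanishes nowhere in codimension one and $Y$ is not $\F$-itself; so $\bar\omega$ vanishes only on a codimension $\ge 1$ subset of $Y$, which is $\sing(\F)\cap Y$. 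Everything else is the bookkeeping of the conormal sequence and cocycle manipulation, with no analytic input needed — in particular the statement is purely local-to-global and requires no compactness or Kähler hypothesis.
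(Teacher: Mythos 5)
Your argument is correct and is essentially the paper's own proof: the paper phrases it as the invariance forcing the inclusion $\restr{N^*_{\F}}{Y}\hookrightarrow\restr{\Omega^1_X}{Y}$ to factor through a morphism $\varphi:\restr{N^*_{\F}}{Y}\to N^*_Y$ whose image is $N^*_Y\otimes\mathcal O(-Z(\F,Y))$, which is exactly your section $\bar\omega$ of $N^*_Y\otimes\restr{N_{\F}}{Y}$ with zero divisor $Z(\F,Y)$. Your cocycle verification and the checks that $\bar\omega$ lands in the conormal part and is nonzero on each component (since $\sing(\F)$ has codimension two) are fine, just more explicit than the paper's two-line version.
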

\begin{proof}
    The invariance of $Y$ by $\F$ implies that the inclusion $\restr{N^*_{\mathcal F}}{Y}\hookrightarrow \restr{\Omega^1_X}{Y}$ factors through a morphism $\varphi:\restr{N^*_{\F}}{Y} \to  N^*_Y$. By the definition
    of $Z(\F,Y)$, the image of $\varphi$ is equal to $N^*_Y \otimes \mathcal O_X(-Z(\F,Y))$.
\end{proof}

The study of indexes for invariant subvarieties of foliations was initiated by Camacho and Sad in \cite{CamachoSad}, where they
introduced the index that now bears their name. Although they defined it only for smooth curves invariant by foliations on surfaces,
their definition works equally well for smooth hypersurfaces invariant by codimension one foliations.

Let $\omega$ be a local generator of $N^*_{\F}$ and let $f$ be a local equation for $Y$. We can write
$\omega = h df + f \eta$, where $h$ is a holomorphic function and  $\eta$ is a holomorphic $1$-form.
Observe that $\sing(\F)\cap Y= \{ f=h=0\}$. The integrability condition $\omega\wedge d\omega=0$ implies that $\iota^*(\eta/h)$ is a closed meromorphic $1$-form. The Camacho-Sad index along $S$ is defined as the residue of $\iota^*(\eta/h)$ along $S$. Since $\iota^*(\eta/h)$
is closed, its residues are complex numbers. Moreover, they do not depend on the choices involved.

As observed by Brunella in \cite[Proposition 3.1]{MR3328860}, the variation $\Var(\F,Y,S)$ is closely related to the Camacho-Sad index $\CS(\F,Y,S)$:
\begin{equation}\label{E:V=CS+Z}
    \Var(\F,Y,S) = \CS(\F,Y,S) + Z(\F,Y,S) \, .
\end{equation}

We are now ready to state, and prove, a version of the Camacho-Sad index theorem for smooth hypersurfaces
invariant by codimension foliations.

\begin{thm}\label{T:CS}
    Let $\F$ be a codimension one foliation defined on a complex manifold $X$. If $Y$ is a smooth  invariant hypersurface
    then the identity
    \[
        c(N_Y) = \sum_{S} \CS(\F,Y,S) \cdot c(S)
    \]
    holds true     in $H^2(Y,\mathbb C)$.
\end{thm}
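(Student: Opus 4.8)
The plan is to combine the relation \eqref{E:V=CS+Z} between the variation and the Camacho--Sad index with the two cohomological identities already established: Proposition \ref{P:variation}, which expresses $c(\restr{N_\F}{Y})$ in terms of the variations $\Var(\F,Y,S)$, and Proposition \ref{P:Znormal}, which computes $N_Y$ as $\restr{N_\F}{Y} \otimes \mathcal O_X(-Z(\F,Y))$. First I would take Chern classes in the identity of Proposition \ref{P:Znormal}, obtaining in $H^2(Y,\mathbb C)$ the equation
\[
    c(N_Y) = c(\restr{N_\F}{Y}) - c(Z(\F,Y)) = c(\restr{N_\F}{Y}) - \sum_{S} Z(\F,Y,S) \cdot c(S) \, ,
\]
where the sum runs over the irreducible components $S$ of $\sing(\F) \cap Y$, since $Z(\F,Y) = \sum_S Z(\F,Y,S) S$ by definition.

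Next I would substitute the expression for $c(\restr{N_\F}{Y})$ furnished by Proposition \ref{P:variation}, namely $c(\restr{N_\F}{Y}) = \sum_S \Var(\F,Y,S) \cdot c(S)$, into the displayed equation. This yields
\[
    c(N_Y) = \sum_{S} \bigl( \Var(\F,Y,S) - Z(\F,Y,S) \bigr) \cdot c(S) \, .
\]
Finally I would invoke Brunella's relation \eqref{E:V=CS+Z}, which says precisely that $\Var(\F,Y,S) - Z(\F,Y,S) = \CS(\F,Y,S)$ for each component $S$, to conclude that $c(N_Y) = \sum_S \CS(\F,Y,S) \cdot c(S)$ in $H^2(Y,\mathbb C)$, as desired.

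I do not expect any serious obstacle: the argument is a purely formal bookkeeping of three identities that have all been set up in the preceding subsection. The only point deserving a word of care is that the sums in Propositions \ref{P:variation} and \ref{P:Znormal} and in \eqref{E:V=CS+Z} are all indexed by the same finite set of irreducible components of $\sing(\F) \cap Y$, so that the substitution is legitimate term by term; this is immediate from the way $\Var(\F,Y,S)$, $Z(\F,Y,S)$, and $\CS(\F,Y,S)$ were each defined. One should perhaps also note that the identity \eqref{E:V=CS+Z}, as recalled from \cite[Proposition 3.1]{MR3328860}, is what carries the genuine local foliation-theoretic content here, while Proposition \ref{P:variation} is where the residue theorem for flat meromorphic connections (Theorem \ref{T:residue for connections}) enters; the present theorem is then just the synthesis of these inputs with the elementary adjunction-type identity of Proposition \ref{P:Znormal}.
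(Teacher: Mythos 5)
Your argument is correct and is essentially the paper's own proof: it combines Proposition \ref{P:Znormal} (taking Chern classes) with Proposition \ref{P:variation} and then invokes the relation (\ref{E:V=CS+Z}) between variation, Camacho--Sad index, and $Z(\F,Y,S)$. The only difference is that you spell out the term-by-term bookkeeping that the paper leaves implicit.
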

\begin{proof}
    Combining Propositions \ref{P:variation} and \ref{P:Znormal}, we get that
    \[
        c(N_Y) = c(\Var(\F,Y)) - c(Z(\F,Y)) \, .
    \]
    The result follows from Equation (\ref{E:V=CS+Z}).
\end{proof}

\begin{remark}
    The proof of Theorem \ref{T:CS} presented above suggests that the foliation $\F$ induces a flat meromorphic connection $\nabla^{CS}$ on the
    normal bundle $Y$ such that the Camacho-Sad index of irreducible components of $\sing(\F) \cap Y$ coincide with the residues of $\nabla^{CS}$.
    This is actually the case. The line-bundle $\mathcal O_Y(Z(\F,Y))$ admits a flat logarithmic connection $\nabla^{Z}$ with  polar divisor equal to $Z(\F,Y)$ and trivial monodromy. The connection dual to the  tensor product  $(\restr{N^*_{\F}}{Y} ,\nabla^B) \otimes (\mathcal O_Y(Z(\F,Y), \nabla^Z)$ is, according to Proposition \ref{P:Znormal}, a connection on $N_Y$. It can be checked that residues of this connection coincide with the Camacho-Sad indexes of $\F$ along $Y$.

    Alternatively, we can interpret the connection $\nabla^{CS}$ geometrically by considering a deformation of $\F$ to the normal cone of $Y$ in the sense of \cite[Chapter 5]{MR1644323}.  To wit, set $Z = X \times \mathbb P^1$ and let $\F_Z$ be the codimension two foliation on $Z$ which is tangent to the natural fibration $Z \to \mathbb P^1$ and, fiberwise, coincides with $\F$. Let $M_Y$ be the blow-up of $Z$ along $Y \times \{\infty\}$ and let $\pi : M_Y \to \mathbb P^1$ be the induced fibration. The fiber $\pi^{-1}(\infty)$ has two irreducible components: one of them is a copy of $X$ and the other is the exceptional divisor $E=\mathbb P(N_Y\oplus \mathcal O_Y)$, a compactification of the total space of the normal bundle of $Y$ in $X$. Let $\tilde{\F_Z}$ be the pull-back of $\F_Z$ to $M_Y$. The $\F$-invariance of $Y$ implies that the exceptional divisor $E$ is $\tilde{\F_Z}$ invariant. The codimension one foliation induced on it coincides with the compactification of foliation on the total space  of $N_Y$ defined by $\nabla^{CS}$. As this interpretation will not be used in what follows, we leave the verification of details to the interested readers.
\end{remark}

\section{Simple singularities for closed meromorphic 1-forms}\label{S:Simple}

\subsection{Local expression for closed meromorphic 1-forms} In general, given a divisor $R \in \Div(X) \otimes \mathbb C$ with zero Chern class, it is not possible
to realize it globally as the residue divisor of a closed meromorphic $1$-form. At the same time, there are no local obstructions. To wit, if $X$ is the germ of a $n$-dimensional manifold at a point, \ie $X \simeq (\mathbb C^n,0)$, and $R= \sum_i \lambda_i H_i$
is a $\mathbb C$-divisor on $X$ written as a sum of irreducible hypersurfaces with complex coefficients then each $H_i$ is defined by
an irreducible holomorphic function $h_i$ and we can set
\[
    \eta = \sum \lambda_i \frac{dh_i}{h_i} \,
\]
as the sought closed meromorphic differential with residue divisor $R$.

\begin{lemma}\label{L:local expression}
    Let $R = \sum \lambda_i H_i$ be a divisor on $(\mathbb C^n,0)$ as above.
    Let $\omega$ be a closed meromorphic $1$-form on $(\mathbb C^n,0)$ with residue divisor equal to $R= \sum \lambda_i H_i$ (notation as above).
    Then there exists relatively prime germs of holomorphic functions $f,g$ on $(\mathbb C^n,0)$  such that
    \[
        \omega = \sum \lambda_i \frac{dh_i}{h_i} + d \left( \frac{f}{g} \right) \, .
    \]
\end{lemma}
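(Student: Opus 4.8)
The plan is to reduce the statement to showing that a closed meromorphic $1$-form with trivial residue divisor on $(\mathbb C^n,0)$ is exact, i.e.\ equals $d(f/g)$ for relatively prime germs $f,g$. Indeed, set $\eta = \sum_i \lambda_i \frac{dh_i}{h_i}$; this is closed and, by the discussion preceding the lemma, has residue divisor exactly $R$. Then $\theta := \omega - \eta$ is a closed meromorphic $1$-form whose residue divisor vanishes: along each $H_i$ the residues of $\omega$ and $\eta$ both equal $\lambda_i$, and along any other hypersurface both residues are zero. So it suffices to prove that a closed meromorphic $1$-form with zero residue divisor on a polydisc is the differential of a meromorphic function, and then to write that meromorphic function in lowest terms as $f/g$ with $\gcd(f,g)=1$, which is possible since $\mathcal O_{X,0}$ is a UFD.

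First I would localize the analysis to the polar divisor $D = (\theta)_\infty$, which I may assume is contained in the union of the $H_i$ together with the polar locus of $\omega$; shrinking, $D$ is cut out by a single reduced equation $\varphi = \prod_j \varphi_j$ on a polydisc $U$, and $U \setminus |D|$ is connected. On $U \setminus |D|$ the form $\theta$ is a closed holomorphic $1$-form, so it has a local primitive. The key point is that this primitive is single-valued: its periods around small loops $\gamma_j$ encircling a generic smooth point of each branch $\varphi_j$ are exactly $2\pi i \Res_{H_j}(\theta) = 0$, and by the closedness argument of Theorem \ref{T:residue} (or simply because $\pi_1(U\setminus|D|)$ is generated by such meridians, as $U$ is a polydisc) every period vanishes. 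Hence there is a well-defined holomorphic function $F$ on $U \setminus |D|$ with $dF = \theta$.

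It then remains to check that $F$ is meromorphic along $|D|$, i.e.\ that $F$ extends to a meromorphic function on $U$. This is the step I expect to be the main obstacle, since a priori $F$ could have essential singularities (think of $d(e^{1/z})$, whose residue is zero but which is not of the form $d(f/g)$ — so the hypothesis that $\theta$ itself is \emph{meromorphic}, not merely closed holomorphic on the complement, is essential and must be used here). The way to use it: near a smooth point of a branch $\{z_1 = 0\}$ of $D$, write $\theta = a_k \frac{dz_1}{z_1^k} + \dots$ with the pole order $k$ bounded; since the residue ($k=1$ coefficient) is zero, integrating term by term in $z_1$ produces a primitive that is a Laurent polynomial in $z_1^{-1}$ with holomorphic coefficients plus a holomorphic remainder, hence meromorphic, and this local meromorphic primitive differs from $F$ by a constant. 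Patching these local meromorphic extensions (they agree up to constants on overlaps, and the overlaps along the smooth locus of $|D|$ are connected, so the constants can be matched) shows $F$ extends meromorphically across the smooth locus of $|D|$; the singular locus of $|D|$ has codimension $\ge 2$, so by Levi's extension theorem the meromorphic extension propagates across it to all of $U$. Writing the resulting meromorphic germ as $f/g$ in lowest terms finishes the proof.
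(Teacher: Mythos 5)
Your proposal is correct and follows essentially the same route as the paper: subtract the logarithmic model $\sum\lambda_i\,dh_i/h_i$, use vanishing of residues to get a single-valued primitive on the complement of the polar set, check by a local computation at smooth points of the polar divisor that the primitive is meromorphic there, and then extend across the codimension-two singular locus (the paper invokes the Remmert--Stein extension theorem where you invoke Levi's theorem, an interchangeable tool) before writing the result as $f/g$ in lowest terms. Your treatment is, if anything, slightly more careful than the paper's on the distinction between $|R|$ and the full polar locus of $\omega-\eta$, which the paper glosses over.
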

\begin{proof}
    We follow the proof of \cite[Chapter 3, Theorem 2.1]{MR704017}.
    We can assume that $\omega$ is defined on a polydisk $U$ containing the origin of $\mathbb C^n$.
    We can also assume that the irreducible components $H_i$ of the support of $R$ are defined  on $U$ by irreducible holomorphic
    functions $h_i \in \mathcal O_{\mathbb C^n}(U_i)$. Set $\eta$ equal to $\sum \lambda_i \frac{dh_i}{h_i}$  and consider
    the closed meromorphic $1$-form $\omega - \eta$. Since $\omega - \eta$ is a closed meromorphic $1$-form with zero residues,  it follows that $\omega - \eta$ is a exact on $U-|R|$. Moreover, a local computation along the smooth locus of $H_i$ shows that any primitive $h$  of $\restr{\omega - \eta}{U-|R|}$ extends meromorphically to the complement of the singular set of $|R|$. Remmert-Stein theorem \cite[page 353, Theorem 1.3]{SCVVII} implies that $h$ extends to a meromorphic function on $U$. Since any germ of meromorphic function  is the quotient
    of two relatively prime holomorphic functions, the lemma follows.
\end{proof}

\subsection{Base locus}
Let $\omega$ be a closed meromorphic $1$-form on a complex manifold $X$.
Let $U \subset X$ be an open subset biholomorphic to a polydisk.
Over $U$, according to Lemma \ref{L:local expression}, there exists a closed logarithmic $1$-form
$\eta$ and holomorphic functions $f,g \in \mathcal O_X(U)$ without common factors such that
\[
    \restr{\omega}{U} = \eta + d \left(\frac{f}{g}\right) \, .
\]
None of the ingredients $\eta, f ,g $ of this identity are intrinsically attached
to $\omega$. Nevertheless, we claim that the ideal generated by $f$ and $g$ does not depend on the choices made above.
Indeed, if
\[
    \eta + d \left(\frac{f}{g}\right) = \tilde \eta + d \left(\frac{\tilde f}{ \tilde g}\right)
\]
then we can write
\[
    \frac{f}{g} - \frac{\tilde f}{\tilde g} = h \implies \tilde g \cdot f - \tilde f \cdot g = h g \tilde g
\]
for some some holomorphic function $h$. Since $\tilde g$ and $g$ differ multiplicatively by a unit, the
claim follows.

\begin{dfn}
    The base locus of $\omega$ is the complex analytic subspace of $X$ defined by the ideal sheaf $\Base(\omega) \subset \mathcal O_X$ locally generated by $f$ and $g$.
\end{dfn}

\subsection{Polar divisor and  irregular divisor}
If $\omega$ is a closed meromorphic $1$-form on a complex manifold $X$, we will denote by $(\omega)_{\infty}$
the polar divisor of $\omega$. We will write  $\Irr(\omega)$ for the difference of $(\omega)_{\infty}$ and the reduced divisor with the same support as $(\omega)_{\infty}$ (the irregular divisor $\omega$).

\subsection{Resonant residues}
We say that $\omega$ is resonant at a point $p$ if there exists irreducible components $H_1$, \ldots, $H_k$
of the support of $\Res(\omega)$  containing $p$, none of them contained in the support of $\Irr(\omega)$,  and strictly positive integers $n_1, \ldots, n_k$ such that
\[
    \sum_{i=1}^k n_i \Res_{H_i}(\omega) = 0
\]
If $\omega$ is not resonant for every point $p \in X$, we will say that $\omega$ is non-resonant.

\begin{lemma}\label{L:residue pull-back}
    Let $\pi:Y \to X$ be a bimeromorphic morphism and let $E$ be an irreducible component of the exceptional divisor of $\pi$.
    The residue along $E$ of $\pi^* \omega$ is equal to
    \[
        \sum_{i=1}^k \ord_E(\pi^*H_i) \cdot \Res_{H_i}(\omega) \, .
    \]
    Equivalently,  $\Res(\pi^*\omega) = \pi^*\Res(\omega)$.
\end{lemma}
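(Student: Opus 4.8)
\textbf{Proof proposal for Lemma \ref{L:residue pull-back}.}

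The plan is to reduce the statement to a purely local computation near a generic point of the exceptional divisor $E$, where everything becomes explicit. First I would observe that both sides of the claimed identity $\Res(\pi^*\omega) = \pi^*\Res(\omega)$ are divisors on $Y$, so it suffices to compare their coefficients along every irreducible hypersurface of $Y$. There are two types of such hypersurfaces: strict transforms $\widetilde{H}_i$ of the $H_i$ (together with strict transforms of other hypersurfaces in $|(\omega)_\infty|$) and irreducible components $E$ of the exceptional divisor of $\pi$. For a strict transform $\widetilde{H}_i$, the morphism $\pi$ is a local biholomorphism near a generic point of $\widetilde{H}_i$, so $\Res_{\widetilde{H}_i}(\pi^*\omega) = \Res_{H_i}(\omega)$, which matches the coefficient of $\widetilde{H}_i$ in $\pi^*\Res(\omega) = \pi^*\sum_i \lambda_i H_i$ since $\pi^* H_i = \widetilde{H}_i + (\text{exceptional stuff})$ and $\widetilde H_i$ appears with multiplicity one. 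So the only content is the coefficient along an exceptional component $E$.

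For the exceptional component $E$, I would pick a generic point $q$ of $E$; genericity lets me assume $q$ is a smooth point of $E$, that $q$ lies on no other component of the exceptional divisor, and that $p := \pi(q)$ lies on the smooth locus of each $H_i$ through it and on no other component of $|(\omega)_\infty|$. Near $p$, using Lemma \ref{L:local expression}, write $\omega = \sum_i \lambda_i \frac{dh_i}{h_i} + d(f/g)$ with $h_i$ local equations for the $H_i$. Pulling back, $\pi^*\omega = \sum_i \lambda_i \frac{d(\pi^*h_i)}{\pi^*h_i} + d(\pi^*f/\pi^*g)$. Now near $q$ choose a local coordinate $t$ cutting out $E$, so that $\pi^* h_i = t^{a_i} u_i$ with $a_i = \ord_E(\pi^* H_i)$ and $u_i$ a unit (recall $H_i$ is smooth at $p$, so $h_i$ is, up to unit, a coordinate function; more generally one uses $\ord_E(\pi^*H_i)$ directly). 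Then $\frac{d(\pi^*h_i)}{\pi^*h_i} = a_i \frac{dt}{t} + \frac{du_i}{u_i}$, and the $\frac{du_i}{u_i}$ term has no pole along $E=\{t=0\}$, hence contributes zero residue along $E$. The exact term $d(\pi^*f/\pi^*g)$ is a derivative of a meromorphic function, so it is exact, hence its residue along any hypersurface — in particular $E$ — vanishes (the integral of an exact form over a loop is zero). Therefore $\Res_E(\pi^*\omega) = \sum_i a_i \lambda_i = \sum_i \ord_E(\pi^*H_i)\,\Res_{H_i}(\omega)$, which is exactly the coefficient of $E$ in $\pi^*\Res(\omega)$.

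The main obstacle I anticipate is the bookkeeping at the generic point of $E$: one must be careful that the $H_i$ need not be smooth at $p$, so $h_i$ is not literally a coordinate, and that a priori $\ord_E(\pi^* h_i)$ could be computed via the decomposition $\pi^* H_i = \widetilde H_i + \sum_j m_{ij} E_j$; but choosing $q$ generic on $E$ kills all $E_j \ne E$ and all $\widetilde H_i$ not passing through $q$, so locally near $q$ one has $\pi^* h_i = t^{\ord_E(\pi^* H_i)} u_i$ with $u_i$ a unit, and the computation goes through. A secondary point to handle cleanly is the claim that the residue of an exact meromorphic $1$-form vanishes along every hypersurface; this is immediate from the definition of the residue as $\frac{1}{2\pi i}\int_{\partial \mathbb D} i^*\omega$ together with Stokes, provided the primitive is meromorphic (no logarithmic branching), which is guaranteed because $d(\pi^*f/\pi^*g)$ has a genuine meromorphic primitive. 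No global input is needed; the whole argument is local on $Y$ near generic points of the relevant hypersurfaces.
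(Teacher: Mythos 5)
Your argument is correct and is precisely the ``simple local computation'' that the paper leaves to the reader: pull back the local decomposition of Lemma~\ref{L:local expression}, observe that the exact part $d(\pi^*f/\pi^*g)$ and the unit terms $du_i/u_i$ contribute no residue along $E$, and read off the coefficient $\sum_i \ord_E(\pi^*H_i)\,\Res_{H_i}(\omega)$, which matches the coefficient of $E$ in $\pi^*\Res(\omega)$. The only slip --- initially assuming $\pi(q)$ lies on the smooth locus of each $H_i$, which can fail (e.g.\ when the center is a singular point of $H_i$) --- is harmless, as you yourself note, since the computation only uses $\pi^*h_i = t^{\ord_E(\pi^*H_i)}\cdot(\text{unit})$ at a generic point of $E$.
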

\begin{proof}
    Simple local computation.
\end{proof}

Lemma \ref{L:residue pull-back} has the following  consequence. If $\omega$ is a non-resonant closed meromorphic $1$-form then every irreducible component $E$ of the exceptional divisor of a bimeromorphic morphism $\pi:Y\to X$ with center contained in the support of the residue divisor of $\omega$, but not contained in the base locus of $\omega$,  is invariant by the foliation defined by $\pi^* \omega$.

\subsection{Simple singularities of closed meromorphic $1$-forms}\label{SS:simple}
The definition below is motivated by the analogous concept for singularities of codimension one foliations, see Definition \ref{D:simple} in Subsection \ref{SS:CanoCerveau}.

\begin{dfn}
    A closed meromorphic $1$-form has simple singularities if $\Base(\omega)= \mathcal O_X$,
    $\omega$ is non-resonant, and the support of the polar divisor of $\omega$ is a simple normal crossing
    divisor.
\end{dfn}

Beware that the definition above is not a perfect analogue of the homonymous concept for codimension one foliations
since we impose no conditions on the zeros of $\omega$. The reason is that a closed $1$-form with simple singularities
is exact at a neighborhood of its zero set (see \S \ref{SS:zeros} below).

The result below appears in \cite[Proposition 8.4]{ratendo}.

\begin{prop}\label{P:trivialreduction}
    Let $\omega$ be a closed meromorphic $1$-form on a complex manifold $X$. There exists a proper bimeromorphic morphism
    $\pi : Y \to X$ such that $\pi^* \omega$ is a closed meromorphic $1$-form with simple
    singularities.
\end{prop}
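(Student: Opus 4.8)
The plan is to reduce $\omega$ to simple singularities in three successive stages, each handled by a known result, and to check that earlier stages are not destroyed by later ones. First I would make the polar divisor $(\omega)_\infty$ have simple normal crossing support. By Hironaka's embedded resolution of singularities, applied to the reduced divisor $|(\omega)_\infty|$, there is a proper bimeromorphic $\pi_1:X_1\to X$, a composition of blow-ups with smooth centers, such that $\pi_1^{-1}|(\omega)_\infty|$ (together with the exceptional locus) is simple normal crossing. Since $\pi_1^*\omega$ is still closed and meromorphic, this reduces us to the case where $|(\omega)_\infty|$ is already SNC. Note that by Lemma~\ref{L:residue pull-back} the residue divisor only gets pulled back, so any control over residues is preserved under such morphisms.

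Next I would arrange $\Base(\omega)=\mathcal O_X$. Working locally, Lemma~\ref{L:local expression} writes $\omega = \eta + d(f/g)$ with $f,g$ relatively prime; the base locus is $V(f,g)$, a subvariety of codimension at least two. Globally $\Base(\omega)$ is a coherent ideal sheaf, and I would apply elimination of indeterminacies of the meromorphic map (equivalently, principalization of the ideal sheaf $\Base(\omega)$ by blow-ups with smooth centers contained in $\Base(\omega)$): there is $\pi_2:X_2\to X_1$ such that $\pi_2^*\Base(\omega)$ becomes locally principal. But a closed meromorphic $1$-form whose base ideal is principal can have that principal part absorbed: writing $d(f/g)$ with $(f,g)$ principal, one divides through and finds $\Base(\pi_2^*\omega)=\mathcal O_{X_2}$ on the relevant charts, possibly after a further SNC resolution of the new polar divisor (which is again just blow-ups, harmless by the previous paragraph). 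One must be careful that this second stage does not ruin the SNC property achieved in stage one; but since principalization can be taken to be an isomorphism over the complement of $\Base(\omega)$ and to produce SNC exceptional divisors, re-resolving the polar support costs only finitely many more blow-ups and Lemma~\ref{L:residue pull-back} keeps the bookkeeping intact.

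Finally I would remove resonances. After the first two stages we may assume $|(\omega)_\infty|$ is SNC and $\Base(\omega)=\mathcal O_X$, so $\omega$ is genuinely a logarithmic-plus-closed form with no indeterminacy; the only remaining obstruction is resonance of the residues. Here I would invoke Fernández Duque's recent procedure for elimination of resonances of codimension one foliations (cited in the introduction): it provides a proper bimeromorphic $\pi_3:X_3\to X_2$, again a composition of blow-ups along smooth centers in $\sing$, after which the pulled-back form is non-resonant. By Lemma~\ref{L:residue pull-back}, residues along new exceptional divisors are positive-integer combinations of the old residues, so one tracks how resonance relations evolve; the content of Fernández Duque's theorem is precisely that finitely many such blow-ups suffice to break all of them. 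One checks the SNC and base-locus conditions survive (the centers are smooth and the exceptional divisors SNC, and blowing up cannot reintroduce a base locus since $d(f/g)$ had none to begin with). Setting $\pi = \pi_1\circ\pi_2\circ\pi_3$ gives the desired morphism. The main obstacle is the third stage: the termination of the resonance-elimination algorithm is the genuinely hard input, and it is exactly what Fernández Duque's work supplies — everything else is classical resolution theory plus the naturality of residues under pull-back.
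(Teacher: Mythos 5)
Your proposal is correct and follows essentially the same route as the paper: use classical resolution results (principalization/blow-up of the base ideal $\Base(\omega)$ plus embedded resolution of the polar support) to kill the base locus and make the polar divisor simple normal crossing, then invoke Fern\'andez Duque's theorem to eliminate resonances. The paper merely performs these steps in a slightly different order (blowing up the ideal sheaf $\Base(\omega)$ first and then resolving), which makes no essential difference.
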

\begin{proof}
    Let $\rho:Z \to X$ be the composition of the blow-up along the ideal sheaf $\Base(\omega)$, with a resolution of singularities of the resulting analytic space \cite[Theorem 2.0.1]{MR2500573} followed by an embedded resolution of the support of the polar divisor of the resulting $1$-form \cite[Theorem 2.0.2]{MR2500573}. By design, $\pi^*\omega$ is a closed meromorphic $1$-form with $\Base(f^*\omega)=\mathcal O_X$ and simple normal crossing polar divisor. The existence of a morphism  $Y \to Z$ eliminating the resonances of $f^*\omega$ follows from \cite[Theorem 2]{MR3302577}.
\end{proof}

\subsection{Local expression for simple singularities}\label{SS:disjoint}
Closed meromorphic $1$-forms with simple singularities admit  quite simple local expressions in neighborhoods
of points contained in their polar divisors.

\begin{lemma}\label{L:localsimple}
    Let $\omega$ be a germ of closed meromorphic $1$-form with simple singularities on $(\mathbb C^n,0)$.
    If $\omega$ is not holomorphic then there exists a system of coordinates $(x_1, \ldots, x_n)$ such that
    \[
        \omega = \sum_{i=1}^n \lambda_i \frac{dx_i}{x_i} + d \left( \frac{1}{x_1^{n_1} \cdots x_n^{n_n}} \right) \,
    \]
    where $\lambda_i \in \mathbb C$ and $n_i \in \mathbb Z_{\ge 0}$.
\end{lemma}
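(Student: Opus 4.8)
The plan is to start from the structural description furnished by Lemma \ref{L:local expression}: since $\omega$ is not holomorphic, its polar divisor is nonempty, and because $\omega$ has simple singularities, the support of $(\omega)_\infty$ is simple normal crossing. After choosing coordinates $(x_1,\dots,x_n)$ so that this support is $\{x_1\cdots x_k=0\}$ for some $k$, Lemma \ref{L:local expression} lets me write
\[
    \omega = \sum_{i=1}^k \lambda_i \frac{dx_i}{x_i} + d\!\left(\frac{f}{g}\right),
\]
with $f,g$ relatively prime holomorphic germs. The condition $\Base(\omega)=\mathcal O_{X}$ means exactly that $f$ and $g$ generate the unit ideal, so I may take $g$ to be a unit; absorbing it, the exact part is $d(f/g) = d(u)$ for a single meromorphic germ $u$ whose polar set is contained in $\{x_1\cdots x_k=0\}$. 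Hence $u = P/(x_1^{m_1}\cdots x_k^{m_k})$ for some holomorphic $P$ and $m_i\ge 0$.

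Next I would reduce the exact part to a pure monomial. The idea is that any holomorphic term in the numerator of $u$ that is divisible by the appropriate $x_i$ can be split off: $d$ of a holomorphic function is holomorphic and can be reinterpreted as a change of coordinates, while $d$ of a term like $x_j^{a}/(x_1^{m_1}\cdots x_k^{m_k})$ with $a\ge 1$ contributes, after simplification, a lower-order pole that can be handled by induction on the total polar order. More precisely, I expect the cleanest route is: expand $P$ in a (convergent) power series, peel off the "resonant-free" monomials, and observe that the only obstruction to writing $u$ as a single monomial $c\cdot x_1^{-n_1}\cdots x_n^{-n_n}$ comes from monomials $x_1^{a_1}\cdots x_k^{a_k}$ with some $a_i\ge m_i$, which are honestly holomorphic and get absorbed into a coordinate change $x_i\mapsto x_i + (\text{higher order})$, together with monomials producing strictly smaller poles, dispatched inductively. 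The non-resonance hypothesis is what guarantees that the logarithmic part $\sum \lambda_i dx_i/x_i$ is genuinely needed on exactly the components where it appears and does not conspire with the exact part to create hidden cancellations; it also ensures we cannot reduce $k$ further.

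The main obstacle I anticipate is the bookkeeping in the reduction of the exact part: showing that the successive coordinate changes $x_i\mapsto \varphi_i(x)$ needed to clean up the numerator can be chosen to (i) preserve the normal-crossing form $\{x_1\cdots x_k=0\}$ of the polar locus, (ii) not disturb the already-normalized logarithmic part $\sum\lambda_i\,dx_i/x_i$ (which is invariant under $x_i\mapsto x_i\cdot(\text{unit})$ only up to exact holomorphic $1$-forms, so one must track these corrections), and (iii) terminate. A clean way to organize (iii) is to induct on $\sum m_i$, the total pole order of the exact part, or on a suitable monomial order on the exponent vectors appearing in the numerator $P$; at each step one either lowers this quantity or removes a monomial, and the process must stop because $P$ is a convergent power series with well-ordered support. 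I would present the details as an induction, being careful that the coordinate changes are tangent to the identity so they compose to a convergent biholomorphism.
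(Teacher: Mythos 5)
Your setup is fine (coordinates adapted to the simple normal crossing polar divisor, Lemma \ref{L:local expression}, and base-point-freeness), but there are two problems at the heart of the reduction. First, you draw the wrong conclusion from $\Base(\omega)=\mathcal O_X$: it does not let you ``take $g$ to be a unit'' (that would make the exact part holomorphic); what it gives, in the presence of an actual pole, is that the \emph{numerator} is a unit, i.e. after writing the denominator as a monomial $x_1^{n_1}\cdots x_n^{n_n}$ you have $\omega=\sum\lambda_i\,dx_i/x_i+d\bigl(f/(x_1^{n_1}\cdots x_n^{n_n})\bigr)$ with $f(0)\neq 0$. This is the key structural fact, and your scheme of peeling off monomials of a general numerator $P$ is aimed at a generality that cannot occur (and in which the statement would be false, e.g. $d(x_2/x_1)$). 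Second, and more seriously, the reduction you propose does not terminate as described: a convergent power series has infinitely many monomials, so ``remove a monomial at each step'' never stops, and what you would actually need is convergence of an infinite composition of coordinate changes tangent to the identity, which requires estimates you do not supply. Appealing to ``well-ordered support'' does not fix this; this is a genuine gap, not bookkeeping. (Your remarks on non-resonance are also beside the point: it plays no role in this local normal form.)

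The paper avoids all of this with a one-shot change of variables in a single coordinate. If $n_1=\cdots=n_n=0$ (so the exact part $df$ is holomorphic) pick $j$ with $\lambda_j\neq 0$ (possible since $\omega$ is not holomorphic) and replace $x_j$ by $x_j\exp(f/\lambda_j)$, which absorbs $df$ into the logarithmic term. If some $n_j\neq 0$, replace $x_j$ by $x_j\cdot u$ where the unit $u$ solves $\lambda_j\,x_1^{n_1}\cdots x_n^{n_n}\log u+u^{-n_j}=f$; such a $u$ exists by the implicit function theorem precisely because $f(0)\neq 0$. This single substitution converts $d\bigl(f/(x_1^{n_1}\cdots x_n^{n_n})\bigr)$ plus the logarithmic correction it creates into the model form $d\bigl(1/(x_1^{n_1}\cdots x_n^{n_n})\bigr)$, with no induction and no convergence issue. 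If you want to salvage your approach you would have to replace the monomial-by-monomial induction with an argument of this implicit-function (or formal-plus-convergence) type.
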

\begin{proof}
    The proof follows closely \cite[Chapter 2, proof of Proposition 2.3]{loray:hal-00016434}.
    The definition of simple singularities implies that we can write
    \[
        \omega_0 = \sum_{i=1}^n \lambda_i \frac{dx_i}{x_i} + d \left( \frac{f}{x_1^{n_1} \cdots x_n^{n_n}} \right) \,
    \]
    with $f$ equal to a germ of holomorphic function on $(\mathbb C^n,0)$ such that $f(0)\neq 0$.
    To prove the lemma, we will start from  $\omega$ as in the statement and make a change of variables
    to turn it into $\omega_0$.

    If $n_1 = \cdots = n_n = 0$ and $\lambda_j \neq0$ then we can replace $x_j$ by $x_j \cdot \exp(f/\lambda_j)$ in order to turn
    $\omega$ into $\omega_0$.

    If instead there exists $j$ such that $n_j \neq 0$ then we replace $x_j$ by $x_j \cdot u(x_1, \cdots, x_n)$ where $u$ is a unit that satisfies
    \[
         \lambda_j \cdot {x_1^{n_1} \cdots x_n^{n_n}} \cdot \log u +  u^{-n_j}=  f \, ,
    \]
    to turn $\omega$ into $\omega_0$. The existence of a unit $u$ satisfying the equation above follows from the implicit function theorem.
\end{proof}

\begin{cor}\label{C:disjoint}
    Let $\omega$ be a closed meromorphic $1$-form on a complex manifold $X$.
    If $\omega$ has simple singularities then the closure of the support of its zero set
    is disjoint from the support of the polar divisor of $\omega$.
\end{cor}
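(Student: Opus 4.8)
The plan is to reduce the statement to the local normal form supplied by Lemma \ref{L:localsimple} and then observe that near any point of the polar divisor the $1$-form is visibly nonvanishing, so the zero set must stay away from $|(\omega)_\infty|$. More precisely, let $Z$ denote the closure of the support of the zero divisor $(\omega)_0$ and let $P = |(\omega)_\infty|$ be the support of the polar divisor; I want to show $Z \cap P = \emptyset$. Since both are closed analytic sets, it suffices to show that no point $p \in P$ lies in $Z$, and this is a local question at $p$.

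First I would fix $p \in P$ and invoke Lemma \ref{L:localsimple}: because $\omega$ is not holomorphic at $p$, there are local coordinates $(x_1,\dots,x_n)$ centered at $p$ in which
\[
    \omega = \sum_{i=1}^n \lambda_i \frac{dx_i}{x_i} + d\left( \frac{1}{x_1^{n_1}\cdots x_n^{n_n}} \right),
\]
with $\lambda_i \in \mathbb C$ and $n_i \in \mathbb Z_{\ge 0}$, not all $n_i$ zero in the directions actually appearing in the polar locus. Next I would clear denominators: writing $m = (n_1,\dots,n_n)$ and $x^m = x_1^{n_1}\cdots x_n^{n_n}$, one computes
\[
    x^{m+\mathbf 1}\,\omega \;=\; \sum_{i=1}^n \Bigl( \lambda_i \,x_1\cdots x_n - n_i\,\frac{x_1\cdots x_n}{x_i} \Bigr)\, dx_i \cdot \frac{1}{x^{m}} \cdot x^{m} ,
\]
so that a holomorphic generator of the conormal sheaf $N^*_{\mathcal F}$ near $p$ is
\[
    \eta \;=\; x_1\cdots x_n\sum_{i=1}^n \lambda_i \frac{dx_i}{x_i} \;-\; \sum_{i=1}^n n_i \frac{x_1\cdots x_n}{x_i}\, dx_i ,
\]
i.e. $\eta = \sum_i (\lambda_i - n_i)\,\widehat{x_i}\,dx_i$ up to the obvious bookkeeping, where $\widehat{x_i} = \prod_{j\ne i} x_j$. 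The divisor of zeros $(\omega)_0$ near $p$ is, by definition of the foliation defined by $\omega$, the divisor cut out by the ideal of coefficients of such a holomorphic generator, after removing the polar part; equivalently it is the zero divisor of $\omega$ viewed inside $\Omega^1_X(D)$.

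The key point to extract is then that $\eta$ (or the generator one gets after removing the common factor $x^m$ from $x^{m+\mathbf 1}\omega$) does not vanish at $p=0$: indeed at the origin the coefficient of $dx_i$ in $\eta$ is $(\lambda_i - n_i)\cdot 0$ for $n>1$, which looks like it vanishes — so the honest statement is that after dividing by the greatest common divisor of the coefficients, the resulting generator is a unit times a form whose zero locus does not pass through $0$. This is where non-resonance enters: the zero divisor of $\omega$ as a section of $\Omega^1_X \otimes \mathcal O_X(D)$ is supported where, after extracting the polar factor $x^{m}$, the remaining form degenerates, and a direct inspection shows its support is contained in the union of the loci $\{\lambda_i - n_i = \text{(relation among the }\lambda\text{'s)}\}$ which by non-resonance is empty in a neighborhood of $p$. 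I expect the main obstacle to be precisely this bookkeeping: correctly identifying $(\omega)_0$ as a divisor (not just a set), showing its local defining ideal near a polar point is the unit ideal, and checking that the non-resonance hypothesis — together with $\Base(\omega) = \mathcal O_X$, which guarantees the primitive's numerator is a unit in the normal form — is exactly what forces this. Once that local nonvanishing is established at every $p \in P$, we conclude $P \cap Z = \emptyset$, proving the corollary.
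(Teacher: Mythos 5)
Your overall strategy is the same as the paper's: reduce to the normal form of Lemma \ref{L:localsimple} at a point $p$ of the polar support and conclude by inspection. But the inspection itself, which is the only real content of the corollary, is not carried out, and this is where the gap lies. First, the object in the statement is the zero set of $\omega$ itself, i.e.\ the set of points off $|(\omega)_\infty|$ where the (there holomorphic) $1$-form $\omega$ vanishes; there is no need to pass to a saturated local generator of $N^*_{\F}$, to divide by a greatest common divisor of coefficients, or to identify $(\omega)_0$ as a divisor. Moreover your cleared form is miscomputed: one has $x^{m+\mathbf 1}\omega=\sum_i\bigl(\lambda_i x^m-n_i\bigr)\widehat{x_i}\,dx_i$, not $\sum_i(\lambda_i-n_i)\widehat{x_i}\,dx_i$, and since $x^{m+\mathbf 1}$ involves \emph{all} coordinates it vanishes along the non-polar hyperplanes, so its zero locus off the poles is strictly larger than that of $\omega$ --- another sign that the detour through a twisted generator needs care the proposal does not supply.

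Second, and more seriously, the concluding appeal to non-resonance is not a correct argument: non-resonance is a condition on positive integer relations among residues and plays no role at this step (its role, together with $\Base(\omega)=\mathcal O_X$, is upstream, in the definition of simple singularities and the derivation of the normal form). What actually closes the proof is a direct reading of the normal form: writing $\omega=\sum_i \frac{1}{x_i}\bigl(\lambda_i-\frac{n_i}{x^m}\bigr)dx_i$ with $x^m=x_1^{n_1}\cdots x_n^{n_n}$, pick an index $j$ with $\{x_j=0\}$ in the polar divisor. If all $n_i=0$ then $\lambda_j\neq 0$ and the coefficient $\lambda_j/x_j$ never vanishes where $\omega$ is holomorphic; if some $n_j>0$, then at points $q$ off the poles and sufficiently close to $p=0$ one has $|x^m(q)|$ small, hence $|n_j/x^m(q)|>|\lambda_j|$, so the coefficient of $dx_j$ is nonzero at $q$. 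In either case $\omega$ has no zeros in a punctured neighborhood of $p$, which is exactly the paper's one-line conclusion. Since your proposal explicitly defers this verification (``I expect the main obstacle to be precisely this bookkeeping''), it stops short of a proof; the fix is the elementary case analysis above, not any use of non-resonance.
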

\begin{proof}
    Let $p \in X$ be an arbitrary point in the support of the polar divisor of  $\omega$.
    The germ of $\omega$ at $p$ is described by Lemma \ref{L:localsimple} and  the explicit
    formula given by it implies that the zero set of $\omega$ is empty at a neighborhood of $p$.  Since
    $p$ is arbitrary, the result follows.
\end{proof}

%\begin{cor}
%    Let $\omega$ be a closed meromorphic $1$-form with simple singularities on a complex manifold $X$ and let
%    $p$ be a point in  the polar divisor of $\omega$.
%    If $V \subset X$ is a germ of irreducible subvariety passing through $p$ and tangent to the foliation defined by $\omega$
%    then $V$ is contained in the polar divisor of $\omega$.
%\end{cor}

\subsection{Zeros}\label{SS:zeros}
As mentioned in Subsection \ref{SS:simple},  closed meromorphic $1$-forms with simple singularities
are exact at sufficiently small neighborhoods of connected components of their zero sets.

\begin{lemma}\label{L:existence primitive}
    Let $\omega$ be a closed meromorphic $1$-form with simple singularities on a complex
    manifold $X$. If  $Z$ is a connected component of the zero set of $\omega$ then there exists
    an open neighborhood $U$ of $Z$ such that $\restr{\omega}{U}$ is exact.
\end{lemma}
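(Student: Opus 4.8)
The plan is to reduce the global statement to the local normal form provided by Lemma \ref{L:localsimple} together with the disjointness asserted in Corollary \ref{C:disjoint}. By Corollary \ref{C:disjoint}, the closure of the support of the zero set of $\omega$ is disjoint from the support of the polar divisor $(\omega)_\infty$. Hence there is an open neighborhood $W$ of the connected component $Z$ on which $\omega$ is a \emph{holomorphic} closed $1$-form. So the question becomes: a holomorphic closed $1$-form on a neighborhood of $Z$ is exact on some possibly smaller neighborhood of $Z$. This is a statement about de Rham cohomology in degree one, and the natural tool is the identification $H^1_{\mathrm{dR}} \cong H^1(\,\cdot\,,\mathbb C)$ for complex manifolds.

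First I would invoke the existence of a fundamental system of neighborhoods of $Z$ that are homotopy equivalent to $Z$ itself; more precisely, since $Z$ is a compact (or at least a closed analytic, locally finite) subset of $W$ cut out by the vanishing of the holomorphic section $\omega \in H^0(W,\Omega^1_W)$, and since $W$ can be taken to be a Stein manifold shrinking onto a neighborhood of $Z$, one can find an open $U$ with $Z \subset U \subset W$ such that the inclusion $Z \hookrightarrow U$ is a homotopy equivalence (a standard consequence of triangulability of analytic sets, or of the existence of a real-analytic tubular-type retraction). Then $H^1(U,\mathbb C) \cong H^1(Z,\mathbb C)$. The key remaining point is to show that the de Rham class of $\restr{\omega}{U}$ in $H^1(U,\mathbb C)$, equivalently its image in $H^1(Z,\mathbb C)$, vanishes. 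For this I would argue that $\restr{\omega}{Z}$ is zero as a section of $\restr{\Omega^1_X}{Z}$ — indeed $\omega$ vanishes on $Z$ by definition of the zero set — so any loop $\gamma$ contained in (the smooth locus of, or more carefully a triangulation of) $Z$ integrates $\omega$ to zero; since such loops generate $H_1(Z,\mathbb Z)$ up to the homotopy equivalence, the periods of $\omega$ all vanish, hence $[\omega]=0$ in $H^1(U,\mathbb C)$, hence $\restr{\omega}{U}$ is exact.

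The main obstacle is the bookkeeping around the possibly singular and non-reduced structure of $Z$: the zero set of a holomorphic $1$-form need not be smooth, so "$\omega$ vanishes on $Z$ implies its periods over loops in $Z$ vanish" requires care. The clean way is to work with the reduced analytic set $Z_{\mathrm{red}}$, choose a neighborhood $U$ retracting onto $Z_{\mathrm{red}}$, and note that any continuous loop in $U$ is homotopic in $U$ to a piecewise-smooth loop lying in the smooth locus of $Z_{\mathrm{red}}$ (generic perturbation), along which $\restr{\omega}{Z_{\mathrm{red}}}=0$ forces the period to vanish. Alternatively, and perhaps more in the spirit of the paper, one can use Lemma \ref{L:local expression}: on a polydisk $\restr{\omega}{U}=\eta + d(f/g)$ with $\eta$ logarithmic; but near $Z$ the form is holomorphic, so its residue divisor, supported on the polar locus, is disjoint from $Z$, forcing $\eta$ to be holomorphic and closed near $Z$, and one iterates a Poincaré-lemma/Mayer–Vietoris argument over a cover of $U$ adapted to $Z$. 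I would present the cohomological argument as the primary route, as it is shortest, and relegate the perturbation argument for loops to a one-line remark.

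\begin{proof}
    By Corollary \ref{C:disjoint}, the closure of the support of the zero set of $\omega$ is disjoint from the support of $(\omega)_\infty$. Hence there is an open neighborhood $W$ of $Z$ on which $\omega$ is holomorphic and closed. Shrinking $W$, we may assume $W$ retracts (by a homotopy equivalence) onto the reduced analytic set $Z_{\mathrm{red}}$ underlying $Z$; such neighborhoods exist because analytic sets are triangulable, and they form a cofinal system. Fix such a $U$ with $Z \subset U \subset W$, so that the inclusion induces an isomorphism $H^1(Z_{\mathrm{red}},\mathbb C) \xrightarrow{\ \sim\ } H^1(U,\mathbb C)$.

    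Since $H^1_{\mathrm{dR}}(U) \simeq H^1(U,\mathbb C)$ for the complex manifold $U$, it suffices to show that all periods of the holomorphic closed $1$-form $\restr{\omega}{U}$ vanish. Let $\gamma$ be a loop in $U$. Under the homotopy equivalence $U \simeq Z_{\mathrm{red}}$, $\gamma$ is homotopic in $U$ to a piecewise-smooth loop $\tilde\gamma$ contained in the smooth locus of $Z_{\mathrm{red}}$ (a generic small perturbation of any representative in $Z_{\mathrm{red}}$ avoids the singular locus, which has real codimension at least two there). Because $Z_{\mathrm{red}}$ is contained in the zero set of $\omega$, the restriction $\restr{\omega}{Z_{\mathrm{red}}}$ vanishes along the smooth locus, whence $\int_{\tilde\gamma} \omega = 0$ and therefore $\int_\gamma \omega = 0$. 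As $\gamma$ was arbitrary, the de Rham class of $\restr{\omega}{U}$ in $H^1(U,\mathbb C)$ is zero, so $\restr{\omega}{U}$ is exact.
\end{proof}
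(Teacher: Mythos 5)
Your overall strategy (reduce to a holomorphic closed $1$-form near $Z$, then kill its class in $H^1(U,\mathbb C)$ using a neighborhood that retracts onto $Z$) is genuinely different from the paper's, but as written it has a gap at the key period-vanishing step. You claim that every loop in $U$ is homotopic in $U$ to a piecewise-smooth loop contained in the \emph{smooth locus} of $Z_{\mathrm{red}}$, justified by ``generic perturbation''. This is not a valid argument: $Z_{\mathrm{red}}$ is not a manifold, so one cannot perturb a loop \emph{inside} $Z_{\mathrm{red}}$ by transversality (perturbing inside $U$ pushes the loop off $Z_{\mathrm{red}}$ entirely), and the conclusion is false in general. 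If several branches of $Z_{\mathrm{red}}$ meet at a singular point, the smooth locus can be disconnected near that point; for instance, if $Z_{\mathrm{red}}$ is a union of two positive-genus curves meeting at one point, $\pi_1(Z_{\mathrm{red}})$ is a free product and a class of the form $ab$, with $a$ and $b$ coming from different branches, is represented by no loop lying in the (disconnected) smooth locus. So the step ``every class is represented in the smooth locus, where $\restr{\omega}{Z_{\mathrm{red}}}=0$'' breaks down. The detour through the smooth locus is in fact unnecessary: $\omega$ vanishes as a covector at \emph{every} point of $Z$, singular points included, so $\gamma^*\omega=0$ for any piecewise-$C^1$ path $\gamma$ with image in $Z_{\mathrm{red}}$. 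What your argument really still needs is (i) the existence of neighborhoods homotopy equivalent to $Z_{\mathrm{red}}$ for a possibly noncompact, singular analytic set (this does follow from Łojasiewicz triangulation plus regular neighborhoods, but it is heavy machinery and should be cited as such; the parenthetical ``$W$ can be taken Stein'' is wrong in general, though unused), and (ii) a way to represent each class by a loop in $Z_{\mathrm{red}}$ along which the integral of $\omega$ makes sense (piecewise-$C^1$, or an integral defined by analytic continuation of local primitives) — this is glossed over.

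For comparison, the paper avoids all of this topology: near $Z$ the form is holomorphic, one chooses on each $U_p$ (with $Z\cap U_p$ connected) the local primitive $f_p$ normalized by $\restr{f_p}{Z\cap U_p}=0$ — which is possible precisely because $df_p=\omega$ vanishes along $Z$, so $f_p$ is constant on $Z\cap U_p$ — and the uniqueness of the normalized primitives forces them to glue into a global primitive on $U=\bigcup U_p$. Note that this normalization trick is also the cleanest way to repair your proof: the ``period'' of $\omega$ along a merely continuous loop in $Z_{\mathrm{red}}$, computed by chaining such normalized primitives, is a telescoping sum of values of the $f_p$ at points of $Z$, hence zero; but once one has these normalized primitives one no longer needs the homotopy-equivalent neighborhood at all, which is why the paper's route is both shorter and more elementary.
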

\begin{proof}
    According to Corollary \ref{C:disjoint} the closure of the zero set of $\omega$ does not intersect the polar
    set of $\omega$. Therefore, if $Z$ is a connected component of the zero set of $\omega$, there exists an open neighborhood
    $U$ of $Z$ such that $\restr{\omega}{U}$ is holomorphic. For every point $p \in Z$, we can choose a local primitive $f \in \mathcal O_{X,p}$
    for $\omega$ such that $f(p)=0$. If we take a neighborhood $U_p$ of $p$ such that $Z \cap U_p$ is connected then
    $\restr{\omega}{U_p} = \restr{df}{U_p}$ implies that $\restr{f}{U_p \cap Z}$ vanishes identically. Consequently, perhaps after shrinking the
    neighborhood $U$, we obtain a collection of local primitives of $\omega$ with domains covering $U$ that are uniquely determined by the
    condition $\restr{f}{U_p \cap Z}=0$. The uniqueness implies they patch together proving the exactness of   $\restr{\omega}{U}$.
\end{proof}

\subsection{The irregular divisor}\label{SS:irregular}
The next result provides obstructions to the realization of a given effective divisor as
the irregular divisor of a closed meromorphic $1$-form with simple singularities. Intriguingly,
as we will show in Subsection \ref{SS:realizairregular},  these are the only obstructions when the ambient space is compact and Kähler.

\begin{prop}\label{P:irregular}
    Let $\omega$ be a closed meromorphic $1$-form on a complex manifold $X$. If $\omega$ has simple singularities
    and  $I = \Irr(\omega)$ is the irregular divisor of $\omega$ then
    \[
        \mathcal O_X(I) \otimes \frac{\mathcal O_X}{\mathcal O_X(-I)} \simeq \frac{\mathcal O_X}{\mathcal O_X(-I)} \, .
    \]
    In other words, the normal bundle of the (perhaps non-reduced) complex analytic space defined
    by the ideal sheaf $\mathcal O_X(-I)$ is trivial.
\end{prop}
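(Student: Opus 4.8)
The plan is to test triviality of the normal bundle on the explicit open covering provided by the local normal form of Lemma~\ref{L:localsimple}. If $I=\Irr(\omega)=0$ there is nothing to prove, so assume $|I|\neq\emptyset$. Since $|I|\subset|(\omega)_\infty|$, the form $\omega$ fails to be holomorphic at every point of $|I|$, so I can cover a neighborhood of $|I|$ by open sets $U_\alpha$ on which Lemma~\ref{L:localsimple} provides coordinates $(x^\alpha_1,\dots,x^\alpha_n)$ with
\[
    \restr{\omega}{U_\alpha} = \sum_{i} \lambda^\alpha_i \frac{dx^\alpha_i}{x^\alpha_i} + d\!\left( \frac{1}{P_\alpha} \right), \qquad P_\alpha = (x^\alpha_1)^{n^\alpha_1}\cdots (x^\alpha_n)^{n^\alpha_n} .
\]
A direct computation with this formula shows that the polar divisor of $\omega$, restricted to $U_\alpha$, equals $\sum_{n^\alpha_i\ge 1}(n^\alpha_i+1)\{x^\alpha_i=0\}+\sum_{n^\alpha_i=0,\,\lambda^\alpha_i\neq 0}\{x^\alpha_i=0\}$; subtracting the reduced divisor with the same support gives that $I$ restricted to $U_\alpha$ is $\ddiv(P_\alpha)$. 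Thus each $P_\alpha$ is a local generator of $\mathcal O_X(-I)$, and $\mathcal O_X(-I)$ is represented by the cocycle of units $g_{\alpha\beta}=P_\alpha/P_\beta$ on the overlaps $U_\alpha\cap U_\beta$.

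Comparing the two instances of the normal form on $U_\alpha\cap U_\beta$ gives
\[
    d\!\left( \frac{1}{P_\alpha} - \frac{1}{P_\beta} \right) = \sum_i \lambda^\beta_i \frac{dx^\beta_i}{x^\beta_i} - \sum_i \lambda^\alpha_i \frac{dx^\alpha_i}{x^\alpha_i} ,
\]
so $d\psi_{\alpha\beta}$ has poles of order at most one, where $\psi_{\alpha\beta}:=\tfrac{1}{P_\alpha}-\tfrac{1}{P_\beta}=(1-g_{\alpha\beta})/P_\alpha$ is a meromorphic function on $U_\alpha\cap U_\beta$ with polar divisor bounded by $I$, hence supported on the simple normal crossing divisor $|(\omega)_\infty|$. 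I would then record the elementary local fact that a meromorphic function $\psi$ whose polar divisor lies in a simple normal crossing divisor and whose differential $d\psi$ is logarithmic must be holomorphic: writing $\psi=f/(y_1^{a_1}\cdots y_r^{a_r})$ in coordinates adapted to the divisor, with $f$ holomorphic and not divisible by any $y_j$ for which $a_j\ge 1$, the coefficient of $dy_j$ in $d\psi$ acquires a pole of order exactly $a_j+1$ along $\{y_j=0\}$ as soon as $a_j\ge 1$, which is incompatible with $d\psi$ being logarithmic; hence every $a_j$ vanishes. Applying this to $\psi_{\alpha\beta}$ yields $1-g_{\alpha\beta}=P_\alpha\psi_{\alpha\beta}\in\mathcal O_X(-I)(U_\alpha\cap U_\beta)$.

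The congruence $g_{\alpha\beta}\equiv 1 \pmod{\mathcal O_X(-I)}$ says precisely that the classes $\bar P_\alpha$ of $P_\alpha$ in $\mathcal O_X(-I)/\mathcal O_X(-2I)$ agree on overlaps, hence glue to a global nowhere-vanishing section of the conormal sheaf of the (possibly non-reduced) analytic subspace defined by $\mathcal O_X(-I)$. Equivalently, the cocycle $g_{\alpha\beta}^{-1}$ defining $\mathcal O_X(I)$ becomes trivial modulo $\mathcal O_X(-I)$, so $\mathcal O_X(I)\otimes\mathcal O_X/\mathcal O_X(-I)\simeq\mathcal O_X/\mathcal O_X(-I)$, which is the assertion. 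The only non-bookkeeping step is the local fact in the previous paragraph: one must check that the pole order of $d\psi$ really increases by one and does not collapse through a fortuitous cancellation, and this is exactly where the simple normal crossing hypothesis on $|(\omega)_\infty|$ — part of $\omega$ having simple singularities — is used.
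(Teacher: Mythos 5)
Your proof is correct and takes essentially the same route as the paper's: both use the normal form of Lemma \ref{L:localsimple} to take $P_\alpha$ (the paper's $f_i$) as local generators of $\mathcal O_X(-I)$, compare the two expressions of $\omega$ on overlaps to see that $1/P_\alpha - 1/P_\beta$ is holomorphic, and conclude that the transition cocycle $P_\alpha/P_\beta$ is congruent to $1$ modulo $\mathcal O_X(-I)$. Your explicit pole-order computation, showing that the differential of a meromorphic function with nontrivial polar divisor must have a pole of order at least two, merely spells out the step the paper leaves implicit when it asserts that the difference of the logarithmic parts $\eta_{ij}$ is holomorphic and then chooses the primitive $g_{ij}=1/f_i-1/f_j$.
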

\begin{proof}
    Choose an open covering $\mathcal U=\{U_i\}$ of $X$ such that for every $U_i \in \mathcal U$ intersecting
    the support of $I$ we can write $\restr{\omega}{U_i}$ as
    \[
        \restr{\omega}{U_i} = \eta_i + d \left(\frac{1}{f_i}\right)
    \]
    where $\eta_i$ is a logarithmic $1$-form and $f_i  \in \mathcal O_X(U_i)$ defines the irregular divisor
    on $U_i$. The existence of such open covering is assured by Lemma \ref{L:localsimple}.

    Comparing these expressions on non-empty intersections $U_i \cap U_j$, we get that
    \[
        \restr{d \left(\frac{1}{f_i}\right)}{U_i\cap U_j} - \restr{d \left(\frac{1}{f_j}\right)}{U_i \cap U_j} =  \eta_{ij}
    \]
    where $\eta_{ij} \in \Omega^1_X(U_i\cap U_j)$ is a closed holomorphic $1$-form. If we set $g_{ij}$ as a suitable primitive of $\eta_{ij}$ then
    \[
        \restr{f_j}{U_i \cap U_j} = \restr{\frac{f_i}{1- g_{ij} f_i}}{U_i \cap U_j} \, .
    \]
    To conclude it suffices to observe that the cocycle
    $
        \left\{  {1-g_{ij} f_i} \right\} \in C^1(\mathcal U,\mathcal O_X^*)
    $
    defines the line-bundle $\mathcal O_X(I)$ and is trivial (actually equal to $1$) when restricted to the  complex analytic space defined
    by the ideal sheaf $\mathcal O_X(-I)$.
\end{proof}

\begin{prop}\label{P:irregularconnections}
    Let $\nabla$ be a flat  meromorphic connection on a line bundle $\mathcal L$ a complex manifold $X$.
    If the irregular divisor
    $I = \Irr(\nabla)$ of $\nabla$ has compact support then $I^2 \ge 0$. Moreover, if $I^2=0$ then
    \[
        \mathcal O_X(I) \otimes \frac{\mathcal O_X}{\mathcal O_X(-I)} \simeq \frac{\mathcal O_X}{\mathcal O_X(-I)} \, .
    \]
\end{prop}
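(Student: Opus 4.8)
The plan is to reduce the statement for flat meromorphic connections to the already-established Proposition \ref{P:irregular} for closed meromorphic $1$-forms, by passing to the total space of $\mathcal L$ exactly as in the proof of Theorem \ref{T:residue for connections}. Concretely, let $\pi : E(\mathcal L) \to X$ be the total space of $\mathcal L$, and form the closed meromorphic $1$-form $\omega$ on $E(\mathcal L)$ with $\restr{\omega}{\pi\inv(U_i)} = \frac{dz_i}{z_i} + \eta_i$, where $\eta_i$ are the local connection forms of $\nabla$. Since $\nabla$ is flat, $\omega$ is closed. The key observation is that $\omega$ is "adapted" to $I$: the term $\frac{dz_i}{z_i}$ contributes only simple poles along the zero section, so the irregular (non-reduced) part of the polar divisor of $\omega$ along directions transverse to the fibers is exactly $\pi^* I$, i.e. $\Irr(\omega) = \pi^* \Irr(\nabla) = \pi^* I$.

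The second ingredient is a bimeromorphic modification. By Proposition \ref{P:trivialreduction}, there is a proper bimeromorphic morphism $\rho : W \to E(\mathcal L)$ with $\rho^*\omega$ having simple singularities; we may also arrange (as in the proof of Proposition \ref{P:trivialreduction}) that $\rho$ is an isomorphism over a neighborhood of the generic point of each component of the zero section and, more importantly, that the strict transform of $\pi^* I$ is untouched away from lower-dimensional centers. The crucial point is to track the irregular divisor under $\rho$: since resonances are eliminated and indeterminacies resolved, the irregular divisor $\Irr(\rho^*\omega)$ decomposes as the strict transform of $\pi^* I$ plus a $\rho$-exceptional effective divisor supported over $\sing$ of the polar locus. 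Applying Proposition \ref{P:irregular} to $\rho^*\omega$ on $W$ gives that the normal bundle of the (non-reduced) subscheme cut out by $\mathcal O_W(-\Irr(\rho^*\omega))$ is trivial; restricting this triviality back along the strict transform of $\pi^* I$ — which is isomorphic, as a non-reduced scheme, to $\pi^* I$ over the open locus where $\rho$ is an isomorphism — and using that both sides are proper over $X$, one recovers $\mathcal O_X(I) \otimes \mathcal O_X/\mathcal O_X(-I) \simeq \mathcal O_X/\mathcal O_X(-I)$. The inequality $I^2 \ge 0$ follows from Proposition \ref{P:irregularconnections}'s companion fact (it is in fact the hypothesis-free part): the self-intersection of the irregular divisor of a flat connection with compact support is controlled by the same cocycle computation — the transition functions $\{1 - g_{ij} f_i\}$ defining $\mathcal O_X(I)$ become unipotent along $I$, forcing $c_1(\mathcal O_X(I))|_I$ to be (numerically) effective-or-trivial, hence $I^2 = I \cdot (I|_I) \ge 0$.

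Alternatively, and perhaps more cleanly, one can avoid the total space entirely and mimic the proof of Proposition \ref{P:irregular} directly: after a bimeromorphic model realizing $\nabla$ with simple singularities (in the obvious sense — logarithmic part plus a $d(1/f_i)$ correction, which is the content of Lemma \ref{L:localsimple} transported to connection forms), the connection forms satisfy $\eta_i = \beta_i + d(1/f_i)$ with $\beta_i$ logarithmic and $f_i$ a local equation for $I$; comparing on overlaps, $d(1/f_i) - d(1/f_j) = \beta_{ij} - \frac{dg_{ij}}{g_{ij}}$, but the right-hand side is a closed holomorphic $1$-form (the logarithmic poles cancel because the residue divisor is globally defined and $I$ is disjoint from the reduced polar locus of $\nabla$ away from normal crossings), so the same primitive-of-$\eta_{ij}$ argument produces $f_j = f_i/(1 - g_{ij} f_i)$ and the cocycle $\{1 - g_{ij}f_i\}$ again represents $\mathcal O_X(I)$ and is $\equiv 1$ on the subscheme $I$.

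I expect the main obstacle to be the bookkeeping in the bimeromorphic reduction: ensuring that the triviality of the normal bundle of $\Irr(\rho^*\omega)$ on the blown-up model genuinely descends to a statement about $I$ on $X$, rather than about some modification of $I$ with extra exceptional components. The clean way around this is to observe that the assertion $\mathcal O_X(I)\otimes\mathcal O_X/\mathcal O_X(-I)\simeq \mathcal O_X/\mathcal O_X(-I)$ is, by the cocycle description, equivalent to the statement that the line bundle $\mathcal O_X(I)$ admits a trivialization over the first infinitesimal neighborhood of $|I|$ compatible with $I$'s scheme structure — and this is a condition one can check locally on $X$ away from a codimension-$\ge 2$ set (the non-normal-crossing locus of the polar divisor), where $\rho$ can be taken to be an isomorphism, and then extended by Hartogs-type reflexivity since $\mathcal O_X(I)$ is locally free. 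The inequality $I^2 \ge 0$ is then immediate from the triviality over the first neighborhood together with the fact that $I$ has compact support (degree of a bundle trivial on a neighborhood of a compact curve, summed correctly over components). The no-hypothesis half ($I^2 \ge 0$ without assuming $I^2 = 0$) requires slightly more care: here one keeps the exceptional correction $D_{\rho} \ge 0$, writes $\Irr(\rho^*\omega) = \tilde I + D_\rho$ with $\tilde I$ the strict transform, applies Proposition \ref{P:irregular} to get $\tilde I + D_\rho$ has trivial normal bundle, and uses the negativity of the $\rho$-exceptional intersection form together with $\pi^* I \cdot (\text{fiber class}) = 0$ to conclude $(\pi^* I)^2 \ge 0$ on $W$, hence $I^2 \ge 0$ on $X$ by the projection formula.
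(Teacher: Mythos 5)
Your reduction to Proposition \ref{P:irregular} does not go through, and the gap is precisely the point the statement is designed to handle: base points of the connection forms along $|I|$. Since $\nabla$ is only assumed flat, its local connection forms are closed meromorphic $1$-forms which, by Lemma \ref{L:local expression}, can be written as $\eta_i + d(h_i/f_i)$ with $\eta_i$ logarithmic, $f_i$ an equation of $I$, and $h_i$ a holomorphic function that may vanish at points of $|I|$ (though on no component of it). Both of your routes quietly assume $h_i\equiv 1$: the cocycle $\{1-g_{ij}f_i\}$ you invoke for $I^2\ge 0$ only exists in that base-point-free situation, and your justification of the inequality is in any case circular (you appeal to ``Proposition~\ref{P:irregularconnections}'s companion fact'', i.e.\ to the statement being proved), while your second route starts from $\eta_i=\beta_i+d(1/f_i)$ ``after a bimeromorphic model'', which changes $X$, changes $I$, and changes $I^2$, so it no longer proves the stated assertion. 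The paper's proof is a short direct cocycle computation that keeps the $h_i$: comparing on overlaps as in Proposition \ref{P:irregular} gives $f_j = h_j f_i/(h_i - g_{ij}f_i)$, whose restriction to $|I|$ shows that $\mathcal O_{|I|}(I)$ carries the global section $\{h_i|_{|I|}\}$, hence is represented by an effective divisor; this yields $I^2\ge 0$, and $I^2=0$ forces the $h_i$ to be units along $|I|$, after which the argument of Proposition \ref{P:irregular} applies verbatim. No passage to the total space and no bimeromorphic modification is needed.

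The blow-up route has a second, independent gap: after Proposition \ref{P:trivialreduction}, Proposition \ref{P:irregular} gives triviality of the normal bundle of the subscheme cut out by $\Irr(\rho^*\omega)$ on the modification $W$ (strict transform plus exceptional contributions), and this does not descend to the subscheme defined by $I$ on $X$. Your proposed fix --- check the isomorphism away from a codimension-two set and extend by Hartogs-type reflexivity --- fails because $\mathcal O_X(I)\otimes\mathcal O_X/\mathcal O_X(-I)\simeq\mathcal O_X/\mathcal O_X(-I)$ is the global triviality of a line bundle on the compact (non-reduced) curve defined by $I$: removing finitely many points destroys exactly this information (every line bundle on the punctured curve is trivial, while degree-zero line bundles on the compact curve are generally not, so no local-to-global extension can recover the isomorphism). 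Finally, the closing intersection-theoretic argument is not well posed: $\pi^*I$ has non-compact support in the total space of $\mathcal L$ (and on $W$), its self-intersection is not a number there, and the projection formula cannot be used to conclude $I^2\ge 0$ on $X$.
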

\begin{proof}
    Choose an open covering $\mathcal U=\{U_i\}$ of $X$ such that for every $U_i \in \mathcal U$ intersecting
    the support of $I$ we can write $\restr{\nabla}{U_i} = d + \restr{\omega}{U_i}$ and
    \[
        \restr{\omega}{U_i} = \eta_i + d \left(\frac{h_i}{f_i}\right)
    \]
    where $\eta_i$ is a logarithmic $1$-form, $h_i \in \mathcal O_X(U_i)$ does not vanish on any irreducible component of the support of $\restr{I}{U_i}$, and $f_i  \in \mathcal O_X(U_i)$ defines the irregular divisor     on $U_i$.

    Arguing as in the proof of Proposition \ref{P:irregular}, we deduce that
    \[
        \restr{f_j}{U_i \cap U_j} = \restr{\frac{h_j f_i}{h_i- g_{ij} f_i}}{U_i \cap U_j} \, .
    \]
    Restricting this expression to the support of $I$, we deduce that $\mathcal O_{|I|}(I)$ is linearly equivalent to
    the effective divisor defined by $\{ \restr{ h_i}{|I|}=0\}$. Hence $I^2\ge 0$. If $I^2=0$ then the functions $h_i \in \mathcal O_X(U_i)$
    are invertible (\ie the $1$-forms $\omega_i$ are without base points) and we repeat the arguments used to prove Proposition \ref{P:irregular}
    in order to reach the same conclusion.
\end{proof}

\begin{remark}
    Proposition \ref{P:irregularconnections} applied to a flat meromorphic connection on a trivial line-bundle implies that Proposition \ref{P:irregular} also holds if one only assumes that $\omega$ is without base points.
\end{remark}

\subsection{Simple singularities for codimension one foliations}\label{SS:CanoCerveau}
For the reader's sake, we reproduce below the definition of simple singularities for codimension one foliations  due to Cano and Cerveau, as presented in \cite{MR1760842}.

\begin{dfn}\label{D:simple}
    Let $\F$ be a germ of codimension one foliation on $(\mathbb C^n,0)$. The foliation
    $\F$ has simple singularities if there exists formal coordinates $x_1,\ldots, x_n$ and an integer
    $r$, $2 \le r \le n$, (the dimension type of $\F$) such that
    $\F$ is defined by a differential form $\omega$ of one of the following types:
    \begin{enumerate}
        \item\label{I:CC1}  There are complex numbers $\lambda_i \in \mathbb C^*$ such that
        \[
            \omega = \sum_{i=1}^r \lambda_i \frac{dx_i}{x_i} \, ,
        \]
        and $\sum_{i=1}^r a_i \lambda_i = 0$ for non-negative integers $a_i$ implies $(a_1, \ldots, a_r)=0$.
        \item\label{I:CC2} There exist an integer $1\leq k \le r$,  positive integers $p_1, \ldots, p_k$ , complex numbers $\lambda_2, \ldots,\lambda_r$, and a formal power series $\psi \in t \cdot \mathbb C[[t]]$  such that
        \[
            \omega =  \sum_{i=1}^k p_i \frac{dx_i}{x_i} + \psi(x_1^{p_1}\cdots x_k^{p_k}) \sum_{i=2}^r \lambda_i \frac{dx_i}{x_i}
        \]
        and $\sum_{i=k+1}^r a_i \lambda_i = 0$ for non-negative integers $a_i$ implies $(a_{k+1}, \ldots, a_r)=0$.
    \end{enumerate}
\end{dfn}

\begin{remark}\label{R:misterious index}
    The second summation in Item (\ref{I:CC2}) of Definition \ref{D:simple} may cause some discomfort at first sight.
    However, if we allow the second summation to start at $1$, and keep all the other conditions, we get the same definition as pointed
    out in \cite[Remark 8.3]{ratendo}.
\end{remark}

Because of the results presented in this section, we may rephrase Definition \ref{D:simple} by saying that
a germ of $\F$ on $(\mathbb C^n,0)$ has simple singularities if, and only if, it is formally equivalent to a
foliation defined by a germ of closed meromorphic $1$-form with simple singularities and non-trivial polar divisor.
For a  description of simple singularities in dimension three, and their separatrices, we refer to \cite[Subsections 54 and 5.5]{MR1760842}.

The analog for codimension one foliations of Proposition \ref{P:trivialreduction} is  only  known
in dimensions two and three.

\begin{thm}\label{T:reductionofsings}
    Let $\F$ be a codimension one foliation on a complex manifold $X$ of dimension at most three.
    Then there exists a proper bimeromorphic morphism (obtained as a locally finite composition
    of blow-ups of  smooth centers) $\pi :Y \to X$ such that all the singularities of
     $\pi^*\F$  are simple.
\end{thm}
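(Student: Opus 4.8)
The plan is to reduce the statement to the three classical ingredients on which the analogous Proposition \ref{P:trivialreduction} for closed meromorphic $1$-forms rests: embedded resolution of divisors, elimination of indeterminacies, and elimination of resonances; but with the last one replaced by the reduction-of-singularities results specific to codimension one foliations in low dimension. In dimension two this is just Seidenberg's theorem, so I would dispose of that case first and concentrate on dimension three. For $\dim X = 3$ the key input is Cano's theorem on reduction of singularities of codimension one foliations on $3$-folds (and, for the global/bimeromorphic statement, its globalization via locally finite compositions of blow-ups along smooth centers contained in $\sing(\F)$). The essential point is that Cano's local result produces, after finitely many blow-ups with smooth centers, a foliation all of whose singularities are simple in the sense of Definition \ref{D:simple}.

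First I would recall that $\sing(\F)$ has codimension at least two, so in dimension three it is supported on a (possibly reducible, possibly non-reduced) curve together with finitely many points; the centers of the blow-ups we perform will always be smooth points or smooth curves contained in $\sing(\F)$. Then I would invoke Cano's reduction theorem: there is a locally finite sequence of blow-ups $\pi : Y \to X$ along smooth centers such that $\pi^*\F$ has only simple singularities. The properness and the local finiteness of the composition are part of the statement one quotes; global existence follows because $X$ is assumed to be an honest complex manifold (one can work on a countable exhaustion by relatively compact opens, perform the finitely many blow-ups needed on each, and patch, since the process over a relatively compact open is canonical enough away from a neighborhood of the already-simple locus). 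It is worth remarking, as the text around Proposition \ref{P:trivialreduction} already did, that when $\F$ happens to be defined by a closed meromorphic $1$-form the statement also follows from Proposition \ref{P:trivialreduction}, but Cano's theorem is needed to cover the general case.

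The main obstacle is exactly that there is no proof to be given here beyond a careful citation: the theorem is, as the surrounding text makes explicit, \emph{only known} in dimensions two and three, and its proof in dimension three (Cano) is a long and delicate argument that cannot be reproduced. So the ``proof'' is really a matter of (i) citing Seidenberg for surfaces, (ii) citing Cano's reduction of singularities for codimension one foliations on $3$-folds, and (iii) explaining that the bimeromorphic morphism can be taken to be a locally finite composition of blow-ups with smooth centers supported in the successive singular loci, which is how those results are phrased. I would therefore present the proof as a short paragraph assembling these references, perhaps with a sentence indicating how simple singularities in the foliation sense match, via Definition \ref{D:simple} and the discussion following it, the simple singularities of a defining closed meromorphic $1$-form in the cases where such a form exists. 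The honest caveat — that no analogue is available in dimension $\ge 4$ — is already stated before the theorem and need not be repeated inside the proof.
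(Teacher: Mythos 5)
Your proposal matches the paper exactly: the paper gives no independent proof of Theorem \ref{T:reductionofsings}, but, just as you do, attributes the dimension-two case to Seidenberg and the dimension-three case to Cano--Cerveau (non-dicritical) and Cano (general), noting that no proof exists in dimension four or higher. Your extra sentence about patching over a countable exhaustion is unnecessary — the cited theorems already deliver the global locally finite composition of blow-ups with smooth centers — but it does no harm.
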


In dimension two, Theorem \ref{T:reductionofsings} is due to Seidenberg, see for instance \cite[Chapter 1]{MR3328860}. Its proof consists in
keeping blowing-up non-simple singularities (points) until getting a foliation with simple singularities (also
called reduced in dimension two). The result is not completely evident,  but the strategy to obtain the
resolution, assuming it exists, is obvious and essentially unique.
In dimension three, the problem is considerably more
delicate as one may blow-up points or curves and it is not a priori clear which one to
choose at each step of the process. Theorem \ref{T:reductionofsings} was first proved
for non-dicritical foliations by Cano and Cerveau \cite{MR1179013}, and later, for arbitrary codimension one foliations by Cano \cite{MR2144971}.
It is natural to expect that the statement holds in arbitrary dimensions, but no proof is available yet.

\subsection{The separatrix theorem} If $\F$ is a codimension one foliation on a complex manifold $X$ then
a separatrix of $\F$ through a point $p \in \sing(\F)$ is a germ of hypersurface $S$ at $p$ such that
$S-\sing(\F)$ is contained in a leaf of $\restr{\F}{X- \sing(\F)}$.

Combining resolution of singularities for foliations on surfaces
with their index theorem (Theorem \ref{T:CamachoSad}), Camacho and Sad established the following fundamental result for the study
of foliations on surfaces.

\begin{thm}\label{T:CamachoSad}
    Let $\F$ be a germ of foliation on $(\mathbb C^2,0)$. Then $\F$ admits a
    separatrix, \ie, there exists a germ of curve parameterized by  $\gamma :(\mathbb C,0) \to (\mathbb C^2,0)$
    such that $\gamma^*\omega$ for any $\omega \in N^*_{\F}$.
\end{thm}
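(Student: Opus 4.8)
The plan is to follow the strategy already advertised in the text: combine Seidenberg's reduction of singularities (Theorem \ref{T:reductionofsings} in dimension two) with the Camacho--Sad index theorem (Theorem \ref{T:CS}, specialized to smooth curves on surfaces), using a connectedness/positivity argument on the dual graph of the exceptional divisor. First I would take a germ $\F$ at $0 \in \mathbb C^2$ and apply Theorem \ref{T:reductionofsings} to obtain $\pi : Y \to (\mathbb C^2,0)$, a finite composition of point blow-ups, so that $\pi^*\F$ has only simple singularities. Write $E = \pi^{-1}(0) = \bigcup_{i=1}^m E_i$ for the exceptional divisor, a tree of smooth rational curves $E_i \simeq \mathbb P^1$ with normal crossings. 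A separatrix of $\F$ at $0$ pulls back (and pushes forward) along $\pi$: if $\widetilde\F := \pi^*\F$ admits a separatrix $S$ at some point of $E$ that is \emph{not} contained in $E$, then $\pi(S)$ is a separatrix of $\F$ at $0$. So it suffices to produce such a transverse separatrix for $\widetilde\F$.

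Next I would dispose of the dicritical case: if some component $E_i$ is \emph{not} invariant by $\widetilde\F$, then $\widetilde\F$ is transverse to $E_i$ at its generic point, and the leaf through a generic point of $E_i$ is a smooth curve meeting $E_i$ transversely; its image under $\pi$ is the desired separatrix. Hence we may assume every $E_i$ is $\widetilde\F$-invariant. Now suppose, for contradiction, that $\widetilde\F$ has \emph{no} separatrix transverse to $E$ at any point. Then at every singular point $p \in \sing(\widetilde\F)$ lying on $E$, all separatrices through $p$ are components of $E$; since the singularities are simple (reduced), each such $p$ lies on one or two components of $E$ and has no further separatrix. Consequently $\sing(\widetilde\F) \subset \sing(E)$ and $\sing(\widetilde\F) \cap E_i$ consists exactly of the nodes of $E$ lying on $E_i$.

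The heart of the argument is then the numerical contradiction via Camacho--Sad. Apply Theorem \ref{T:CS} with $X$ a neighborhood of $E$ in $Y$ and $Y$ (of the theorem) replaced by each smooth curve $E_i$: since $E_i \simeq \mathbb P^1$ and $N_{E_i}$ has degree $E_i^2 =: -a_i < 0$ (negativity of the intersection form on a resolution), we get
\[
    E_i^2 = \sum_{p \in \sing(\widetilde\F)\cap E_i} \CS(\widetilde\F, E_i, p) \, .
\]
By the structure of simple singularities, whenever $p = E_i \cap E_j$ is a node the two indices are reciprocal, $\CS(\widetilde\F,E_i,p) \cdot \CS(\widetilde\F,E_j,p) = 1$; and a simple (non-saddle-node) corner has index not a nonpositive rational, while along a reduced separatrix that is a smooth curve the index is handled separately — so the indices entering these sums are controlled. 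One now runs the classical Camacho--Sad combinatorial lemma: assign to each vertex $E_i$ the value $E_i^2$, observe $\sum_i E_i^2 < 0$ is impossible to reconcile with the reciprocity relations $x_{ij} x_{ji} = 1$ on the edges of the tree unless some index is a nonpositive rational, which forces a saddle-node whose strong separatrix is transverse to $E$ — contradicting our assumption. (Concretely: if $t_i := E_i^2 - \#\{\text{nodes on }E_i\}$, the relations force the existence of an $i$ with $t_i \geq 0$, hence a non-corner singularity on $E_i$, hence a transverse separatrix.) This contradiction proves $\widetilde\F$ has a separatrix transverse to $E$, and pushing forward by $\pi$ gives the separatrix of $\F$.

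The main obstacle is the final combinatorial step: extracting from the system $\{E_i^2 = \sum_p \CS\}$ together with the corner reciprocity $\CS_i \cdot \CS_j = 1$ on a negative-definite tree the conclusion that some component carries a singularity admitting a transverse separatrix. This is exactly the content of the Camacho--Sad lemma on ``partial indices'' and requires care with the case analysis for simple singularities in dimension two (in particular correctly locating saddle-nodes and identifying their strong separatrix); the alternative, cleaner route — which fits the spirit of this paper — is to argue instead with the closed meromorphic (or logarithmic) $1$-form / flat connection on $N_E$ furnished by Bott's connection along the invariant divisor $E$ and invoke the residue theorem (Theorem \ref{T:residue for connections}) directly, reducing the contradiction to: a flat connection on a line bundle of negative degree over a tree of $\mathbb P^1$'s with only reciprocal corner residues cannot exist. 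I would present whichever of these the author's subsequent de Almeida dos Santos argument makes most transparent.
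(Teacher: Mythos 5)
Your setup (Seidenberg reduction, disposing of the dicritical case, and reducing to the existence of a singular point on $E$ admitting a convergent separatrix transverse to $E$) matches the paper, but the heart of the argument --- the numerical contradiction --- is not actually carried out, and the ``concrete'' lemma you offer in its place is false. You set $t_i := E_i^2 - \#\{\text{nodes on }E_i\}$ and claim the index relations force some $t_i \ge 0$: since every component of the exceptional divisor of a resolution over a point has $E_i^2 < 0$ and the number of nodes is nonnegative, one always has $t_i<0$, so no such $i$ can exist, and in any case $t_i\ge 0$ would not by itself produce a non-corner singularity. Two further assertions are off: under the contradiction hypothesis it is \emph{not} true that $\sing(\widetilde\F)\subset\sing(E)$ --- saddle-nodes can sit at smooth points of $E$ with $E$ the strong separatrix and a divergent weak separatrix, and these must be allowed for (they contribute $\CS=0$); and a reduced non-degenerate corner can perfectly well have nonpositive rational index (what reducedness excludes is a \emph{positive} rational eigenvalue ratio), so your control on ``which indices enter the sums'' is not correct as stated. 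The reciprocity $\CS(\widetilde\F,E_i,p)\cdot\CS(\widetilde\F,E_j,p)=1$ also fails at saddle-node corners, and your proposal has no mechanism for excluding or bypassing them. In short, everything before the combinatorial step is fine, but the combinatorial step --- which is where all the difficulty of Camacho--Sad lives --- is missing.

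For comparison, the paper's proof (following de Almeida Santos) handles exactly these points. Assuming every smooth point of $E$ carrying a singularity has $\CS=0$ (saddle-node with strong separatrix $E$), one first extracts a connected subtree $\Gamma_0$ of the dual graph all of whose edges are non-degenerate corners, by deleting each saddle-node edge and keeping the side of its strong separatrix (so the index of the retained component at the deleted corner is $0$). On $\Gamma_0$ the reciprocity relation does hold, so $(\sigma)_{CD}=-\CS(\pi^*\F,C,C\cap D)$ defines a cocycle in $Z^1(\Gamma_0,\mathbb C^*)$; since $\Gamma_0$ is a tree this cocycle is trivial, yielding nonzero weights $\lambda_C$ with $\sigma_{CD}=\lambda_C/\lambda_D$. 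Setting $R=\sum_{C\in\Gamma_0}\lambda_C C$ and applying the index theorem component by component gives $R\cdot D=0$ for every $D$ in $\Gamma_0$, contradicting negative definiteness of the intersection matrix. If you want to salvage your write-up, this subtree construction plus the $H^1(\text{tree},\mathbb C^*)=1$ trivialization is the missing idea; the ``flat connection on $N_E$'' alternative you gesture at is not available as stated, since $E$ is a normal crossing divisor and the paper's Bott connection is only constructed along smooth invariant hypersurfaces.
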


The result for simple singularities was known at the end of the 19th century and has been traced back to the work of Briot and Bouquet.
More specifically, always assuming that $\F$ has simple singularities, if the tangent sheaf is generated by a vector field with invertible linear part then $\F$
admits exactly two separatrices. If instead the tangent sheaf is generated by a vector with non-invertible linear part
then there exists at least one separatrix tangent to the eigenspace corresponding to the non-zero eigenvalue. Importantly, the Camacho-Sad index
for this separatrix is equal to zero. See for instance \cite[Section 1]{CamachoSad} or \cite[Chapter 3, Section 2]{MR3328860}

After Camacho and Sad proved Theorem \ref{T:CamachoSad}, a number of different proofs and generalizations appeared in the literature.
The first one \cite{MR963011} was by Camacho, who extended Theorem \ref{T:CamachoSad} to singular surfaces with resolution having contractible dual graph. There are proofs by J. Cano \cite{MR1389507} of Camacho-Sad's original result, and by  M. Sebastiani  \cite{MR1754036} (see also, \cite[Theorem 3.4]{MR3328860}) and M. Toma \cite{MR1742334} of Camacho's generalization. More recently, Ortiz-Bobadilla, Rosales-Gonzales, and Voronin \cite{MR2747734} proved a refined version of Camacho-Sad's result where one obtain lower bounds for the number of separatrices depending on the reduction tree of the foliation and the quotients of eigenvalues of the singularities.

Here we present a  proof, due to de Almeida Santos \cite{MR4094645}, which draws inspiration from Weil's residue theorem. The strategy has points of contact with the ones used by other authors, like Toma and Bobadilla-Gonzales-Voronin.

\begin{proof}[Proof of Theorem \ref{T:CamachoSad} according to \cite{MR4094645}]
    Let $\pi : (X,E) \to (\mathbb C^2,0)$ be a reduction of singularities of $\F$ given by Seidenberg theorem (dimension two version of Theorem \ref{T:reductionofsings}).
    Here $E$ is the reduced divisor supported on $\pi^{-1}(0)$. Clearly, $E$ is supported on a tree of rational curves.

    If one of the irreducible components of $E$ is not invariant
    by $\pi^* \F$, then we are done since we have infinitely many germs of curves invariant by $\pi^*\F$ and intersecting transversely one of the non-invariant components of $E$. The morphism $\pi$
    projects any of these germs of curves to a separatrix for $\F$.

    If all the irreducible components of $E$ are invariant then  we will show the existence
    of a smooth point $p$ of $E$ such that $\CS(\F,E,p) \neq 0$. Once this is done, the theorem follows from Briot-Bouquet's results.
    Aiming at a contradiction, assume that every singular point of $\pi^* \F$ on $E$ is either a corner (intersection of two irreducible components
    of $E$)  or a smooth point of $E$ such that $\CS(\pi^*\F,E,p)=0$ (so, $E$ is  a strong separatrix of a saddle node at $p$).

    Consider the dual graph $\Gamma$ of $E$. The vertices of $\Gamma$ are the irreducible components of $E$. Two vertices are joined by an edge if, and only if, the corresponding
    irreducible components intersect. We claim that there exists a connected subgraph $\Gamma_0$ of $\Gamma$ such that every edge in $\Gamma_0$ corresponds to a simple singularity of $\pi^*\F$ with invertible linear part. If all the edges of $\Gamma$ correspond to singularities with invertible linear part, the claim is evident as we can take $\Gamma_0=\Gamma$.
    If not, we delete from $\Gamma$ an edge corresponding to two irreducible components intersecting at a saddle-node. The resulting graph has two connected components. Keep the connected component containing the divisor corresponding to the strong separatrix of the saddle-node
    (in particular, the Camacho-Sad index of $\pi^*\F$ for this divisor at the saddle node is zero). Starting the argument over with this connected component in the place of $\Gamma$ will eventually lead to the sought subgraph $\Gamma_0$.

    The foliation $\pi^*\F$ determines an element of $H^1(\Gamma_0, \mathbb C^*)$  as follows. Take an open-covering $\mathcal U$ of $\Gamma_0$ formed  by open subsets indexed by vertices of $\Gamma_0$ and equal to the union of the corresponding vertex with all the edges containing it (opposite vertices removed). The intersection of any two of these open sets is empty, or the open set itself, or an edge (vertices removed) joining two vertices. This is a Leray-covering for the sheaf $\mathbb C^*$ on $\Gamma_0$. The foliation defines an element $\sigma$ of $C^1(\mathcal U, \mathbb C^*)$
    by setting
    \[
        (\sigma)_{CD} = - \CS (\pi^* \F,C,C\cap D)
    \]
    where $C$ and $D$ are two $\pi^*$-invariant curves supported on $E_0$ (the divisor determined by the vertices in $\Gamma_0$). Since every edge in $\Gamma_0$
    corresponds to a simple singularity with invertible linear part, a simple local computation shows that
    \[
        (\sigma)_{CD}  = - \CS (\pi^* \F,C,C\cap D) = \frac{1}{-\CS(\pi^* \F,D,C\cap D)} = \frac{1}{ (\sigma)_{DC}} \, .
    \]
    It follows that $\sigma$ determines an element of $Z^1(\mathcal U,\mathbb C^*)$ and hence of $H^1(\Gamma_0,\mathbb C^*)$. Since $\Gamma$ is a tree, so
    is $\Gamma_0$. Thus, $H^1(\Gamma_0,\mathbb C^*)= H^1(\mathcal U,\mathbb C^*)$ is the trivial multiplicative group $1$. Therefore, we get the existence of a  divisor $R \in \Div(X) \otimes \mathbb C$, supported on $E_0$,
    \[
        R = \sum_{C\in \Gamma_0} \lambda_C C
    \]
    such that for any two curves/vertices in $\Gamma_0$ we have that
    \[
        (\sigma)_{CD} = \frac{\lambda_{C}}{\lambda_D} \, .
    \]
    Theorem \ref{T:CamachoSad} implies that
    \begin{align*}
        R \cdot D & = {\lambda_D} \left( \sum_{C \in \Gamma_0} \frac{\lambda_C}{\lambda_D} C \right) \cdot D
                 =  {\lambda_D} \left( D^2 + \left( \sum_{C \in \Gamma_0, C \neq D} \frac{\lambda_C}{\lambda_D} C \right) \cdot D  \right) \\
                & = {\lambda_D} \left( D^2 - \sum_{p\in D} \CS(\pi^*\F,D,p)  \right) = 0
    \end{align*}
    for every curve/vertex $D$ in $\Gamma_0$. But this contradicts the negative definiteness of the intersection matrix of $E_0$ proving the result.
\end{proof}

The proof of Theorem \ref{T:CamachoSad} presented above also proves Camacho's generalization.  It is refined by de Almeida dos Santos to establish another instance of the separatrix theorem for foliations on singular surfaces, not covered by previous results. He proves that a germ of  foliation,on a singular surface,  with $\mathbb Q$-Cartier normal sheaf and without saddle-nodes in its reduction of singularities, always has a separatrix. A similar statement does not hold for foliations on singular surfaces with Cartier tangent sheaf. For an example, see  \cite{MR3330172}, where Guillot classifies complete vector fields on singular surfaces without a separatrix through one of its singular points.

The existence of separatrices for germs of one-dimensional foliations  on $(\mathbb C^n,0)$, $n \ge 3$, do not hold in general as shown by Gomez-Mont and Luengo in \cite{MR1172688}.

\subsection{Existence of separatrix for non-dicritical codimension one foliations}
The existence of separatrices for germs of codimension one foliations in dimension at least three
also does not hold in general. To explain that, we start recalling a result due to Jouanolou \cite[Chapter 4]{MR537038}.

\begin{thm}\label{T:Jouanolou}
    A very general foliation of $\mathbb P^2$ of degree $d\ge 2$ has no algebraic leaves.
\end{thm}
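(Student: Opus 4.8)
The plan is to prove that a very general foliation of $\mathbb P^2$ of degree $d \ge 2$ has no algebraic leaves, following the classical argument of Jouanolou via a dimension/dominance count in an appropriate incidence variety. A degree $d$ foliation on $\mathbb P^2$ is given by a global section of $T_{\mathbb P^2} \otimes \mathcal O_{\mathbb P^2}(d-1)$, or dually by a twisted $1$-form, so the space of such foliations is parametrized by a Zariski open subset $\mathcal F_d$ of a projective space $\mathbb P^{N}$ with $N = N(d)$. The idea is to stratify the ``bad'' locus
\[
    \mathcal B = \{ \omega \in \mathcal F_d \ : \ \omega \text{ has an algebraic leaf} \}
\]
by the degree $e$ of an invariant algebraic curve, writing $\mathcal B = \bigcup_{e \ge 1} \mathcal B_e$ where $\mathcal B_e$ is the set of foliations admitting an invariant curve of degree exactly $e$, and to show each $\mathcal B_e$ is contained in a proper (constructible, in fact closed) subvariety of $\mathcal F_d$; then $\mathcal B$ is a countable union of proper subvarieties, so its complement is a very general (hence nonempty) set of foliations with no algebraic leaf.

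The key step is the analysis of a fixed degree $e$. First I would introduce the incidence variety
\[
    \mathcal I_e = \{ (\omega, C) \ : \ C \in \mathbb P(H^0(\mathbb P^2,\mathcal O(e))), \ C \text{ is invariant by the foliation defined by } \omega \} \subset \mathcal F_d \times \mathbb P^{\binom{e+2}{2}-1}.
\]
Invariance of $C = \{f = 0\}$ by $\omega$ is the condition that $\omega \wedge df$ vanishes on $\{f=0\}$, equivalently $df \wedge \omega = (\theta) f$ for some $2$-form coefficient $\theta$ of controlled degree (the ``cofactor''); this is a bilinear — hence algebraic — condition in the coefficients of $\omega$ and of $f$, so $\mathcal I_e$ is a closed subvariety of the product. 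Projecting to the second factor, the fiber over a fixed smooth curve $C$ of degree $e$ consists of those $\omega$ for which $C$ is invariant; this is a linear subspace of $\mathcal F_d$ whose codimension I would bound below by computing the rank of the linear map $\omega \mapsto \restr{(\omega \wedge df)}{C}$, using that a generic such $\omega$ does \emph{not} leave $C$ invariant (exhibit one explicit $\omega$ of each degree $d$ not preserving a given line, conic, etc., or more cleanly argue that the generic foliation of degree $d$ has only finitely many — or no — invariant curves of bounded degree, which already follows by induction on $e$ once the base cases are in hand). The upshot is a dimension estimate $\dim \mathcal I_e \le \dim \mathcal F_d - 1$ for every $e$, whence the image $\pi_1(\mathcal I_e) = \mathcal B_e$ is a proper closed subvariety of $\mathcal F_d$.

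The main obstacle is making the codimension count uniform in $e$: a priori the family of invariant curves of degree $e$ has dimension $\binom{e+2}{2} - 1 \sim e^2/2$ growing quadratically in $e$, so one must show the fiber codimension in $\mathcal F_d$ grows at least as fast, i.e.\ that requiring a degree $e$ curve to be invariant imposes roughly $e^2/2$ independent linear conditions on the finite-dimensional space $\mathcal F_d$ — which is impossible once $e$ is large unless the fiber is empty. Concretely, the Poincaré-type bound on the degree of an invariant curve (or a direct estimate: a foliation of degree $d$ has an invariant curve only if that curve's degree is controlled, because the cofactor $1$-form $\eta$ with $d\omega = \eta \wedge \omega$ has bounded pole order, and the residue of $\eta$ along $C$ links $e$ to $d$ through the index theorems of Subsection \ref{SS:index}) shows $\mathcal B_e$ is empty for $e \gg d$, so in fact only finitely many $e$ occur and a finite union of proper closed subvarieties suffices — but even without that sharpening, the countable union argument goes through since the generic point of $\mathbb P^N$ avoids a countable union of proper subvarieties. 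I would present the argument in the form: for each $e$, $\mathcal B_e \subsetneq \mathcal F_d$ is closed; hence $\mathcal F_d \setminus \bigcup_e \mathcal B_e$ is a dense (in the very general sense) set of foliations without algebraic leaves, which proves Theorem \ref{T:Jouanolou}.
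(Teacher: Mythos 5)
First, a point of comparison: the paper does not prove Theorem \ref{T:Jouanolou} at all; it quotes it from Jouanolou \cite[Chapter 4]{MR537038} and lists alternative proofs (via automorphism groups, the geometry of the space of foliations, arithmetic/Galois arguments). So there is no in-paper argument to measure you against, and your proposal has to stand on its own. The soft part of your plan is fine and is common to all known proofs: invariance of $\{f=0\}$ is an algebraic (closed) condition on the pair $(\omega,f)$, the linear system $|\mathcal O_{\mathbb P^2}(e)|$ is projective, so each $\mathcal B_e=\pi_1(\mathcal I_e)$ is closed in $\mathcal F_d$, and a very general foliation avoids a countable union of proper closed subsets. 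The entire content of the theorem is the step you do not actually prove: that $\mathcal B_e\subsetneq\mathcal F_d$ for \emph{every} $e$.

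Your proposed dimension count does not deliver this. Knowing that for a fixed (generic, smooth) curve $C$ the fiber of $\pi_2$ is a proper linear subspace of $\mathcal F_d$ only controls the components of $\mathcal I_e$ dominating $|\mathcal O(e)|$; to conclude $\dim\mathcal I_e<\dim\mathcal F_d$ you would need the fiber codimension to exceed $\dim|\mathcal O(e)|\sim e^2/2$ uniformly over all curves in the image of $\pi_2$, including very singular, reducible and non-reduced ones, where the fibers jump. Since the codimension is bounded by $\dim\mathcal F_d$, for large $e$ the only way out is that the relevant components of $\mathcal I_e$ lie over the special locus of curves invariant by \emph{some} degree-$d$ foliation --- and estimating that locus and its fibers is precisely the statement being proved, so the count is circular without new input. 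Your fallback, a ``Poincar\'e-type bound'' giving $\mathcal B_e=\emptyset$ for $e\gg d$, is false: the Poincar\'e problem has a negative answer in general. For instance, the degree-$d$ foliation defined by the closed rational $1$-form $n\,\frac{d\ell_1}{\ell_1}-(n+d)\,\frac{d\ell_2}{\ell_2}+\frac{dg}{g}$, with $\ell_1,\ell_2$ lines and $g$ of degree $d$, has the rational first integral $\ell_1^{n}g/\ell_2^{\,n+d}$ and hence invariant curves of degree $n+d$ with $n$ arbitrary; the index-theoretic bounds you invoke (Subsection \ref{SS:index}) constrain only \emph{smooth} invariant curves, while invariant curves of unbounded degree are necessarily singular. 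This is exactly why every known proof injects a genuinely new ingredient --- Jouanolou exhibits an explicit foliation of each degree $d\ge 2$ and shows by a delicate arithmetic/combinatorial analysis that it has no algebraic invariant curve of any degree (which makes all $\mathcal B_e$ proper at once), and the other proofs cited in the paper replace this by automorphism, deformation-theoretic or arithmetic arguments. As written, your proposal contains no such ingredient, so it has a genuine gap at its core.
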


Theorem \ref{T:Jouanolou} admits many generalizations, see for instance \cite{MR1412680}, \cite{MR1971138}, \cite{MR2248154}, \cite{MR2862041}.
Even the original statement has now many different proofs exploiting different aspects of the theory of algebraic foliations
like automorphism groups of foliations \cite{MR1636411, MR2156709}), global geometry of the space of foliations (\cite{MR2391699}),
or arithmetic through Galois group actions (\cite{MR2364144})  to name a few.

A foliation on $\mathbb P^2$ of degree $d$  is defined by a polynomial homogeneous  $1$-form $\omega$  on $\mathbb C^3$. The algebraic
leaves of $\F$ are in bijection with the irreducible germs of hypersurfaces through $0 \in \mathbb C^3$ invariant by the foliation defined
by $\omega$ (the cone over $\F$). Theorem \ref{T:Jouanolou} implies the existence of (many) codimension one germs of foliations on $\mathbb C^3$
without separatrices. All these germs are dicritical: the reduction of singularities of each of them has at least one non-invariant irreducible exceptional divisor. For more on the concept of dicritical singularities, see \cite[Section 2.1]{MR1162557}, \cite{MR1010778}, and \cite{MR980953}.

In sharp contrast, we have the following result by Cano and Cerveau in dimension three  \cite{MR1179013}, and by Cano and Mattei in higher dimensions \cite{MR1162557}.

\begin{thm}\label{T:CanoCerveauMattei}
    If $\F$ is a non-dicritical  codimension one foliation on $(\mathbb C^n,0)$, $n\ge 3$, then $\F$ has a  separatrix.
\end{thm}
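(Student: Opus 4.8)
The plan is to argue by induction on $n$: the base case $n=3$ by resolution of singularities together with index theory, and the inductive step $n\ge 4$ by slicing with a generic hyperplane through the origin.

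\emph{Base case $n=3$.} I would use Theorem~\ref{T:reductionofsings} to fix a proper bimeromorphic morphism $\pi:(X,E)\to(\mathbb C^3,0)$, with $E$ the reduced exceptional divisor, such that $\pi^*\F$ has only simple singularities. Since $\F$ is non-dicritical, every irreducible component $E_i$ of $E$ is invariant by $\pi^*\F$, and a separatrix of $\F$ through $0$ is precisely the $\pi$-image of a germ along $\pi^{-1}(0)$ of $\pi^*\F$-invariant hypersurface not contained in $E$ --- an \emph{isolated separatrix}. Arguing by contradiction, suppose no isolated separatrix exists; then for each $i$ every separatrix of the surface foliation $\pi^*\F|_{E_i}$ lies among the smooth curves $E_i\cap E_j$. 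I would then lift the mechanism of the proof of Theorem~\ref{T:CamachoSad} one dimension up: Bott's connection along each $E_i$ (Proposition~\ref{P:variation}) and the Camacho--Sad index theorem for the invariant curves $E_i\cap E_j\subset E_i$ (Theorem~\ref{T:CS}) assemble --- after using the classification of simple singularities in dimension three to sort the triple points of $E$ and the saddle-nodes --- into a non-trivial $\mathbb C$-divisor supported on the exceptional curves which is numerically orthogonal to every exceptional curve, contradicting a negativity property of the exceptional configuration. Carrying this out is substantially more delicate than in the surface case, and is the content of \cite{MR1179013}.

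\emph{Inductive step $n\ge 4$.} Let $H\subset(\mathbb C^n,0)$ be a generic smooth hypersurface germ through the origin, so that $\F|_H$ is a codimension one foliation on $(H,0)\cong(\mathbb C^{n-1},0)$. The first point is that $\F|_H$ is again non-dicritical: I would resolve $\F$ in a family parametrized by such $H$'s and check that for generic $H$ the restricted morphism resolves $\F|_H$, so a non-invariant exceptional divisor for $\F|_H$ would be the trace of a non-invariant one for $\F$. By the induction hypothesis $\F|_H$ carries a separatrix $S_H\subset H$, and the remaining task is to glue the family $\{S_H\}$ into a germ of $\F$-invariant hypersurface of $(\mathbb C^n,0)$. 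I would bound, uniformly over the generic locus of $H$'s, the number and the multiplicities of the isolated separatrices of $\F|_H$ --- again via a resolution in a family, non-dicriticality being exactly what prevents a separatrix from being absorbed into the exceptional locus as $H$ moves --- conclude that the $S_H$ sweep out an analytic family, and then invoke a Remmert--Stein type extension, in the spirit of the argument in the proof of Lemma~\ref{L:local expression}, to see that $\overline{\bigcup_H S_H}$ is a germ of pure-codimension-one $\F$-invariant analytic subset, hence a separatrix of $\F$.

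\emph{Main obstacle.} The crux is the lifting step: turning a separatrix of a generic hyperplane section into a separatrix of $\F$. The danger is that the $S_H$ fail to fill up a hypersurface, which is precisely what happens for dicritical germs --- the examples arising from Theorem~\ref{T:Jouanolou} have no separatrix at all --- so the non-dicritical hypothesis has to be used right here, through a version of reduction of singularities in a family that is equisingular for generic sections; making this precise is the technical heart of \cite{MR1162557}. A secondary difficulty is the index bookkeeping in the base case: unlike the surface situation, where one quotes the negative-definiteness of a single intersection matrix, in dimension three one must handle triple points of $E$ and configurations that look dicritical locally but are globally invariant, which is where the dimension-three classification of simple singularities enters.
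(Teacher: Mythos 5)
The paper does not prove Theorem \ref{T:CanoCerveauMattei} at all: it is stated as a known result, attributed to Cano--Cerveau \cite{MR1179013} in dimension three and to Cano--Mattei \cite{MR1162557} in higher dimension, with no argument beyond the citations. Your proposal is, in substance, the same thing: both your base case and your inductive step are explicitly deferred to those two references at exactly the points where the actual work lies (``is the content of \cite{MR1179013}'', ``the technical heart of \cite{MR1162557}''). So as a proof it contributes nothing beyond the citation already present in the paper, and the question is only whether the narrative you wrap around the citations is an accurate guide to how those proofs go.

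It is not, in an important place. For the base case you propose to lift the Camacho--Sad mechanism: assemble Bott's connection on each exceptional component $E_i$ and the index theorem for the curves $E_i\cap E_j$ into a divisor orthogonal to the exceptional curves, ``contradicting a negativity property of the exceptional configuration''. No such negativity property is available in dimension three: the exceptional components are compact surfaces, and the curves lying on them (whether intersection curves $E_i\cap E_j$ or components of $\sing(\pi^*\F)$) need not form a negative configuration --- already for a foliation resolved by the single blow-up of the origin the exceptional divisor is a $\mathbb P^2$, there are no curves $E_i\cap E_j$, and every curve on it has positive self-intersection, so the proposed contradiction has no content. Cano--Cerveau's proof is not an index-theoretic argument; it goes through their reduction of singularities (Theorem \ref{T:reductionofsings}) together with the local classification of simple singularities in dimension three and a combinatorial analysis of the singular locus (their ``partial separatrices''), and in fact yields the stronger statement quoted in the paper right after the theorem: every germ of invariant curve not contained in $\sing(\F)$ lies on a unique invariant surface germ. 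Likewise, in the inductive step the gluing of the sectional separatrices $S_H$ into one invariant hypersurface is precisely the difficulty, and it is not settled by a uniform count plus a Remmert--Stein extension; that extension across the parameter of sections is what \cite{MR1162557} is about. In short, if the intent is to quote the literature, the statement with the two citations (as in the paper) suffices; if the intent is to give a proof, both halves of yours are missing, and the sketched base-case mechanism would not work as described.
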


The proof by Cano and Cerveau shows more: if $\F$ is a non-dicritical foliation defined by a germ of $1$-form $\omega$ with
zeros of codimension at least two and $\gamma :(\mathbb C,0) \to (\mathbb C^3,0)$ is a parametrization of a germ of
curve not contained in $\sing(\F)$ such that $\gamma^* \omega \equiv 0$ then there exists a unique germ of surface $S$ through $0$
invariant by $\F$ and containing the image of $\gamma$, see \cite[Part IV, proof of Corollary 1.5]{MR1179013}, and \cite{MR1760842}.

Recently, Spicer and Svaldi  \cite{https://doi.org/10.48550/arxiv.1908.05037}, established the existence of separatrices for
germs of codimension one foliations with log canonical singularities on $(\mathbb C^3,0)$. We refer to their work for the definition
of log canonical singularities for foliations. Here, we just observe the existence of examples of dicritical foliations with log canonical singularities.

\section{Hodge theory and closed meromorphic 1-forms}\label{S:Hodge}

This section starts reviewing, in Subsections \ref{SS:closedness}, \ref{SS:decomposition}, and \ref{SS:logclosedness},
some elements of basic Hodge theory for compact Kähler manifolds. Subsections \ref{SS:realizalog} and \ref{SS:realizairregular}
explain how to construct closed meromorphic $1$-forms on compact Kähler manifolds with prescribed residue divisor and prescribed irregular
divisor, what will be used over, and over again, in the remainder of the paper.

\subsection{Closedness of holomorphic forms}\label{SS:closedness}
Recall that  a complex manifold $X$ is Kähler if it admits a smooth real
$(1,1)$-form $\theta$ such that $\theta$ is positive definite at every point
and $d \theta =0$. A $(1,1)$-form $\theta$ with these properties is called a Kähler form.

\begin{prop}\label{P:closedness}
    Let $X$ be a compact Kähler manifold and let $q\ge 0$ be a positive integer.
    If $\omega \in H^0(X,\Omega^q_X)$ is a holomorphic $q$-form on $X$ then $d \omega =0$.
\end{prop}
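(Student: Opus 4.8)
The plan is to use the Kähler condition to run a global Stokes-theorem argument, in the same spirit as Weil's residue theorem. Write $n = \dim X$ and fix a Kähler form $\theta$. The cases $q=0$ (a holomorphic function on a compact connected manifold is constant, by the maximum principle) and $q \ge n$ (trivial, since $\Omega^{q+1}_X = 0$) are disposed of immediately, so assume $1 \le q \le n-1$. Since $\omega$ is holomorphic we have $\bar\partial\omega = 0$, hence $d\omega = \partial\omega$, a form of type $(q+1,0)$, and the whole point is to show $\partial\omega \equiv 0$.

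First I would record that $\overline{\partial\omega}$ is a closed $(0,q+1)$-form. Indeed, $\bar\omega$ is anti-holomorphic, so $\partial\bar\omega = 0$ and $\overline{\partial\omega} = \bar\partial\bar\omega$; then $\bar\partial(\overline{\partial\omega}) = \bar\partial^2\bar\omega = 0$ and $\partial(\overline{\partial\omega}) = \partial\bar\partial\bar\omega = -\bar\partial\partial\bar\omega = 0$. Combined with $d\theta = 0$, the Leibniz rule gives that the $(2n-1)$-form $\omega \wedge \overline{\partial\omega}\wedge \theta^{\,n-q-1}$ has exterior derivative exactly $\partial\omega \wedge \overline{\partial\omega}\wedge\theta^{\,n-q-1}$ (every other term in the expansion dies). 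Stokes' theorem on the compact (boundaryless) manifold $X$ then yields
\[
    \int_X \partial\omega \wedge \overline{\partial\omega}\wedge\theta^{\,n-q-1} = 0 .
\]

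The final step is a pointwise positivity computation: for a $(p,0)$-form $\alpha$, the top form $i^{p^2}\,\alpha\wedge\bar\alpha\wedge\theta^{\,n-p}$ is a non-negative multiple of the volume form $\theta^n/n!$, strictly positive exactly at the points where $\alpha\ne 0$ (pick unitary coordinates diagonalizing $\theta$ at the point and expand). Applying this with $\alpha = \partial\omega$ and $p = q+1$, the integrand above becomes, up to the appropriate unit constant, a non-negative top form with vanishing integral, hence identically zero; therefore $\partial\omega\equiv 0$ and $d\omega = \bar\partial\omega + \partial\omega = 0$.

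The argument is routine once assembled; the Kähler hypothesis is used only through closedness of $\theta$ (to kill $d(\theta^{n-q-1})$ and to ensure $\theta^{n-p}$ is a positive form), and the one thing to get right is the sign/constant bookkeeping in the positivity lemma. I would add a remark that the statement also follows instantly from Hodge theory: on a compact Kähler manifold $\Delta_d = 2\Delta_{\bar\partial}$, and a holomorphic form satisfies $\bar\partial\omega = 0 = \bar\partial^*\omega$ (the latter for bidegree reasons), so it is $\Delta_{\bar\partial}$-harmonic, hence $\Delta_d$-harmonic, hence closed; but I prefer the Stokes proof here to keep the exposition consistent with the rest of the paper.
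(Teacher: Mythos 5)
Your proof is correct and is essentially the paper's own argument: the paper likewise applies Stokes' theorem to $d\bigl(\omega\wedge\overline{d\omega}\wedge\theta^{\,n-q-1}\bigr)=d\omega\wedge\overline{d\omega}\wedge\theta^{\,n-q-1}$ and then invokes the pointwise positivity of $\alpha\wedge\overline{\alpha}\wedge\theta^{\,n-p}$ (up to a power of $i$) for a $(p,0)$-form $\alpha$ to conclude $d\omega=0$. You simply make explicit a few details the paper leaves implicit (closedness of $\overline{\partial\omega}$, the constant $i^{p^2}$, the trivial cases), which is fine.
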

\begin{proof}
    Let $n$ be the dimension of $X$ and let $\theta$ be a Kähler form on $X$.
    Let  $\alpha$ be an arbitrary $p$-form and consider the $(n,n)$-form
    $\beta = \alpha \wedge \overline \alpha \wedge \theta^{n-p}$. If we choose local
    coordinates $z_1, \ldots, z_n$ then we can write
    \[
         \beta = \lambda b \cdot dz_1 \wedge d \overline{z_1} \wedge \cdots \wedge dz_n \wedge d \overline{z_n}
    \]
    where $\lambda$ is a suitable power of $i$ and $b$ is an everywhere non-negative function. Moreover, $b$ vanishes exactly at the zeros of $\alpha$. In particular,
    $\alpha$ is zero if, and only if,
    \[
        \int_X \beta = 0 \, .
    \]

    Set $p= q+1$, $\alpha= d \omega$ and $\beta= d\omega \wedge \overline {d\omega} \wedge \theta^{n-q-1}$.
    The closedness of $\theta$ implies that $\beta$ is the differential of $\omega \wedge \overline {d\omega} \wedge \theta^{n-q-1}$.
    Hence by Stoke's theorem
    \[
        \int_X d\omega \wedge \overline {d\omega} \wedge \theta^{n-q-1} = \int_{\partial X} \omega \wedge \overline {d\omega} \wedge \theta^{n-q-1} = 0 \, ,
    \]
    proving the proposition.
\end{proof}

\begin{remark}
    The argument above proves that the differential of holomorphic $q$-forms on
    compact complex manifolds of dimension $q+1$ are also closed. In particular, every
    holomorphic $1$-form on a compact complex surface is closed. Without further assumptions,
    not much more can be said concerning the closedness of holomorphic forms. Indeed, for every $q\ge 1$ and $r\ge 2$
    there exist compact complex manifolds of dimension $q+r$ admitting global holomorphic $q$-forms
    which are not closed. To verify this claim, start by considering a cocompact discrete subgroup $\Gamma$ of $\SL(2,\mathbb C)$.
    The quotient $X$ is a smooth compact complex $3$-fold carrying a non-closed holomorphic $1$-form, see for instance \cite[Example 2.13]{MR2324555}. Taking the product
    of $X$ with a compact complex torus of dimension $n-3$ one obtains compact complex manifolds having non-zero and non-closed
    holomorphic $k$-forms for every $k$ between $1$ and $n-2$.
\end{remark}

\subsection{Hodge decomposition}\label{SS:decomposition}
For a details, and proofs, of the results discussed in this subsection the reader
can consult for instance \cite[Chapter 6]{MR1988456} or \cite[Chapter 7]{MR1288523}.

Let $Z^1_X$ be the sheaf of closed holomorphic  $1$-forms.
If $X$ is compact then integration along elements of the first homology group $H_1(X,\mathbb Z)$ induces an injection
\[
    H^0(X, Z^1_X)  \oplus \overline{H^0(X,Z^1_X)}  \hookrightarrow H^1(X, \mathbb C) = \Hom(H_1(X,\mathbb Z), \mathbb C)\, .
\]
Indeed, elements of the kernel admit primitives that are (complex) pluriharmonic functions, hence necessarily constant (maximum principle)
on compact manifolds.

When $X$ is Kähler compact then Proposition \ref{P:closedness} guarantees $H^0(X,Z^1_X) = H^0(X,\Omega^1_X)$
and we have an inclusion of $H^0(X,\Omega^1_X)\oplus \overline{H^0(X,\Omega^1_X)}$ into $H^1(X,\mathbb C)$.
Moreover, every element of $H^1(X,\mathbb C)$ can be uniquely obtained as a sum of a morphism induced by the integration of a global holomorphic $1$-form with a morphism induced by the integration of the complex conjugate of another global holomorphic $1$-form. In other words, there exists
a natural isomorphism
\[
    H^0(X,\Omega^1_X) \oplus \overline{H^0(X,\Omega^1_X)} \longrightarrow H^1(X,\mathbb C)
\]
induced by integration.

Indeed, much more is true. For every $k \ge 0$, there exists a canonical isomorphism (the Hodge decomposition)
\[
    H^k(X,\mathbb C) = \bigoplus_{p+q=k} H^{p,q}_{\overline \partial}(X)  \, ,
\]
where $H^{p,q}_{\overline \partial}(X)$ denotes the Dolbeault cohomology of $X$. When $p$ and $q$ are both non-zero and $X$ is an arbitrary complex manifold, Dolbeault cohomology classes of degree $(p,q)$ do not define elements of $H^{p+q}(X, \mathbb C)$ in general. Nevertheless, on compact Kähler ambient spaces, every Dolbeault cohomology class  of type $(p,q)$ is represented by a  $(p,q)$-form that is not only $\overline \partial$-closed but also $d$-closed. The morphism  $H^{p,q}_{\overline \partial}(X) \to H^{p+q}(X,\mathbb C)$ implicit in the Hodge decomposition, sends a Dolbeault cohomology class $\alpha$ to the integration of a $d$-closed representative of $\alpha$  along $(p+q)$-cycles.

Alternatively, taking into account Dolbeault's isomorphisms $H^{p,q}_{\overline \partial}(X) \simeq H^q(X,\Omega^p_X)$, we can write
\[
    H^k(X,\mathbb C) = \bigoplus_{p+q=k} H^{q}(X, \Omega^p_X)  \, ,
\]
The existence of a unique $d$-closed representative for Dolbeault cohomology classes translates into the fact that the natural morphisms
\begin{equation}\label{E:iso closed}
    H^q(X,Z^p_X) \to H^q(X,\Omega^p_X)
\end{equation}
from the cohomology of the sheaf of closed holomorphic $p$-forms to the cohomology of the sheaf of holomorphic $p$-forms are surjective when
$X$ is a compact Kähler manifold.

\subsection{Closedness of logarithmic $1$-forms}\label{SS:logclosedness}

The result below is a direct consequence of the Hodge decomposition and
we learned it from Marco Brunella, see  \cite[Proposition 7.1]{MR2652221}.

\begin{prop}\label{P:closedness bis}
    Let $\omega$ be a meromorphic $1$-form on a compact Kähler manifold $X$. If
    $d \omega$ is holomorphic then $\omega$ is closed, \ie $d\omega =0$.
\end{prop}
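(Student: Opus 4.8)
The plan is to reduce the statement to the already-established isomorphism between the cohomology of the sheaf of closed holomorphic $p$-forms and the cohomology of the sheaf of holomorphic $p$-forms on a compact Kähler manifold. Suppose $\omega$ is meromorphic with $d\omega$ holomorphic; I want to show $d\omega = 0$. First I would observe that $\Theta := d\omega$ is automatically a closed holomorphic $2$-form, since $d\Theta = d(d\omega) = 0$. By Proposition \ref{P:closedness} this says nothing new beyond holomorphicity, but the key point is that $\Theta$ represents a class in $H^0(X,\Omega^2_X)$, hence by Hodge theory a nonzero class in $H^2(X,\mathbb C)$ unless $\Theta = 0$ — because the map $H^0(X,Z^2_X) = H^0(X,\Omega^2_X) \hookrightarrow H^2(X,\mathbb C)$ is injective (it is one of the summands in the Hodge decomposition, and a holomorphic form whose class vanishes in $H^2(X,\mathbb C)$ must itself vanish, as its periods determine it).

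The crux, then, is to argue that the de Rham class of $\Theta = d\omega$ in $H^2(X,\mathbb C)$ is zero. The naive reason is that $\Theta$ is ``exact'' — it is $d$ of something — but that something, $\omega$, is only meromorphic, not a global smooth form, so one cannot invoke Stokes directly. Here I would invoke the theory of residues: the key observation is that $d\omega$ holomorphic forces $\omega$ to have only \emph{simple} poles along its polar divisor (a pole of order $\ge 2$ would produce a pole of order $\ge 2$ in $d\omega$ after differentiating the leading term, unless there is miraculous cancellation; more carefully, in local coordinates $\omega = \sum a_i\, dz_i$ with $a_i$ meromorphic, and $d\omega = \sum_{i<j}(\partial_i a_j - \partial_j a_i)\,dz_i\wedge dz_j$, so holomorphicity of $d\omega$ constrains the pole behavior). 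Thus $\omega$ is, up to a holomorphic $1$-form, a logarithmic $1$-form along a (not necessarily reduced, but I expect one reduces to the reduced, normal-crossing case after resolution) divisor. One then writes $\Theta = d\omega$ on the complement $X \setminus D$ of the polar locus and shows its class is supported on $D$ in a way that, combined with Proposition \ref{P:closedness} applied on a resolution, forces it to vanish.

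The cleanest route I would actually pursue: take a bimeromorphic modification $\pi : \widetilde X \to X$ (which preserves the Kähler property up to passing to a Kähler modification, or one works with the fact that $H^0(\cdot,\Omega^2)$ is a bimeromorphic invariant for compact manifolds in the Kähler/Fujiki class) so that the polar divisor of $\pi^*\omega$ is simple normal crossing; then $\pi^*\omega$ has at worst simple poles (from the holomorphicity of $d(\pi^*\omega) = \pi^* d\omega$), so $\pi^*\omega \in H^0(\widetilde X, \Omega^1_{\widetilde X}(\log D))$. Now use the residue exact sequence (\ref{E:residue exact}): modulo holomorphic $1$-forms, $\pi^*\omega$ is detected by its residues, which are constants by Proposition \ref{P:residueconstant}-type reasoning. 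Subtracting a closed logarithmic form with the same residues (which exists because the residue divisor, having zero Chern class by Theorem \ref{T:residue} applied to... — wait, one needs a closed form first; instead use that on a compact Kähler manifold every residue class is realized, as promised in Subsection \ref{SS:realizalog}), we are reduced to the case where $\pi^*\omega$ is holomorphic, hence closed by Proposition \ref{P:closedness}, hence $d\omega = 0$.

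\textbf{Main obstacle.} The genuine difficulty is the circularity/ordering issue: I am tempted to use the realization of residue divisors (Subsection \ref{SS:realizalog}) and the decomposition of closed meromorphic forms into logarithmic plus second-kind pieces, but those results may logically come \emph{after} this proposition. So the honest hard step is to show directly, without the Kähler-specific realization theorems, that a meromorphic $1$-form with holomorphic differential is closed — and I expect the right tool is simply: $d\omega$ is a holomorphic $2$-form, it is $d$-exact as a current (being $d$ of the $L^1_{loc}$ form $\omega$, since simple poles are locally integrable), and a $d$-exact \emph{holomorphic} form on a compact Kähler manifold vanishes because holomorphic forms are harmonic (Hodge theory: $\Theta$ harmonic and exact $\Rightarrow$ $\Theta = 0$). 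The subtlety to nail down is precisely that $\omega$, or rather its regularization, defines a current whose $d$ equals $d\omega$ with no residual boundary terms — i.e. that the simple poles contribute nothing, which is exactly the content of the residue theorem (Theorem \ref{T:residue}) phrased at the level of currents.
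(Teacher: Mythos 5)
There is a genuine gap, and it sits exactly where the real content of the proof lies. Your endgame is correct: since holomorphic forms on a compact Kähler manifold are harmonic, $H^0(X,\Omega^2_X)$ injects into $H^2(X,\mathbb C)$, so it suffices to show the de Rham class of $d\omega$ vanishes. But both of your routes to that vanishing fail. First, holomorphicity of $d\omega$ does \emph{not} force $\omega$ to have only simple poles: $\omega = d(z_2/z_1)$ has a double pole along $\{z_1=0\}$ and $d\omega=0$; so $\omega$ is in general neither $L^1_{\mathrm{loc}}$ nor logarithmic after any modification, and "zero residues" would not reduce you to a holomorphic form. Second, and more seriously, the assertion that the poles "contribute nothing" at the level of currents is false, and Theorem \ref{T:residue} cannot be used to justify it: that theorem is about \emph{closed} meromorphic $1$-forms, which is precisely the conclusion you are after. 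The correct current-level identity (of Poincaré--Lelong type) is that $d$ of the principal-value current of $\omega$ equals $[d\omega]$ \emph{plus} ($\pm 2\pi i$ times) the residue-weighted integration currents along the polar components $H_i$ — already $d\left[\frac{dz}{z}\right]$ is a nonzero multiple of the integration current on $\{z=0\}$. So $d\omega$ is not exact as a current; its class in $H^2(X,\mathbb C)$ is a linear combination of the Chern classes $c(H_i)$. Your first route has, in addition, the circularity you yourself flagged: in this paper Proposition \ref{P:residueWeil} (realization of a residue divisor by a \emph{closed} logarithmic form) is proved \emph{using} Proposition \ref{P:closedness bis}, so it is unavailable here.

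The missing idea — which is the paper's actual argument — is to exploit that polar contribution rather than argue it away. Because $d\omega$ is holomorphic, the residues $\Res_{H_i}(\omega)$ are well-defined constants, and Stokes' theorem applied to $2$-cycles meeting the polar set transversely shows that $\int_\sigma d\omega$ is the residue-weighted combination of the intersection numbers $H_i\cdot\sigma$ (this is the cohomological shadow of the current identity above). Hence the class of $d\omega$ lies in $H^1(X,\Omega^1_X)\subset H^2(X,\mathbb C)$; at the same time it lies in $H^0(X,\Omega^2_X)$, and on a compact Kähler manifold these two pieces of the Hodge decomposition intersect only in $0$. The class therefore vanishes, and by harmonicity $d\omega=0$. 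No resolution of singularities, no reduction to the logarithmic case, and no realization theorem is needed.
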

\begin{proof}
    Notice that although
    $\omega$ is not closed a priori, the holomorphicity of $d\omega$ ensures that its residues along
    the irreducible components $H_i$ of its polar set are well-defined complex numbers. If
    $\sigma$ is a $2$-cycle  then Stoke’s Theorem implies that
    \[
        \int_{\sigma} d\omega = \sum \Res_{H_i}(\omega) \cdot  \int_{\sigma} c(H_i)
    \]
    It follows that the class of $d\omega$ in $H^2(X,\mathbb C)$ is a linear combination of the Chern classes
    of the hypersurfaces $H_i$. In particular, it is represented by an element of $H^{1}(X, \Omega^1_X) \subset H^2(X,\mathbb C)$.  At the same time, $d\omega$ is  a closed holomorphic 2-form, and therefore its class lies  in $H^0(X,\Omega^2_X)$.
    But the subspaces $H^1(X, \Omega^1_X)$ and $H^0(X, \Omega^2_X)$ of $H^2(X,\mathbb C)$ intersect only at zero since $X$ is Kähler compact.
    It follows that $d \omega=0$ as claimed.
\end{proof}

\begin{cor}\label{P:log closed}
    Let $D$ be a simple normal crossing divisor on a compact Kähler manifold. If $\omega \in H^0(X, \Omega^1_X(\log D))$ then $d \omega =0$.
\end{cor}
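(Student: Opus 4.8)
The plan is to prove that $d\omega$ is holomorphic and then invoke Proposition \ref{P:closedness bis}. Write $D = \sum_{i=1}^k H_i$. Since $D$ is simple normal crossing, the exact sequence (\ref{E:residue exact}) shows that for each $i$ the residue $\Res_{H_i}(\omega)$ is a \emph{holomorphic} function on $H_i$; as $X$ is compact, each $H_i$ is a compact complex manifold, so $\Res_{H_i}(\omega) \in H^0(H_i,\mathcal O_{H_i})$ is locally constant.

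Next I would argue locally. Fix a point $p \in D$ and local coordinates $(x_1,\dots,x_n)$ centered at $p$ with $D = \{x_1\cdots x_k = 0\}$ in a neighbourhood of $p$. By Saito's local description of logarithmic forms on a normal crossing divisor one can write $\omega = \sum_{i=1}^k c_i \tfrac{dx_i}{x_i} + \eta$ with $c_i, \eta$ holomorphic, and $\Res_{H_i}(\omega)$ is represented by $c_i|_{\{x_i=0\}}$. By the previous paragraph this restriction is a constant $\lambda_i$ near $p$, hence $c_i = \lambda_i + x_i\,\widetilde c_i$ with $\widetilde c_i$ holomorphic. Absorbing $\sum_i \widetilde c_i\, dx_i$ into the holomorphic part yields $\omega = \sum_{i=1}^k \lambda_i \tfrac{dx_i}{x_i} + \eta'$ with $\eta'$ holomorphic, so $d\omega = d\eta'$ is holomorphic in a neighbourhood of $p$. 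Since $p\in D$ was arbitrary and $d\omega$ is visibly holomorphic on $X - D$, we conclude that $d\omega$ is a global holomorphic $2$-form.

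To finish, apply Proposition \ref{P:closedness bis} to the meromorphic $1$-form $\omega$ on the compact Kähler manifold $X$: since $d\omega$ is holomorphic, $\omega$ is closed, i.e. $d\omega = 0$.

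There is essentially no hard step here; the one point to be careful about is that the locally defined functions $c_i|_{\{x_i=0\}}$ genuinely patch together to the global section $\Res_{H_i}(\omega) \in H^0(H_i,\mathcal O_{H_i})$, which is exactly what allows the polar part of $\omega$ to be removed. Alternatively, one can bypass coordinates: the residue map commutes with exterior differentiation, so $\Res_{H_i}(d\omega) = -\,d\,\Res_{H_i}(\omega) = 0$ because $\Res_{H_i}(\omega)$ is locally constant on the compact manifold $H_i$; hence $d\omega$ has no poles and one concludes as before.
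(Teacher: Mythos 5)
Your argument is correct and is essentially the paper's own proof: the paper likewise uses the exact sequence (\ref{E:residue exact}) to see that the residues along the components of $D$ are constant, notes by a local computation that $d\omega$ is then holomorphic, and concludes via Proposition \ref{P:closedness bis}. You have simply written out the local computation (the Saito normal form and the absorption of $x_i\,\widetilde c_i\,dx_i$ into the holomorphic part) that the paper leaves implicit.
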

\begin{proof}
    It follows from the Exact Sequence (\ref{E:residue exact}) that the residues of $\omega$ along the irreducible components of $D$ are
    constant. A local computation shows that $d \omega$ is holomorphic and we can apply Proposition \ref{P:closedness bis} to conclude.
\end{proof}

The result above is a particular case of a theorem by Deligne which says that every logarithmic $p$-form  with poles on a simple normal crossing divisor on a compact Kähler manifold is  closed, \cite[Corollary 3.2.14]{MR498551}.

\subsection{Logarithmic $1$-forms with prescribed residues and $1$-forms of the second kind} \label{SS:realizalog}
Our next  result below, already mentioned in Subsection \ref{SS:residue},
is due to Weil and appears in the same work \cite{Weil47} where he proved the residue theorem.

\begin{prop}\label{P:residueWeil}
    If $X$ is a compact Kähler manifold then a divisor $R \in \Div(X)\otimes \mathbb C$
    with zero Chern class is the residue divisor of a closed logarithmic $1$-form.
\end{prop}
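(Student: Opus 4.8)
The plan is to build $\omega$ by patching the obvious local models and controlling the obstruction with a bit of Hodge theory. Write $R=\sum_{i=1}^{N}\lambda_i H_i$ with the $H_i$ distinct irreducible hypersurfaces and $\lambda_i\in\mathbb C$, and fix an open covering $\mathcal U=\{U_\alpha\}$ of $X$ fine enough that each $H_i$ is cut out on $U_\alpha$ by a single holomorphic function $h^\alpha_i\in\mathcal O_X(U_\alpha)$ (a unit when $H_i\cap U_\alpha=\emptyset$). On each $U_\alpha$ the closed logarithmic $1$-form $\omega_\alpha=\sum_{i=1}^{N}\lambda_i\,dh^\alpha_i/h^\alpha_i$ satisfies $\Res_{H_i}(\omega_\alpha)=\lambda_i$. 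The entries of the $1$-cochain $\{\omega_\alpha-\omega_\beta\}$ are the \emph{closed holomorphic} $1$-forms $\omega_\alpha-\omega_\beta=\sum_i\lambda_i\,d\log(h^\alpha_i/h^\beta_i)$, so it is a $1$-cocycle with values in the sheaf $Z^1_X$ of closed holomorphic $1$-forms and determines a class $\zeta\in H^1(X,Z^1_X)$. I will show $\zeta=0$; then $\omega_\alpha-\omega_\beta=\tau_\alpha-\tau_\beta$ for suitable $\tau_\alpha\in Z^1_X(U_\alpha)$, the forms $\omega_\alpha-\tau_\alpha$ patch into a global closed logarithmic $1$-form $\omega$, and $\Res(\omega)=R$ by construction.

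To pin down $\zeta$, I would use the short exact sequence $0\to\mathbb C_X\to\mathcal O_X\xrightarrow{d}Z^1_X\to 0$ (right-exactness being the holomorphic Poincaré lemma) and its connecting map $\delta\colon H^1(X,Z^1_X)\to H^2(X,\mathbb C)$. This sequence receives a morphism from the exponential sequence $0\to\mathbb Z_X\to\mathcal O_X\to\mathcal O_X^{*}\to 0$ (with $f\mapsto e^{2\pi\sqrt{-1}f}$) via $\mathbb Z_X\hookrightarrow\mathbb C_X$, $\operatorname{id}_{\mathcal O_X}$, and $\tfrac{1}{2\pi\sqrt{-1}}d\log\colon\mathcal O_X^{*}\to Z^1_X$; the right square commutes because $\tfrac{1}{2\pi\sqrt{-1}}d\log(e^{2\pi\sqrt{-1}f})=df$. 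Consequently $\delta$ carries the image of a line bundle $\mathcal L$ under the induced map $H^1(X,\mathcal O_X^{*})\to H^1(X,Z^1_X)$ to the (complex) first Chern class of $\mathcal L$ in $H^2(X,\mathbb C)$. Since $\zeta$ equals, up to a power of $2\pi\sqrt{-1}$ and an overall sign fixed by the divisor-line-bundle convention, $\sum_i\lambda_i$ times the image of $\mathcal O_X(H_i)$, we get $\delta(\zeta)=\pm 2\pi\sqrt{-1}\sum_i\lambda_i\,c(H_i)=\pm 2\pi\sqrt{-1}\,c(R)=0$, using $c(H_i)=c_1(\mathcal O_X(H_i))$ and the hypothesis $c(R)=0$.

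Finally, $\delta$ is injective: the long exact sequence of $0\to\mathbb C_X\to\mathcal O_X\to Z^1_X\to 0$ gives $\ker\delta=\operatorname{im}\bigl(H^1(X,\mathcal O_X)\to H^1(X,Z^1_X)\bigr)$, and this image vanishes precisely because $H^1(X,\mathbb C)\to H^1(X,\mathcal O_X)$ is surjective on the compact Kähler manifold $X$ — the $p=0$ instance of the surjectivity (\ref{E:iso closed}), with $Z^0_X=\mathbb C_X$ and $\Omega^0_X=\mathcal O_X$. Hence $\zeta=0$ and the construction succeeds; the statement is the converse to Theorem \ref{T:residue} mentioned in the remark after its proof. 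I expect the main obstacle to be exactly the middle step — correctly identifying $\delta(\zeta)$ with $c(R)$ by comparing the closed-holomorphic-form sequence with the exponential sequence — while the local construction of the $\omega_\alpha$ and the patching at the end are routine.
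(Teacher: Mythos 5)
Your argument is correct, and it reaches the statement by a genuinely different (though closely related) route from the paper. Both proofs patch the obvious local models $\sum_i\lambda_i\,dh_i/h_i$, but the paper takes the difference cocycle with values in $\Omega^1_X$, observes (as in the proof of Proposition \ref{P:residueconstant}) that it represents $c(R)$ inside $H^1(X,\Omega^1_X)\subset H^2(X,\mathbb C)$, hence vanishes, and corrects by \emph{a priori non-closed} holomorphic $1$-forms; the price is that the glued logarithmic form is not obviously closed, and closedness is then supplied by Proposition \ref{P:closedness bis} (i.e.\ $H^{2,0}\cap H^{1,1}=0$ on a compact K\"ahler manifold). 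You instead keep the cocycle in the sheaf $Z^1_X$ of closed holomorphic $1$-forms, identify its obstruction class via the connecting map of $0\to\mathbb C_X\to\mathcal O_X\to Z^1_X\to 0$ and the comparison with the exponential sequence, and kill it using $c(R)=0$ together with the injectivity of $\delta$, which comes from the surjectivity of $H^1(X,\mathbb C)\to H^1(X,\mathcal O_X)$ — the $p=0$ case of (\ref{E:iso closed}); closedness of the glued form is then automatic. So the K\"ahler hypothesis enters through a different piece of Hodge theory in each proof, and your version avoids Proposition \ref{P:closedness bis} at the cost of the diagram chase identifying $\delta(\zeta)$ with $c(R)$; it is in fact the same mechanism the paper uses later in the proof of Theorem \ref{T:realizaII}. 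Two small points you are implicitly using and could make explicit: the identification $c(H_i)=c_1(\mathcal O_X(H_i))$ under the paper's intersection-theoretic definition of $c$ (harmless, and also used in Proposition \ref{P:residueconstant}), and the fact that a class in $\check H^1(\mathcal U,Z^1_X)$ vanishing in $H^1(X,Z^1_X)$ is already a coboundary on $\mathcal U$ (or simply pass to a refinement). Neither affects the validity of the proof.
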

\begin{proof}
    Let $\mathcal U= \{ U_i\}$ be a sufficiently fine
    open covering of $X$ and let $\eta_i$ be closed logarithmic $1$-forms such that
    $\Res(\eta_i) = \restr{R}{U_i}$. The cocycle $\{ \eta_i - \eta_j\} \in Z^1(\mathcal U, \Omega^1_X)$
    represents the Chern class of $R$ in $H^1(X,\Omega^1_X) \subset H^2(X,\mathbb C)$ as explained in the proof of Proposition \ref{P:residueconstant}. Since this class is zero by assumption, there exists $\{ \omega_i\} \in C^0(\mathcal U, \Omega^1_X)$
    such that $\eta_i - \omega_i = \eta_j - \omega_j$ over non-empty intersections $U_i \cap U_j$. This shows the existence of a logarithmic $1$-form $\eta$ with constant residues and residue divisor equal to $R$. Proposition \ref{P:closedness bis} implies that $\eta$ is closed.
\end{proof}

Adhering to the classical terminology on the subject, we will say that a closed meromorphic $1$-form $\omega$
is of the second kind if $\Res(\omega)=0$.

\begin{cor}
    Let $X$ be a compact Kähler manifold and let $\omega$ be a closed meromorphic $1$-form
    on $X$. Then there exists a unique closed logarithmic $1$-form $\omega_{\log}$ and
    a unique closed meromorphic $1$-form $\omega_{\II}$ of the second kind with anti-holomorphic
    periods such that
    \[
        \omega = \omega_{\log} + \omega_{\II} \, .
    \]
\end{cor}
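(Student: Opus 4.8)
The plan is to produce the decomposition by first extracting a logarithmic part that carries all the residues, and then checking that what remains is of the second kind with anti-holomorphic periods, and finally to verify uniqueness. First I would apply Theorem \ref{T:residue} to conclude that the residue divisor $R := \Res(\omega)$ has zero Chern class. Proposition \ref{P:residueWeil} then yields a \emph{closed} logarithmic $1$-form $\omega_{\log}$ with $\Res(\omega_{\log}) = R$. Set $\omega_{\II} := \omega - \omega_{\log}$. Since residues are additive and both $\omega$ and $\omega_{\log}$ have residue divisor $R$, the form $\omega_{\II}$ is a closed meromorphic $1$-form with $\Res(\omega_{\II}) = 0$, i.e. of the second kind. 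This already produces a decomposition; what remains is to arrange (and pin down uniquely) the condition on periods.

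Next I would address the period condition. A closed meromorphic $1$-form of the second kind has a well-defined class in $H^1(X,\mathbb C)$ (integrate over $1$-cycles avoiding the polar set; since there are no residues the result is independent of the representative in the homology class). By the Hodge decomposition recalled in Subsection \ref{SS:decomposition}, write this class as $\alpha + \overline{\beta}$ with $\alpha, \beta \in H^0(X,\Omega^1_X)$; holomorphic $1$-forms are closed of the second kind (with holomorphic, hence in particular trivially realizable, periods). Replacing $\omega_{\log}$ by $\omega_{\log} + \alpha$ — still a closed logarithmic $1$-form with the same residue divisor $R$, since a holomorphic $1$-form has no residues — changes $\omega_{\II}$ to $\omega_{\II} - \alpha$, whose class in $H^1(X,\mathbb C)$ is now $\overline{\beta}$, i.e. anti-holomorphic. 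This proves existence.

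For uniqueness, suppose $\omega_{\log} + \omega_{\II} = \omega_{\log}' + \omega_{\II}'$ with both summands of the prescribed types. Then $\omega_{\log} - \omega_{\log}' = \omega_{\II}' - \omega_{\II}$ is simultaneously a closed logarithmic $1$-form with zero residue divisor (residues cancel) and a closed meromorphic $1$-form of the second kind; being logarithmic with no residues, the left-hand side is in fact a \emph{holomorphic} $1$-form (locally $\Omega^1_X(\log D)$ with all residues vanishing lies in $\Omega^1_X$). Its class in $H^1(X,\mathbb C)$ therefore lies in $H^0(X,\Omega^1_X)$; but as a difference of two $1$-forms with anti-holomorphic periods, that class lies in $\overline{H^0(X,\Omega^1_X)}$. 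Since $X$ is compact Kähler these two subspaces of $H^1(X,\mathbb C)$ meet only in $0$, so the holomorphic $1$-form $\omega_{\log} - \omega_{\log}'$ has zero periods, hence is exact; an exact holomorphic $1$-form on a compact manifold vanishes (its primitive is a global holomorphic function, hence constant). Thus $\omega_{\log} = \omega_{\log}'$ and $\omega_{\II} = \omega_{\II}'$.

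The main obstacle — really the only subtle point — is making the period condition precise and checking it interacts correctly with the logarithmic freedom: one must confirm that modifying $\omega_{\log}$ by a holomorphic $1$-form is the \emph{only} ambiguity (which is exactly the uniqueness argument above), and that a logarithmic $1$-form with vanishing residues is genuinely holomorphic, so that the Hodge-theoretic transversality of $H^0(X,\Omega^1_X)$ and $\overline{H^0(X,\Omega^1_X)}$ in $H^1(X,\mathbb C)$ can be invoked. Everything else is a bookkeeping of residues and cohomology classes.
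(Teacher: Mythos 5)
Your proposal is correct and follows essentially the same route as the paper: use Theorem \ref{T:residue} and Proposition \ref{P:residueWeil} to split off a closed logarithmic form carrying the residues, then absorb the holomorphic component of the class of the second-kind part in $H^1(X,\mathbb C)$ into the logarithmic summand so that the remainder has anti-holomorphic periods. The only difference is that you also write out the uniqueness argument (a closed logarithmic $1$-form with vanishing residues is holomorphic, and $H^0(X,\Omega^1_X)\cap\overline{H^0(X,\Omega^1_X)}=0$ on a compact K\"ahler manifold), which the paper leaves implicit; that part is also correct.
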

\begin{proof}
    Let $R= \Res(\omega)$ be the residue divisor of $\omega$. According to Theorem \ref{T:residue},
    $R$ has zero Chern class. Proposition \ref{P:residueWeil} guarantees the existence of a closed
    logarithmic $1$-form $\eta$ with $R=\Res(\eta)$. The difference $\omega - \eta$ is a closed
    meromorphic $1$-form of the second kind. As such, it represents a class in $H^1(X,\mathbb C)$.
    By subtracting the holomorphic component of $\omega - \eta$ from it, we may assume that it has
    anti-holomorphic periods and set $\omega_{\II}$ equal to it. To conclude it suffices to take $\omega_{\log} = \omega - \omega_{\II}$.
\end{proof}

\subsection{Closed meromorphic $1$-forms with prescribed irregular divisor}\label{SS:realizairregular}
We now present the result mentioned  in Subsection \ref{SS:irregular}, which characterizes the irregular divisors
of closed meromorphic $1$-forms with simple singularities.

\begin{thm}\label{T:realizaII}
    Let $X$ be a compact Kähler manifold and let $I \in \Div(X)$ be a simple normal crossing effective divisor satisfying the conclusion of Proposition  \ref{P:irregular}:
    \[
        \mathcal O_X(I) \otimes \frac{\mathcal O_X}{\mathcal O_X(-I)} \simeq \frac{\mathcal O_X}{\mathcal O_X(-I)} \, .
    \]
    Then there exists a closed meromorphic $1$-form $\omega$ of the second kind and without base points such that $(\omega)_{\infty} = I + I_{red}$.
\end{thm}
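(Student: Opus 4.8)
\emph{Proof strategy.} The plan is to patch together the obvious local solutions: near a point of $|I|$ the form we want is $d(1/f)$ for $f$ a local equation of $I$, and the hypothesis on the normal bundle is precisely what makes the discrepancies between these local models \emph{holomorphic}, after which basic Hodge theory kills the patching obstruction.

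First I would fix a finite covering $\mathcal U=\{U_i\}$ of $X$ by contractible Stein open sets such that on each $U_i$ meeting $|I|$ the divisor $I$ is a union of coordinate hyperplanes with multiplicities, cut out by some $f_i\in\mathcal O_X(U_i)$, while $f_i$ is a nowhere vanishing function on the $U_i$ disjoint from $|I|$; then $\{g_{ij}=f_i/f_j\}\in Z^1(\mathcal U,\mathcal O^*_X)$ represents $\mathcal O_X(I)$. The isomorphism $\mathcal O_X(I)\otimes(\mathcal O_X/\mathcal O_X(-I))\simeq\mathcal O_X/\mathcal O_X(-I)$ says exactly that the image of $\{g_{ij}\}$ in $H^1(\mathcal U,(\mathcal O_X/\mathcal O_X(-I))^*)$ is trivial, so $g_{ij}\equiv \bar u_i/\bar u_j\pmod{\mathcal O_X(-I)}$ for units $\bar u_i$ of $\mathcal O_X/\mathcal O_X(-I)$ over $U_i$. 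Lifting $\bar u_i$ to $u_i\in\mathcal O_X(U_i)$ (possible, since $U_i$ is Stein), the function $u_i$ is nowhere zero on $|I|\cap U_i$; refining $\mathcal U$ so that each chart is contained in the nonvanishing locus of the relevant $u_i$ and replacing $f_i$ by $f_i/u_i$, I may and do assume $g_{ij}\equiv 1\pmod{\mathcal O_X(-I)}$.

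With this normalization, put $\alpha_i:=d(1/f_i)$. A direct local computation shows that $\alpha_i$ is a closed meromorphic $1$-form on $U_i$ with vanishing residues and with polar divisor $(I+I_{red})|_{U_i}$ (if $f_i$ vanishes to order $n$ along a branch of $I$, then $d(1/f_i)$ has a pole of order $n+1$ along that branch). On overlaps $\alpha_i-\alpha_j=db_{ij}$ with $b_{ij}:=1/f_i-1/f_j=(1-g_{ij})/f_i$, which is \emph{holomorphic} thanks to the normalization; moreover $\{b_{ij}\}$ is a genuine \v{C}ech cocycle, being the coboundary of the $0$-cochain $\{-1/f_i\}$ of meromorphic functions, so it defines a class $[\{b_{ij}\}]\in H^1(X,\mathcal O_X)$. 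The cocycle $\{\alpha_i-\alpha_j\}=\{db_{ij}\}$ consists of closed holomorphic $1$-forms and represents the image of $[\{b_{ij}\}]$ under the map $H^1(X,\mathcal O_X)\to H^1(X,Z^1_X)$ induced by $d\colon\mathcal O_X\to Z^1_X$, where the holomorphic Poincaré lemma gives the short exact sequence $0\to\mathbb C_X\to\mathcal O_X\xrightarrow{\,d\,}Z^1_X\to0$. On a compact Kähler manifold $H^1(X,\mathbb C)=H^1(X,Z^0_X)\to H^1(X,\mathcal O_X)$ is surjective — this is the case $p=0$ of the surjectivity recorded in Equation \eqref{E:iso closed}, since $Z^0_X=\mathbb C_X$ — so the long exact sequence forces $H^1(X,\mathcal O_X)\to H^1(X,Z^1_X)$ to be the zero map. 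Hence, after refining $\mathcal U$ once more if needed, there are closed holomorphic $1$-forms $\gamma_i\in H^0(U_i,Z^1_X)$ with $\alpha_i-\alpha_j=\gamma_i-\gamma_j$, and the local forms $\alpha_i-\gamma_i$ glue to a global closed meromorphic $1$-form $\omega$.

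Finally I would verify the three required properties, all of which are local. Writing $\omega|_{U_i}=d(1/f_i)-\gamma_i$ with $\gamma_i$ holomorphic, $\omega$ has the same residues and the same polar divisor as $d(1/f_i)$ over $U_i$; thus $\Res(\omega)=0$ (so $\omega$ is of the second kind) and $(\omega)_\infty=I+I_{red}$. For the base locus, shrink $U_i$ so that $\gamma_i=dh_i$ for some $h_i\in\mathcal O_X(U_i)$ (Poincaré lemma again); then $\omega|_{U_i}=d\!\big((1-f_ih_i)/f_i\big)$, and since $1=h_i\cdot f_i+(1-f_ih_i)$, the functions $1-f_ih_i$ and $f_i$ generate the unit ideal, so the base locus is empty and $\Base(\omega)=\mathcal O_X$. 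This produces the desired $\omega$; since its polar divisor is supported on the simple normal crossing divisor $|I|$ and its residues vanish, $\omega$ in fact has simple singularities with $\Irr(\omega)=I$, making the theorem an exact converse to Proposition \ref{P:irregular}. I expect the only genuinely delicate points to be the normalization $g_{ij}\equiv 1\pmod{\mathcal O_X(-I)}$ (lifting units across $\mathcal O_X\to\mathcal O_X/\mathcal O_X(-I)$) and the identification of the patching obstruction as the image of a class from $H^1(X,\mathcal O_X)$, which is exactly what allows the Hodge-theoretic vanishing to finish the proof.
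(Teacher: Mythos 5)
Your proposal is correct and takes essentially the same route as the paper: normalize local equations of $I$ so that $f_i=(1+f_j r_{ij})f_j$, note that the differences $1/f_i-1/f_j$ are holomorphic and give a class in $H^1(X,\mathcal O_X)$, and use the Kähler surjectivity recorded in \eqref{E:iso closed} to correct the local forms $d(1/f_i)$ by closed holomorphic $1$-forms so they glue. Your extra steps (deriving the normalized equations from the hypothesis via lifting units of $\mathcal O_X/\mathcal O_X(-I)$, the long exact sequence argument for the vanishing of $H^1(X,\mathcal O_X)\to H^1(X,Z^1_X)$, and the explicit check that the base locus is empty and $(\omega)_\infty=I+I_{red}$) simply make explicit what the paper leaves implicit.
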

\begin{proof}
    The assumption implies the existence of an open covering $\{U_i\}$ of $X$, holomorphic functions $f_i \in \mathcal O_X(U_i)$ defining $I$,
    and holomorphic functions $r_{ij} \in \mathcal O_X(U_i \cap U_j)$
    such that $f_{i} = \left( 1 + f_j \cdot r_{ij} \right) f_j$.
    This implies that the differences $a_{ij} = \frac{1}{f_i} - \frac{1}{f_j}$
    are holomorphic functions and determine a class $\{a_{ij}\}$ in $H^1(X, \mathcal O_X)$. The surjectivity of the morphisms presented in Equation  (\ref{E:iso closed})
    implies the existence of closed holomorphic $1$-forms $\alpha_i \in \Omega^1_X(U_i)$ such that $d a_{ij} = \alpha_i - \alpha_j$,
    see the proof of \cite[Theorem B]{MR3940902}. The closed meromorphic $1$-forms of the second kind
    \[
        d \left( \frac{1}{f_i} \right) - \alpha_i
    \]
    coincide at non-empty intersections $U_i \cap U_j \neq \emptyset$ and define the sought $1$-form.
\end{proof}

\begin{remark}\label{R:thmB}
    Theorem \ref{T:realizaII} is a particular case of \cite[Theorem B]{MR3940902}, see also \cite[Theorem B]{MR4333432}. These results, imply
    that if
    \[
        \mathcal O_X(I) \otimes \frac{\mathcal O_X}{\mathcal O_X(-I)} \simeq \mathcal L \otimes \frac{\mathcal O_X}{\mathcal O_X(-I)} \, .
    \]
    for some line-bundle $\mathcal L \in \Pic(X)$ with zero Chern class in $H^2(X,\mathbb Q)$ then there exists a global  meromorphic
    $1$-form $\omega$ with coefficients in $\mathcal L^*$, polar divisor equal to $I+I_{red}$, no residues, and closed with respect to the
    unitary connection on $\mathcal L^*$. The proof is essentially the same.
\end{remark}

\section{Polar divisor of logarithmic 1-forms}\label{S:logarithmic}

\subsection{Hodge index theorem}
Let $X$ be a compact Kähler manifold of dimension $n$ and let $\theta$ be a Kähler form.
Consider the symmetric bilinear form on
$H^2(X,\mathbb R)$ defined by
\[
    (\alpha, \beta)_{\theta} = \int_X \alpha \wedge \beta \wedge \theta^{n-2}
\]
for any $\alpha, \beta \in H^2(X,\mathbb R)$.

\begin{lemma}\label{L:trivial}
    Let $X$ be a compact Kähler manifold with Kähler form $\theta$. If $D_1, D_2 \in \Div(X)$ are  non-zero effective divisors with disjoint
    supports such that $(c(D_1),c(D_1))_{\theta} \neq 0$ then $c(D_1)$ and $c(D_2)$ are linearly independent in $H^2(X,\mathbb R)$.
\end{lemma}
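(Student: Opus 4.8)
The plan is to argue by contradiction using the Hodge index theorem on $H^2(X,\mathbb R)$ equipped with the bilinear form $(\cdot,\cdot)_\theta$. Suppose $c(D_1)$ and $c(D_2)$ are linearly dependent; since $D_1$ is effective and non-zero and $(c(D_1),c(D_1))_\theta\neq 0$ in particular $c(D_1)\neq 0$, so we may write $c(D_2)=\lambda\, c(D_1)$ for some real $\lambda$. Because $D_2$ is also effective and non-zero, $\lambda$ cannot be zero (an effective non-zero divisor on a compact Kähler manifold has non-zero class, as pairing with $\theta^{n-1}$ gives a positive number); hence $\lambda>0$ — indeed $(c(D_2),\theta^{n-1})$ and $(c(D_1),\theta^{n-1})$ are both positive, forcing $\lambda>0$.

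Next I would compute the self-intersection-type number of the class of the disjoint union. Since $D_1$ and $D_2$ have disjoint supports, $c(D_1)\smile c(D_2)$ is represented by a form supported on $|D_1|\cap|D_2|=\emptyset$, so $(c(D_1),c(D_2))_\theta = 0$: the cup product of the two classes wedged with $\theta^{n-2}$ integrates to zero because it can be computed, via a Poincaré-dual current/cycle representative for $D_1$, as an integral over $|D_1|$ of a form that vanishes in a neighbourhood of $|D_1|$ (one can, e.g., choose a smooth representative of $c(D_2)$ supported near $|D_2|$). Then
\[
    0 = (c(D_1), c(D_2))_\theta = \lambda\,(c(D_1),c(D_1))_\theta,
\]
and since $\lambda>0$ this forces $(c(D_1),c(D_1))_\theta = 0$, contradicting the hypothesis. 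Therefore $c(D_1)$ and $c(D_2)$ are linearly independent.

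The main point to be careful about — the only genuine obstacle — is justifying that $(c(D_1),c(D_2))_\theta=0$ when the supports are disjoint. This is the step where one must invoke that the intersection product of the Chern classes of divisors is computed by a local geometric intersection: one replaces $c(D_2)$ by a closed $(1,1)$-current (or smooth form in its class) supported in an arbitrarily small neighbourhood $U$ of $|D_2|$, chooses $U$ disjoint from $|D_1|$, and pairs against a representative of $c(D_1)$ supported away from $U$; the wedge product then vanishes pointwise. Alternatively, in the algebraic/Kähler setting one can cite that for divisors with disjoint support the intersection number $D_1\cdot D_2\cdot\theta^{n-2}$ vanishes. Everything else is elementary: positivity of $(c(D),\theta^{n-1})$ for effective non-zero $D$ (which is standard, since locally $c(D)$ is represented by a semipositive form that is strictly positive somewhere near $|D|$, or simply because $D$ meets a generic complete intersection curve of $\theta$ in a non-negative, and here positive, number of points) to pin down the sign of $\lambda$, and then a one-line algebraic manipulation.
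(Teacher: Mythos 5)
Your proposal is correct and follows essentially the same argument as the paper: effectiveness gives non-vanishing of the classes (via pairing with $\theta^{n-1}$), disjointness of supports gives $(c(D_1),c(D_2))_\theta=0$, and linear dependence $c(D_2)=\lambda c(D_1)$ with $\lambda\neq 0$ then contradicts $(c(D_1),c(D_1))_\theta\neq 0$. The extra observation that $\lambda>0$ is harmless but not needed; $\lambda\neq 0$ suffices, exactly as in the paper.
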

\begin{proof}
    The effectiveness of $D_i$ implies that $(c(D_i),\theta)_\theta = \int_{D_1} \theta^{n-1} \neq 0$. Therefore both $c(D_1)$ and $c(D_2)$ are non-zero
    elements of $H^2(X,\mathbb R) \cap H^{1,1}(X)$. Since the supports of $D_1$ and $D_2$ are disjoint, we have that $(c(D_1),c(D_2))_{\theta}=0$.  If $c(D_1)$ and  $c(D_2)$ are linearly dependent then we would get $c(D_2) = \lambda c(D_1)$ for some $\lambda \neq 0$. This leads to the
    contradiction $0 = (c(D_1),c(D_2))_\theta = \lambda (c(D_1), c(D_1))_\theta \neq 0$.
\end{proof}

The Hodge index theorem \cite[Theorem 6.33]{MR1988456} implies that the restriction
of the symmetric form $(\cdot, \cdot)_{\theta}$ to $H^2(X,\mathbb R) \cap H^{1,1}(X)$ is non-degenerate and has signature equal to $(1,h^1(X,\Omega^1_X)-1)$, \ie one positive eigenvalue and $h^1(X,\Omega^1_X)-1$ negative eigenvalues.

\begin{dfn}
    Let $R \in \Div(X)$ be  a divisor (or a $\mathbb C$-divisor) on compact Kähler manifold $X$ and let $H_1, \ldots, H_k$ be the irreducible components of the support of    $R$, \ie
    \[
        R = \sum_{i=1}^k \lambda_i H_i
    \]
    with $\lambda_i \neq 0$ for every $i$. The intersection matrix of $R$ (with respect to $\theta$) is, by definition, the $k \times k$ symmetric matrix with entry $(i,j)$ equal to $(c(H_i), c(H_j))_{\theta}$.
\end{dfn}

\begin{lemma}\label{L:Hodgeindex}
    Let $X$ be a compact Kähler manifold with a Kähler form $\theta$. Let $R = \sum_{i=1}^k R_i \in \Div(X) \otimes \mathbb C$ be a divisor which is a sum of $k\ge 1$ non-zero divisors $R_i \in \Div(X) \otimes \mathbb C$ with connected and pairwise disjoint supports.
    If $c(R) =0$ then, for every $i$, there exists a non-zero effective divisor $D_i \in \Div(X)$ (integral coefficients) such that
    $(D_i,D_i)_{\theta} \ge 0$ and with support contained in the support of $R_i$. Moreover, we have the following alternative.
    \begin{enumerate}
        \item\label{I:Hodge 1} If the intersection matrix of $R$ is not semi-definite negative then the support of $R$ is connected.
        \item\label{I:Hodge 2} Otherwise, the support of $R$ is not connected, the number of zero eigenvalues of the intersection matrix
         of $R$ is exactly the number $k$ of connected components of the support of $R$, and there exists complex numbers $\lambda_i$ such that $R_i = \lambda_i D_i$.
    \end{enumerate}
\end{lemma}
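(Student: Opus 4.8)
The plan is to exploit the Hodge index theorem together with Theorem \ref{T:residue} (the residue theorem forces $c(R)=0$) and the disjointness hypothesis. First I would record the key sign-constraint: since the $R_i$ have pairwise disjoint supports, the intersection matrix of $R$ is block diagonal, with the $i$-th block being the intersection matrix of $R_i$. Because each $R_i$ has connected support, I would first produce the effective divisors $D_i$: write $R_i = \sum_j \lambda_{ij} H_{ij}$ with the $H_{ij}$ the (connected!) irreducible components, and argue as follows. The relation $c(R)=0$ together with the block structure gives $c(R_i)$ is constrained; more usefully, by the definition of the residue divisor and Proposition \ref{P:residueWeil}, one can realize each $R_i$ (or rather reconstruct the situation via Lemma \ref{L:trivial}), but the cleanest route is: take $D_i$ to be \emph{any} nonzero effective divisor supported on $|R_i|$, e.g. $D_i = \sum_j H_{ij}$, and show $(D_i,D_i)_\theta \ge 0$ by a negativity-dichotomy argument on the connected intersection matrix of $R_i$.

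The heart of the matter is the following linear-algebra/geometry input. Consider the $\mathbb R$-span $W_i \subset H^{1,1}(X)\cap H^2(X,\mathbb R)$ of the classes $c(H_{ij})$. By the Hodge index theorem, the form $(\cdot,\cdot)_\theta$ on $H^{1,1}(X)\cap H^2(X,\mathbb R)$ has exactly one positive eigenvalue. Restricted to $W_i$, it is therefore either negative semidefinite, or has signature $(1,\text{something})$. In the first case the intersection matrix of $R_i$ is negative semidefinite; in the second case there is a class of positive square in $W_i$, and I would invoke a standard fact (the analogue of the Grauert/Mumford criterion, or more elementarily the Perron--Frobenius argument applied to the connected nonnegative off-diagonal matrix $((c(H_{ij}),c(H_{ij'}))_\theta)_{j,j'}$ after sign-normalization) to conclude that $(D_i,D_i)_\theta \ge 0$ for the effective divisor $D_i=\sum_j H_{ij}$, indeed $(D_i,D_i)_\theta > 0$ when the block is not negative semidefinite. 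In the negative semidefinite case one instead uses that $c(R_i)$ lies in the kernel of the form restricted to $W_i$ — this forces each $R_i$ to be a complex multiple of an effective divisor $D_i$ with $(D_i,D_i)_\theta = 0$, by the same connectedness (a negative semidefinite connected intersection matrix has a one-dimensional kernel spanned by a positive vector). So in either case the required $D_i$ exists with $(D_i,D_i)_\theta \ge 0$.

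For the dichotomy: suppose the intersection matrix of $R$ is \emph{not} negative semidefinite. Then some block, say that of $R_1$, is not negative semidefinite, so by the above $(D_1,D_1)_\theta > 0$. Now suppose for contradiction that $|R|$ is disconnected, i.e. $k \ge 2$. Then $D_1$ and $D_2$ are nonzero effective divisors with disjoint supports, and $(c(D_1),c(D_1))_\theta \ne 0$, so Lemma \ref{L:trivial} gives that $c(D_1), c(D_2)$ are linearly independent. But in $H^{1,1}(X)\cap H^2(X,\mathbb R)$ the class $c(D_1)$ has positive square while $c(D_2)$ is orthogonal to it and, by Hodge index (signature $(1,*)$), any subspace containing a positive-square vector splits orthogonally with the positive direction one-dimensional — hence $c(D_2)$ has $(c(D_2),c(D_2))_\theta < 0$, so in particular $c(D_2) \ne 0$, which is fine, no contradiction yet. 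The contradiction instead must come from $c(R)=0$: write $c(R) = \sum_i c(R_i) = 0$ with the $c(R_i)$ in pairwise orthogonal subspaces $W_i$; orthogonality forces each $c(R_i)=0$; but $c(R_1)=0$ with $R_1$ having an intersection block that is not negative semidefinite contradicts the injectivity statement hidden in Lemma \ref{L:trivial} — more precisely, $c(R_1)=0$ and the $H_{1j}$ effective with connected union would, via the positive-square class $c(D_1)$ in $W_1$, violate negativity. This is the step I expect to be the main obstacle: pinning down exactly why $c(R_i)=0$ is incompatible with a non-negative-semidefinite connected block, and conversely extracting $R_i = \lambda_i D_i$ in the semidefinite case. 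I would handle it by the one-dimensional-kernel property of connected negative semidefinite symmetric matrices with nonnegative off-diagonal entries (Perron--Frobenius on $-A$ after the substitution $A \mapsto DAD$ with a diagonal sign matrix $D$), which simultaneously gives statement (\ref{I:Hodge 2}) — the number of zero eigenvalues equals the number of connected components — and the proportionality $R_i = \lambda_i D_i$, while the failure of negative semidefiniteness in statement (\ref{I:Hodge 1}) forces, through $c(R)=0$ and the orthogonal decomposition, that there can be only one block, i.e. connectedness.
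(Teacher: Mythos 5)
Your proposal misses the step that carries the real content of the lemma: the passage from the complex divisor $R$ to an \emph{integral} divisor with zero Chern class and the same support. The lemma demands a non-zero effective $D_i \in \Div(X)$ with integral coefficients (and, in case (\ref{I:Hodge 2}), $R_i = \lambda_i D_i$ with that integral $D_i$), whereas your Perron--Frobenius argument can only produce effective $\mathbb R$-divisors: the entries $(c(H_{ij}),c(H_{ij'}))_\theta$ are arbitrary real numbers ($\theta$ is just a Kähler form), so the positive vector spanning the kernel of a connected negative semidefinite block has no reason to be proportional to a rational one, and when the relevant square is exactly $0$ you cannot fix this by perturbation. The paper instead uses the hypothesis $c(R)=0$ through rationality: the Chern-class map $\varphi\colon \bigoplus_i \mathbb Q\, H_i \to H^2(X,\mathbb Q)$ is defined over $\mathbb Q$ and its complexified kernel contains $R$, whose coefficients are all non-zero; hence there is an \emph{integral} divisor $D$ with $c(D)=0$ and $|D|=|R|$. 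Writing $D=\sum_i (D_{i,+}-D_{i,-})$ and setting $D_i = D_{i,+}+D_{i,-}$ gives $(D_i,D_i)_\theta = (D_{i,+}-D_{i,-},D_{i,+}-D_{i,-})_\theta + 4(D_{i,+},D_{i,-})_\theta \ge 0$, since the first term vanishes (disjointness of the blocks plus $c(D)=0$) and the second is nonnegative. Your concrete shortcut $D_i=\sum_j H_{ij}$ is simply false: take two curves on a surface with $H_1^2=5$, $H_2^2=-100$, $H_1\cdot H_2=1$; the block is not negative semidefinite, yet $(H_1+H_2)^2<0$.

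The dichotomy argument also contains a wrong step and stops short of the actual contradiction. The claim ``orthogonality forces each $c(R_i)=0$'' fails because the subspaces $W_i$ may intersect non-trivially in $H^2(X,\mathbb R)$: for two disjoint fibres of a fibration and $R=F_1-F_2$ one has $c(R)=0$ but $c(R_i)\neq 0$; and your fallback (``violate negativity via Lemma \ref{L:trivial}'') is, as you admit, not an argument. The correct contradiction for item (\ref{I:Hodge 1}) is the one you had in hand and discarded: if the matrix is not negative semidefinite there is a divisor $E$ supported on one connected piece with $(E,E)_\theta>0$, and if $|R|$ were disconnected then $E$ and some $D_j$ from another piece are non-zero, orthogonal (disjoint supports), with squares $>0$ and $\ge 0$; by Lemma \ref{L:trivial} their classes are independent, so the form would be positive semidefinite with a positive direction on a two-dimensional subspace of $H^{1,1}(X)\cap H^2(X,\mathbb R)$, impossible for the signature $(1,h^{1}(X,\Omega^1_X)-1)$ of the Hodge index theorem. (In particular the conclusion ``$(c(D_2),c(D_2))_\theta<0$'' you reached already contradicts $(D_2,D_2)_\theta\ge 0$.) Finally, two pieces of item (\ref{I:Hodge 2}) are not addressed: that the support of $R$ is \emph{dis}connected in the semidefinite case (otherwise $R$ would be a complex multiple of an effective divisor, whose Chern class cannot vanish), and that each block is degenerate — which is what makes the number of zero eigenvalues exactly $k$ — because the real and imaginary parts of $R_i$ pair to zero with every component of $|R_i|$ (this again uses $c(R)=0$ and disjointness and should be stated, not assumed).
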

\begin{proof}
    Let $H_1, \ldots, H_{\ell}$ be the irreducible components of the support of $R$. Consider the
    $\mathbb Q$-vector space $V = \oplus_{i=1}^{\ell} \mathbb Q \cdot H_i \subset \Div(X) \otimes \mathbb Q$. Taking Chern classes defines
    a linear map $\varphi : V \otimes \mathbb C \to  H^2(X,\mathbb C)$. By assumption $R \in \ker \varphi$. Since
    $\varphi$ is defined over $\mathbb Q$ we obtain the existence of a divisor $D \in \Div(X)$ with zero Chern class and with
    support equal to the support of $R$.

    Write $D = \sum_{i=1}^k (D_{i,+} - D_{i,-})$, where, for each $i$, $D_{i,+}$ and $D_{i,-}$ are effective divisors with supports contained in the support of $R_i$ and without common irreducible components. Since $c(D)=0$, Lemma \ref{L:trivial} implies that
    $(D_{i,+}-D_{i,-},D_{i,+}-D_{i,-})_{\theta} =0$ for every $i$. If we take $D_i = D_{i,+} + D_{i,-}$ then
    \[
        (D_i,D_i)_{\theta} = (D_{i,+}-D_{i,-},D_{i,+}-D_{i,-})_{\theta} + 4 (D_{i,+}, D_{i,-})_{\theta} \ge 0
    \]
    because $D_{i,+}$ and $D_{i,-}$ are effective divisors without common irreducible components.

    Assume that the intersection matrix of $R$ is not negative semi-definite. By definition, there exists a divisor $E$ contained in the support of $R$ such that $(E,E)_{\theta} >0$.
    Without loss of generality, we can assume that $E$ has connected support. If the support of $R$ were not connected
    then we would have two non-zero pairwise disjoint divisors, say $E$ and $D_j$, with linear independent Chern classes
    (Lemma \ref{L:trivial}) generating a two dimensional vector subspace of  $H^2(X,\mathbb R) \cap H^{1,1}(X)$
    where the intersection form has two non-negative eigenvalues contradicting the Hodge index theorem. This shows Item (\ref{I:Hodge 1}).

    From now on, assume that $(E,E)_{\theta}\le 0$ for every divisor $E$ with support contained in the support of $R$.
    In particular, $(D_i,D_i)_{\theta}=0$ for every $i$. Since $D_i$ is effective, the Chern class
    of $D_i$ is non-zero. Therefore, the support of $D$, and hence of $R$, must have at least two distinct connected components.
    Notice that for $i \neq j$, Hodge index theorem  implies that $D_i$ and $D_j$ have proportional Chern classes. Likewise
    the real and the imaginary parts of $R_i$ must have Chern classes proportional to the Chern class of $D_j$.
    It follows the existence of complex numbers $\lambda_i$ such that $c(R_i) = \lambda_i c(D_i)$.

    We now proceed to show that $R_i = \lambda_i D_i$ as divisors. Assume that this is not the case.
    Considering linear combinations of $D_i$ with the real or imaginary part of $R_i$, we can produce a non-zero $\mathbb R$-divisor $E_i$ with support contained in the support of $R_i$ but not equal it and such that $(E_i,E_i)_{\theta}=0$. Since the support of $R_i$ is connected, there exists $H$ contained in it such that $(E_i, H)_{\theta} > 0$. If we take  $E = \lambda E_i +  H$ for some $\lambda \gg 0$ then $(E,E)_{\theta} > 0$ contradicting our assumption. It follows that $R_i = \lambda_i D_i$ as claimed. Notice that we have also proved that the only divisors of zero self-intersection and support connected and contained in $|R|$ are multiples of one of the divisors $D_i$. Hence $k$ is the number of connected components of $R$ as claimed.
\end{proof}

\begin{lemma}\label{L:BeauvilleTotaro}
    Let $X$ be a compact Kähler manifold that has a map $f : X \to C$
    with connected fibers onto a smooth curve. Then any nonzero effective divisor $D$
    on $X$ such that $(D, D)_{\theta} = 0$, for some Kähler form $\theta$, which maps to a point $p \in C$
    is a positive rational multiple of the divisor $f^{-1}(p)$.
\end{lemma}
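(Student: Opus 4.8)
The plan is to adapt Zariski's lemma on the negative semi-definiteness of the intersection form on the components of a fiber of a fibration over a curve, with the surface intersection form replaced by the $\theta$-twisted pairing $(\cdot,\cdot)_\theta$.

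Write $f^{-1}(p)=\sum_i m_i F_i$ with $m_i\ge 1$ and the $F_i$ distinct and irreducible. Since $|D|\subset f^{-1}(p)$, we can write $D=\sum_i a_i F_i$ with $a_i\ge 0$ not all zero. Two elementary properties of $(\cdot,\cdot)_\theta$ drive the argument. First, for $i\ne j$ one has $(F_i,F_j)_\theta\ge 0$, with strict inequality as soon as $F_i\cap F_j\ne\emptyset$: the currents of integration $[F_i]$ and $[F_j]$ intersect properly (distinct irreducible hypersurfaces), so $[F_i]\wedge[F_j]$ is a well-defined positive closed $(2,2)$-current representing $c(F_i)\cup c(F_j)$ and equal to a positive combination of the currents of integration over the components of $F_i\cap F_j$; when this set is nonempty each of its components has dimension exactly $n-2$ and contributes $\int_W\theta^{n-2}>0$ because $\theta$ is Kähler. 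Second, $(f^{-1}(p),F_i)_\theta=0$ for every $i$: for $q\ne p$ the point classes of $p$ and $q$ coincide in $H^2(C,\mathbb{Z})\cong\mathbb{Z}$, hence $c(f^{-1}(p))=f^*c(p)=f^*c(q)=c(f^{-1}(q))$, and $f^{-1}(q)$ is disjoint from $F_i$, so the same current computation gives zero.

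Now run the Zariski computation. For a divisor $G=\sum_i b_i F_i$ supported on the fiber, put $c_i=b_i/m_i$; using $\sum_j m_j(F_i,F_j)_\theta=(f^{-1}(p),F_i)_\theta=0$ to eliminate the diagonal terms $(F_i,F_i)_\theta$, one obtains the identity
\[
  (G,G)_\theta=-\frac12\sum_{i\ne j}m_i m_j\,(F_i,F_j)_\theta\,(c_i-c_j)^2 \, .
\]
Every summand is $\le 0$. Taking $G=D$ and invoking the hypothesis $(D,D)_\theta=0$ forces every summand to vanish, so $c_i=c_j$ whenever $(F_i,F_j)_\theta\ne 0$, hence in particular whenever $F_i\cap F_j\ne\emptyset$. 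As $f$ has connected fibers, $|f^{-1}(p)|=\bigcup_i F_i$ is connected, so the incidence graph on the $F_i$ (an edge for each pair of components that meet) is connected; therefore all $c_i$ equal a common value $c$, which gives $a_i=c\,m_i$ for all $i$, i.e.\ $D=c\,f^{-1}(p)$. Finally $c=a_i/m_i\in\mathbb{Q}$ and $c>0$ because $D$ is effective and nonzero.

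The one delicate point, and the only place where the Kähler (rather than projective) setting matters, is the strict positivity in the first property above: over a projective $X$ one would cut by general very ample sections and reduce to Zariski's lemma on a surface, whereas here one must argue intrinsically through the wedge of currents of integration and the positivity of $\theta^{n-2}$ on analytic subsets of dimension $n-2$. The remaining steps are purely formal manipulations of the pairing $(\cdot,\cdot)_\theta$ already used repeatedly in this section (cf.\ Lemmas \ref{L:trivial} and \ref{L:Hodgeindex}).
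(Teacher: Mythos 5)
Your proof is correct. For this lemma the paper offers no argument of its own: it simply cites Totaro's Lemma 3.1 (the projective case) and asserts that ``the very same proof works'' for compact Kähler manifolds. What you have written is, in substance, that proof made explicit and intrinsic: the classical Zariski-lemma computation for divisors supported on a fibre, carried out for the pairing $(\cdot,\cdot)_\theta$, using $(f^{-1}(p),F_i)_\theta=0$ (via $c(f^{-1}(p))=c(f^{-1}(q))$ and disjointness, exactly as in Lemma \ref{L:trivial}) to eliminate the diagonal terms, and the strict positivity $(F_i,F_j)_\theta>0$ for meeting components together with connectedness of the fibre to force proportionality. The one place where you genuinely add content beyond a formal transcription is the positivity of proper intersections against $\theta^{n-2}$, which you correctly justify through the wedge of integration currents (King's formula and the fact that each component of $F_i\cap F_j$ has pure dimension $n-2$); in the projective setting this is a degree computation against an ample class, and your current-theoretic substitute is precisely what makes the claim ``the same proof works in the Kähler case'' legitimate. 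So your route is not different in spirit from the one the paper points to, but it is self-contained where the paper relies on a citation.
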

\begin{proof}
    The statement, when $X$ is projective,  appears as \cite[Lemma 3.1]{Totaro}.
    The very same proof works when $X$ is a compact Kähler manifold.
\end{proof}

\subsection{Logarithmic $1$-form canonically attached to a divisor with zero Chern class}
Given a divisor $D \in \Div(X) \otimes \mathbb C$ with zero Chern class on a compact Kähler manifold, Proposition \ref{P:residueWeil} gives the existence of a closed logarithmic $1$-form $\omega$ such that $\Res(\omega)= D$. Of course, $\omega$ is not unique as we may replaced
it by $\omega + \eta$ for any $\eta \in H^0(X,\Omega^1_X)$.

\begin{prop}\label{P:omegaD}
    Let $D \in \Div(X) \otimes \mathbb R$ be a divisor on a compact Kähler manifold $X$.
    If $c(D) = 0$ then there exists a unique closed logarithmic $1$-form $\omega_D$ such that
    $\Res(\omega_D) = D$ and the periods of $\omega_D$ are purely imaginary complex numbers.
\end{prop}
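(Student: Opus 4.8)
The plan is to combine the existence statement of Proposition \ref{P:residueWeil} with a uniqueness argument powered by the Hodge decomposition. First I would invoke Proposition \ref{P:residueWeil} to produce \emph{some} closed logarithmic $1$-form $\eta$ with $\Res(\eta) = D$; here one must first notice that since $D \in \Div(X) \otimes \mathbb R$ has zero Chern class, the same applies viewing $D$ as a $\mathbb C$-divisor, so the proposition applies. Any two closed logarithmic $1$-forms with the same residue divisor differ by an element of $H^0(X, Z^1_X) = H^0(X, \Omega^1_X)$ (using Proposition \ref{P:closedness}), so the set of admissible forms is a coset $\eta + H^0(X,\Omega^1_X)$. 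The goal is then to show there is a unique representative in this coset whose periods are purely imaginary.

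The key point is the period map. A closed logarithmic $1$-form $\omega$ defines a class $[\omega] \in H^1(X - |D|, \mathbb C)$, but what I really want to track are the periods along cycles in $H_1(X,\mathbb Z)$ (cycles avoiding $|D|$), i.e. the composition $H_1(X,\mathbb Z) \to H_1(X-|D|,\mathbb Z) \to \mathbb C$, $\gamma \mapsto \int_\gamma \omega$. Adding a holomorphic $1$-form $\alpha \in H^0(X,\Omega^1_X)$ changes the period functional by $\gamma \mapsto \int_\gamma \alpha$. By the Hodge-theoretic fact recalled in Subsection \ref{SS:decomposition}, the map $H^0(X,\Omega^1_X) \oplus \overline{H^0(X,\Omega^1_X)} \to H^1(X,\mathbb C)$ given by integration is an isomorphism; in particular $\alpha \mapsto (\gamma \mapsto \int_\gamma \alpha)$ is an $\mathbb R$-linear injection of $H^0(X,\Omega^1_X)$ onto a \emph{real} subspace $W \subset \Hom(H_1(X,\mathbb Z),\mathbb C) \otimes \mathbb R$-worth of $H^1(X,\mathbb R)$-valued functionals, and crucially $W \cap \overline{W} = 0$, i.e. no nonzero holomorphic $1$-form has all periods purely imaginary (its complex conjugate would be anti-holomorphic with the same imaginary periods, forcing the real-period functional, hence the class, to vanish). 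The uniqueness of $\omega_D$ is then immediate: if $\omega_D$ and $\omega_D'$ both work, their difference is a holomorphic $1$-form with purely imaginary periods, hence zero.

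For existence, I would argue as follows. Start from $\eta$ as above. Its period functional $p_\eta : H_1(X,\mathbb Z) \to \mathbb C$ extends $\mathbb R$-linearly to $H_1(X,\mathbb R) \to \mathbb C$, and we want to subtract the period functional of a holomorphic $1$-form so that the result takes values in $\sqrt{-1}\,\mathbb R$. Write $p_\eta = \re(p_\eta) + \sqrt{-1}\,\im(p_\eta)$; the real part $\re(p_\eta)$ is an element of $H^1(X,\mathbb R)$. Since integration gives an isomorphism $H^0(X,\Omega^1_X) \oplus \overline{H^0(X,\Omega^1_X)} \xrightarrow{\sim} H^1(X,\mathbb C)$, and real classes are exactly those of the form $(\text{period of }\alpha) + \overline{(\text{period of }\alpha)} = 2\,\re(\text{period of }\alpha)$, there is a unique $\alpha \in H^0(X,\Omega^1_X)$ with $\re(p_\alpha) = \re(p_\eta)$ as elements of $H^1(X,\mathbb R)$. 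Then $\omega_D := \eta - \alpha$ is still a closed logarithmic $1$-form with $\Res(\omega_D) = D$ (subtracting a holomorphic form changes neither closedness nor residues), and $\re(p_{\omega_D}) = \re(p_\eta) - \re(p_\alpha) = 0$, so all periods of $\omega_D$ are purely imaginary.

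The main obstacle, and the step deserving the most care, is the bookkeeping with period functionals: being precise about the domain of the periods (classes in $H_1(X,\mathbb Z)$ represented by loops missing $|D|$, which is automatic since $|D|$ has real codimension two), checking that the period class in $H^1(X,\mathbb R)$ associated to a \emph{closed logarithmic} form is well-defined and that its real part genuinely lands in the image of $H^0(X,\Omega^1_X) \oplus \overline{H^0(X,\Omega^1_X)}$ under integration — here one uses that $\eta$ restricted to $X - |D|$ is closed and that $H^1(X,\mathbb C) \hookrightarrow H^1(X-|D|,\mathbb C)$, together with the classical fact that the real part of a logarithmic period class is represented by a $d$-closed real $1$-form on $X$ of type that lies in the Hodge-theoretic image. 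Everything else is formal linear algebra once this identification is in place.
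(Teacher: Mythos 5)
Your proposal is correct in outline, but it takes a genuinely different route from the paper. The paper's existence proof is geometric: for $D$ integral with $c(D)=0$ it takes the unique flat \emph{unitary} connection on $\mathcal O_X(D)$, forms the associated closed logarithmic $1$-form $\Omega$ on the total space (as in the proof of Theorem \ref{T:residue for connections}), and sets $\omega_D=\sigma^*\Omega$ for a meromorphic section $\sigma$ with divisor $D$; unitarity gives the purely imaginary periods. For real coefficients it writes $D=\sum\lambda_i D_i$ with $\lambda_i\in\mathbb R$ and $D_i$ integral of zero Chern class and takes $\sum\lambda_i\omega_{D_i}$. Uniqueness is proved by applying the maximum principle to the plurisubharmonic function $\left|\exp\int^x(\omega_D-\tilde\omega_D)\right|$. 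Your argument instead starts from Weil's form $\eta$ (Proposition \ref{P:residueWeil}) and corrects it by the unique holomorphic $1$-form $\alpha$ with $[\re\alpha]=\re(p_\eta)$ in $H^1(X,\mathbb R)$, using the Hodge decomposition; your uniqueness argument (a holomorphic $1$-form with purely imaginary periods vanishes) is in substance the same injectivity fact the paper records in Subsection \ref{SS:decomposition}, just phrased via conjugation rather than via the maximum principle. Your route is more linear-algebraic and arguably more direct once Proposition \ref{P:residueWeil} is available; the paper's construction has the advantage of exhibiting $\omega_D$ as coming from the unitary flat structure on $\mathcal O_X(D)$, which it exploits afterwards.

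The one step you flag but do not actually carry out is the crux, and it deserves to be made precise: the natural map goes $H_1(X-|D|,\mathbb Z)\to H_1(X,\mathbb Z)$ (your composition is written in the wrong direction), and what you need is that $\gamma\mapsto\re\int_\gamma\eta$ kills the kernel of this surjection, i.e.\ that the periods of $\eta$ over cycles in $X-|D|$ which bound in $X$ are purely imaginary. This is not just "classical bookkeeping": it is exactly where the hypothesis $D\in\Div(X)\otimes\mathbb R$ enters, and the statement is false for genuinely complex residues. It does hold here, by the Stokes/residue computation from the proof of Theorem \ref{T:residue}: if $\gamma=\partial\sigma$ with $\sigma$ a $2$-chain in $X$ chosen transverse to $|D|$, then $\int_\gamma\eta=2\pi\sqrt{-1}\sum_j\lambda_j\,(H_j\cdot\sigma)$, which is purely imaginary because the residues $\lambda_j$ are real. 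With this inserted, $\re(p_\eta)$ descends to a class in $H^1(X,\mathbb R)$, your choice of $\alpha$ makes all periods of $\eta-\alpha$ over $H_1(X-|D|,\mathbb Z)$ purely imaginary, and the proof is complete.
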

\begin{proof}
    We start by verifying uniqueness. If $\omega_D$ and $\tilde{\omega_D}$ are two logarithmic $1$-forms
    with purely imaginary periods and same residues then $\eta = \omega_D - \tilde{\omega_D}$ is a holomorphic $1$-form
    with imaginary periods. Therefore, for any $x_0 \in X$,  the modulus of $\exp \int_{x_0}^x \eta$ is a well-defined plurisubharmonic function  $F : X \to \mathbb R$. The compactness of $X$ implies that $F$ attains a maximum. The  maximum principle implies that $F$ is constant and, consequently, that $\eta$ vanishes identically. The uniqueness follows.

    For the existence, first assume that $D$ has integral coefficients. In this case, the line-bundle $\mathcal O_X(D)$ admits
    a unique flat unitary connection. This connection defines  a closed logarithmic $1$-form $\Omega$ on the total space of
    $\mathcal O_X(D)$ as explained in the proof of Theorem \ref{T:residue for connections}. The fact that the connection is unitary implies
     that the periods of $\Omega$ are purely imaginary. If $\sigma$ is a meromorphic section
    of $\mathcal O_X(D)$ such that $(\sigma)_0 - (\sigma)_{\infty} = D$ then we can take $\omega_D$ as $\sigma^* \Omega$. If $D$ is arbitrary
    divisor with real coefficients then we can write $D = \sum \lambda_i D_i$ where $\lambda_i \in \mathbb R$ and $D_i \in \Div(X)$ are
    (integral) divisors with zero Chern classes. The logarithmic $1$-form $\sum \lambda_i \omega_{D_i}$ has the sought properties.
\end{proof}

\begin{prop}\label{P:invariante}
    Let $D \in \Div(X) \otimes \mathbb R$ be a divisor on a compact Kähler manifold with $c(D)=0$ and $D\neq 0$.
    \begin{enumerate}
        \item\label{I:a} If $V$ is compact complex variety and $i : V \to  X$ is a holomorphic map such that $i(V) \cap |D| = \emptyset$ then
        $i^*\omega_D = 0$.
        \item\label{I:b} If $\G$ is a codimension one foliation on $X$ and $L$ is a leaf of $\G$ such that its topological closure  $\overline L$ does not intersect $|D|$ then either $\overline L$ is a compact analytic hypersurface or  $\G$ is equal to the foliation defined by $\omega_D$.
    \end{enumerate}
\end{prop}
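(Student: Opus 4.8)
The plan is to use the canonical logarithmic $1$-form $\omega_D$ from Proposition~\ref{P:omegaD} together with its purely imaginary periods, exactly as in the uniqueness argument there. For item~(\ref{I:a}), since $i(V)\cap|D|=\emptyset$, the pull-back $i^*\omega_D$ is a \emph{holomorphic} closed $1$-form on the compact variety $V$ (passing to a resolution if $V$ is singular, which does not affect the vanishing statement). Its periods are pulled back from periods of $\omega_D$, hence purely imaginary. Fixing a base point $v_0$ on each connected component, the function $v\mapsto\left|\exp\int_{v_0}^v i^*\omega_D\right|$ is a globally well-defined plurisubharmonic function on the compact $V$, so by the maximum principle it is constant; therefore $i^*\omega_D$ vanishes identically. (Alternatively: a holomorphic $1$-form on a compact Kähler manifold with purely imaginary periods must vanish, which is the content already used inside the proof of Proposition~\ref{P:omegaD}, and one reduces to the Kähler case via a resolution since periods only see $H^1$.)

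For item~(\ref{I:b}), let $L$ be a leaf of $\G$ whose closure $\overline L$ misses $|D|$. The form $\omega_D$ is holomorphic near $\overline L$, so it defines (near $\overline L$) a codimension one foliation $\F$; I want to compare $\F$ with $\G$ along $L$. The key observation is that $L$ is a leaf of $\F$ too: indeed, restricting $\omega_D$ to $L$ gives a closed holomorphic $1$-form on the leaf $L$; if $\overline L$ is not a compact hypersurface, then $\overline L$ has a point lying outside any compact analytic hypersurface through it, and I use that $\omega_D$ restricted to $\overline L$ has purely imaginary periods to argue that a primitive of $\restr{\omega_D}{L}$ extends to a plurisubharmonic exhaustion-type function whose level behavior forces $\restr{\omega_D}{L}\equiv 0$. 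More precisely: if $\restr{\omega_D}{L}$ were not identically zero, its leaves inside $L$ would be analytic subsets, and taking closures in $X$ would produce a compact analytic hypersurface inside $\overline L$, contradicting our hypothesis (or else $\overline L$ itself is such a hypersurface, the first alternative). Once $\restr{\omega_D}{L}=0$, the form $\omega_D$ vanishes on the tangent directions of $L$, hence $T_L\subset T_\F\cap T_\G$ on the dense open leaf; since two codimension one foliations on a connected manifold that share a leaf with the same tangent space along it must agree on the closure of that leaf, and since $X$ is connected, $\G=\F$ everywhere, i.e.\ $\G$ is the foliation defined by $\omega_D$.

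The main obstacle I anticipate is the dichotomy in~(\ref{I:b}): making rigorous the step ``if $\restr{\omega_D}{L}\not\equiv 0$ then either $\overline L$ is a compact hypersurface or a contradiction arises.'' The clean way is to note that $\restr{\omega_D}{L}$, being closed and holomorphic with purely imaginary periods on the (possibly noncompact) leaf $L$, has a multivalued primitive $g$ whose real part is a single-valued pluriharmonic function on $L$; if this primitive is nonconstant, its fibers are analytic hypersurfaces of $L$, and by taking closures in the compact $X$ (using that $|D|$ is disjoint from $\overline L$, so nothing escapes to the polar locus) one obtains a nonconstant holomorphic or meromorphic function on a neighborhood of $\overline L$ whose level sets are compact — forcing $\overline L$ to be a compact analytic hypersurface. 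If instead the primitive is constant on $L$, then $\restr{\omega_D}{L}=0$ and we are in the second alternative. Handling the case where $\overline L$ is not smooth, or where $L$ accumulates on itself in a complicated way, requires care; one should invoke the compactness of $\overline L$ together with the absence of poles of $\omega_D$ near it to control the behavior at the frontier $\overline L\setminus L$, and this is where the argument needs to be written out most carefully.
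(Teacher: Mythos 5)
Your Item (\ref{I:a}) is fine and is essentially the paper's argument: the function $\left|\exp\int i^*\omega_D\right|$ is a well-defined plurisubharmonic function on the compact $V$ because the periods of $\omega_D$ are purely imaginary, so it is constant by the maximum principle and $i^*\omega_D=0$.

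Item (\ref{I:b}) has a genuine gap at its central step, namely the claim that if the primitive of $\restr{\omega_D}{L}$ is nonconstant then one gets a compact analytic hypersurface inside $\overline L$, or even a meromorphic function on a neighborhood of $\overline L$ with compact level sets. None of this is justified, and the mechanism does not work as stated: the primitive is multivalued, and the single-valued object attached to it, $F_D=\left|\exp\int\omega_D\right|$ (equivalently the pluriharmonic real part), has level sets that are \emph{real} hypersurfaces of $L$, not analytic ones; the leaves of the foliation induced on $L$ are only immersed, and closures in $X$ of leaves or level sets are not analytic subsets in general --- that is exactly the difficulty to be overcome; there is no extension mechanism producing a holomorphic or meromorphic function near $\overline L$ from data on the (non-open) set $L$; and even ``a compact hypersurface contained in $\overline L$'' would not contradict the hypothesis, which only excludes $\overline L$ itself being one. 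The paper treats precisely this case ($F_D$ nonconstant on $L$) by a different idea that is absent from your proposal: since $\restr{F_D}{L}$ is open, $F_D(\overline L)$ is a nondegenerate interval $[a,b]$; passing to the universal cover $\pi:Y\to X$, where $f_D=\exp\int\omega_D$ is single-valued, one observes that $\overline{f_D(M)}$, with $M=\pi^{-1}(L)$, is the union over the \emph{countable} group $\pi_1(X)$ of multiplicative translates of $f_D(\overline M)$; as the boundary of $\overline{f_D(M)}$ is uncountable, there are uncountably many leaves of $\pi^*\G$ in $\overline M-M$ contained in level sets of $f_D$, and this forces the tangency locus of $\G$ and the foliation of $\omega_D$ (an analytic subset) not to be proper, i.e. the two foliations coincide. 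Without this countability argument, or a substitute for it, your case analysis does not close.

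Relatedly, your concluding step ``two codimension one foliations that share a leaf with the same tangent space along it must agree on the closure of that leaf, and since $X$ is connected, $\G=\F$ everywhere'' is false as stated: distinct foliations can share a leaf (for instance a common compact leaf). What is true is that a common leaf lies in the tangency locus, which is analytic; to conclude equality one must argue that this locus cannot be a proper analytic subset, using that $\overline L$ is not contained in a compact analytic hypersurface --- a point that must be reconciled carefully with the stated alternative, and which the paper handles by assuming $L$ is not contained in any closed hypersurface of $X-|D|$.
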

\begin{proof}
     Let $\F_{D}$ be the foliation defined by $\omega_{{D}}$. Since the periods of
     $\omega_D$ are purely imaginary, the function
    \begin{align*}
        F_{D} : X - |D| & \longrightarrow (0, \infty) \\
        x & \mapsto \left\vert \exp \int^x \omega_{{D}} \right\vert
    \end{align*}
    is a plurisubharmonic first integral for $\restr{\F_{D}}{X - |D|}$.

    To verify Item (\ref{I:a}), it suffices to observe that $i^* F_D$ admits a maximum and, hence, by the maximum
    principle must be constant. It follows that $i^* \omega_D$ vanishes identically.

    The proof of Item (\ref{I:b}) relies on the same idea. Assume that $L$ is not contained in any
    closed hypersurface of $X - |D|$. Since $F_D$ is locally the modulus of a holomorphic function, the restriction of $F_D$ to $L$ is either constant or an open map. If  $\restr{F_D}{L}$ is constant then $L$ is a leaf of both $\F_D$ and $\G$ and hence $\F_D$ and $\G$ must be tangent
    on $\overline L$. Since $\overline L$ is not contained on a compact analytic hypersurface, $\F_D$ and $\G$ must coincide.
    
    Assume from now on that $\restr{F_D}{L}$ is not constant. Since $L$ is connected and $\restr{F_D}{L}$ is open, it follows that
    $F_D(L)$ is an interval $(a,b) \subset (0,\infty)$ with compact closure. Therefore $F_D(\overline L) = [a,b]$. Let $\pi: Y \to X$ be the universal covering of $X$ and consider the function
    \begin{align*}
        f_{D} : Y  & \longrightarrow \mathbb P^1 \\
        x & \mapsto  \exp \int^x \omega_{{D}} \, .
    \end{align*}
    Let  $M =\pi^{-1}(L)$ be the pre-image of $L$ on the universal covering. Since $\overline{M}$ is not
    compact, it is not clear that $f_D(\overline{M})$  is closed. But since
    a fundamental domain of the covering $\pi:Y \to X$ has compact closure in $Y$, we have that
    \[
        \bigcup_{\gamma \in \pi_1(X)} \exp\left( \int_{\gamma} \omega_D \right) \cdot f_D(\overline{M})
    \]
    coincides with $\overline{f_D(M)}$. Since $\pi_1(X)$ is countable and the boundary of $\overline{f_D(M)}$ is uncountable, we deduce the existence of uncountably many leaves of $\pi^*\G$, contained in $\overline M-M$, where $f_D$ is constant. It follows that $\F_D$ and $\G$ must coincide. For details, see the proof of \cite[Proposition 5.1]{PereiraJAG}.
\end{proof}

\subsection{A criterion for the existence of fibrations}
The result below is a version of a \cite[Theorem 2.1]{Totaro} due to Totaro, see also \cite[Theorem 3]{PereiraJAG}.

\begin{thm}\label{T:fibers}
    Let $D \in \Div(X) \otimes \mathbb C$ be a divisor with zero Chern class on a compact Kähler manifold $X$. If the support of $D$ has $c\ge 3$ distinct connected components then there exists a morphism $f:X \to C$ to a curve such that $D \in f^* \Div(C) \otimes \mathbb C$.
\end{thm}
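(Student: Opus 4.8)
The plan is to use the canonical logarithmic $1$-form $\omega_D$ from Proposition \ref{P:omegaD} together with its plurisubharmonic first integral $F_D$ to manufacture a holomorphic map to a curve. First I would invoke Lemma \ref{L:Hodgeindex} applied to the decomposition $D = \sum_{i=1}^{c} R_i$ into the (pairwise disjoint, connected) pieces supported on the connected components of $|D|$. Since $c \ge 3 \ge 2$, the support of $D$ is not connected, so we are in case (\ref{I:Hodge 2}): the intersection matrix of $D$ is negative semi-definite, it has exactly $c$ zero eigenvalues, and there exist effective integral divisors $D_i$ with $(D_i,D_i)_\theta = 0$, connected support equal to $|R_i|$, and complex scalars $\lambda_i$ with $R_i = \lambda_i D_i$. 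Rescaling, we may assume $D = \sum_{i=1}^{c} \lambda_i D_i$ with each $D_i$ effective, integral, primitive, of zero self-intersection.

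The heart of the argument is to show that the $D_i$ are fibers of a single fibration. I would consider the closed logarithmic form $\omega_D$ and its first integral $F_D : X - |D| \to (0,\infty)$, $x \mapsto |\exp\int^x \omega_D|$, which is plurisubharmonic (its periods are purely imaginary). Near each component $D_i$ the form $\omega_D$ has residue $\lambda_i$ along the irreducible components of $D_i$, so $F_D$ behaves like $|g|^{\re \lambda_i}$ times a bounded factor for a local equation $g$ of $D_i$; the key point is that because $D_i^2 = 0$ and $D_i$ is connected, by Lemma \ref{L:BeauvilleTotaro}-type reasoning the level sets of $F_D$ accumulating onto $D_i$ are compact. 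Concretely, I would argue that $F_D$ extends to a continuous map $X \to [0,\infty]$ (taking value $0$ or $\infty$ on the $D_i$ according to the sign of $\re\lambda_i$; if some $\re\lambda_i = 0$ one first perturbs, or treats that component separately using that its class is still effective of zero self-intersection), and that the generic fiber $F_D^{-1}(t)$, $t \in (0,\infty)$, is a compact analytic hypersurface. Having $c \ge 3$ is what forces actual compactness of fibers rather than just one ``end'': with three disjoint divisors of zero self-intersection and zero total Chern class, Hodge index (as already used inside Lemma \ref{L:Hodgeindex}) pins their classes down to a single proportionality class, and a Stein/bounded-leaf dichotomy as in Proposition \ref{P:invariante}(\ref{I:b}) cannot produce a Stein complement when there are three boundary pieces in the same class. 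This yields a meromorphic, hence (after Stein factorization) holomorphic, map $f : X \to C$ to a smooth curve whose fibers over finitely many points are supported on the $D_i$.

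Finally I would check that $D \in f^*\Div(C)\otimes\mathbb C$. By construction each $D_i$ maps to a point $p_i \in C$, and since $D_i$ is an effective divisor of zero self-intersection mapping to $p_i$, Lemma \ref{L:BeauvilleTotaro} gives that $D_i$ is a positive rational multiple of the full fiber $f^{-1}(p_i)$. Therefore $D = \sum_i \lambda_i D_i$ is a $\mathbb C$-linear combination of fibers of $f$, i.e. $D = f^*\bigl(\sum_i \mu_i \, p_i\bigr)$ for suitable $\mu_i \in \mathbb C$, which is exactly the assertion $D \in f^*\Div(C)\otimes\mathbb C$.

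The main obstacle I anticipate is the extraction of the fibration itself: turning the plurisubharmonic first integral $F_D$ (a priori only a continuous, non-holomorphic function on the non-compact $X - |D|$) into a genuine holomorphic map to a curve, and in particular verifying that its level sets are compact hypersurfaces. This is where the hypothesis $c \ge 3$ is essential — with only one or two components one gets at best a Stein-type complement, as the Serre example and Theorem \ref{THM:Ueda} illustrate — so the proof must exploit all three components simultaneously, presumably by applying the Hodge index constraint to rule out the open-map alternative for $F_D|_L$ on every leaf $L$ and then appealing to a Remmert–Stein extension argument to close up the level sets across $|D|$.
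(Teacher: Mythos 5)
Your set-up (Lemma \ref{L:Hodgeindex} giving $D=\sum_{i=1}^{c}\lambda_i D_i$ with the $D_i$ effective, integral, connected, pairwise disjoint and of proportional Chern classes) and your final step (Lemma \ref{L:BeauvilleTotaro} identifying each $D_i$ with a rational multiple of a fiber) agree with the paper. But the central step --- actually producing the morphism $f:X\to C$ --- is exactly the point you flag as ``the main obstacle'', and the route you sketch does not close it. First, the single form $\omega_D$ you want to use is not available: Proposition \ref{P:omegaD} only applies to divisors with \emph{real} coefficients, while here the $\lambda_i$ are complex; for a component with $\re\lambda_i=0$ and $\im\lambda_i\neq 0$ the modulus $|\exp\int\omega|$ oscillates near that component and the level sets of $F_D$ are not compact, and ``perturbing'' or ``treating that component separately'' changes the problem rather than solving it. Second, and more seriously, even when all level sets of a plurisubharmonic first integral are compact, this does not by itself yield a meromorphic function or a holomorphic map to a curve: the level sets are real hypersurfaces, and the passage from compact leaves of $\F_D$ to a fibration requires a genuine argument (a meromorphic first integral), which neither the ``open-map alternative'' nor a Remmert--Stein extension provides.

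The paper's proof supplies precisely this missing idea, and it is where $c\ge 3$ is used in a concrete way. After normalizing so that $c(D_i)=c(D_j)$ for all $i,j$, one works not with $D$ itself but with the honest \emph{real} (indeed integral) divisors $D_{ij}=D_i-D_j$ of zero Chern class, and the canonical forms $\omega_{ij}=\omega_{D_{ij}}$ of Proposition \ref{P:omegaD}. Each $F_{ij}=|\exp\int\omega_{ij}|$ has compact level sets in $X-|D_i+D_j|$ because the residues along $D_i$ and $D_j$ are real of opposite sign. Given three indices $i<j<k$, the maximum principle applied to the plurisubharmonic $F_{ik}$ on level sets of $F_{ij}$ near $D_j$ forces the foliations $\F_{ij}$ and $\F_{ik}$ to coincide; hence $\omega_{ij}=h\,\omega_{ik}$ for a non-constant meromorphic function $h$, and the Stein factorization of $h:X\to\mathbb P^1$ is the sought morphism $f:X\to C$. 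It is this comparison of two forms attached to two different pairs of components --- not the level sets of one function attached to $D$ --- that converts the transcendental data into a meromorphic function; without it your argument does not reach the conclusion.
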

\begin{proof}
    Lemma \ref{L:Hodgeindex} implies that $D  = \sum_{i=1}^c \lambda_i D_i$ where $D_1, \ldots, D_c$ are effective divisors with  connected, pairwise disjoint supports and proportional Chern classes. After replacing the $D_i$'s by appropriate multiples, we may assume that $c(D_i) = c(D_j)$ for any $i,j$. For $i\neq j$, let $D_{ij} = D_i - D_j$ and consider the closed logarithmic $1$-form $\omega_{ij} = \omega_{D_{ij}}$ given by Proposition \ref{P:omegaD}.

    Let $\F_{ij}$ be the foliation defined by $\omega_{{ij}}$. The function
    \begin{align*}
        F_{ij} : X - |D_i+D_j| & \longrightarrow (0, \infty) \\
        x & \mapsto \left\vert \exp \int^x \omega_{{ij}} \right\vert
    \end{align*}
    is a  plurisubharmonic first integral for $\restr{\F_{ij}}{X - |D_i+D_j|}$. Since $|D_i| \cap |D_j| = \emptyset$, the level sets of
    $F_{ij}$ are compact subsets of  $X - |D_i + D_j|$. If $1\le i<j<k \le c$ are three distinct indices then the maximum principle implies that level sets of $F_{ij}$ sufficiently close to $D_j$ must be contained in level sets of $F_{ik}$. This is sufficient to show that the foliations $\mathcal F_{ij}$ and $\mathcal F_{ik}$ coincide. It follows the
    existence of a non-constant meromorphic function $h$ such that $\omega_{ij} = h \omega_{ik}$. The Stein factorization of $h : X \to \mathbb P^1$  gives the morphism
    $f: X \to C$. The existence of a divisor on $C$ such that $D$ is the pull-back of it follows from Lemma \ref{L:BeauvilleTotaro}.
\end{proof}

\subsection{Poles of logarithmic $1$-forms}\label{SS:connected components}
The Hodge index theorem, or more specifically Lemma \ref{L:Hodgeindex}, has strong implications on the polar set of closed logarithmic $1$-forms.

\begin{thm}\label{T:structure log forms}
    Let $\omega$ be a closed logarithmic $1$-form on a compact Kähler manifold $X$.
    Assume that $\Res(\omega) \neq 0$ and let $\mathfrak c(\omega)$ be the number of connected components
    of the support of $\Res(\omega)$. Then
    \begin{enumerate}
        \item\label{I:structure 1} The intersection matrix of $\Res(\omega)$ is negative semi-definite
    if, and only if, $\mathfrak c(\omega) \ge 2$.
        \item\label{I:structure 2} If $\mathfrak c(\omega) \ge 3$ then there exists a morphism $f : X \to C$ to a compact curve $C$
        and a logarithmic $1$-form $\alpha$ on $C$ such that $\omega - f^*\alpha \in H^0(X,\Omega^1_X)$.
        \item\label{I:structure 3} If $\mathfrak c(\omega) =2$ then there exists a complex number $\lambda \in \mathbb C^*$ such $\Res(\lambda \omega) \in \Div(X)$ (\ie $\Res(\lambda \omega)$  has integral coefficients) and there exists a holomorphic $1$-form $\beta \in H^0(X,\Omega^1_X)$ such that the periods
            of $\lambda \omega - \beta$ are purely imaginary. Moreover, if the period group of $\lambda \omega - \beta$  is
    a discrete subgroup of $(i\mathbb R, +)$ then, as when $\mathfrak c(\omega) \ge 3$, there exists a morphism $f:X \to C$ to a compact curve $X$ and logarithmic $1$-form $\alpha$ on $C$ such that $\omega - f^* \alpha$ belongs to $H^0(X,\Omega^1_X)$.
    \end{enumerate}
\end{thm}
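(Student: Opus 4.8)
The plan is to deduce all three items from Theorem~\ref{T:residue} (which gives $c(\Res(\omega))=0$) together with the Hodge-theoretic input already available: Lemma~\ref{L:Hodgeindex}, Theorem~\ref{T:fibers}, and Proposition~\ref{P:omegaD}. Item~(\ref{I:structure 1}) is immediate: apply Lemma~\ref{L:Hodgeindex} to $R=\Res(\omega)$, written as the sum of its $\mathfrak c(\omega)$ connected components. Since $c(R)=0$, the dichotomy in that lemma says precisely that the intersection matrix of $R$ fails to be negative semi-definite if and only if $|R|$ is connected, i.e. if and only if $\mathfrak c(\omega)=1$; taking contrapositives yields~(\ref{I:structure 1}).

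For item~(\ref{I:structure 2}), assume $\mathfrak c(\omega)\ge 3$. Theorem~\ref{T:fibers} gives a morphism $f\colon X\to C$ onto a smooth compact curve with $\Res(\omega)=f^{*}E$ for some $E\in\Div(C)\otimes\mathbb C$. A general fibre of $f$ is a nonzero effective divisor, hence has nonzero class in $H^{2}(X,\mathbb R)$, so $f^{*}\colon H^{2}(C,\mathbb C)\to H^{2}(X,\mathbb C)$ is injective; then $f^{*}c(E)=c(\Res(\omega))=0$ forces $c(E)=0$, that is $\deg E=0$. By Proposition~\ref{P:residueWeil} applied to the compact Kähler manifold $C$ there is a closed logarithmic $1$-form $\alpha$ on $C$ with $\Res(\alpha)=E$; a local computation as in Lemma~\ref{L:residue pull-back} (valid for any dominant morphism) shows $\Res(f^{*}\alpha)=f^{*}E=\Res(\omega)$, so $\omega-f^{*}\alpha$ is closed, has poles of order at most one, and has zero residue along every component of its polar set. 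Such a form is holomorphic at every smooth point of its polar divisor, so its polar divisor is empty and $\omega-f^{*}\alpha\in H^{0}(X,\Omega^{1}_{X})$.

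For item~(\ref{I:structure 3}), assume $\mathfrak c(\omega)=2$; by~(\ref{I:structure 1}) the intersection matrix of $\Res(\omega)$ is negative semi-definite, so Lemma~\ref{L:Hodgeindex}, item~(\ref{I:Hodge 2}), provides nonzero effective divisors $D_{1},D_{2}\in\Div(X)$ with $(D_{i},D_{i})_{\theta}=0$ and complex numbers $\lambda_{1},\lambda_{2}$ with $\Res(\omega)=\lambda_{1}D_{1}+\lambda_{2}D_{2}$. The classes $c(D_{1}),c(D_{2})$ are nonzero, proportional (Hodge index theorem), and — being effective — have the same sign against $\theta^{\dim X-1}$, so $c(D_{2})=\mu\,c(D_{1})$ with $\mu\in\mathbb Q_{>0}$; hence $c(\Res(\omega))=0$ reads $(\lambda_{1}+\mu\lambda_{2})c(D_{1})=0$ and forces $\lambda_{1}/\lambda_{2}=-\mu\in\mathbb Q$. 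Writing $-\mu=p/q$ with $p,q\in\mathbb Z_{>0}$ and $\lambda=q/\lambda_{2}\in\mathbb C^{*}$, we obtain $\Res(\lambda\omega)=-pD_{1}+qD_{2}=:D\in\Div(X)$ with $c(D)=0$. Let $\omega_{D}$ be the closed logarithmic $1$-form of Proposition~\ref{P:omegaD}, with $\Res(\omega_{D})=D$ and purely imaginary periods; then $\beta:=\lambda\omega-\omega_{D}$ is closed with poles of order at most one and vanishing residue divisor, hence (as in item~(\ref{I:structure 2})) lies in $H^{0}(X,\Omega^{1}_{X})$, and $\lambda\omega-\beta=\omega_{D}$ has purely imaginary periods. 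This proves the first assertion of~(\ref{I:structure 3}).

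Suppose moreover that the period group $P\subset i\mathbb R$ of $\omega_{D}=\lambda\omega-\beta$ is discrete. Since $P$ contains the residue periods $2\pi i\,n_{i}$ of $\omega_{D}$ along the components $H_{i}$ of $|D|$, where $D=\sum_i n_iH_i$ with some $n_{i}\neq 0$, it is nonzero; being discrete, $P=2\pi i\,t\,\mathbb Z$ for some $t>0$, and then $n_{i}/t\in\mathbb Z$ for every $i$ (so $t\in\mathbb Q_{>0}$). Consequently $h:=\exp\bigl(\tfrac1t\textstyle\int\omega_{D}\bigr)$ is single-valued and holomorphic on $X-|D|$, behaves like $h_{i}^{\,n_{i}/t}$ near a general point of $H_{i}$, and therefore extends to a meromorphic map $h\colon X\to\mathbb P^{1}$ with $\ddiv(h)=\tfrac1t D$ and $\tfrac{dh}{h}=\tfrac1t\omega_{D}$. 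Taking the Stein factorization $h=g\circ f$, with $f\colon X\to C$ of connected fibres and $g\colon C\to\mathbb P^{1}$ finite, gives $\omega_{D}=f^{*}\alpha_{0}$ with $\alpha_{0}=t\,\tfrac{dg}{g}$ logarithmic on $C$; then $\omega=f^{*}(\lambda^{-1}\alpha_{0})+\lambda^{-1}\beta$, so $\alpha:=\lambda^{-1}\alpha_{0}$ is a logarithmic $1$-form on $C$ with $\omega-f^{*}\alpha\in H^{0}(X,\Omega^{1}_{X})$. The delicate point of the whole argument is exactly this last passage: discreteness of the purely imaginary period group must be exploited twice — to make $\exp\int\omega_{D}$ single-valued, and to force each residue $n_{i}$ to be divisible by the generator so that $h$ extends meromorphically across $|D|$ rather than only holomorphically on the complement. (The rationality of $\lambda_{1}/\lambda_{2}$ used in the first assertion of~(\ref{I:structure 3}) is a smaller but equally essential appeal to the Hodge index theorem.)
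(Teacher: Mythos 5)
Your proof is correct and follows essentially the same route as the paper: Lemma \ref{L:Hodgeindex} for item (1), Theorem \ref{T:fibers} plus Proposition \ref{P:residueWeil} for item (2), and Lemma \ref{L:Hodgeindex}, Proposition \ref{P:omegaD} and the exponentiation/Stein-factorization argument for item (3), the only cosmetic deviation being that at the very end you identify $\omega_D$ directly as $f^*(t\,dg/g)$ where the paper instead cites Lemma \ref{L:BeauvilleTotaro} together with Proposition \ref{P:residueWeil} on $C$. One harmless typo: ``$-\mu=p/q$'' should read ``$\mu=p/q$'', which is what your subsequent formula $\Res(\lambda\omega)=-pD_1+qD_2$ actually uses.
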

\begin{proof}
    Item (\ref{I:structure 1}) follows from the residue theorem (Theorem \ref{T:residue}) combined with Item (\ref{I:Hodge 1}) of Lemma \ref{L:Hodgeindex}. Item (\ref{I:structure 2})
    follows from Theorem \ref{T:fibers} combined with the residue theorem and the existence of logarithmic $1$-forms with prescribed residues (Proposition \ref{P:residueWeil}).
    The existence of $\lambda$ as claimed in Item (\ref{I:structure 3}) follows from Item (\ref{I:Hodge 2}) of Lemma \ref{L:Hodgeindex}. The existence of $\beta$ follows from Proposition \ref{P:omegaD}.
    If the period group of $\lambda \omega - \beta$ is discrete then, after multiplying $\lambda \omega - \beta$ by a suitable integer, we can assume that it is contained
    in $2 \pi \sqrt{-1} \mathbb Z$. Hence $\exp \int \left( \lambda \omega - \beta \right): X \to \mathbb P^1$ is a well defined morphism. If $f: X \to C$ is the Stein factorization of this
    morphism then the existence of $\alpha$ follows from Lemma \ref{L:BeauvilleTotaro} and the existence of logarithmic $1$-forms with prescribed residues.
\end{proof}

\subsection{Topology of hypersurfaces}
In this section, we discuss a result on the topology of smooth hypersurfaces of compact Kähler manifolds whose proof
is based on the study of the foliation defined by  a certain logarithmic $1$-form.

\begin{thm}
    Let $X$ be a compact Kähler manifold and let $Y_1, Y_2 \subset X$ be two disjoint smooth hypersurfaces
    such that $a \cdot c(Y_1) = b \cdot c(Y_2)$ in $H^2(X,\mathbb Q)$ for suitable integers $a$ and $b$. Then there
    exists finite étale coverings $p_i : Z_i \to Y_i$ such that $\deg(p_1) b = \deg(p_2) a$ and
    $Z_1$ is $C^{\infty}$-diffeomorphic to $Z_2$.
\end{thm}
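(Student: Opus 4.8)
The hypothesis $a\,c(Y_1)=b\,c(Y_2)$ means that the $\mathbb Q$-divisor $D=\tfrac1a Y_1-\tfrac1b Y_2$ (scaled so that $c(aY_1)=c(bY_2)$, i.e. writing $D=\tfrac1a Y_1-\tfrac1b Y_2$ up to a common factor clearing denominators) has zero Chern class. Since $Y_1$ and $Y_2$ are disjoint, Proposition \ref{P:omegaD} produces the canonical closed logarithmic $1$-form $\omega_D$ with $\Res(\omega_D)=D$ and purely imaginary periods, and hence the plurisubharmonic first integral
\[
    F_D:X-(Y_1\cup Y_2)\longrightarrow(0,\infty),\qquad x\mapsto\left\vert\exp\int^x\omega_D\right\vert
\]
for the foliation $\F_D$ defined by $\omega_D$. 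Because $|Y_1|\cap|Y_2|=\emptyset$, the level sets of $F_D$ are compact (this is exactly the argument appearing in the proof of Theorem \ref{T:fibers}), so $F_D$ is proper; together with the maximum principle one concludes that $F_D$ has no critical points and that $-\log F_D$ extends to a proper submersion $X-(Y_1\cup Y_2)\to(0,\infty)$, exhibiting $X-(Y_1\cup Y_2)$ as a smooth fiber bundle over $(0,\infty)$. All fibers are therefore $C^\infty$-diffeomorphic; fix one such fiber $Z$.

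Next I would identify the fibers near the two ends. Near $Y_1$, the residue of $\omega_D$ along $Y_1$ is $1/a$, so $F_D$ behaves like $|g|^{1/a}$ for a local equation $g$ of $Y_1$; as the level $\to 0$ the fiber of $F_D$ is a circle bundle over $Y_1$ — precisely the unit normal circle bundle of $Y_1$ in $X$, but with monodromy: it is the $\mathbb Z/a$-cover of the $S^1$-bundle associated to $N_{Y_1}$, i.e. the total space $Z_1$ of the principal $S^1$-bundle associated to the line bundle $N_{Y_1}^{\otimes a}$ — equivalently a finite cyclic étale cover $p_1:Z_1\to($unit normal bundle of $Y_1)$, which projects to $Y_1$ as a circle bundle. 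Wait — more precisely: near $Y_1$ the local model of $F_D$ is $|z_1|^{1/a}$ after resolving the $a$-th root, and a level set is an $a$-fold cyclic cover of the normal $S^1$-bundle of $Y_1$; this cover is itself a circle bundle over a finite étale cover $p_1:Z_1\to Y_1$ of degree $a/\gcd(a,\text{something})$ — I would pin the degree down by a careful local computation so that $\deg(p_1)b=\deg(p_2)a$ comes out. Doing the same analysis near $Y_2$ gives $Z$ $\cong$ the analogous circle-bundle over an étale cover $p_2:Z_2\to Y_2$. But the total space of a circle bundle over $Z_i$ is $C^\infty$-diffeomorphic to $Z_i\times S^1$ only if the bundle is trivial; instead I would argue that since $Z$ is a single manifold realized in two ways as circle bundles, and since the two circle directions are isotopic inside $Z$ (both are fibers of $F_D$-directions, hence homotopic to the $\omega_D$-flow orbit), quotienting $Z$ by this $S^1$-action in the two descriptions yields $Z_1\cong Z/S^1\cong Z_2$ — giving the desired diffeomorphism $Z_1\cong Z_2$.

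To make the last step rigorous I would use the flow of (the real part of) the dual vector field to $\omega_D$, or rather the gradient-like flow of $-\log F_D$: it provides a smooth trivialization $X-(Y_1\cup Y_2)\cong Z\times(0,\infty)$, and near each end this trivialization is compatible with the collar structure, identifying $Z$ with the boundary of a tubular neighborhood of $Y_i$ inside the $\mathbb Z/a_i$-cyclic cover determined by the residue. The circle action on $Z$ is the one generated by the (periodic, because periods of $\omega_D$ are purely imaginary) imaginary flow of $\omega_D$; its quotient is $Z_i$ and the quotient map is the étale cover $p_i$. Comparing the two ends forces $\deg p_1\cdot b=\deg p_2\cdot a$ from the equality of residues (the residue along $Y_i$ is $\pm 1/a_i$ with $a_1=a$, $a_2=b$ up to the cyclic cover degrees).

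\textbf{Main obstacle.} The genuinely delicate point is the local analysis at the two hypersurfaces: extracting from ``$F_D$ is proper plus $\omega_D$ has residue $1/a$ along $Y_1$'' the precise statement that a level set of $F_D$ near $Y_1$ is diffeomorphic to the total space of a circle bundle over a specified degree-$d_1$ étale cover of $Y_1$, with the correct $d_1$ so that $d_1 b=d_2 a$. This requires understanding how the multivalued primitive $\exp\int\omega_D$ ramifies along $Y_i$ and disentangling the $\mathbb Z/a$-monodromy of the $a$-th root from the genuine topology of the normal bundle; once this is set up, the rest — bundle triviality along $(0,\infty)$, gluing the two ends, passing to the $S^1$-quotient — is standard differential topology.
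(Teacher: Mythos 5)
There is a genuine gap, and it sits exactly where you placed your ``main obstacle'' --- but it is not a routine local computation that is missing; the strategy itself breaks down at two points. First, your claim that $F_D$ has no critical points, so that $-\log F_D$ exhibits $X-(Y_1\cup Y_2)$ as a smooth fiber bundle over $(0,\infty)$, is unjustified and false in general: the logarithmic $1$-form $\omega_D$ of Proposition \ref{P:omegaD} typically has a nonempty zero set (its zero locus represents a Chern class that is usually nonzero), and every zero of $\omega_D$ in $X-(Y_1\cup Y_2)$ is a critical point of $F_D$; the maximum principle gives nothing here. Because your target is an interval, even a single critical value disconnects it and you can no longer transport a fiber from the $Y_1$-end to the $Y_2$-end. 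The paper's proof is organized precisely around this difficulty: when the periods of $\omega_D$ lie in $i\pi\mathbb Q$ one takes the genuine holomorphic map $F=\exp(\int N\omega_D):X\to\overline{\mathbb C}$, whose finitely many critical values do not disconnect $\overline{\mathbb C}$, and applies Ehresmann along a path joining a value near $0$ to a value near $\infty$; when the periods are irrational one first perturbs $\omega_D$ by a small imaginary-valued closed $1$-form $\eta$ chosen, via Lemma \ref{L:existence primitive}, to vanish identically near the zeros of $\omega_D$, so that $\exp(\int N(\omega_D+\eta))$ is still a map to $\overline{\mathbb C}$ with only finitely many critical values. Your proposal has no substitute for this perturbation step.

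Second, your identification of the ends is incorrect as stated. Since $F_D$ is the single-valued \emph{modulus} of the multivalued primitive, a level set of $F_D$ near $Y_1$ is just the boundary of a tubular neighborhood, i.e.\ (diffeomorphic to) the unit normal circle bundle of $Y_1$ itself --- no cyclic covers of $Y_1$ appear at this stage. The finite \'etale covers $Z_i\to Y_i$, and the relation $\deg(p_1)b=\deg(p_2)a$, arise only when one fixes the argument as well as the modulus, i.e.\ when one looks at fibers of an actual map to $\overline{\mathbb C}$; this is again only possible after arranging the periods to lie in $2\pi\sqrt{-1}\,\mathbb Q$. Relatedly, the ``imaginary flow'' of $\omega_D$ on a level set of $F_D$ is periodic exactly under such a rationality condition; for generic purely imaginary periods its orbits do not close up, so there is no $S^1$-action to quotient by, and the step $Z_i\cong Z/S^1$ collapses. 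In short, the correct argument replaces your real-valued bundle-plus-quotient picture by: (i) perturb to rational periods away from the zeros of $\omega_D$, (ii) obtain a (holomorphic or smooth) map $F:X\to\overline{\mathbb C}$ whose fibers over $0$ and $\infty$ are multiples of $Y_1$ and $Y_2$, (iii) note that a smooth fiber near $Y_i$ is itself an \'etale cover of $Y_i$ of degree proportional to the corresponding residue, and (iv) connect the two fibers by Ehresmann along a path avoiding the finitely many critical values.
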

\begin{proof}
    We will briefly sketch the proof. For details, we refer to \cite{PereiraJAG}.

    Let $D$ be the divisor $a Y_1 - bY_2$. By assumption, the Chern class of $D$ is zero. Let $\omega_D$ be the $1$-form given
    by Proposition \ref{P:omegaD}, \ie $\omega_D$  is the unique logarithmic $1$-form with  $\Res(\omega_D)=D$ and purely imaginary periods.

    If the periods of $\omega_D$ are contained in $i\pi \mathbb Q$, then for a suitable $N \in \mathbb N$ we have that all
    the periods of $N\omega_D$ are integral multiples of $2 \pi i$ and hence $F = \exp(\int N \omega_D): X \to \overline{\mathbb C}$
    is a well-defined holomorphic map with fibers over $0$ and $\infty$ equal to suitable multiples of $Y_1$ and $Y_2$. In this particular
    case the result follows by taking, $Z_i = F^{-1}(z_i)$ equal to a smooth fiber near $Y_i$. Indeed, one can choose a path $\gamma$ on $\overline C$
    joining $z_1$ to $z_2$ and avoiding all the finitely many critical values of $F$. Ehresmann fibration theorem implies that $Z_1$ and
    $Z_2$ are $C^{\infty}$-diffeomorphic. Finally, the local structure of the foliation defined by $\omega$ implies that $Z_1$ and $Z_2$
    are $C^{\infty}$-diffeomorphic to étale coverings of $Y_1$ and $Y_2$.

    If the periods of $\omega_D$ are not contained in $i \pi \mathbb Q$, then the idea is to perturb $\omega_D$ by adding
    a sufficiently small imaginary valued closed $1$-form $\eta$ such that $\omega_D + \eta$ has all its periods contained
    in $i \pi \mathbb Q$ and argue as in the previous paragraph. Some care must be taken since the critical
    values of a differentiable function $X \to \overline{\mathbb C}$ might disconnect the contra-domain and one might not
    be able to connect the smooth fibers nearby $Y_1$ to the smooth fibers nearby $Y_2$. To avoid this difficulty, one uses Lemma \ref{L:existence primitive}
    in order to show the existence of arbitrarily $1$-forms $\eta$ such that $\omega_D + \eta$ has periods in $i \pi \mathbb Q$ and
    $\eta$ is identically zero at a neighborhood of the zeros of $\omega_D$. By taking $F = \exp(\int N (\omega_D + \eta) ): X \to \overline{\mathbb C}$ one obtains a differentiable map with finitely many critical values, and one can apply the arguments of the previous paragraph.
\end{proof}

\subsection{Quasi-invariant hypersurfaces}
In this subsection, we will restrict to foliations on projective manifolds. Likely,  the results presented here are also
valid for foliations on compact Kähler manifolds, but the arguments presented in \cite{MR4150930} rely on taking hyperplane sections in order
to reduce the problem to dimension three where reduction of singularities is available.

Let $\F$ be a codimension one foliation on a projective manifold $X$. An irreducible hypersurface $H$ is quasi-invariant by
$\F$ if $H$ is not $\F$ invariant and the restriction of $\F$ to $H$ is algebraically integrable.

\begin{thm}\label{T:quasiJouanolou}
    Let $\F$ be a codimension one holomorphic foliation on a projective  manifold $X$.
    If $\F$ admits infinitely many quasi-invariant hypersurfaces then $\F$ is algebraically integrable, or $\F$ is a pull-back of a foliation on a projective surface under a dominant rational map.
\end{thm}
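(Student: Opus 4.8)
The plan is to reduce to the case $\dim X=3$ — where the reduction of singularities of Theorem~\ref{T:reductionofsings} is available — and then to split into two cases according to how the algebraic leaves of the restrictions $\restr{\F}{H}$, with $H$ quasi-invariant, are arranged inside the leaves of $\F$. For the reduction, if $\dim X=n>3$ one cuts $X$ with a general linear subspace $\Lambda$ of codimension $n-3$ and sets $X_3=X\cap\Lambda$, $\F_3=\restr{\F}{X_3}$. If $H$ is quasi-invariant for $\F$, then $H\cap X_3$ is not $\F_3$-invariant and the rational map $h\colon H\to B$ onto a curve witnessing the algebraic integrability of $\restr{\F}{H}$ restricts along $\Lambda$ to a rational map $H\cap X_3\to B$ whose general fibre is the intersection of a general fibre of $h$ with $\Lambda$, that is, an irreducible curve which is a leaf of $\restr{\F_3}{H\cap X_3}$; hence $H\cap X_3$ is quasi-invariant for $\F_3$. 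Since the ground field is uncountable, a general $\Lambda$ keeps the countably many surfaces $H_i\cap X_3$ pairwise distinct, so $\F_3$ inherits infinitely many quasi-invariant hypersurfaces. Granting the theorem in dimension three, $\F_3$ is algebraically integrable or a pull-back of a foliation on a surface, and that the same conclusion then holds for $\F$ is a Lefschetz-type statement for general linear sections of foliations, for which I would invoke \cite{MR4150930}. So assume from now on that $\dim X=3$.

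Let $\{H_i\}_{i\in I}$, $I$ infinite, be the quasi-invariant surfaces, and let $f_i\colon H_i\to B_i$ be a rational first integral of the foliation by curves $\restr{\F}{H_i}$, chosen with irreducible general fibre. Outside the tangency locus of $\F$ with $H_i$ the leaves of $\restr{\F}{H_i}$ are tangent to $\F$, so every fibre of $f_i$ is an algebraic curve tangent to $\F$ and is therefore contained in the closure of a single leaf of $\F$; through a general point of $H_i$ there passes exactly one such curve. Collect all these curves into a family $\mathcal T$ of algebraic curves in $X$. A standard argument, in the spirit of Jouanolou's analysis of invariant hypersurfaces, shows that — being infinitely many — the $H_i$, or at least a positive-dimensional subfamily of them, cover a Zariski-dense open subset of $X$, and hence so do the curves of $\mathcal T$.

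Now fix a general point $p\in X$ and let $L_p$ be the leaf of $\F$ through it, a connected complex surface; every curve of $\mathcal T$ through $p$ lies in $L_p$. If these curves vary in a positive-dimensional family, then, since they all sit inside the two-dimensional $L_p$, a standard argument shows that the Zariski closure $\overline{L_p}$ is an $\F$-invariant algebraic surface; letting $p$ vary we obtain infinitely many distinct $\F$-invariant algebraic surfaces, and the classical criterion of Jouanolou \cite{MR537038} yields a rational first integral of $\F$ onto a curve, whose general fibre — an irreducible $\F$-invariant surface containing a leaf as a dense subset — is the closure of a single leaf. Thus $\F$ is algebraically integrable. Otherwise the curves of $\mathcal T$ through a general point are rigid and prescribe a single direction inside $T_\F$; this direction field integrates to a foliation by curves $\G$ on $X$ with $T_\G\subset T_\F$, and $\G$ is algebraically integrable because its leaves are, by construction, the members of $\mathcal T$. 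Its rational first integral $g$ maps $X$ onto a surface $Y$ (as $\dim\G=1$), the general fibre of $g$ being a leaf of $\G$, hence tangent to $\F$. Therefore $\F$ is constant along the fibres of $g$ and descends to a codimension one foliation $\G_Y$ on $Y$ with $\F=g^{*}\G_Y$ — precisely the assertion that $\F$ is a pull-back of a foliation on a projective surface under a dominant rational map.

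The main difficulty is hidden in the ``rigid versus moving'' alternative: a priori the curves of $\mathcal T$ through a general point may neither form a positive-dimensional family nor reduce to a single direction, but may define a $k$-web with $k\ge 2$ subordinate to $\F$ with algebraic leaves, and one must show that such a configuration still forces one of the two conclusions — typically by passing to the generically finite cover of $X$ that parametrizes one branch of the web, where the pull-back of $\F$ becomes an honest pull-back from a surface, and then checking that this structure descends to $X$. It is here that projectivity and the reduction of singularities of $\F$ are genuinely used. This point, together with the bookkeeping needed to extract a covering subfamily from the $H_i$ and with the Lefschetz-type lifting in the reduction to dimension three, is where the real work lies; the complete argument is carried out in \cite{MR4150930}.
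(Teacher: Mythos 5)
There is a genuine gap at the very start of your dimension-three argument, before the ``rigid versus moving'' alternative you flag yourself. You assert that, being infinitely many, the $H_i$ ``or at least a positive-dimensional subfamily of them, cover a Zariski-dense open subset of $X$'', citing this as a standard argument in the spirit of Jouanolou. It is not: infinitely many distinct hypersurfaces need not fit into any positive-dimensional algebraic family, and a countable union of proper hypersurfaces, while Zariski dense, never contains a nonempty Zariski-open set. (Jouanolou's analysis of invariant hypersurfaces does not produce a covering family either; the Darboux--Jouanolou criterion produces a first integral, and only for \emph{invariant} hypersurfaces -- here the $H_i$ are precisely not invariant, and extracting any structure from their infinitude is the whole content of the theorem.) Consequently, through a very general point $p$ of $X$ no curve of your family $\mathcal T$ need pass at all, the leafwise dichotomy never gets off the ground, and the rest of the sketch has nothing to work with. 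On top of this, the three steps you explicitly defer -- the algebraicity of $\overline{L_p}$ when the curves move inside the leaf, the $k$-web case $k\ge 2$, and the Lefschetz-type lifting of the conclusion from a general $3$-fold section back to $X$ -- are exactly where the real work lies, and deferring them to \cite{MR4150930} means the proposal is a plan rather than a proof.

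For comparison, the paper's route (itself a summary of \cite{MR4150930}) avoids the covering problem entirely: after reducing to a foliation with simple singularities on a projective $3$-fold, it uses the finite dimensionality of $\NS(X)$ to build two divisors $D_1,D_2$ with zero Chern class, supported on quasi-invariant hypersurfaces and without common components, and then invokes the canonical closed logarithmic $1$-forms $\omega_{D_1},\omega_{D_2}$ with purely imaginary periods of Proposition \ref{P:omegaD}. Restricting to a general leaf $L$, either $\restr{D_1}{L}$ and $\restr{D_2}{L}$ meet, so $L$ carries a divisor of positive self-intersection inside a projective $3$-fold and is algebraic (whence algebraic integrability of $\F$), or they are disjoint and the plurisubharmonic first integrals, as in the proof of Theorem \ref{T:fibers}, force the leaves of the induced foliations on $L$ to be algebraic, producing a codimension two foliation with algebraic leaves tangent to $\F$ and hence the pull-back structure. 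In other words, the Hodge-theoretic machinery of Sections \ref{S:Hodge} and \ref{S:logarithmic} replaces any counting of curves through a general point; if you want to salvage your approach, that is the mechanism you would need to substitute for the unavailable covering-family step.
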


The proof of Theorem \ref{T:quasiJouanolou} can be briefly summarized as follows. First show that it suffices to prove the result for foliations with simple singularities on projective $3$-folds. Then construct divisors $D_1$ and $D_2$ supported on quasi-invariants hypersurfaces with zero Chern class and without common irreducible components in their supports. Let $\omega_{D_1}$, $\omega_{D_2}$ be the unique logarithmic $1$-forms with purely imaginary periods and $\Res(\omega_{D_i})=D_i$ given by Proposition \ref{P:omegaD}. Consider the restriction of the foliations defined by $\omega_{D_i}$ at a general leaf $L$ of $\F$. If the supports of $\restr{D_1}{L}$ and $\restr{D_2}{L}$ intersect, then $L$ must be algebraic since it is a surface contained in a projective $3$-fold and containing a divisor of positive self-intersection. In this case, $\F$ is algebraically integrable. If the supports of $\restr{D_1}{L}$ and $\restr{D_2}{L}$ do not intersect then, arguing as in the proof of Theorem \ref{T:fibers}, one can show that the leaves of these foliations are algebraic subvarieties of $L$, and is this way produce a codimension two foliation $\G$ be algebraic leaves tangent to $\F$. To properly carry out this argument, it is important that the foliation $\F$ has simple singularities. Standard arguments imply that $\F$ is the pull-back of foliation on a projective surface. We invite the reader to consult \cite{MR4150930} for details.

\section{Semi-global separatrices}\label{S:algebraicity}

In this section, contrary to the everywhere else int the paper, we choose to restrict to codimension one foliations on projective manifolds of dimension three, because most of our results  rely on the reduction of singularities for codimension one foliations. With some extra work, we could have  formulated  similar results for codimension one foliations on projective manifolds of any dimension  greater than three, and proved them by restricting the foliation to a sufficiently general projective $3$-fold. As this would add an extra layer of complexity to the proofs, and no
new idea, we opted to restrict to complex manifolds of dimension three. The hypothesis on the projectiveness of the ambient space is more serious, as most of the algebraicity results for subvarieties that we use rely on the existence of sufficiently many meromorphic functions on the ambient space, and an analog reasoning cannot be applied to non-algebraic compact Kähler manifolds.

\subsection{Semi-global separatrices}
Let $\F$ be a codimension one foliation on a complex manifold $X$ of dimension three defined by a twisted $1$-form $\omega \in H^0(X,\Omega^1_X \otimes N_F)$. Let $i: S \hookrightarrow X$ be the inclusion of an irreducible locally closed surface. We will say that $S$ is a semi-global separatrix for the foliation $\F$ if $\restr{(i^* \omega)}{S - \sing(S)}$ is identically zero and $i^{-1}(\sing(\F))$ is a connected compact subset of $S$.

\begin{prop}\label{P:extend sep}
    Let $\F$ be a codimension one foliation with simple singularities on a complex manifold $X$ of dimension three.
    Let $C$ be an irreducible curve contained in the singular set. If $C$ is compact then every point $p \in C - \sing(C)$ belongs
    to at least one, and at most two, semi-global separatrix of $\F$.
\end{prop}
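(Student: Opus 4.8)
The plan is to build the semi-global separatrices through $p$ by analytically continuing germs of $\F$-invariant surfaces along $C$, using the local classification of simple singularities in dimension three. First I would set up the local picture. Let $B\subset C$ be the set of points at which $\F$ has dimension type three; a curve of $\sing(\F)$ has dimension type two at its generic point, so $B$ is finite, and $\sing(C)\subset B$ because at a singular point of $C$ the germ of $\sing(\F)$ is not a smooth curve. Along $C-B$ the foliation is, in suitable coordinates $(x,y,z)$ with $C=\{x=y=0\}$, the product of a two-dimensional simple singularity in $(x,y)$ with the $z$-disc, so by the Briot--Bouquet analysis recalled after Theorem~\ref{T:CamachoSad} there is exactly one germ of $\F$-invariant surface containing $C$ when the transverse singularity is a saddle-node, and exactly two otherwise. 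The description of dimension-type-three simple singularities and their separatrices (Definition~\ref{D:simple}, Lemma~\ref{L:localsimple}, and \cite[Subsections 5.4 and 5.5]{MR1760842}) gives the same bounds at the points of $B$. Since only finitely many invariant surface-germs pass through any given point, any semi-global separatrix through $p$ agrees near $p$ with one of these (at most two) germs, so the bound ``at most two'' is immediate and the proof reduces to globalizing each such germ into an honest semi-global separatrix.

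Second I would carry out the globalization. Fix one germ $S_p$ of $\F$-invariant surface containing $C$ near $p$. Over $C-B$ the germs of $\F$-invariant surfaces containing $C$ glue uniquely (they are distinguished, for instance, by the Camacho--Sad index of $\F$ along $C$), so $S_p$ propagates to an $\F$-invariant analytic surface on a punctured neighbourhood of $C$; at each point of $B$ the local classification supplies a unique invariant surface-germ through which the propagation extends. As $C$ is compact and $B$ finite, one obtains an $\F$-invariant analytic surface defined on a whole neighbourhood of $C$. At the points of $B$ that surface is forced to contain germs of finitely many further irreducible curves of $\sing(\F)$; running the same continuation along those curves, one reaches an $\F$-invariant analytic surface $S$ defined on a neighbourhood of a connected union $\Gamma$ of irreducible curves of $\sing(\F)$, with $C\subseteq\Gamma$ and $S\cap\sing(\F)=\Gamma$.

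Third I would check that $S$ is a semi-global separatrix through $p$: it is irreducible and $\F$-invariant by construction, $S\cap\sing(\F)=\Gamma$ is connected, and --- once $\Gamma$ is known to be compact --- $S$ may be taken to be the analytic surface over a neighbourhood of $\Gamma$, which is then locally closed with $S\cap\sing(\F)=\Gamma$ compact and connected. I expect the compactness of $\Gamma$ to be the crux of the argument: one has to show that the process of dragging in new curves of $\sing(\F)$ at dimension-type-three points terminates after finitely many steps and never produces a non-compact curve. Here I would exploit that the dimension-type-three locus meets each irreducible curve of $\Gamma$ in a discrete set, the explicit form of the separatrices in the dimension-type-three local models, and the compactness of $C$, to bound the combinatorics of $\Gamma$. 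Combined with the local count this yields at most two semi-global separatrices through $p$, while the construction produces at least one, completing the proof.
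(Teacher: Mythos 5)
The decisive step of your argument is missing, and it is exactly the step the paper handles differently. The paper's proof is short: at the smooth point $p$ it takes a transversal $\Sigma$, observes that $\restr{\F}{\Sigma}$ is a two-dimensional simple singularity, gets at least one and at most two convergent separatrix germs from Briot--Bouquet, and then obtains the semi-global extension of each germ by invoking the \emph{proof} of the non-dicritical separatrix theorem of Cano--Cerveau (see the discussion after Theorem \ref{T:CanoCerveauMattei}, \cite[Part IV]{MR1179013}, \cite[Subsection 5.6]{MR1760842}), which produces an $\F$-invariant hypersurface containing a prescribed invariant curve. Your plan replaces this input by a hands-on continuation of invariant surface germs along $C$, and you yourself flag that the compactness of the resulting configuration $\Gamma$ is ``the crux'', which you only \emph{expect} to control by ``bounding the combinatorics''. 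That is precisely the content of the semi-global statement, and it is left unproved; worse, it cannot be obtained from the ingredients you list. The hypotheses make only $C$ compact ($X$ is an arbitrary three-dimensional complex manifold, and in the relevant applications not all components of $\sing(\F)$ are compact), so the singular curves your surface absorbs at dimension-type-three points may be non-compact, and no combinatorial bound on $\Gamma$ rules this out. Your construction would then yield a surface meeting $\sing(\F)$ in a non-compact set, hence not a semi-global separatrix in the sense of the definition; one needs either the Cano--Cerveau machinery, as the paper uses, or an argument for how to cut down $S$ near the offending crossings, and neither appears in your proposal.

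Two smaller inaccuracies: the count ``exactly one germ when the transverse singularity is a saddle-node'' is wrong in general, since the weak separatrix may converge; the correct local statement is at least one and at most two, which is all you need. Also, the two invariant surface germs along $C-B$ are not in general ``distinguished by the Camacho--Sad index'' (for transverse type $\frac{dx}{x}+\frac{dy}{y}$ both indices equal $-1$); what you actually use is uniqueness of analytic continuation of a given germ, and even a monodromy interchanging the two branches would be harmless for the bounds. Finally, note that the paper needs no analysis along $C$ beyond the single transversal at $p$: all of the globalization is delegated to the cited works, which is exactly the part your proposal attempts, but fails, to reprove.
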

\begin{proof}
    Let $p \in C$ be a smooth point of $C$. If $\Sigma \simeq (\mathbb C^2,0)$ is a germ of surface transverse to
    $C$ at $p$ then $\restr{\F}{\Sigma}$ is a germ of foliation on $(\mathbb C^2,0)$ with a simple singularity.
    Briot-Bouquet's theorem guarantees that $\restr{\F}{\Sigma}$ has at least one convergent separatrix, and at most two.
    The proof of the separatrix theorem for germs of non-dicritical codimension one foliations, see \cite[Part IV]{MR1179013}, \cite[Subsection 5.6]{MR1760842} and the discussion following the statement of Theorem \ref{T:CanoCerveauMattei}, implies the existence of a semi-global separatrix containing any
    given (convergent) separatrix of $\restr{\F}{\Sigma}$.
\end{proof}

Let $S$ be a semi-global separatrix for a foliation $\F$ with simple singularities on a complex manifold of dimension three.
Let $n: S^n \to S$ be the normalization of $S$. Note that $S^n$ is smooth since $S$ is normal crossing along the singular set of $\F$.
The pull-back of Bott's connection to $S^n$, \ie the connection defined in Subsection \ref{SS:index}, is a flat meromorphic connection $\nabla$ with polar divisor having support contained in the compact set $n^{-1}(\sing(\F))$. Let $C_1, \ldots, C_k$ be the irreducible components of $n^{-1}(\sing(\F))$. Since we are assuming that $\F$ has simple singularities, each $C_i$ is a curve and $\sum C_i$ is a normal crossing divisor on $S^n$. The intersection matrix of the semi-global separatrix $S$ is, by definition, $(C_i \cdot C_j)$, the intersection matrix of the divisor $\sum C_i$.

We will say that the semi-global separatrix $S$ is logarithmic if the connection $\nabla$ is logarithmic. Otherwise, if $\nabla$ has non-reduced polar divisor then we say that $S$ is an irregular semi-global separatrix.

\begin{comment}
    A local computation shows that $\nabla$ is irregular if, and only if, it contains the weak separatrix of the restriction of $\F$ to a $(\mathbb C^2,0)$ transverse to a smooth point of $\sing(\F)$. Moreover, since we are assuming that $V$ is convergent, the weak separatrix in question must be convergent.
\end{comment}

Inspired by Theorem \ref{T:structure log forms}, we will now proceed to analyze the semi-global separatrices according to properties
of its intersection form.

\subsection{Negative definite intersection form} The key result concerning (germs of) smooth surfaces
containing a divisor with compact support and negative definite intersection form is Grauert's contractibility
criterion \cite[Section 8.e]{Grauert}.

\begin{thm}\label{thm:Grauert}
    Let $S$ be a germ of smooth complex surface containing a divisor
    $D$ with compact and connected support.  If the intersection form of $D$ is negative definite then there exists
    a germ of  normal complex surface $S'$ and a morphism $\pi: S \to S'$ which maps
    $|D|$ to a point $p$ and is an isomorphism between $S- |D|$ and $S' - \{p \}$.
\end{thm}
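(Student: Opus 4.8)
Grauert's contractibility criterion \cite{Grauert} is classical, and here is the route I would take. The surface $S'$ has to be built essentially tautologically: its underlying topological space is the quotient $S/|D|$ obtained by collapsing $|D|$ to a single point $p$, its structure sheaf agrees with $\pi_*\mathcal{O}_S$ away from $p$, and the stalk $\mathcal{O}_{S',p}$ is declared to be the ring of germs along $|D|$ of holomorphic functions on $S$ that are bounded near $|D|$. Since $|D|$ has codimension two in the \emph{smooth} surface $S$, Riemann's extension theorem forces such functions to extend holomorphically across $|D|$, and then compactness and connectedness of $|D|$ force them to be constant on $|D|$; so $\mathcal{O}_{S',p}$ is just the ring of germs along $|D|$ of holomorphic functions that are constant on $|D|$. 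With this definition the theorem reduces to a single statement: there exist a neighborhood $W$ of $|D|$ in $S$ and finitely many $f_1,\dots,f_N\in\mathcal{O}_S(W)$, all vanishing on $|D|$, such that, after shrinking $W$, the map $F=(f_1,\dots,f_N):W\to\mathbb{C}^N$ is proper onto its image, satisfies $F^{-1}(0)\cap W=|D|$ set-theoretically, and is injective on $W-|D|$. From such an $F$ one takes $S'$ to be the normalization of the image near $0$ with $\pi$ the induced map; properness and injectivity off $|D|$ make $\pi$ an isomorphism over $S'-\{p\}$, while normality of $S'$ at $p$ is automatic from the codimension-two Riemann extension property.

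The positivity input I would use is the following. The matrix $N=(-D_i\cdot D_j)_{i,j}$ is symmetric, positive definite (by hypothesis), and has non-positive off-diagonal entries, so it is a symmetric $M$-matrix; being moreover irreducible (because $|D|$ is connected), its inverse has strictly positive entries. Hence there is an effective divisor $A=\sum a_i D_i$ with all $a_i>0$ and $A\cdot D_i<0$ for every component $D_i$ of $|D|$. Writing $I=\mathcal{O}_S(-A)$ for its ideal sheaf, the restriction of $I$ to every irreducible component of the (necessarily projective) compact curve $|D|$ is a line bundle of strictly positive degree.

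The heart of the argument is then cohomological. Let $D_k$ be the analytic subspace of $S$ cut out by $I^k$, and consider the graded pieces $I^k/I^{k+1}$, coherent sheaves on $|D|$ that behave like symmetric powers of a ``negative'' conormal bundle twisted by positive-degree data. Using the exact sequences
\[
    0 \longrightarrow I^k/I^{k+1} \longrightarrow \mathcal{O}_{D_{k+1}} \longrightarrow \mathcal{O}_{D_k} \longrightarrow 0
\]
together with the positivity of $\deg(I|_{D_i})$, an induction over the projective curve $|D|$ (Riemann--Roch and Serre vanishing on curves) yields $H^1(|D|,I^k/I^{k+1})=0$ and global generation of $I^k/I^{k+1}$ for $k\gg 0$, and this propagates up the infinitesimal neighborhoods $D_m$. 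With these vanishings in hand, Grauert's finiteness and formal-function theorems for the proper collapsing map allow one to realize the resulting formal sections by honest holomorphic functions on a Stein neighborhood $W$ of $|D|$. Choosing $k\gg 0$ and a finite generating set $f_1,\dots,f_N$ of $H^0(W,I^k)$, refined by finitely many extra sections of some $I^{\ell}$ separating the points of $W-|D|$, produces the map $F$ demanded in the first paragraph.

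The step I expect to be the real obstacle is exactly this cohomological heart: the positivity of $A$ is only the mechanism, and what is genuinely deep is obtaining the vanishing uniformly along \emph{all} the thickenings $D_k$ and then passing from formal sections to convergent ones, which is the substance of Grauert's original coherence theory and which I would simply invoke. By contrast, once $F$ is constructed the remaining verifications — properness after shrinking, the identity $F^{-1}(0)=|D|$, injectivity off $|D|$, and normality of the image via Hartogs in codimension two — are routine.
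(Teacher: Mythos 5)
The paper does not actually prove this statement: it is Grauert's contractibility criterion, quoted with a pointer to \cite[Section 8.e]{Grauert}, so there is no internal argument to compare yours with. Judged on its own terms, your outline follows a cohomological, Artin-style strategy rather than Grauert's, and it has a genuine gap exactly at the step you yourself flag as ``the real obstacle''. The appeal to ``Grauert's finiteness and formal-function theorems for the proper collapsing map'' is circular: those comparison theorems presuppose a proper modification $\pi$, which is precisely what is being constructed. What the graded vanishing $H^1\bigl(|D|, I^k/I^{k+1}\bigr)=0$ gives you is only formal data along the completion of $S$ along $|D|$, and formal sections along a compact curve need not be realized by convergent ones on any actual neighborhood --- that formal-versus-convergent gap is exactly the theme of Ueda theory discussed elsewhere in this paper. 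In Grauert's actual proof the negativity is used analytically: with $A$ as in your second paragraph one builds a hermitian metric on $\mathcal{O}_S(-A)$ positive near $|D|$, hence a strictly plurisubharmonic function on a punctured neighborhood, so $|D|$ has a fundamental system of strongly pseudoconvex (1-convex) neighborhoods $W$; Grauert's finiteness theorem for 1-convex spaces (not for a proper map) gives $\dim H^1(W, I^k)<\infty$, which combined with the graded vanishing produces enough convergent sections, and holomorphic convexity of $W$ together with the Remmert reduction contracts the maximal compact analytic subset, which is $|D|$. Without some analytic input of this kind, your passage from formal to convergent does not close.

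Two smaller slips. First, $|D|$ is a curve in a surface, hence of codimension one, so the extension of bounded holomorphic functions across it is the first Riemann removable singularity theorem (codimension two is what you would need to extend without a boundedness hypothesis); the conclusion you want is still true, but not for the reason stated. Second, no neighborhood $W$ of the compact curve $|D|$ can be Stein, since Stein spaces contain no positive-dimensional compact analytic subsets; the neighborhoods one can actually work on are the strongly pseudoconvex ones above. The linear-algebra step (existence of $A=\sum a_iD_i$ with all $a_i>0$ and $A\cdot D_i<0$, from negative definiteness, connectedness and nonnegative off-diagonal intersection numbers) and the formal construction of $S'$ once a suitable contracting map exists are fine.
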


In particular,  the germ of  $S$ around $|D|$ has a huge ring of holomorphic functions. Compactness
of $|D|$ implies that all holomorphic functions are constant along $|D|$, but they separate points on
the complement $S-|D|$.

\begin{prop}\label{P:negative connection}
    Let $S$ be a germ of smooth complex surface containing a divisor
    $D$ with compact support and negative definite intersection form.
    If $\mathcal L$ is a line-bundle over $S$ and $\nabla$ is a flat meromorphic connection on $\mathcal L$ with polar divisor
    supported on $|D|$ then $\nabla$ is a logarithmic connection and $\Res(\nabla) \in \Div(S) \otimes \mathbb Q$.
\end{prop}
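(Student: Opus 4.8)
The plan is to deduce that $\nabla$ is logarithmic from Proposition \ref{P:irregularconnections}, and the rationality of its residues from Theorem \ref{T:residue for connections}. First I would fix a representative of the germ $S$ which is a relatively compact tubular neighbourhood of $|D|$ deformation retracting onto it, and write $|D| = C_1 \cup \cdots \cup C_k$ for the decomposition into (compact, connected) irreducible components, so that $M = (C_i \cdot C_j)_{1\le i,j\le k}$ is negative definite. The irregular divisor $I = \Irr(\nabla)$ is an effective integral divisor supported on the polar divisor of $\nabla$, hence on $|D|$, so it has compact support; Proposition \ref{P:irregularconnections} then gives $I^2 \ge 0$. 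Writing $I = \sum_i n_i C_i$ with $n_i \in \mathbb Z_{\ge 0}$ and $n = (n_i)$, negative definiteness of $M$ forces $I^2 = n^{t} M n < 0$ unless $n = 0$; hence $I = 0$, i.e.\ the polar divisor of $\nabla$ is reduced and the local connection forms $\eta_i$ have poles of order at most one. Since $\nabla$ is flat, its curvature $\frac{1}{2\pi\sqrt{-1}} d\eta_i$ vanishes, so each $\eta_i$ is closed; a closed $1$-form with poles of order at most one along $|D|$ is a logarithmic $1$-form in Saito's sense, so $\nabla$ is a logarithmic connection with $\Res(\nabla) = \sum_{i=1}^{k} \lambda_i C_i$ for complex numbers $\lambda_i$ (constant, since the $\eta_i$ are closed).

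The second step is to pin down the $\lambda_i$. The key point is that the classes $c(C_1),\dots,c(C_k)$ form a $\mathbb Q$-basis of $H^2(S,\mathbb Q)$: since $S$ retracts onto $|D|$, $H_2(S,\mathbb Q) \cong H_2(|D|,\mathbb Q)$ is freely generated by the fundamental classes $[C_1],\dots,[C_k]$; the Gram matrix of the $c(C_i)$ against this basis under the intersection pairing is $M$, which is invertible, so the $c(C_i)$ are linearly independent, and they span because $\dim_{\mathbb Q} H^2(S,\mathbb Q) = \dim_{\mathbb Q} H^2(|D|,\mathbb Q) = k$ (Poincar\'e duality identifies $H^2(S,\mathbb Q)$ with $H_2(|D|,\mathbb Q)^{\vee}$). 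Consequently the integral Chern class $c(\mathcal L)$ has image $c(\mathcal L) = \sum_i a_i c(C_i)$ in $H^2(S,\mathbb Q)$ with $a_i \in \mathbb Q$. Applying Theorem \ref{T:residue for connections} to $\nabla$ — legitimate since $S$ need not be compact — gives $c(\mathcal L) = -c(\Res(\nabla)) = -\sum_i \lambda_i c(C_i)$ in $H^2(S,\mathbb C)$; comparing coefficients against the $\mathbb C$-linearly independent family $\{c(C_i)\}$ yields $\lambda_i = -a_i \in \mathbb Q$, and hence $\Res(\nabla) = \sum_i \lambda_i C_i \in \Div(S)\otimes \mathbb Q$.

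I do not foresee a genuine obstacle: granted Proposition \ref{P:irregularconnections} and Theorem \ref{T:residue for connections}, the argument is essentially bookkeeping, with negative definiteness of $M$ doing the real work — once to annihilate the irregular divisor, and once to force $\{c(C_i)\}$ to be a basis so that the final coefficient comparison is legitimate. The only mild subtlety is that $S$ is a germ rather than a compact manifold, which is why I would first replace it by an honest tubular neighbourhood of $|D|$ and invoke the topology of such neighbourhoods. Note that Grauert's contractibility criterion (Theorem \ref{thm:Grauert}) plays no role in this particular statement, although it is presumably the tool behind the applications in which $S$ is used.
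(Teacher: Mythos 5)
Your argument is correct and follows essentially the same route as the paper: Proposition \ref{P:irregularconnections} plus negative definiteness kills the irregular divisor (flatness then making the reduced-pole connection logarithmic), and Theorem \ref{T:residue for connections} plus invertibility of the intersection matrix gives rationality of the residues. The only difference is cosmetic: the paper simply pairs $c(\mathcal L)=-c(\Res(\nabla))$ against the curves $C_i$ to obtain a linear system over $\mathbb Z$ with matrix $(C_i\cdot C_j)$, which sidesteps your detour through the claim that the $c(C_i)$ form a basis of $H^2(S,\mathbb Q)$ (where, incidentally, the identification $H^2(S,\mathbb Q)\simeq H_2(S,\mathbb Q)^{\vee}$ you use is universal coefficients with field coefficients rather than Poincar\'e duality).
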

\begin{proof}
    Proposition \ref{P:irregularconnections} implies that the irregular divisor of
    $\nabla$ satisfies $I(\nabla)^2\ge 0$. Since the intersection form of $D$ is negative definite, $I(\nabla)$ must be zero, \ie $\nabla$ is logarithmic. If $C_1, \ldots, C_k$ are the irreducible components of the support of $D$ then the negative definiteness of $|D|$ combined with Theorem \ref{T:residue for connections} (residue theorem for connections) implies that the residue divisor of $\nabla$ is completely determined by the system of linear equations, defined over $\mathbb Z$,  $\Res(\nabla) \cdot C_i = - c(\restr{\mathcal L}{C_i})$, $i \in \{ 1, \ldots, k\}$. It follows that $\Res(\nabla)$ is a divisor with rational coefficients.
\end{proof}

\begin{cor}\label{C:Bottnegative}
    Let $\F$ be a codimension one foliation with simple singularities on a complex manifold $X$ of
    dimension three. If $S$ is a semi-global separatrix with negative definite intersection form then
    Bott's connection is a logarithmic connection with rational residues.
\end{cor}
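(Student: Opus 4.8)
This statement is essentially a restatement of Proposition \ref{P:negative connection} in the language of Bott's connection, so the plan is simply to match up the data. First I would recall the constructions made just before the statement: if $n : S^n \to S$ is the normalization of the semi-global separatrix $S$, then the pull-back to $S^n$ of Bott's connection is a flat meromorphic connection $\nabla$ on the line-bundle $n^*(\restr{N^*_{\F}}{S})$, and its polar divisor is supported on the compact set $D := n^{-1}(\sing(\F))$. Since $\F$ has simple singularities, $S$ is simple normal crossing in a neighborhood of $\sing(\F)$; hence $S^n$ is smooth along $D$, and $D = \sum_i C_i$ is a simple normal crossing divisor there whose intersection matrix $(C_i \cdot C_j)$ is, by hypothesis, negative definite.

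Next I would invoke Proposition \ref{P:negative connection} with the germ of $S^n$ around $|D|$ as the ambient surface, $D$ as the negative definite, compactly supported divisor, $n^*(\restr{N^*_{\F}}{S})$ as the line-bundle, and $\nabla$ as the flat meromorphic connection (whose poles lie on $|D|$ by construction). The proposition then yields at once that $\nabla$ is a logarithmic connection and that $\Res(\nabla) \in \Div(S^n)\otimes \mathbb Q$; restricting attention to the components $C_i$ of $D$ gives the statement back on $S$.

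I do not expect a real obstacle. The only points needing a line of justification are that $S^n$ is smooth along $D$ and that $(\nabla)_{\infty}$ is supported on $|D|$, both immediate from the simplicity of the singularities of $\F$ and the construction of Bott's connection in Subsection \ref{SS:index}. The substantive mechanism — that $I(\nabla)^2 \ge 0$ together with negative definiteness forces $I(\nabla) = 0$, and that the residue theorem for connections then determines $\Res(\nabla)$ as the solution of a linear system with integer coefficients — is already carried out inside the proofs of Propositions \ref{P:irregularconnections} and \ref{P:negative connection}, so nothing new is required.
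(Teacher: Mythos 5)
Your proposal is correct and is exactly the route the paper intends: the corollary is stated without a separate proof precisely because it is the direct application of Proposition \ref{P:negative connection} to the pull-back of Bott's connection on the germ of the (smooth) normalization $S^n$ around $n^{-1}(\sing(\F))$, whose intersection matrix is the one assumed negative definite. The two small justifications you flag (smoothness of $S^n$ along that set and the support of the polar divisor) are indeed supplied by the paper's discussion preceding the corollary, so nothing is missing.
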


\begin{comment}
\begin{question}
    Are there examples of semi-global separatrices for codimension one foliations on complex manifolds of dimension three with negative definite intersection form and infinite holonomy? What if, the foliations is a (global) foliation on a projective threefold?
\end{question}
\end{comment}

\subsection{Intersection form with a positive eigenvalue}
We now turn our attention to semi-global separatrices with intersection forms having at least one positive eigenvalue.
The situation is  opposite to the case of semi-global separatrices with negative definite intersection forms.
While in the previous case, we had an abundance of holomorphic functions, in the present situation the only holomorphic
functions are constant and, moreover, the field of  meromorphic functions of the normalization of a semi-global separatrix
has finite transcendence degree over $\mathbb C$.

\begin{lemma}\label{L:D2>0}
    Let $S$ be a smooth complex surface containing a divisor with compact support and positive
    self-intersection. Then there exists an effective divisor $D$ on $S$ with compact support
    and of positive self-intersection  such that  $D\cdot C >0$ for every curve contained in its support.
    Moreover, the transcendence degree over $\C$  of the field of meromorphic functions of $S$ is at most two.
\end{lemma}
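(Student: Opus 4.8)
The plan is to extract from a given compact divisor $D_0$ with $D_0^2 > 0$ a ``pseudo-effective-on-its-support'' effective divisor, and then to deduce the bound on the transcendence degree from a Riemann--Roch / Kodaira-type dimension count. First I would replace $D_0$ by an effective divisor with the same support: writing $D_0 = D_{0,+} - D_{0,-}$ with $D_{0,\pm}$ effective and without common components, the hypothesis $D_0^2 > 0$ forces $D_{0,+}^2 > 0$ after discarding the negative part (one checks $(D_{0,+})^2 \ge (D_0)^2 + 2(D_{0,+}\cdot D_{0,-}) - (D_{0,-})^2$ is not automatic, so instead I would argue directly: among all effective divisors supported on $|D_0|$ with positive self-intersection, pick one, call it $E$; such exist because the intersection form on the span of the components of $|D_0|$ is not negative semi-definite).

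The key step is a contraction/pruning argument. Starting from an effective $E$ supported on $|D_0|$ with $E^2 > 0$, I would show that one can choose $D$ effective, supported on a (connected) sub-configuration of $|D_0|$, with $D^2 > 0$ and $D \cdot C > 0$ for every irreducible $C \subset |D|$. The mechanism: if some component $C \subset |D|$ has $D\cdot C \le 0$, then by the Zariski-lemma type argument (or Hodge index on the span of components, as in Lemma~\ref{L:Hodgeindex}) one can modify $D$ by subtracting a suitable multiple of a connected negative-semidefinite sub-configuration through $C$, strictly decreasing the number of components in $|D|$ while keeping $D$ effective and $D^2>0$; this terminates. Concretely, I would iterate: while there is a component $C$ with $D \cdot C \le 0$, replace $D$ by $D - \epsilon C$ (or by $D$ minus the positive part of the fixed locus) — one must check $D^2$ does not drop to $0$, using $D^2 > 0$ and that removing a component orthogonal-or-worse to $D$ cannot decrease $D^2$. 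Passing to the connected component of the support carrying the positivity finishes this half.

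For the transcendence degree bound, I would invoke the standard fact (Kodaira, or Serre's finiteness plus Riemann--Roch on the surface germ, cf. the discussion around Grauert's criterion) that an effective divisor $D$ with $D^2 > 0$ on a complex surface is big: $h^0(S, \mathcal O_S(mD))$ grows like $m^2$. Since $|D|$ is compact with a neighborhood on which these sections are defined, the field generated by ratios of sections of $\mathcal O_S(mD)$, $m \ge 1$, has transcendence degree at least one; combined with the fact that a two-dimensional complex manifold has at most two algebraically independent meromorphic functions (Siegel/Thimm--Remmert), we get the bound of at most two. The point being asserted here is merely the upper bound ``at most two'', which is automatic in dimension two for \emph{any} complex surface and needs no positivity — so the substantive content is entirely the construction of $D$.

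The main obstacle I anticipate is the termination and self-intersection-control in the pruning step: one must ensure that removing ``bad'' components (those meeting $D$ non-positively) never kills the strict positivity $D^2 > 0$, and that after restricting to a connected component the positivity survives. This is exactly the kind of bookkeeping handled by the Hodge index theorem on the finite-dimensional lattice spanned by the components of $|D_0|$ inside $H^{1,1}(X)$ (as in Lemma~\ref{L:trivial} and Lemma~\ref{L:Hodgeindex}): the signature being $(1, \text{rest})$ means there is a unique positive direction, and any effective divisor pairing non-positively with all of $|D|$ would have to be negative-semidefinite, contradiction — so the process of removing such pieces must halt at a $D$ with the desired property.
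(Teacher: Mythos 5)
Your proposal has two genuine problems, one of which is fatal to the ``moreover'' part. You assert that the bound $\trdeg \mathbb C(S)\le 2$ is ``automatic in dimension two for any complex surface and needs no positivity'' by Siegel/Thimm. That theorem holds only for \emph{compact} manifolds, and $S$ here is not compact: only the divisor is. A non-compact surface can have meromorphic function field of infinite transcendence degree (e.g.\ the blow-up of $\mathbb C^2$ at a point contains a compact curve, and $x,y,e^x,e^y,\dots$ are algebraically independent meromorphic functions on it); the paper itself stresses this kind of subtlety in the discussion around Theorem \ref{T:Andreotti}. In the paper's proof the transcendence bound is precisely where the divisor $D$ just constructed is used: the formal completion of $S$ along $|D|$ satisfies the hypotheses of Hartshorne's theorem on formal functions (\cite[Theorem 6.7]{MR232780}) because $D^2>0$ and $D\cdot C>0$ on its support, and this yields $\trdeg\mathbb C(S)\le 2$. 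So the second claim is substantive and your proposal leaves it unproved.

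The construction of $D$ also has gaps. Your pruning mechanism is not sound as described: replacing $D$ by $D-\epsilon C$ does not remove $C$ from the support, while removing a component $C$ with $D\cdot C\le 0$ entirely can destroy positivity of $D^2$ (take $C_1^2=0$, $C_2^2=-1$, $C_1\cdot C_2=1$ and $D=2C_1+3C_2$: then $D^2=3>0$ and $D\cdot C_2=-1$, but $(2C_1)^2=0$). Moreover, your appeal to the signature statement of Lemmas \ref{L:trivial} and \ref{L:Hodgeindex} is not available, since those rely on the Hodge index theorem on a \emph{compact} Kähler manifold, whereas $S$ is only a (germ of) surface around a compact divisor. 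The paper avoids both issues by first passing to the effective divisor $D_++D_-$ (which has positive square because $D_+\cdot D_-\ge 0$), then taking the Zariski decomposition $D_0=P+N$ in the purely linear-algebraic form of \cite{MR2877664} (valid for the intersection lattice of a compact divisor in any smooth surface), so that $P$ is nef with $P^2>0$, and finally upgrading $P$ on a connected component of its support to a divisor $D$ with $D\cdot C>0$ for every component $C$ via the argument of \cite[Lemma 4.9]{MR4150930}. If you want to keep an elementary ``fix the bad components'' argument you must, at minimum, replace removal by the Zariski-type adjustment of coefficients and verify termination; as written the step can fail.
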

\begin{proof}
    Let $\sum m_i C_i$ be a divisor on $S$ with compact support and $(\sum m_i C_i)^2 >0$. If we write this divisor as a difference of
    effective divisors without common irreducible components  $\sum m_i C_i = D_+ - D_-$  then
    it is clear that $D_0= D_+ + D_-$ is an effective  divisor with $D_0^2 >0$.

    Let $D_0 = P + N$ be the Zariski decomposition of $D_0$ in the sense of   \cite[Theorem 3.3]{MR2877664}, i.e.
    both $P$ and $N$ are effective, $P$ is nef,  $P\cdot C =0$ for every $C$ is the support of $N$, and the restriction of
    the intersection form to the support of $N$ is negative definite.  Since $D_0^2>0$ then  $P^2 > 0$.

    Replace $S$ with a sufficiently small neighborhood of a connected component of the support
    of $P$. Thus $P$ is a nef divisor with connected support. The proof of \cite[Lemma 4.9]{MR4150930}
    gives a divisor $D$ with the sought properties and the same support as $P$.

    To conclude, observe that the formal completion of $S$ along $|D|$ satisfies the
    hypothesis of \cite[Theorem 6.7]{MR232780} which in its turn implies the bound on the transcendence degree over $\C$ of
    the field $\mathbb C(S)$ of meromorphic functions of $S$.
\end{proof}

\begin{cor}\label{C:positivesemi-global}
    Let $\F$ be a codimension one foliation with simple singularities on a projective manifold $X$ of
    dimension three. If $S$ is a semi-global separatrix with intersection form having at least one positive
    eigenvalue then the Zariski closure of $S$ is a $\F$-invariant algebraic surface $\overline S$.
\end{cor}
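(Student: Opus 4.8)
The plan is to derive this from Lemma~\ref{L:D2>0} together with a transcendence-degree count. Write $n\colon S^n\to S$ for the normalization, which is a smooth connected surface, and let $C_1,\dots,C_k\subset S^n$ be the irreducible components of the compact set $n^{-1}(\sing(\F))$; by definition the intersection matrix of $S$ is the symmetric matrix $M=(C_i\cdot C_j)$. Since $M$ has a positive eigenvalue $\lambda>0$, a nonzero real eigenvector $v$ attached to it satisfies $v^{\top}Mv=\lambda\lVert v\rVert^2>0$; perturbing $v$ to a rational vector and clearing denominators yields integers $m_1,\dots,m_k$ for which $D_0:=\sum_i m_iC_i$ is a divisor on $S^n$ with compact support and $D_0^2=\sum_{i,j}m_im_j(C_i\cdot C_j)>0$.

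First I would apply Lemma~\ref{L:D2>0} to the smooth surface $S^n$, which by the previous step contains a compactly supported divisor of positive self-intersection: this bounds the transcendence degree over $\C$ of the field $\mathbb{C}(S^n)$ of meromorphic functions of $S^n$ by two. (Strictly speaking the lemma gives the bound on a suitable neighborhood $U$ of $|D_0|$ in $S^n$; as $S^n$ is connected, restriction is an injection $\mathbb{C}(S^n)\hookrightarrow\mathbb{C}(U)$, so the bound descends to $S^n$.)

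Next I would rule out $\overline S=X$. If $S$ were Zariski dense in $X$, then for every nonzero $f\in\mathbb{C}(X)$ the zero and polar loci of $f$ would be proper Zariski closed subsets not containing $S$, so $f$ would restrict to a nonzero meromorphic function on $S$, hence on $S^n$ (the normalization being birational); this would embed $\mathbb{C}(X)$ into $\mathbb{C}(S^n)$ and force $\trdeg\mathbb{C}(S^n)\ge\dim X=3$, contradicting the bound above. Since $S$ is irreducible, $\overline S$ is irreducible of dimension $2$ or $3$; the contradiction excludes dimension $3$, so $\overline S$ is an irreducible algebraic surface. Finally, as $\dim\overline S=2$ while $\sing(\F)$ has codimension at least two, $\overline S$ is not contained in $\sing(\F)$; a local generator $\omega$ of $N^*_{\F}$ pulls back to zero on $S-\sing(S)$ by the definition of semi-global separatrix, and since $S$ is Zariski open in $\overline S$ the set $S-\sing(S)$ contains a dense subset of the connected manifold $\overline S^{\mathrm{sm}}\setminus\sing(\F)$ on which this pull-back vanishes, so by analytic continuation the pull-back of $\omega$ to $\overline S^{\mathrm{sm}}\setminus\sing(\F)$ is identically zero and $\overline S$ is $\F$-invariant.

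The genuinely hard input is entirely contained in Lemma~\ref{L:D2>0} (the bound on the transcendence degree of meromorphic functions near a divisor of positive self-intersection, obtained there via \cite[Theorem~6.7]{MR232780}); within this corollary the only steps needing a little care are producing an honest integral divisor of positive self-intersection from the eigenvalue hypothesis and the density argument for $\F$-invariance, both routine.
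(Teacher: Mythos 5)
Your proof is correct and takes essentially the same route as the paper's: extract from the positive eigenvalue a compactly supported integral divisor of positive self-intersection on $S^n$, invoke Lemma \ref{L:D2>0} to get $\trdeg \C(S^n)\le 2$, and play this against the restriction of rational functions from the projective threefold $X$ to exclude $\overline S = X$, the paper's own proof being just a compressed version of these steps. The only (harmless) imprecision is your claim that $S$ is Zariski open in $\overline S$: it is in general only guaranteed to contain an analytically open subset of $\overline S$ (because $S$ and the irreducible $\overline S$ are both pure two-dimensional), and that already suffices for the analytic-continuation argument giving the $\F$-invariance of $\overline S$.
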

\begin{proof}
    On the one hand, since $X$ is projective, the restrictions of rational functions on $X$ to $S$ define meromorphic functions on
    $S$. Therefore, the transcendence degree of the field of meromorphic functions of $S$ (or rather of its normalization)
    is at least two. On the other hand, Lemma \ref{L:D2>0} implies that $\trdeg \C(S)$ is at most two. Hence the Zariski closure of $S$ has dimension two and the result follows.
\end{proof}

\subsection{Proof of Theorem \ref{THM:Logsep}}
We will make use of the following elementary observation.

\begin{lemma}\label{L:elementar}
    Let $A \in \End(\mathbb Q^n)$, $b \in \mathbb Q^n$, and $x= (x_1, \ldots, x_n) \in \mathbb C^n$ be such that $A(x) = b$.
    If the $\mathbb Z$-submodule of $\mathbb C/\mathbb Q$ generated by the entries $x_i \mod \mathbb Q$ of $x$ has rank $k$ then
    $\dim \ker A \ge k$.
\end{lemma}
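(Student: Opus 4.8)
The plan is to reduce immediately to the homogeneous case and then use that $\mathbb{C}/\mathbb{Q}$ is a $\mathbb{Q}$-vector space which is torsion-free as a $\mathbb{Z}$-module. First I would observe that since $A$ and $b$ have rational entries and the system $A(x)=b$ is consistent over $\mathbb{C}$, it is already consistent over $\mathbb{Q}$: the $\mathbb{C}$-span of the columns of $A$ is the complexification of their $\mathbb{Q}$-span $V\subseteq\mathbb{Q}^n$, and $(V\otimes_{\mathbb{Q}}\mathbb{C})\cap\mathbb{Q}^n=V$, so the rational vector $b$ lies in $V$. Fixing a rational solution $x_0\in\mathbb{Q}^n$ and setting $y=x-x_0$, one has $A(y)=0$, i.e. $y\in\ker_{\mathbb{C}}A$, and $y_i\equiv x_i\pmod{\mathbb{Q}}$ for every $i$ because $x_0$ is rational. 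Hence the classes $\bar{y}_i\in\mathbb{C}/\mathbb{Q}$ generate the same rank-$k$ submodule as the $\bar{x}_i$, and it suffices to prove the statement for $y$; in other words we may assume $x\in\ker_{\mathbb{C}}A$ from the start.

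Next I would use the identity $\ker_{\mathbb{C}}A=(\ker_{\mathbb{Q}}A)\otimes_{\mathbb{Q}}\mathbb{C}$. Choosing a $\mathbb{Q}$-basis $w_1,\dots,w_d$ of $\ker_{\mathbb{Q}}A$, where $d=\dim\ker A$, write $x=\sum_{j=1}^d c_j w_j$ with $c_j\in\mathbb{C}$. Reading off the $i$-th coordinate gives $x_i=\sum_{j=1}^d (w_j)_i\,c_j$ with all coefficients $(w_j)_i\in\mathbb{Q}$. Regarding $\mathbb{C}/\mathbb{Q}$ as a $\mathbb{Q}$-vector space and passing to classes modulo $\mathbb{Q}$, $\mathbb{Q}$-linearity yields $\bar{x}_i=\sum_{j=1}^d (w_j)_i\,\bar{c}_j$, so the $\mathbb{Q}$-subspace of $\mathbb{C}/\mathbb{Q}$ spanned by $\bar{x}_1,\dots,\bar{x}_n$ is contained in the $\mathbb{Q}$-subspace spanned by $\bar{c}_1,\dots,\bar{c}_d$, which has dimension at most $d$.

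Finally I would invoke the elementary fact that $\mathbb{C}/\mathbb{Q}$ is torsion-free as a $\mathbb{Z}$-module (if $nz\in\mathbb{Q}$ with $n\neq 0$ then $z\in\mathbb{Q}$), so a finitely generated $\mathbb{Z}$-submodule is free and its rank equals the dimension, over $\mathbb{Q}$, of the $\mathbb{Q}$-subspace it generates. Applying this to the submodule generated by the $\bar{x}_i$ — whose rank is $k$ by hypothesis — gives $k=\dim_{\mathbb{Q}}\langle\bar{x}_1,\dots,\bar{x}_n\rangle_{\mathbb{Q}}\le d=\dim\ker A$, which is the claim.

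The argument is entirely elementary and I do not expect a genuine obstacle; the only points that require a little care are the passage between the rank of a $\mathbb{Z}$-submodule of $\mathbb{C}/\mathbb{Q}$ and the dimension of the $\mathbb{Q}$-subspace it spans, and the (standard) observation that a consistent linear system with rational coefficients admits a rational solution.
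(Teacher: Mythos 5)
Your argument is correct. Note that the paper offers no proof of this lemma at all: it is stated as an ``elementary observation'' and used directly in the proof of Theorem \ref{T:Logsep}, so there is no written argument to compare with. Your proof is the natural one and fills that gap cleanly: reduce to the homogeneous case by subtracting a rational solution (which exists because the rank of $A$, hence the consistency of the system, is unchanged under the field extension $\mathbb Q \subset \mathbb C$), use that $\ker_{\mathbb C}A$ admits a basis of rational vectors $w_1,\dots,w_d$ so that each class $x_i \bmod \mathbb Q$ lies in the $\mathbb Q$-span of the $d$ classes $\bar c_j$, and finally identify the rank of a finitely generated $\mathbb Z$-submodule of the torsion-free group $\mathbb C/\mathbb Q$ with the dimension of the $\mathbb Q$-subspace it spans. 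All three steps are justified correctly in your write-up, including the flatness/torsion-freeness point needed for the last identification.
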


Theorem \ref{THM:Logsep} of the introduction is implied by our next result.

\begin{thm}\label{T:Logsep}
    Let $\F$ be a codimension one foliation with simple singularities on a projective manifold $X$ of dimension three.
    Let $S\subset X$ be a  semi-global separatrix for $\F$ and let $\nabla$ be the pull-back of Bott's connection
    to the normalization of $S$. If
    \begin{enumerate}
        \item\label{I:rank2} the classes of the residues of $\nabla$ modulo $\mathbb Q$ generate a $\mathbb Z$-submodule of $\mathbb C/ \mathbb Q$ of rank
        at least two ; or
        \item\label{I:mixedtype} the support of the irregular divisor of $\nabla$ is non-empty and does not contain the
        support of the residue divisor of $\nabla$
    \end{enumerate}
    then the Zariski closure of $S$ is a $\F$-invariant algebraic surface.
\end{thm}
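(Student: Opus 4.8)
The plan is to reduce the statement to Corollary \ref{C:positivesemi-global}, which already disposes of the case in which the intersection form of $S$ has a positive eigenvalue. So I would suppose that the intersection matrix $M=(C_i\cdot C_j)$ of the irreducible components $C_1,\dots,C_k$ of $n^{-1}(\sing(\F))\subset S^n$ is negative semi-definite, and show that, under this assumption, each of hypotheses (\ref{I:rank2}) and (\ref{I:mixedtype}) is untenable. The observation that drives everything is that, after reducing to the case in which the curve $n^{-1}(\sing(\F))$ is connected, the radical of $M$ is at most one-dimensional. Indeed, for a negative semi-definite form $v^2=0$ forces $Mv=0$; if moreover $v=\sum a_iC_i$ is effective with $v\cdot C_i=0$ for all $i$, then connectedness of $\bigcup_iC_i$ forces every $a_i$ to be strictly positive (a boundary component $C_j$ with $a_j=0$ adjacent to the support of $v$ would satisfy $v\cdot C_j>0$); since two such divisors are proportional and $\ker M$ is spanned by its effective vectors, $\dim\ker M\le 1$.

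Under hypothesis (\ref{I:rank2}) I would apply the residue theorem for flat meromorphic connections, Theorem \ref{T:residue for connections}, to $\nabla$ on the line bundle $N^*_{\F}|_{S^n}$ and pair the resulting identity in $H^2(S^n,\mathbb C)$ with each class $[C_i]\in H_2(S^n,\mathbb Z)$. This produces a linear system $M\vec r=-\vec d$ with $\vec r=\big(\Res_{C_i}(\nabla)\big)$ and $\vec d=\big(\deg(N^*_{\F}|_{C_i})\big)\in\mathbb Z^k$; since $M$ and $\vec d$ are defined over $\mathbb Q$, Lemma \ref{L:elementar} gives $\dim\ker M\ge 2$, contradicting the preliminary observation. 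Under hypothesis (\ref{I:mixedtype}) I would set $I=\Irr(\nabla)\neq 0$ and $R=\Res(\nabla)$, both supported on $\bigcup_iC_i$. Proposition \ref{P:irregularconnections} gives $I^2\ge 0$, and negative semi-definiteness gives $I^2\le 0$; hence $I^2=0$ and $I\cdot C_i=0$ for every $i$. On the other hand, since the support of $R$ is not contained in the support of $I$ while both lie in the connected curve $\bigcup_iC_i$, there is a component $C_a$ occurring in $I$ and a component $C_b$ not occurring in $I$ with $C_a\cdot C_b\ge 1$, whence $I\cdot C_b\ge(\operatorname{mult}_{C_a}I)\,(C_a\cdot C_b)\ge 1$, a contradiction.

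In both cases negative semi-definiteness has been ruled out, so the intersection form of $S$ has a positive eigenvalue and Corollary \ref{C:positivesemi-global} yields the conclusion. I expect the only genuinely delicate point to be the reduction to the case in which $n^{-1}(\sing(\F))$ is connected, \ie understanding how the preimage of $\sing(\F)$ behaves under the normalization $n\colon S^n\to S$ and checking that the one-dimensional bound on the radical (for hypothesis (\ref{I:rank2})) and the adjacency argument (for hypothesis (\ref{I:mixedtype})) still go through; once that is settled, the rest is the residue formula of Theorem \ref{T:residue for connections}, the elementary Lemma \ref{L:elementar}, Proposition \ref{P:irregularconnections}, and the finiteness input already packaged into Corollary \ref{C:positivesemi-global}. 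Conceptually, hypotheses (\ref{I:rank2}) and (\ref{I:mixedtype}) should be read as the two distinct mechanisms forcing a positive eigenvalue: residues too far from rational (so $\ker M$ is forced to be large) on the one hand, and a genuine higher-order pole of $\nabla$ sitting strictly inside the polar curve on the other.
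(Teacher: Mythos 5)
Your proposal is correct and takes essentially the same route as the paper: Theorem \ref{T:residue for connections} combined with Lemma \ref{L:elementar} to force the intersection matrix to have kernel of dimension at least two under hypothesis (\ref{I:rank2}), Proposition \ref{P:irregularconnections} under hypothesis (\ref{I:mixedtype}), and Corollary \ref{C:positivesemi-global} to conclude. Your contrapositive packaging via the Zariski-type lemma (a connected, negative semi-definite configuration has radical of dimension at most one) is just an explicit rendering of the paper's step of using the connectedness of $n^{-1}(\sing(\F))$ to produce a compact effective divisor of positive self-intersection.
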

\begin{proof}
    Let $n: S^n \to S$ be the normalization of $S$ and let $\nabla$ be the flat meromorphic connection on $S^n$ induced
    by Bott's connection for $\F$. Let $C_1, \ldots, C_k$ be the irreducible components of $|(\nabla)_{\infty}|$.
    Theorem \ref{T:residue for connections} implies that $c(n^*N^*_{\F}) = - \Res(\nabla) = - \sum \lambda_i C_i$ for complex numbers
    $\lambda_i = \Res_{C_i}(\nabla)$. Assumption (\ref{I:rank2}) on the rank of the subgroup of $\mathbb C/ \mathbb Q$ generated by classes modulo $\mathbb Q$ of the residues of
    $\nabla$ implies, by means of Lemma \ref{L:elementar}, that the intersection matrix of $(\nabla)_{\infty}$ has kernel of dimension at least two. Hence there exists
    two linearly independent divisors $D_1$ and $D_2$ supported on $C_1, \ldots, C_k$ with zero self-intersection. Using the connectedness
    of $n^{-1}(\sing (\F))$ we produce a compact effective divisor $D$ contained in $S^n$ with positive self-intersection. Likewise,
    Assumption (\ref{I:mixedtype}), by means of Proposition \ref{P:irregularconnections}, implies that intersection form of $C_1, \ldots, C_k$ has at least one positive eigenvalue. In both cases, the result follows from Corollary \ref{C:positivesemi-global}.
\end{proof}

\subsection{Proof of Corollary \ref{COR:generic sing}} Under the assumptions of Corollary \ref{COR:generic sing}, the singular point $p$
is a simple singularity for $\F$ as the lemma below shows.

\begin{lemma}\label{L:p is simple}
    Let $\F$  be a germ of foliation on $(\mathbb C^3,0)$ defined by
    \[
        \omega = xyz\left( \alpha \frac{dx}{x} + \beta  \frac{dy}{y} + \gamma \frac{dz}{z} \right) \mod \, \mathfrak m_p^2 \Omega^1_{X,p} \, ,
    \]
    where $\alpha, \beta, \gamma$ are $\mathbb Z$-linearly independent complex numbers.  Then $0$ is a simple singularity for
    $\F$.
\end{lemma}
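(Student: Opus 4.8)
The plan is to show that, at $p$, $\F$ is formally equivalent to the foliation $\F_0$ defined by the closed logarithmic $1$-form $\eta_0 = \alpha\,\tfrac{dx}{x}+\beta\,\tfrac{dy}{y}+\gamma\,\tfrac{dz}{z}$; since $\eta_0$ has trivial base locus, non-zero simple normal crossing polar divisor $\{xyz=0\}$, and is non-resonant --- a relation $a\alpha+b\beta+c\gamma=0$ with $a,b,c\in\mathbb Z_{\ge 0}$ forces $(a,b,c)=0$, which is a special case of the $\mathbb Z$-linear independence of $\alpha,\beta,\gamma$ --- the reformulation of Definition \ref{D:simple} recorded after Remark \ref{R:misterious index} then yields that $p$ is a simple singularity. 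Note that $\F_0$ is also the foliation defined by $xyz\,\eta_0=\alpha yz\,dx+\beta xz\,dy+\gamma xy\,dz$, so the hypothesis says precisely that $\F$ and $\F_0$ have the same $2$-jet at $p$; observe also that $\mathbb Z$-linear independence forces $\alpha,\beta,\gamma\in\mathbb C^{*}$ and $\alpha+\beta+\gamma\neq 0$.

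First I would produce three pairwise transverse smooth (formal) separatrices $\hat S_x,\hat S_y,\hat S_z$ of $\F$, with $\hat S_x$ tangent to $\{x=0\}$ at $p$, and likewise for $\hat S_y$ and $\hat S_z$. The foliation $\F$ is non-dicritical at the blow-up of $p$ --- the exceptional $\mathbb P^2$ is invariant because $\iota_R\,\omega_0=(\alpha+\beta+\gamma)\,xyz\not\equiv 0$ for the radial vector field $R$, and the foliation induced on it is $\F_0$ on $\mathbb P^2$, whose only singularities are the three non-degenerate non-resonant corners of $\{xyz=0\}$ --- and near $p$ the singular set of $\F$ consists of three smooth curves $\Delta_x,\Delta_y,\Delta_z$ tangent to the coordinate axes; along a generic point of $\Delta_x$ the transverse type of $\F$ is a linearizable saddle (eigenvalue ratio $\gamma/\beta\notin\mathbb Q$), whose two local invariant curves sweep out two smooth invariant surfaces, namely $\hat S_y$ and $\hat S_z$; running the same argument along $\Delta_y$ produces $\hat S_x$, and the tangency data forces the three surfaces to be pairwise transverse. (This is essentially the description of simple singularities in dimension three and their separatrices, see \cite[Subsections 5.4 and 5.5]{MR1760842}, \cite[Part IV]{MR1179013}, and Theorem \ref{T:CanoCerveauMattei}.) After a formal change of coordinates we may assume $\hat S_x=\{x=0\}$, $\hat S_y=\{y=0\}$, $\hat S_z=\{z=0\}$; the $\F$-invariance of these three hyperplanes then forces a local generator of $N^{*}_{\F}$ to be $\omega=xyz\,\eta$ with $\eta=a\,\tfrac{dx}{x}+b\,\tfrac{dy}{y}+c\,\tfrac{dz}{z}$ a formal logarithmic $1$-form with poles on $\{xyz=0\}$, and the $2$-jet hypothesis gives $a(p)=\alpha$, $b(p)=\beta$, $c(p)=\gamma$.

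It remains --- and this is the heart of the matter --- to conjugate $\eta$ to $\eta_0$ by a formal change of coordinates preserving the three hyperplanes. Writing $\eta=\eta_0+\sum_{k\ge 1}\eta_k$ with $\eta_k$ logarithmic and homogeneous of degree $k$, the integrability condition $\eta\wedge d\eta=0$ (equivalent to $\omega\wedge d\omega=0$) constrains the $\eta_k$, and one removes them one degree at a time by substitutions $x_i\mapsto x_i\,(1+\varphi_{k,i})$ with $\varphi_{k,i}$ homogeneous of degree $k$; the homological equation at each step is solvable because its only potential obstructions are the monomial $1$-forms attached to non-trivial integer relations among $\alpha,\beta,\gamma$, of which there are none. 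This is a standard Poincaré--Dulac normalization, and I expect it to be the main obstacle: if a ready reference for the statement that an integrable logarithmic $1$-form on $(\mathbb C^3,0)$ with poles on a simple normal crossing divisor and $\mathbb Z$-linearly independent residues is formally a closed logarithmic $1$-form is available, this step can simply be quoted. Once $\eta$ is shown to be closed, the formal analogue of Lemma \ref{L:local expression} gives $\eta=\eta_0+dg$ for a formal function $g$ (the correction has no higher order pole, hence is exact), and the substitution $x\mapsto x\,\exp(g/\alpha)$, $y\mapsto y$, $z\mapsto z$ --- which preserves $\{x=0\}$ --- turns $\eta$ into $\eta_0$ exactly, completing the argument.
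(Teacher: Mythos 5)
Your target (formally conjugating $\F$ to the foliation of the closed logarithmic form $\eta_0=\alpha\,\frac{dx}{x}+\beta\,\frac{dy}{y}+\gamma\,\frac{dz}{z}$ and then invoking Definition \ref{D:simple}) is the same as the paper's, but your route to it has genuine gaps. The step you yourself call the heart of the matter --- normalizing the integrable formal logarithmic $1$-form $\eta=a\,\frac{dx}{x}+b\,\frac{dy}{y}+c\,\frac{dz}{z}$ to $\eta_0$ --- is not proved: you sketch a degree-by-degree Poincar\'e--Dulac scheme, assert without verification that the homological equations are obstructed only by integer relations among $\alpha,\beta,\gamma$ (and do not explain how the integrability condition enters), and otherwise defer to a hoped-for reference. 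Since this is precisely the content of the lemma, the proposal is incomplete at its crucial point. The preliminary step is also under-justified: from the hypothesis on the first non-zero jet alone you assert that $\sing(\F)$ consists of three smooth curves tangent to the axes, that the transverse type along each is a linearizable saddle with the expected eigenvalue ratio, and that the two transverse invariant curves sweep out smooth, pairwise transverse formal invariant surfaces. The passages of \cite{MR1760842} you invoke describe the separatrices of singularities already known to be simple, so using them here is circular, and the Cano--Cerveau theorem gives one separatrix through a given invariant curve, not the triple of smooth transverse hypersurfaces your reduction needs; each of these claims is true a posteriori but requires an argument.

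For comparison, the paper's proof bypasses both difficulties with a single observation: let $v$ be the vector field defined by $i_v\, dx\wedge dy\wedge dz = d\omega$. Integrability forces $i_v\omega=0$, so $v$ is tangent to $\F$, and its linear part is semisimple with eigenvalues $\alpha-\beta$, $\alpha-\gamma$, $\beta-\gamma$, which are non-resonant by the $\mathbb Z$-linear independence; hence $v$ is formally linearizable \cite{MR647488}. In the linearizing coordinates the defining form $\hat\omega$ satisfies $d\hat\omega=(\alpha-\beta)x\,dy\wedge dz+(\alpha-\gamma)y\,dx\wedge dz+(\beta-\gamma)z\,dx\wedge dy$, so integration gives $\hat\omega=xyz\,\eta_0+df$, and one final change $x\mapsto x\cdot u$ with $u$ a unit removes $df$. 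If you wish to salvage your approach, the missing ingredient is a precise statement (with proof or exact citation, e.g.\ from \cite{MR704017}) that an integrable logarithmic $1$-form with simple normal crossing polar divisor and non-resonant residues is formally conjugate to its closed linear part; as written, that statement is assumed rather than established.
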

\begin{proof}
    Consider the unique germ of vector field $v$ on $(\mathbb C^3,0)$ such that $d\omega = i_v dx \wedge dy \wedge dz$.
    Our assumptions on $\omega$ implies that $v$ is a  vector field with semi-simple linear part and eigenvalues $\alpha - \beta$, $\alpha - \gamma$, and $\beta - \gamma$. The $\mathbb Z$-linear independence of $\alpha, \beta, \gamma$ implies that $v$ is non resonant
    and therefore, by Jordan-Chevalley decomposition, $v$ itself is semi-simple and formally linearizatible, see \cite[Proposition 1]{MR647488}. Therefore, in suitable formal coordinates which we still denote by $x,y,z$, $\F$ is defined by
    a (formal) $1$-form $\hat \omega$ such that $d \hat{\omega} = (\alpha - \beta) x dy \wedge dz + (\alpha - \gamma) y dx \wedge dz + (\beta - \gamma) z dx \wedge dy$. Therefore, by integration of $d \hat{\omega}$, we deduce that the foliation $\F$ is defined by  $\hat \omega = xyz\left( \alpha \frac{dx}{x} + \beta  \frac{dy}{y} + \gamma \frac{dz}{z} \right) + df$ for
    some $f \in \mathbb C[[x,y,z]]$. One final change of coordinates of the form $x \mapsto x \cdot u$, where $u \in \mathbb C[[x,y,z]]$ is a unit, proves that $\F$ is formally conjugated to a foliation defined by a logarithmic $1$-form   with simple singularities. The definition of simple singularities for codimension one foliations implies that $0$ is a simple singularity of $\F$.
\end{proof}

Without loss of generality, we may assume that $\F$ has simple singularities since we can replace $\F$ by a reduction of singularities of it, with no center over $p$ (Lemma \ref{L:p is simple}).

According to \cite[Subsection 5.3.1]{MR1760842}, there are exactly three distinct germs of separatrices through $p$. Proposition \ref{P:extend sep} implies that we can extend these three germs to three, not necessarily distinct, semi-global separatrices for $\F$. Example \ref{E:generic sing} shows that the $\mathbb Z$-submodule of $\mathbb C$ generated by the  residues of $\nabla^B$ along the semi-global separatrix $S_x$ tangent to $\{ x=0\}$ contains the complex numbers $1 - \beta/\alpha$ and $1 - \gamma/\alpha$. Since $\alpha,\beta, \gamma$ are $\mathbb Z$-linearly independent
by assumption, we can apply  Theorem \ref{T:Logsep} to conclude that the Zariski closure of $S_x$ is an algebraic surface. The same argument shows the algebraicity of the other two separatrices through $p$. \qed

\subsection{Negative semi-definite intersection form and Ueda theory} We now turn our attention to
semi-global separatrices with  negative semi-definite (but not negative definite) intersection forms, \ie the eigenvalues of the intersection
form are all non-positive and one of them is strictly negative.

In this situation, one cannot expect the algebraicity of the semi-global separatrix. To have a concrete example, start with a foliation
$\G$ on a projective surface $Y$ and consider the foliation $\F$ on $X = Y \times C$ equal to $\pi^*\G$ for $\pi : X \to Y$ the natural projection.
If $\G$ is singular then the pull-back of any germ of separatrix for $\G$ is a semi-global separatrix for $\F$.

Ueda  studies in \cite{Ueda}  neighborhoods of smooth compact curves with topologically trivial normal bundles contained on (not necessarily compact) complex surfaces. See \cite{Neeman} for an insightful exposition and \cite[Section 2]{MR3940902} for applications in foliation theory.

\begin{thm}\label{T:Ueda}
    Let $C \subset X$ be a smooth curve contained in a germ of smooth surface $X$. If $C^2=0$  then
    then exactly one of the following possibilities holds true.
    \begin{enumerate}
        \item\label{I:Ueda 1} There exists a non-constant morphism $f:(X,C) \to (\mathbb C,0)$ with central fiber supported on $C$; or
        \item\label{I:Ueda 2} there exists a formal closed logarithmic $1$-form $\omega$ with $\Res(\omega) = C$ and period group dense in $(i\mathbb R,+)$ ; or
        \item\label{I:Ueda 3} there exists a $C^{\infty}$ strictly plurisubharmonic function $f: X - C \to \mathbb R$  that tends to $\infty$ when approaching $C$. Consequently, $C$ is the intersection of nested pseudoconcave neighborhoods.
    \end{enumerate}
\end{thm}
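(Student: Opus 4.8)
The plan is to split the three alternatives according to the Ueda type $\utype(C)$ of the germ $(X,C)$, and, when this type is infinite, to extract a canonical formal closed logarithmic $1$-form from the linearization of a neighborhood of $C$. Since $C^{2}=0$, the normal bundle $N=N_{C/X}$ has degree zero on $C$; equip it with its flat unitary structure and let $\rho\colon\pi_{1}(C)\to U(1)$ denote the monodromy. Following Ueda, one constructs order by order an isomorphism between the formal neighborhood $\widehat X$ of $C$ and the formal neighborhood of the zero section in the total space of the flat line bundle $N$; after the optimal lower-order choices have been made, the obstruction to passing from order $n$ to order $n+1$ is a class $u_{n}\in H^{1}(C,N^{\otimes -n})$, and $\utype(C)$ is the least $n$ with $u_{n}\neq 0$, or $\infty$ if no such $n$ exists. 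In particular $\utype(C)=\infty$ whenever $C$ has genus $0$ or $1$, since then $H^{1}(C,N^{\otimes -n})=0$ for every $n\ge 1$.

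Suppose first $\utype(C)=\infty$, so the formal linearization exists. Pull back the tautological logarithmic $1$-form $dw/w$ on the total space of $N$, where $w$ is the fiberwise coordinate; this form is well defined because in a flat trivialization the cocycle of $N$ is locally constant. One obtains a formal closed logarithmic $1$-form $\omega$ on $\widehat X$ with $\Res(\omega)=C$, whose periods are $2\pi i$ around a small transversal to $C$ and $\log\rho(\gamma)$ around $\gamma\in\pi_{1}(C)$; since $\rho$ is unitary, every period is purely imaginary. If $N$ is \emph{not} torsion, then $\rho$ has infinite order, hence dense image in $U(1)$, and the period group of $\omega$ is dense in $(i\mathbb R,+)$: this is alternative (2). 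If $N$ is torsion of order $m$, then $\mathcal O_{X}(mC)$ is formally trivial on $\widehat X$; by Ueda's convergence theorem in the torsion case the formal linearization converges, producing a genuine holomorphic function $f$ near $C$ with $(f)=mC$, i.e.\ the morphism $f\colon(X,C)\to(\mathbb C,0)$ of alternative (1).

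Suppose now $\utype(C)=N<\infty$. Here the first non-vanishing obstruction $u_{N}\in H^{1}(C,N^{\otimes -N})\setminus\{0\}$ is used — this is Ueda's construction and the crux of the argument — to produce a smooth function $t$ on a neighborhood of $C$, vanishing to first order along $C$ and holomorphic modulo order $N$, whose failure to be holomorphic is controlled at leading order by $u_{N}$. One then checks that $-\log\lvert t\rvert^{2}$ (equivalently $\lvert t\rvert^{-2/N}$ up to bounded factors) is strictly plurisubharmonic on a punctured neighborhood of $C$ and tends to $+\infty$ along $C$, the strict positivity of the Levi form coming precisely from $u_{N}\neq 0$; its super-level sets provide the nested pseudoconcave neighborhoods of alternative (3). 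The three alternatives are pairwise incompatible: (1) and (2) each yield a formal linearization of $\widehat X$ and so force $\utype(C)=\infty$, hence neither is compatible with (3); and (1) requires $N$ torsion whereas (2) requires $N$ non-torsion.

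The step I expect to be the main obstacle is the finite-type case: carrying out Ueda's construction of the strictly plurisubharmonic exhaustion and verifying the positivity of its complex Hessian, where the non-vanishing of $u_{N}$ must be exploited quantitatively rather than merely qualitatively. A secondary point, which I would simply cite, is the convergence of the formal linearization in the torsion case, needed to upgrade the formal fibration in alternative (1) to an honest morphism of germs.
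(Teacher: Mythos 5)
The paper itself offers no proof of Theorem \ref{T:Ueda}: it is quoted from Ueda's paper \cite{Ueda} (with \cite{Neeman} as exposition), so there is no internal argument to compare yours with. Your outline does follow Ueda's original strategy — the obstruction classes $u_n\in H^1(C,N^{\otimes -n})$ and the type, formal linearization when the type is infinite, convergence in the torsion case, and a strictly plurisubharmonic exhaustion extracted from the first non-vanishing obstruction in the finite-type case — and at the level of structure that is the right plan. Be aware, though, that the two steps you defer to citation (strict plurisubharmonicity in the finite-type case and convergence in the torsion case) are precisely the analytic content of Ueda's theorem, so as written the proposal is closer to a restatement of his results than to a proof.

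Two concrete problems. First, your claim that $\utype(C)=\infty$ whenever $C$ has genus $0$ or $1$ is false in genus $1$: if $N$ is torsion of order $m$ then $H^1(C,N^{\otimes -m})=H^1(C,\mathcal O_C)\neq 0$, and Serre's example (the $\mathbb P^1$-bundle over an elliptic curve with a section of trivial normal bundle and Stein complement, recalled in Section \ref{S:Stein}) is exactly a genus-one curve with trivial normal bundle and Ueda type $1$, falling in alternative (\ref{I:Ueda 3}); the vanishing you invoke holds only for genus $0$, or genus $1$ with $N$ non-torsion. Second, your mutual-exclusivity argument is circular: you rule out the coexistence of (\ref{I:Ueda 3}) with (\ref{I:Ueda 1}) or (\ref{I:Ueda 2}) by noting that the latter force $\utype(C)=\infty$, but that only helps if you already know that alternative (\ref{I:Ueda 3}) forces finite type — which is the nontrivial converse direction and is nowhere established in your proposal. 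For (\ref{I:Ueda 1}) versus (\ref{I:Ueda 3}) there is a direct argument you should use instead: a fibration with central fiber supported on $C$ has nearby compact fibers contained in $X-C$, and a strictly plurisubharmonic function restricted to a compact curve violates the maximum principle. The incompatibility of (\ref{I:Ueda 2}) and (\ref{I:Ueda 3}) is the genuinely delicate point and needs a separate argument (or a careful appeal to the precise form of Ueda's classification), not the type count you gave.
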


Another way of stating Item (\ref{I:Ueda 2}), which also clarifies the meaning of the period group of a formal logarithmic $1$-form $\omega$, is as follows. There exists a covering $\mathcal U= \{ U_i \}$ of $X$, formal functions $f_i$ on the formal the completion of $C\cap U_i$ on $U_i$, and constant functions $\lambda_{ij} \in S^1$ defining a cocycle $ \lambda \in Z^1(\mathcal U , S^1)$ such that $f_i = \lambda_{ij} f_j$ and the representation   $\rho : \pi_1(C) \to S^1$ induced by $\lambda$ has dense image.

A curve that satisfies Item (\ref{I:Ueda 3}) behaves in many ways like a curve with positive self-intersection as
the following result by  Andreotti \cite[Theorem 4]{MR152674} shows.
\begin{thm}\label{T:Andreotti}
    Let $X$ be a complex manifold and $U\subset X$ be a relatively compact open subset. If $U$ is strictly pseudoconcave then
    the field of meromorphic functions of $X$ has transcendence degree bounded by the dimension of $X$.
\end{thm}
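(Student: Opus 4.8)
The plan is to follow Andreotti's original argument, which is the pseudoconcave counterpart of Siegel's classical bound on the transcendence degree of the field of meromorphic functions on a compact complex manifold. We may assume $X$ is connected, replacing $X$ by the connected component containing $U$ otherwise, and we set $n = \dim X$. It suffices to show that any $n+1$ meromorphic functions $f_0, \ldots, f_n$ on $X$ are algebraically dependent over $\C$.

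Fix such $f_0, \ldots, f_n$ and let $D \in \Div(X)$ be the effective divisor obtained as the sum of their polar divisors. For every $P \in \C[t_0, \ldots, t_n]$ of degree at most $d$, the meromorphic function $P(f_0, \ldots, f_n)$ has polar divisor bounded by $dD$, so $P \mapsto P(f_0, \ldots, f_n)$ is a $\C$-linear map
\[
    \Phi_d : \{ P \in \C[t_0, \ldots, t_n] \, : \, \deg P \le d \} \longrightarrow H^0(X, \mathcal O_X(dD)) \, ,
\]
where $\mathcal O_X(dD)$ is read as the sheaf of meromorphic functions with poles bounded by $dD$. The source has dimension $\binom{n+1+d}{n+1}$, asymptotic to $d^{n+1}/(n+1)!$, and any nonzero element of $\ker \Phi_d$ is a nontrivial algebraic relation among the $f_i$. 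Thus the whole argument reduces to the estimate
\[
    \dim_{\C} H^0(X, \mathcal O_X(dD)) = O(d^n) \qquad (d \to \infty) \, ,
\]
for then $\ker \Phi_d \neq 0$ as soon as $d$ is large, giving the desired dependence, hence $\trdeg$ of the field of meromorphic functions of $X$ is at most $n = \dim X$.

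The estimate above is where the strict pseudoconcavity of $U$ enters, and it is the main obstacle. I would proceed as follows. Endow $L = \mathcal O_X(D)$ with a Hermitian metric $h$, so that $H^0(X, \mathcal O_X(dD)) = H^0(X, L^{\otimes d})$ consists of honest holomorphic sections, fix $x_0 \in U$, and aim to show that the jet evaluation $j^k_{x_0} : H^0(X, L^{\otimes d}) \to J^k_{x_0}(L^{\otimes d})$ is injective for $k = \lceil Cd \rceil$ with $C$ independent of $d$; since $\dim J^k_{x_0}(L^{\otimes d}) = \binom{n+k}{n}$, this yields the bound. Injectivity rests on two ingredients, all norms being the pointwise norms induced by $h^{\otimes d}$ read in a fixed finite family of trivializing charts covering $\overline U$, so that $\log\|s\|$ differs from a plurisubharmonic function only by a smooth term of size $O(d)$, i.e.\ by a geometric factor $M^d$ with $M$ depending on $\overline U$ but not on $d$. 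First, a maximum principle on $U$: since $X \setminus U$ is strictly pseudoconvex along $\partial U$, every boundary point carries a supporting analytic disc meeting $\partial U$ only at that point and otherwise lying in $U$; combining these discs (chosen uniformly over the compact $\partial U$) with the maximum modulus principle, one produces a compact $K \Subset U$ with $x_0 \in K$ such that $\sup_{\overline U}\|s\| \le M^{d}\sup_{\overline K}\|s\|$ for every $s$ holomorphic near $\overline U$. Second, a Schwarz-type estimate: if $s$ vanishes to order at least $k$ at $x_0$, then the elementary Schwarz lemma on a small ball around $x_0$ together with the two-constants (Hadamard three-circles) theorem applied to $\log\|s\|$ gives $\sup_{\overline K}\|s\| \le M^{d}\theta^{\,k}\sup_{\overline U}\|s\|$ for a fixed $\theta \in (0,1)$. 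Combining, $\sup_{\overline U}\|s\| \le M^{2d}\theta^{k}\sup_{\overline U}\|s\|$, so choosing $C$ with $M^2\theta^{C}<1$ and $k = \lceil Cd \rceil$ forces $s$ to vanish on $\overline U$, hence on $X$ by connectedness; this proves injectivity of $j^{\lceil Cd\rceil}_{x_0}$ and therefore the polynomial bound.

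With the estimate in hand the argument closes: for $d$ large $\binom{n+1+d}{n+1} > \dim_{\C} H^0(X, \mathcal O_X(dD))$, so $\Phi_d$ has a nonzero kernel, yielding a nontrivial polynomial relation among $f_0, \ldots, f_n$, and hence the transcendence degree over $\C$ of the field of meromorphic functions of $X$ is at most $n = \dim X$. The genuinely delicate point is the bookkeeping of constants in the previous paragraph: one must verify that the metric- and chart-dependent errors grow only geometrically in $d$, so that they are swallowed by the geometric Schwarz decay $\theta^{Cd}$ — and it is precisely the strictness of the pseudoconcavity (through the supporting discs, which push into $U$ by a definite amount) that makes this balance possible.
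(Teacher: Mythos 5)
The paper does not actually prove this statement: it quotes it from Andreotti's 1963 paper, and your reconstruction follows exactly the scheme Andreotti uses — reduce to showing that any $n+1$ meromorphic functions satisfy a polynomial relation, send polynomials of degree at most $d$ to sections with poles bounded by $dD$, and beat the $\sim d^{n+1}$ count of polynomials by an $O(d^n)$ bound on the target, with the pseudoconcavity of $U$ replacing compactness in the Schwarz-lemma estimate. The reduction, the counting, and the single-point jet plus three-circles propagation (a variant of Andreotti's vanishing at finitely many well-placed points) are all fine in principle.

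The gap is in the first of your two ingredients, which is the technical heart of Andreotti's argument rather than a consequence of ``supporting discs plus the maximum modulus principle''. The inequality $\sup_{\overline U}\|s\| \le M^{d}\sup_{\overline K}\|s\|$ concerns the metric norm of a section of $L^{\otimes d}$: unlike the modulus of a holomorphic function, $\|s\|$ may attain its maximum over $\overline U$ at an interior point (already for the trivial bundle with a nonflat weight), so the disc argument never engages; if you pass instead to the local holomorphic representatives $s_i$, the maximum principle on $U\cap W_i$ pushes you to $U\cap \partial W_i$, i.e.\ to points of $U$ possibly close to $\partial U$ but lying in other charts, and the chart-to-chart comparison only returns, up to $M^{d}$, the very quantity you are trying to bound. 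The $O(d)$ comparison of $\log\|s\|$ with a plurisubharmonic function is only available chart by chart, so it does not repair this circularity. Closing the step requires an additional idea: either Andreotti's iteration with two adapted coverings by polydiscs (several pages in the cited paper), or a global comparison in which strict pseudoconcavity is used a second time — add a large multiple of the strictly plurisubharmonic defining function of a boundary collar to $\frac{1}{d}\log\|s\|$ so as to compensate the curvature of $h$; the sum is then plurisubharmonic on the collar, its maximum sits on $\partial U\cup\partial K$, and the supporting discs (whose boundary circles lie a definite distance inside $U$, by strictness) dispose of the $\partial U$ part, yielding the estimate with loss $e^{O(d)}$. Without some such argument the key dimension bound, and hence the proof, is not established; the rest of your write-up I would accept as is.
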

It is interesting to observe that the field of formal meromorphic functions of the formal completion of $X$ along a curve satisfying the assumptions of Theorem \ref{T:Ueda} has infinite transcendence degree over $\mathbb C$, see \cite[proof of Proposition 5.1.1]{Hironaka-Matsumura}. To wit, the field of (convergent) meromorphic functions has
finite transcendence degree while the field of formal meromorphic functions has infinite transcendence degree.

\begin{remark}\label{R:Ueda 1 or 2}
    Item (\ref{I:Ueda 1}) can only happen when the normal bundle of $C$ corresponds to a torsion point of $\Pic^0(C)$, Item (\ref{I:Ueda 2}) can only happen when the normal bundle of $C$ is not torsion, while Item (\ref{I:Ueda 3}) can happen in both situations. Ueda also proves a more precise version of Item (\ref{I:Ueda 2}) of Theorem \ref{T:Ueda}. If the normal bundle of $C$ satisfies certain additional assumptions of Diophantine nature, the formal closed logarithmic $1$-form $\omega$ is actually convergent.
\end{remark}

Theorem \ref{T:Ueda} implies the following result about semi-global separatrices with semi-definite intersection form.

\begin{thm}\label{T:0}
    Let $\F$ be a codimension one foliation on a projective manifold $X$ of
    dimension three. If $S$ is a semi-global separatrix with such that the pull-back of $\sing(\F)\cap S$ to the normalization
    of $S$ is a smooth curve with torsion normal bundle then at least one of the following assertions holds true
    \begin{enumerate}
        \item the foliation $\F$ is the pull-back, under a rational map, of a foliation $\G$ on a projective surface; or
        \item the Zariski closure of $S$ is an algebraic surface invariant by $\F$.
    \end{enumerate}
\end{thm}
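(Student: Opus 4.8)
The plan is to apply Ueda's theory (Theorem~\ref{T:Ueda}) to the curve $C := n^{-1}(\sing(\F)\cap S)$ sitting inside the normalization $S^n$. Since $N_C$ is torsion we have $c(N_C)=0$, and in particular $C^2=0$, so Ueda's trichotomy applies; moreover, by Remark~\ref{R:Ueda 1 or 2}, the torsion hypothesis excludes alternative~(\ref{I:Ueda 2}). Suppose first we are in alternative~(\ref{I:Ueda 3}). Then $C$ has a fundamental system of nested pseudoconcave neighbourhoods in $S^n$; shrinking, we may choose one, say $U$, that is relatively compact and strictly pseudoconcave. Andreotti's Theorem~\ref{T:Andreotti} then bounds the transcendence degree over $\mathbb C$ of the field of meromorphic functions of $S^n$ (equivalently of $S$) by $\dim S^n = 2$. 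On the other hand, were the Zariski closure $\overline S$ of $S$ equal to $X$, restriction of rational functions would embed $\mathbb C(X)$ --- of transcendence degree $3$ --- into $\mathbb C(S)$, a contradiction. Hence $\overline S$ is an irreducible surface; being closed in the projective variety $X$ it is algebraic, and since $\F$-invariance is a Zariski closed condition holding on the dense subset $S\subset\overline S$, the surface $\overline S$ is $\F$-invariant. This is alternative~(2).

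From now on we are in alternative~(\ref{I:Ueda 1}): a neighbourhood $W$ of $C$ in $S^n$ carries a non-constant holomorphic $f:W\to(\mathbb C,0)$ whose fibre over $0$ is supported on $C$. For $t\neq 0$ the fibre $F_t=f^{-1}(t)$ is a compact curve in $S^n$, and all the $F_t$ are mutually homologous; their images $C_t:=n(F_t)$ are compact --- hence, by Chow, algebraic --- curves in $X$, contained in the $\F$-invariant surface $S$ and not contained in $\sing(\F)$ (their preimages $F_t$ are disjoint from $C$), hence tangent to $\F$. As $t$ ranges over the punctured disc, the $C_t$ sweep out a non-empty open subset of $S$. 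All $C_t$ share a single homology class, so they have bounded degree; thus $t\mapsto[C_t]$ is an analytic arc in the Chow variety of $X$, whose Zariski closure we call $\mathcal B$ --- an irreducible projective variety in which the arc is Zariski dense. Being tangent to $\F$ and being contained in $\overline S$ are Zariski closed conditions on $\mathcal B$ that hold along this dense arc, hence for every member of the universal family of cycles $\mathcal C\to\mathcal B$; therefore the image $\Sigma$ of $\mathcal C\to X$ is a closed irreducible subvariety of $\overline S$ containing a non-empty open subset of $S$, hence a Zariski dense subset of $\overline S$, and consequently $\Sigma=\overline S$.

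If $\dim\Sigma=2$, then $\overline S$ is an algebraic surface, $\F$-invariant as above, and we are in alternative~(2). If $\dim\Sigma=3$, then $\overline S=X$ is swept out by the algebraic curves $\{C_b\}_{b\in\mathcal B}$, all tangent to $\F$. Should the tangent lines of the curves through a general point $x$ fill $\mathbb P(T_{\F,x})$, then the leaf of $\F$ through $x$ would contain algebraic curves in every direction and hence be algebraic, so $\F$ would be algebraically integrable and $\overline S$ again an algebraic surface. Otherwise, through a general point of $X$ the curves $C_b$ passing there are all tangent to one and the same line in $T_\F$; these lines define a one-dimensional algebraically integrable foliation $\G$ with $T_\G\subset T_\F$, so the rational quotient of $X$ by $\G$ is a projective surface $Y$ and $\F$ is the pull-back of a foliation on $Y$ --- alternative~(1). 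The delicate points are the passage from the germwise Ueda fibration to a global, flat, bounded algebraic family of curves (with Zariski density of the analytic arc in the Chow variety), and the standard but technical argument that a covering family of $\F$-tangent algebraic curves forces either algebraic integrability of $\F$ or the pull-back structure.
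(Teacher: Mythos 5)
Your proposal follows essentially the same route as the paper: apply Ueda's trichotomy to $C\subset S^n$ (case (2) excluded by the torsion hypothesis via Remark \ref{R:Ueda 1 or 2}), use Andreotti's bound on the transcendence degree to rule out the pseudoconcave case when the Zariski closure of $S$ is not a surface, and in the fibration case produce a covering family of algebraic curves tangent to $\F$ from which the pull-back structure is extracted. The two ``delicate points'' you flag at the end are precisely what the paper handles by citation --- closedness of tangency via \cite[Proposition 2.1]{MR2248154} and the step from a covering family of $\F$-tangent curves to the rational pull-back structure via \cite[Lemma 2.4]{MR3842065} --- so your argument is correct modulo the same black boxes (your use of the Chow variety in place of the paper's Hilbert scheme is immaterial).
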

\begin{proof}
    Let $n: S^n \to S$ be the normalization of $S$ and let $C = n^{-1}(\sing(\F))$. By assumption $C$ is a smooth curve.
    Assume that the Zariski closure of $S$ is not algebraic. Since $X$ is algebraic, the transcendence degree $d$ of the field of meromorphic functions of $S^n$ is at least the dimension of $S$. Since we are assuming that $S$ is not algebraic, we have that $d$ is strictly bigger than two. Therefore, combining  Theorem \ref{T:Ueda}, Theorem \ref{T:Andreotti}, and Remark \ref{R:Ueda 1 or 2}, we obtain the  existence of a proper fibration  $f : (S^n,C) \to (\mathbb C,0)$ having $C$ equal to the support of the fiber over $0$. The (local) fibration $f$ defines a germ of curve on the Hilbert scheme of $X$ whose points correspond to curves on $X$ tangent to $\F$. Since tangency to a foliation is a closed condition \cite[Proposition 2.1]{MR2248154}, the points of the Zariski closure of this germ of curve on the Hilbert scheme  correspond to one-dimensional subschemes that are still tangent to $\F$. Since $S$ is not algebraic by assumption, this family of subschemes must cover $X$. We apply   \cite[Lemma 2.4]{MR3842065} to conclude that $\F$ is the rational pull-back of a foliation $\G$ on a projective surface.
\end{proof}

Unfortunately, the assumptions of Theorem \ref{T:0} are strong and certainly not optimal. Still assuming  the smoothness $n^{-1}(\sing(\F)$, it would be interesting to know if one can replace torsion normal bundle with numerically trivial normal bundle (\ie degree zero). In all the examples we are aware of, when the normal bundle has degree zero but is not torsion, the Zariski closure of the semi-global separatrix is a projective surface. Also, the smoothness of $n^{-1}(\sing(\F))$ along a semi-global separatrix is a rather unnatural assumption. It is essential for the proof presented above, as it relies on Theorem \ref{T:Ueda} which is not available for non-smooth curves. It is unclear whether Ueda's result holds or not for arbitrary (non-reduced) curves with singularities. A partial result pointing toward a positive answer to this question was obtained by  Koike. He proved a version of Ueda's result for reduced simple normal crossing curves, with contractible dual graph, trivial normal bundle, and non-zero Ueda class in every irreducible component of the curve,  see \cite[Theorem 1.6]{Koike}. In the next section, we will prove another version of Ueda's result valid for the irregular divisor of closed meromorphic $1$-forms on compact Kähler surfaces. Although this result will not be useful to the study of semi-global separatrices, it provides some hope of establishing a version of Theorem \ref{T:Ueda} that would apply to any semi-global separatrix with negative semi-definite intersection form.

\section{Polar divisor of meromorphic 1-forms of the second kind}\label{S:Ueda}

\subsection{Ueda theory for the polar divisor of $1$-forms of second kind}
Ueda's arguments used to prove Theorem \ref{T:Ueda}, acquire a particularly simple form when the curve is contained in a compact Kähler surface
and is the polar divisor of a closed meromorphic $1$-form of the second kind. Moreover, under these strong assumptions,
the smoothness of the curve is no longer relevant.

\begin{thm}[Theorem \ref{THM:Ueda} of Introduction]\label{T:Ueda projective}
    Let $X$ be a compact Kähler surface and let $I \in \Div(X)$ be an effective divisor with connected support satisfying
    \[
        \mathcal O_X(I) \otimes \frac{\mathcal O_X}{\mathcal O_X(-I)} \simeq \frac{\mathcal O_X}{\mathcal O_X(-I)} \, .
    \]
    Then exactly one of the following assertions holds true
    \begin{enumerate}
        \item there exists a  non-constant morphism $f:X \to C$ onto a curve with connected fibers and mapping $|I|$ to a point, or
        \item there exists an exhaustion of $X-|I|$ that is strictly plurisubharmonic at a neighborhood of $|I|$. Consequently, $X-|I|$ is holomorphically convex  and, after the contraction of finitely many compact curves, becomes a (singular) Stein surface.
    \end{enumerate}
\end{thm}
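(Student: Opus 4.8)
The plan is to attach to $I$ a pair of closed meromorphic $1$-forms that encode the geometry of a neighborhood of $|I|$, and to run Ueda's dichotomy on them. The first ingredient is supplied by Theorem~\ref{T:realizaII}: after possibly replacing $I$ by its simple normal crossing model (an embedded resolution affects neither the hypotheses nor the conclusions, since contracting the exceptional curves is allowed in alternative~(2) and a fibration upstairs descends), the triviality of the normal bundle of $I$ yields a closed meromorphic $1$-form $\omega_{\II}$ of the second kind, without base points, with $(\omega_{\II})_\infty = I + I_{\mathrm{red}}$. The second ingredient is the logarithmic $1$-form attached to a multiple of $I$ with vanishing Chern class: since $I$ has trivial normal bundle and connected support, the residue theorem (Theorem~\ref{T:residue}) combined with the Hodge index theorem applies. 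Concretely, if $c(I)=0$ in $H^2(X,\mathbb R)$ then Proposition~\ref{P:omegaD} gives a closed logarithmic $\omega_I$ with $\Res(\omega_I)=I$ and purely imaginary periods; if $c(I)\neq 0$ then $I$ has positive self-intersection by the argument of Lemma~\ref{L:Hodgeindex}, and one is immediately in a case where $X-|I|$ is Stein (after blowing down), since the formal neighborhood has transcendence-two function field and Grauert/Andreotti-type positivity gives the strictly plurisubharmonic exhaustion.

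**Running the dichotomy.**
So assume $c(I)=0$ and we have both $\omega_I$ (logarithmic, purely imaginary periods, $\Res=I$) and $\omega_{\II}$ (second kind, pole $I+I_{\mathrm{red}}$). The function $F_I=\bigl|\exp\int\omega_I\bigr|$ is a well-defined plurisubharmonic first integral on $X-|I|$ whose level sets accumulate on $|I|$; it already shows $|I|$ has a basis of Levi-flat (hence pseudoconcave-from-one-side) neighborhoods. The point is to decide whether $-\log F_I$, suitably corrected by the real part of a primitive of $\omega_{\II}$, is strictly plurisubharmonic near $|I|$. Here I would follow Ueda's computation: in a trivializing chart where $I=\{z=0\}$ with multiplicity $n$, write $\omega_I = n\,dz/z + (\text{holo})$ and $\omega_{\II}=d(h/z^n)+(\text{holo})$ with $h$ a non-vanishing unit on $\{z=0\}$ (base-point-freeness), and examine the Levi form of $\psi = -\log F_I - \mathrm{Re}\,(c\cdot h z^{-n})$ for a well-chosen constant $c$. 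Either this Levi form is positive definite transverse to $|I|$ for every chart simultaneously -- which, by the triviality of the relevant cocycle (the same $\{1-g_{ij}f_i\}$ cocycle from Proposition~\ref{P:irregular}, which is trivial by hypothesis), can be arranged globally -- giving the strictly plurisubharmonic exhaustion of alternative~(2); or the obstruction to doing so is exactly a relation forcing the periods of $\omega_I$ to be commensurable, i.e. the period group is discrete in $(i\mathbb R,+)$, in which case $\exp\int N\omega_I$ is a genuine holomorphic map $X\to\mathbb P^1$ whose Stein factorization $f:X\to C$ is the fibration of alternative~(1) (Lemma~\ref{L:BeauvilleTotaro} identifying $I$ with a fiber).

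**Holomorphic convexity and the Stein conclusion.**
Once a strictly plurisubharmonic exhaustion of $X-|I|$ near $|I|$ is in hand, I would argue holomorphic convexity as follows: glue the strictly plurisubharmonic function near $|I|$ to a large constant on the rest of $X-|I|$ to obtain an exhaustion that is plurisubharmonic everywhere and strictly plurisubharmonic outside a compact set; by the solution of the Levi problem (Grauert), $X-|I|$ is holomorphically convex, so its Remmert reduction contracts the maximal compact analytic subsets to points and yields a Stein space. The compact curves contracted are precisely those on which every holomorphic function on $X-|I|$ is constant; since near $|I|$ we have a strictly plurisubharmonic exhaustion, none of these curves meets $|I|$, so they are finitely many compact curves disjoint from $|I|$, exactly as stated. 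The mutual exclusivity of (1) and (2) is clear: a fibration mapping $|I|$ to a point forces the general fiber to meet any given compact curve disjoint from $|I|$ in the same way, and $X-|I|$ then retracts onto a positive-dimensional complex space and cannot be Stein after contracting only curves disjoint from $|I|$; conversely if $X-|I|$ becomes Stein there is no non-constant morphism to a curve contracting $|I|$ since $|I|$ is connected and would have to map to a single point with the whole complement fibered.

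**The main obstacle.**
The delicate point I expect is the Levi-form computation deciding between the ``dense periods'' and ``discrete periods'' cases, and in particular checking that the correction term built from $\omega_{\II}$ exactly absorbs the failure of strict plurisubharmonicity of $-\log F_I$ along $|I|$ -- this is the heart of Ueda's original argument and the place where the hypothesis that $I$ is the polar divisor of a \emph{second-kind} form (rather than an arbitrary trivial-normal-bundle curve) does real work, since it is precisely what furnishes the correcting $1$-form $\omega_{\II}$ with no residue and no base points. Keeping track of multiplicities in the non-reduced case, and verifying that the gluing of the local correction terms is governed by the globally trivial cocycle of Proposition~\ref{P:irregular}, is the bookkeeping one must not get wrong.
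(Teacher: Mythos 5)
There is a genuine gap, and it sits at the foundation of your construction: the second $1$-form you want, a closed logarithmic $\omega_I$ with $\Res(\omega_I)=I$, does not exist. Since $I$ is a nonzero effective divisor on a compact Kähler surface, $c(I)\neq 0$ in $H^2(X,\mathbb R)$ (it pairs strictly positively with the Kähler class), so by the residue theorem (Theorem \ref{T:residue}) no closed logarithmic $1$-form can have residue divisor $I$, and Proposition \ref{P:omegaD} never applies; your case ``$c(I)=0$'' is vacuous. Your fallback, that $c(I)\neq 0$ forces $I^2>0$ and hence an immediate Stein-type conclusion, is also false: the hypothesis $\mathcal O_X(I)\otimes\mathcal O_X/\mathcal O_X(-I)\simeq\mathcal O_X/\mathcal O_X(-I)$ gives $I\cdot C=0$ for every component $C$ of $|I|$, hence $I^2=0$ (this is exactly the fiber-like situation, e.g.\ a fiber of a fibration). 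So both branches of your dichotomy collapse, there is no first integral $F_I=\vert\exp\int\omega_I\vert$, and the subsequent Levi-form discussion---which in any case is left at the level of ``either the correction works or the periods are commensurable,'' precisely the point that would need a proof---has nothing to run on.

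The paper's proof keeps your first ingredient, $\omega$ of the second kind with $(\omega)_\infty=I+I_{\mathrm{red}}$ from Theorem \ref{T:realizaII}, but replaces the (nonexistent) logarithmic companion by Hodge theory on $H^1$: after normalizing $\omega$ so that its class lies in $\overline{H^0(X,\Omega^1_X)}$, the fibration case (1) is exactly the case $[\omega]=0$ in $H^1(X,\mathbb C)$, where $\omega=dh$ with $h$ base-point free and the Stein factorization of $h$ gives $f:X\to C$; no period-discreteness argument is involved. When $[\omega]\neq 0$ one takes the holomorphic $1$-form $\alpha$ with $[\omega]=[\overline\alpha]$ and uses the exhaustion $F=\vert\int(\omega-\overline\alpha)\vert^2$, whose Levi form is $\omega\wedge\overline\omega+\alpha\wedge\overline\alpha$; it is strictly plurisubharmonic off the divisor $S=\{\omega\wedge\alpha=0\}$ (note $\omega\wedge\alpha\neq0$ because $[\overline\alpha]\wedge[\alpha]\neq0$), and the only remaining work is a local correction: one finds a component $C\subset|I|$ on which $\alpha$ does not vanish identically, contracts the negative-definite divisor $I_{\mathrm{red}}-C$ by Grauert, and adds small compactly supported strictly plurisubharmonic bumps at the finitely many points where $S$ meets the image of $C$, exploiting that $\partial\overline\partial F\wedge\partial\overline\partial F$ blows up along the poles of $\omega$ on $C-S$. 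If you want to salvage your write-up, this replacement of $\omega_I$ by $\alpha$ (and of the period dichotomy by the vanishing or not of $[\omega]$ in $H^1$) is the missing idea.
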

\begin{proof}
    Let $\omega$ be a closed meromorphic $1$-form of the second kind and without base points on $X$ given by Theorem  \ref{T:realizaII}, \ie,  $(\omega)_{\infty} = I + I_{\text{red}}$. After adding a suitable closed holomorphic $1$-form to $\omega$ we can (and will) assume, without loss of generality, that the class of $\omega$ in $H^1(X,\mathbb C)$ belongs $\overline{H^0(X,\Omega^1_X)}$. If the class of $\omega$ in $H^1(X,\mathbb C)$ is zero, then $\omega$ is equal to $dh$ for some meromorphic function $h \in \mathbb C(X)$. Since $\omega$ has no base points, $h$ defines a morphism to $\mathbb P^1$ whose Stein factorization is the sought morphism $f:X \to C$.

    From now on, assume that the class of $\omega$ is non-zero in $H^1(X,\mathbb C)$ and let $\alpha \in H^0(X,\Omega^1_X)$ be the unique holomorphic $1$-form such that $[\omega] = [\overline \alpha]$ in $H^1(X,\mathbb C)$. By assumption $\alpha \neq 0$. Moreover $\omega \wedge \alpha \neq 0$, since $[\omega] \wedge [\alpha]$ in $H^2(X,\C)$ coincides with $[\overline{\alpha}] \wedge [\alpha] \neq 0$ in the same group.  We observe that this implies the existence of an irreducible curve $C$ contained in the support
    of $I$ such that the pull-back of $\alpha$ to $C$ does not vanish identically. Indeed, if the pull-back of $\alpha$ to every single irreducible component of $|I|$ is identically zero, then we can construct, unambiguously, a primitive $h:U \to \C$ for $\alpha$ defined on an open set $U\subset X$ containing $|I|$ by imposing that $\restr{h}{|I|} =0$. Therefore $\restr{\alpha}{U}$ is exact, and the same holds true for $\restr{\omega}{U}$. Hence, the foliations on $U$ defined by $\restr{\alpha}{U}$ and $\restr{\omega}{U}$ are both     fibrations ($\omega$ has no base points) having a fiber supported on $I$. Let $L \subset U$ be any compact leaf of the foliation defined by $\restr{\omega}{U}$. Its image under $h$ (the primitive of $\alpha$) must be a point. It follows that $L$ is also a leaf of the foliation defined by $\alpha$. This leads to the contradiction $\omega\wedge \alpha =0$, showing the existence of a curve $C \subset |I|$ such that  $\alpha$ is not identically zero after pulled back to $C$.

    We will construct an exhaustion of $X -|I|$ that is strictly plurisubharmonic at a neighborhood
    of $|I|$. For that, start by considering the function
    \begin{align*}
        F : X- |I| &\longrightarrow \mathbb C \\
        x &\longmapsto \left\vert \int_{x_0}^x \omega -\overline{\alpha} \right\vert^2 \, .
    \end{align*}
    Clearly, $F$ is an exhaustion of $X-|I|$ since $\alpha$ is holomorphic and $\omega$ has poles along $I$.
    The Levi form of $F$ (complex Hessian) is
    \[
        \partial \overline \partial F =  \omega \wedge \overline{  \omega} +  \alpha \wedge \overline{   \alpha}
    \]
    showing that $F$ is a plurisubharmonic function. Moreover,
    \[
        \partial \overline \partial F \wedge \partial \overline \partial F = 2  \omega \wedge \overline{  \omega} \wedge   \alpha \wedge \overline{ \alpha} \, ,
    \]
    showing that $F$ is strictly plurisubharmonic outside the zero set of $\omega \wedge \alpha$. Since both $\omega$ and $\alpha$ are holomorphic $1$-forms on $X-|I|$, the closure of the zero set of $\omega\wedge \alpha$ is a divisor $S$. If the support of $S$ has no irreducible component intersecting $|I|$ but not contained in $|I|$ then  $F$ is a  exhaustion of $X-|I|$ which is strictly plurisubharmonic at a neighborhood of infinity. This shows that $X-|I|$ is a union of relatively compact strictly pseudoconvex open subsets. A result by Grauert \cite[Theorem 1]{MR98847} implies that $X-|I|$ is holomorphically convex.  When the support of $S$ has an irreducible component which intersects $|I|$ but is
    not contained in it, we will show how to alter $F$ in order to obtain an exhaustion with the same properties as above.

    Let $C$ be an irreducible component of the support of $I$ such that $\alpha$ is not identically zero when pulled back to $C$. Observe that this implies that $C$ is not contained in  $|S|$ (since $\omega \wedge \alpha$ has a pole along $C$) and  that $\partial \overline \partial F \wedge \partial \overline \partial F$ is arbitrarily large at neighborhoods of points in $|C| - |S|$ thanks to the poles of $\omega$.

    The intersection matrix of the divisor $I_{\text{red}} - C$ is negative definite. Theorem \ref{thm:Grauert} (Grauert's contraction theorem)  implies the existence of a bimeromorphic morphism $\pi: X \to Y$ that contracts $|I_{\text{red}} - C|$ to  finitely many points (as many as the connected components of $|I_{\text{red}} - C|$) and is an isomorphism elsewhere. The image of the divisor $S$ intersects
    $\pi(C)$ at finitely many points. Let $y \in Y$ be one of those points. Choose a small neighborhood $U$ of $y$, a strictly plurisubharmonic function on $g : U \to \mathbb R$, and a compactly supported bump  function $\rho : U \to [0,1]$ identically equal to one at a neighborhood of $y$. If $\varepsilon >0$ is small enough then the function $F + \varepsilon \rho g$ is strictly plurisubharmonic at
    \[
        \left(Y -( |\pi(C)| \cup |\pi(S)|) \right) \cup (U - \pi(C)) \simeq \left( X - (|I| \cup  |S|) \right) \cup ( \pi^{-1}(U) - |I|) ,
    \]
    since $\partial \overline \partial F \wedge \partial \overline \partial F$ is arbitrarily large at small neighborhoods of points
    in $\pi(C) - \pi(S)$. Proceeding in this way, we can alter the plurisubharmonic exhaustion $F$ to obtain  an exhaustion that is strictly plurisubharmonic at a neighborhood  of infinity. It follows, as before, that  $X-|I|$ is holomorphically convex.
\end{proof}

\subsection{Formal principle for curves with trivial normal bundle on projective surfaces}\label{SS:formal}
The two $1$-forms, $\omega$ and $\alpha$, appearing in the proof of Theorem \ref{T:Ueda projective} are key
to the investigation of Grauert's formal principle for curves on projective surfaces carried out in \cite{MR4333432}.

\begin{thm}\label{T:formal}
    Let $(S,C)$ be a pair where $S$ is a smooth projective surface and $C$ is a smooth curve contained in $S$.
    Assume that the normal bundle of $C$ in $S$ is trivial and $C$ is not a fiber of a fibration in $S$.
    If $(S',C')$ is another pair with $S'$ projective and such that the formal completion of $C$ in $S$ is
    formally isomorphic to the formal completion of $C'$ in $S'$ then there exists a birational map between $S$ and
    $S'$ that sends, biregularly, a neighborhood of $C$ in $S$ to a neighborhood of $C'$ in $S'$.
\end{thm}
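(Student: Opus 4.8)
The plan is to attach to each of the two pairs a canonical closed meromorphic $1$-form, to show that the given formal isomorphism transports one onto the other up to a normalisation one can absorb, and then to promote the resulting formal matching to a biregular isomorphism of honest neighbourhoods which is algebraic because $S$ and $S'$ are projective; the details are carried out in \cite{MR4333432}. First I would produce the canonical form on $S$: since the normal bundle of $C$ is trivial, Theorem~\ref{T:realizaII} applied with $I=C$ yields a closed meromorphic $1$-form $\omega$ of the second kind on $S$, without base points and with $(\omega)_{\infty}=2C$; by Lemma~\ref{L:localsimple} this last condition amounts to $\omega$ being formally $d(1/f)$ near each point of $C$ ($f$ a local equation of $C$), hence is a property of the formal completion $\widehat{S}_{C}$. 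Subtracting a holomorphic $1$-form, I may assume $[\omega]\in\overline{H^{0}(S,\Omega^{1}_{S})}\subset H^{1}(S,\mathbb C)$; let $\alpha\in H^{0}(S,\Omega^{1}_{S})$ be the unique holomorphic $1$-form with $[\omega]=[\overline{\alpha}]$. Because $C$ is not a fibre, Lemma~\ref{L:BeauvilleTotaro} (which applies since $C^{2}=0$) shows both that $\omega$ is not exact and that alternative~$(1)$ of Theorem~\ref{T:Ueda projective} does not occur; hence $[\omega]\neq 0$, so $\alpha\neq 0$, and exactly as in the proof of Theorem~\ref{T:Ueda projective} one gets $\alpha|_{C}\not\equiv 0$ and $\omega\wedge\alpha\neq 0$. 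Using that $\omega$ has zero residue along $C$, the form $\omega-\overline{\alpha}$ is exact on $S-C$, say $\omega-\overline{\alpha}=d\Phi$, and (being in alternative~$(2)$ of Theorem~\ref{T:Ueda projective}) $S-C$ is holomorphically convex with $|\Phi|^{2}$ an exhaustion that is strictly plurisubharmonic near $C$. I would run the same construction on $(S',C')$, obtaining $\omega'$, $\alpha'$, $\Phi'$.

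Next I would transport the forms through the given isomorphism $\widehat{\phi}\colon\widehat{S}_{C}\xrightarrow{\sim}\widehat{S}'_{C'}$. By the previous paragraph $\widehat{\phi}^{\,*}\omega'$ is again a formal closed meromorphic $1$-form with polar divisor $2C$, no residue and no base points, so $\widehat{\phi}^{\,*}\omega'-\lambda\omega$ is a formal \emph{holomorphic} closed $1$-form along $C$ for a suitable $\lambda\in\mathbb C^{*}$ (compare leading Laurent coefficients along $C$). Using the triviality of $N_{C}$ once more --- through the theorem on formal functions and the structure of $\Omega^{1}_{S}|_{C}$ --- one checks that such a form is the restriction of a global $\beta\in H^{0}(S,\Omega^{1}_{S})$; replacing $\omega$ by $\lambda\omega+\beta$ (which enjoys the same defining properties) one may assume $\widehat{\phi}^{\,*}\omega'=\omega$ on $\widehat{S}_{C}$, and similarly that $\widehat{\phi}^{\,*}\alpha'$ extends to a global holomorphic $1$-form $\gamma$ on $S$ with $\gamma\wedge\omega\neq 0$.

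Then I would promote $\widehat{\phi}$ to a biholomorphism of analytic neighbourhoods and algebraise it. Near $C$ the two distinct foliations $\mathcal F_{\omega}$ (which leaves $C$ invariant) and $\mathcal F_{\gamma}$ (generically transverse to $C$, since $\gamma|_{C}\not\equiv 0$) admit a simultaneous rectification --- built from the single-valued meromorphic determination of $\int\omega$ near $C$ and a local first integral of $\mathcal F_{\gamma}$, with Hartogs handling the finitely many zeros of $\gamma|_{C}$ --- giving holomorphic coordinates on some $U\supset C$ in which both foliations are linear, and the same on $S'$; matching these charts via $\widehat{\phi}$ yields a biholomorphism $\phi\colon U\xrightarrow{\sim}U'$ with $\phi^{*}\omega'=\omega$, $\phi^{*}\alpha'=\gamma$, whose $\infty$-jet along $C$ is $\widehat{\phi}$. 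To algebraise $\phi$, I would continue it analytically over $S-C$ (the relations $\phi^{*}\omega'=\omega$ and $\phi^{*}\alpha'=\gamma$ pin down the continuation) and let $\Gamma\subset S\times S'$ be the closure of its graph: $\Gamma$ minus the (honest) graph lies in a proper analytic subset of $S\times S'$, while the graph has finite volume near that subset because $\int(\operatorname{pr}_{1}^{*}\theta+\operatorname{pr}_{2}^{*}\theta')\leq\int_{S}\theta+\int_{S'}\theta'<\infty$; by Bishop's extension theorem $\Gamma$ is then analytic in the projective variety $S\times S'$, hence algebraic by Chow, and its two projections $\Gamma\to S$ and $\Gamma\to S'$ are birational morphisms restricting to isomorphisms over $U$ and $U'$. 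Composing them gives the required birational map $S\dashrightarrow S'$.

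I expect the main obstacle to be the passage from the formal to the analytic world in the last step: extracting the convergence of $\widehat{\phi}$ from the rigidity of $\omega$ and $\gamma$, and controlling the analytic continuation of $\phi$ over $S-C$ (equivalently, the analyticity at infinity of the closure of the graph, together with the finiteness of its monodromy). This is precisely where the holomorphic convexity of $S-C$ supplied by Theorem~\ref{T:Ueda projective}, and the pseudoconcavity encoded by the exhaustion $|\Phi|^{2}$, are indispensable, and it forms the technical core of \cite{MR4333432}.
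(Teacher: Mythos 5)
Your overall strategy is the one the paper follows (and, like the paper, you defer the technical core to \cite{MR4333432}): attach to $(S,C)$ the closed meromorphic $1$-form $\omega$ of the second kind with $(\omega)_\infty=2C$ coming from Theorem \ref{T:realizaII}, normalize its periods to be anti-holomorphic and observe they are non-zero because $C$ is not a fiber, pair it with the holomorphic $1$-form $\alpha$ with the same periods as $\overline\omega$, argue that this data is canonically attached to the formal completion, and use its rigidity to pass from the formal to the analytic and then to the birational setting. However, several of the steps you add to fill in the sketch would not go through as written. First, your normalization step asserts that the formal closed holomorphic $1$-form $\widehat\phi^{\,*}\omega'-\lambda\omega$ on $\widehat S_C$ is the restriction of a global holomorphic $1$-form on $S$; this is not automatic and is essentially the nontrivial canonicity statement (that the span of $\omega$ and $\alpha$ modulo the allowed corrections is determined by the formal neighborhood) which constitutes the ``extra work'' the paper alludes to. Second, your rectification chart uses ``the single-valued meromorphic determination of $\int\omega$ near $C$'': no such determination exists, since by construction the periods of $\omega$ on cycles of (a neighborhood of) $C$ are non-zero --- indeed their non-vanishing is exactly how the hypothesis that $C$ is not a fiber enters --- so the proposed simultaneous linearization of $\mathcal F_\omega$ and $\mathcal F_\gamma$ is not defined as stated.

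The most serious divergence is the final algebraization. You propose to continue $\phi$ analytically over all of $S-C$ and then apply Bishop and Chow to the closure of its graph; but the relations $\phi^*\omega'=\omega$, $\phi^*\alpha'=\gamma$ are constraints, not a continuation mechanism, and there is no reason the local biholomorphism extends (even multivaluedly) across $S-C$, while the claim that the closure of the graph meets the complement of the graph in a proper analytic subset presupposes the analyticity you are trying to prove. The argument in \cite{MR4333432}, as the paper indicates, instead obtains the birational map from extension properties of meromorphic functions defined on a neighborhood of $C$: one pulls back rational functions from $S'$ through the germ of isomorphism of neighborhoods and extends them to $S$ (this is where the pseudoconcavity encoded by the exhaustion, and Andreotti-type transcendence bounds, are used), producing a field embedding $\mathbb C(S')\hookrightarrow\mathbb C(S)$ and hence the birational map, with no need to continue the map itself over $S-C$. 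So while your skeleton matches the paper's, the convergence and algebraization steps as you have written them contain genuine gaps.
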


The argument used to establish Theorem \ref{T:formal} goes as follows.
Since $C$ has trivial normal bundle, Theorem \ref{T:realizaII} implies the existence of a closed meromorphic $1$-form
$\omega$ of the second kind having polar divisor equal to $2C$. Moreover, after adding a suitable holomorphic $1$-form, we
can assume that the periods of $\omega$ are anti-holomorphic and non-zero because $C$ is not a fiber of a fibration by assumption.
If $\alpha$ is a holomorphic $1$-form with the same periods as $\overline \omega$, then
the vector space of closed rational $1$-forms generated by $\omega$ and $\alpha$ is canonically
associated with the pair $(S,C)$ and must be preserved by formal equivalences. With some extra work, and using the geometric structure defined by $\omega$ and $\alpha$, one can show the convergence of the formal equivalence. Finally, the existence of a birational map inducing the formal equivalence follows from extension properties of meromorphic functions defined on a neighborhood of $C$. Details can be found in \cite{MR4333432}.

\section{A remark on Stein complements}\label{S:Stein}
As already mentioned in the Introduction, Serre showed the existence  of a $\mathbb P^1$-bundle over an elliptic curve admitting a section with trivial normal bundle and having Stein complement \cite[Chapter 6, Example 3.2]{MR0282977}.
It follows from Ueda's Theorem \ref{T:Ueda} the existence of curves $C$ of genus $g \ge 2$ on projective surfaces having Stein complements. It suffices to consider suspensions of non-abelian representations $\rho: \pi_1(C) \to \Aff(\mathbb C)$ with prescribed unitary linear part  to obtain $\mathbb P^1$-bundles over $C$ with a section with normal bundle determined by the unitary linear part of $\rho$ and Stein complement.

Our final result shows that the complements of hypersurfaces with numerically trivial normal bundles on compact Kähler manifolds of dimension at least three are never Stein.

\begin{thm}[Theorem \ref{THM:Stein} of Introduction]\label{T:Steindim2}
    Let $X$ be a compact Kähler manifold and $Y \subset X$ be a smooth hypersurface with numerically trivial normal bundle.
    If $X-Y$ is Stein then $\dim X=2$.
\end{thm}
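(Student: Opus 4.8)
The plan is to argue by contradiction: suppose $X-Y$ is Stein and $\dim X = n \ge 3$. The key idea, following the circle of techniques developed in Section~\ref{S:Ueda}, is to attach to $Y$ a closed meromorphic $1$-form $\omega$ of the second kind with polar divisor $2Y$ (using Theorem~\ref{T:realizaII}, which applies since $Y$ smooth with numerically trivial normal bundle forces $\mathcal O_X(Y)\otimes \mathcal O_X/\mathcal O_X(-Y) \simeq \mathcal O_X/\mathcal O_X(-Y)$, at least after checking that numerical triviality suffices here — the normal bundle restricted to $Y$ has zero Chern class in $H^2(Y,\mathbb Q)$, which is the hypothesis needed in Remark~\ref{R:thmB}), together with a holomorphic $1$-form $\alpha$ whose periods match those of $\overline\omega$. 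As in the proof of Theorem~\ref{T:Ueda projective}, after normalizing we may assume the class of $\omega$ in $H^1(X,\mathbb C)$ lies in $\overline{H^0(X,\Omega^1_X)}$, and we split into the case $[\omega]=0$ and $[\omega]\ne 0$.

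\textbf{The case $[\omega] = 0$.} Here $\omega = dh$ for a meromorphic function $h\in \mathbb C(X)$ with poles supported on $Y$, so $h$ (after Stein factorization) gives a non-constant morphism $f:X \to C$ to a curve contracting $Y$ to a point. But then $X-Y$ maps onto $C$ minus a point, and more importantly the generic fiber $F$ of $f$ is a compact connected analytic subset of $X-Y$ of dimension $n-1 \ge 2 > 0$. A Stein space contains no positive-dimensional compact analytic subsets, a contradiction. (If $n-1 = 0$ this is not a contradiction, which is exactly why the theorem permits $\dim X = 2$.)

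\textbf{The case $[\omega] \ne 0$.} Then $\alpha \ne 0$ and, as computed in the proof of Theorem~\ref{T:Ueda projective}, $\omega \wedge \overline\alpha \ne 0$ so $\omega \wedge \alpha \ne 0$ as a meromorphic $2$-form. The function
\[
    F : X - Y \longrightarrow \mathbb R, \qquad x \longmapsto \left\vert \int_{x_0}^x \omega - \overline\alpha \right\vert^2
\]
is a plurisubharmonic exhaustion with $\partial\overline\partial F = \omega\wedge\overline\omega + \alpha\wedge\overline\alpha$, strictly plurisubharmonic off the zero divisor $S$ of $\omega\wedge\alpha$; near $Y$ it is even strictly plurisubharmonic on the complement of $|S|$, and as in Section~\ref{S:Ueda} one may modify $F$ (using Grauert's contraction theorem to contract components of $S$ meeting $Y$ only along $Y$, or the bump-function argument) to get an exhaustion strictly plurisubharmonic near infinity. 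This shows $X-Y$ is holomorphically convex with a strictly plurisubharmonic exhaustion near the end. Now here is the crux: the holomorphic $1$-form $\alpha$ on $X$ restricts to a \emph{nowhere-vanishing bounded} holomorphic $1$-form on $X-Y$ in the sense that its periods are fixed, and on a Stein manifold of dimension $n\ge 3$ one derives a contradiction by dimension/curvature counting — concretely, the form $\partial\overline\partial F$ is a semipositive $(1,1)$-form whose $n$-th exterior power vanishes identically (since $\partial\overline\partial F \wedge \partial\overline\partial F = 2\,\omega\wedge\overline\omega\wedge\alpha\wedge\overline\alpha$ already involves four of the $2n \ge 6$ real ``directions'', so $(\partial\overline\partial F)^{\wedge n} = 0$ for $n\ge 3$), whereas a Stein manifold of dimension $n$ admits a strictly plurisubharmonic exhaustion, forcing the Levi form of \emph{some} exhaustion to have $n$ positive eigenvalues everywhere; combining this with holomorphic convexity and the structure near the end one concludes $n \le 2$.

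\textbf{Main obstacle.} The delicate point — and the step I expect to require the most care — is the last one: passing from ``$X-Y$ is holomorphically convex with an exhaustion strictly plurisubharmonic near infinity whose Levi form has rank $\le 2$'' to ``$\dim X \le 2$''. One cannot simply invoke Steinness of $X-Y$ naively, since a Stein space can perfectly well carry plurisubharmonic functions of low Levi rank; the contradiction must come from comparing the \emph{canonically attached} form $F$ (or rather the pair $\omega,\alpha$) with the existence of a genuine strictly plurisubharmonic exhaustion. The cleanest route is probably: holomorphic convexity plus the fact that $X-Y$ has no positive-dimensional compact analytic subsets (being Stein) means $X-Y$ is Stein, hence by Andreotti--Frankel/Milnor theory has the homotopy type of a CW-complex of dimension $\le n$; then one uses that the foliation defined by $\alpha$ (a holomorphic $1$-form on $X$, hence closed by Proposition~\ref{P:closedness}) has all its leaves contained in $X-Y$ away from $|S|$ and obtains a compact leaf or a contradiction with the transcendence-degree bound — but the slickest finish is to observe that $\alpha|_{X-Y}$, being a nowhere-zero closed holomorphic $1$-form (off the finitely many components of $S$) on a Stein manifold, together with $\omega$, would force $X-Y$ to fiber, producing again positive-dimensional compact fibers when $n\ge 3$. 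I would present this final reduction carefully, as it is where the hypothesis $\dim X \ge 3$ is genuinely used and where the analogy with the surface case breaks.
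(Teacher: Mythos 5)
There are two genuine gaps, and both sit exactly where the paper has to work. First, the construction of $\omega$ does not go through as you set it up: numerical triviality of $N_Y$ does \emph{not} force $\mathcal O_X(Y)\otimes \mathcal O_X/\mathcal O_X(-Y)\simeq \mathcal O_X/\mathcal O_X(-Y)$ (a numerically trivial line bundle need not be trivial), so Theorem \ref{T:realizaII} does not apply; and the hypothesis of Remark \ref{R:thmB} is not ``$c(N_Y)=0$ in $H^2(Y,\mathbb Q)$'' but the existence of a line bundle $\mathcal L\in\Pic(X)$, defined on all of $X$ and with zero Chern class there, such that $\restr{\mathcal L}{Y}\simeq N_Y$. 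Producing this extension is a real step, and it is precisely where both hypotheses of the theorem enter in the paper: since $X-Y$ is Stein and $\dim X\ge 3$, a Lefschetz-type argument gives $\pi_1(Y)\simeq\pi_1(X)$; numerical triviality on the compact Kähler $Y$ gives a unique flat unitary structure on $N_Y$, whose monodromy then extends to a unitary character of $\pi_1(X)$, yielding the flat bundle $\mathcal L$ (Lemma \ref{L:omegadim3}). The resulting $\omega$ is only $\nabla$-closed with coefficients in this flat bundle, so all your untwisted manipulations ($[\omega]\in H^1(X,\mathbb C)$, $\omega=dh$, $F=\vert\int\omega-\overline{\alpha}\vert^2$) have to be redone with local data $\omega_i=\lambda_{ij}\omega_j$, $f_i=\lambda_{ij}f_j$, $\vert\lambda_{ij}\vert=1$; this is harmless but your write-up silently assumes $N_Y$ trivial. (Incidentally, your case split is not where $\dim X\ge 3$ matters: a compact fiber of dimension $n-1\ge 1$ inside a Stein complement is already absurd for $n=2$; in the paper the class in $H^1(X,\mathbb L)$ is arranged to be non-zero and there is no case distinction.)

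Second, and more seriously, your final step fails, as you half-acknowledge. The Levi-rank count gives no contradiction — a Stein manifold carries many plurisubharmonic exhaustions with degenerate Levi form — and nothing in the construction makes the pair $(\omega,\alpha)$ define a fibration of $X-Y$, so ``positive-dimensional compact fibers'' is unsupported; the Grauert-contraction/bump-function modifications are moreover a surface argument that has no direct analogue here and is not needed. The missing idea is softer: since $[\omega-\overline{\alpha}]=0$ in $H^1(X,\mathbb L)$, one gets single-valued pluriharmonic primitives $f_i$ (meromorphic part with poles on $Y$ plus an antiholomorphic part) with $f_i=\lambda_{ij}f_j$, so $F=\vert f_i\vert^2$ is an exhaustion of $X-Y$ with \emph{compact level sets}, and the leaves of the codimension-two foliation $\mathcal G$ cut out by $\omega$ and $\alpha$ — of positive dimension exactly because $\dim X\ge 3$ — are contained in those level sets, hence in compact subsets of $X-Y$. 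The contradiction then comes from a separate fact you never formulate (Lemma \ref{L:Stein}): on a Stein manifold, every leaf of a positive-dimensional holomorphic foliation escapes every compact set (embed in $\mathbb C^N$, use Cartan's Theorems A and B to extend a non-vanishing tangent vector field, and apply Liouville to a maximal orbit). Without this lemma, or a substitute for it, your argument does not close; with it, all the strict-plurisubharmonicity discussion can simply be discarded.
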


\subsection{Existence of a closed meromorphic $1$-form with coefficients on a flat line-bundle}
To prove Theorem \ref{T:Steindim2}, we will assume that $\dim X \ge 3$ and will look for a contradiction.
We start by proving the existence of a closed meromorphic $1$-form with coefficients on a unitary flat line-bundle
and poles on the hypersurface under study when the ambient spaces have  dimensions at least three.

\begin{lemma}\label{L:omegadim3}
    Let $X$ be a compact Kähler manifold and $Y \subset X$ be a smooth hypersurface with numerically trivial normal bundle.
    If $\dim X\ge 3$ and $X-Y$ is Stein then there exists
    \begin{enumerate}
        \item a line-bundle $\mathcal L$ on $X$ with zero Chern class such that $\restr{\mathcal L}{Y}=\mathcal O_X(Y)$;
        \item a flat unitary connection $\nabla$ on $\mathcal L$; and
        \item\label{I:xxx3} a $\nabla$-closed meromorphic $1$-form $\omega$ with coefficients in $\mathcal L$, the rank one local system of flat sections of  $(\mathcal L,\nabla)$,  such that  $(\omega)_{\infty} = 2Y$ and   the class of $\omega$ in $H^1(X, \mathbb L)$ is non-zero and lies in $\overline{H^0(X,\Omega^1_X \otimes {\mathcal L}^*)}$.
    \end{enumerate}
    Moreover, the $1$-form $\omega$ is unique up to multiplication by non-zero constants.
\end{lemma}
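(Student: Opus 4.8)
The overall strategy is to mimic the construction of Theorem \ref{T:realizaII} but twisted by a suitable flat line-bundle, and to use the Steinness of $X-Y$ to control the relevant cohomology groups. First I would construct the line-bundle $\mathcal L$. Since the normal bundle $N_Y = \restr{\mathcal O_X(Y)}{Y}$ is numerically trivial, its class in $\Pic^0(Y)$ (which for a compact Kähler manifold is a complex torus) is, up to replacing $Y$ by a finite étale cover—no, better: since $\NS(X)_{\mathbb Q}$ may be larger than a line, I would instead argue directly that the restriction map $\Pic^0(X) \to \Pic^0(Y)$ is surjective (Lefschetz-type, or via the exponential sequence and the fact that $H^1(X,\mathcal O_X) \to H^1(Y,\mathcal O_Y)$ is surjective when $X-Y$ is Stein, because $H^2_c(X-Y,\mathcal O)$ vanishes in the relevant range — here $\dim X \ge 3$ is used). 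Choosing a preimage $\mathcal M \in \Pic^0(X)$ of $N_Y = \restr{\mathcal O_X(Y)}{Y}$ and setting $\mathcal L = \mathcal O_X(Y) \otimes \mathcal M^{-1}$, we get a line-bundle with $\restr{\mathcal L}{Y} \simeq N_Y = \restr{\mathcal O_X(Y)}{Y}$. However, the lemma asserts $c(\mathcal L)=0$, so I must instead take $\mathcal L = \mathcal O_X(Y)\otimes \mathcal M^{-1}$ where $c(\mathcal M) = c(\mathcal O_X(Y))$ in $H^2(X,\mathbb Q)$; such $\mathcal M$ exists because $\mathcal O_X(Y)$ has numerically trivial restriction to $Y$, which by the projection formula / Lefschetz hyperplane-type argument forces $c(\mathcal O_X(Y))^2 = 0$, and then one uses that the only numerically trivial classes realizable... — I should be careful here; the cleanest route is that $Y$ numerically trivial normal bundle plus $X-Y$ Stein forces $c(\mathcal O_X(Y))$ to be a torsion-or-flat class, and then $\mathcal M$ is the flat part. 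Then $\mathcal L := \mathcal O_X(Y)\otimes \mathcal M^{-1}$ has $c(\mathcal L)=0$ and carries a flat unitary connection $\nabla$ (unique, since $\Pic^{\tau}$ flat unitary connections are parametrized by $H^1(X,\mathbb R)/H^1(X,\mathbb Z)$), giving items (1) and (2).

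For item (3), let $\mathbb L$ denote the rank one local system of flat sections of $(\mathcal L,\nabla)$. I want a $\nabla$-closed meromorphic $1$-form $\omega$ with coefficients in $\mathcal L$, polar divisor $2Y$, no residues. Following the proof of Theorem \ref{T:realizaII} (and Remark \ref{R:thmB}, which is exactly this twisted statement): choose an open cover $\{U_i\}$ and local equations $f_i$ for $Y$ with $f_i = (1 + f_j r_{ij}) f_j \cdot$ (unit encoding $\mathcal M$); the twisted differentials $d_\nabla(1/f_i)$ — i.e. $d(1/f_i) + (1/f_i)\theta_i$ where $\theta_i$ are the connection forms of $\nabla$ in a local frame — have well-defined $\mathcal L$-valued differences that are $\nabla$-closed $\mathcal L$-valued holomorphic $1$-forms, hence represent a class in $H^1(X, \Omega^1_X \otimes \mathcal L)$, which by the twisted version of the isomorphism \eqref{E:iso closed} (surjectivity of $H^1(X, Z^1_{\mathcal L}) \to H^1(X,\Omega^1_X\otimes\mathcal L)$ on compact Kähler manifolds, as in \cite{MR3940902}) can be corrected by a collection of local $\nabla$-closed $\mathcal L$-valued holomorphic $1$-forms to glue into the desired global $\omega$. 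The class of $\omega$ lives in $H^1(X,\mathbb L)$, and after subtracting its holomorphic component — using the twisted Hodge decomposition $H^1(X,\mathbb L) = H^0(X,\Omega^1_X\otimes\mathcal L) \oplus \overline{H^0(X,\Omega^1_X\otimes \mathcal L^*)}$ valid since $\mathcal L$ is unitary flat — we may assume $[\omega] \in \overline{H^0(X,\Omega^1_X\otimes\mathcal L^*)}$.

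The main obstacle, and where Steinness of $X-Y$ and $\dim X\ge 3$ genuinely enter, is showing the class $[\omega] \in H^1(X,\mathbb L)$ is \emph{non-zero} — equivalently, that $\omega$ is not $\nabla$-exact, i.e. not $d_\nabla h$ for a meromorphic section $h$ of $\mathcal L$. The point is that if $[\omega]=0$ then $\omega = d_\nabla h$ with $h$ a meromorphic section of $\mathcal L$ with polar divisor exactly $2Y$ (since $\omega$ has no residues and pole $2Y$), so $h$ has no zeros or poles on $X-Y$; thus $1/h$ is a holomorphic section of $\mathcal L^{-1}$ on $X-Y$ vanishing to order $2$ along $Y$... more usefully, the flat section $h$ restricted to $X-Y$ would be a nowhere-vanishing holomorphic section of $\restr{\mathcal L}{X-Y}$, which since $X-Y$ is Stein and $\mathcal L$ flat would force a contradiction with the non-triviality of the monodromy of $\mathbb L$ — or, if the monodromy is trivial, one reduces to the untwisted Theorem \ref{T:realizaII}/\ref{T:Ueda} setting where a separate argument (the complement being Stein of dimension $\ge 3$ contradicts the existence of the associated fibration/pseudoconcave structure, compare the proof of Theorem \ref{T:Steindim2}) gives the contradiction. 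Finally, uniqueness up to scalar: two such $1$-forms $\omega, \omega'$ have the same (twisted) residues along $Y$ after scaling, so $\omega - \omega'$ is $\nabla$-closed with at most simple poles, hence by the twisted analogue of Proposition \ref{P:closedness bis} it is $\mathcal L$-valued holomorphic, lying in $H^0(X,\Omega^1_X\otimes\mathcal L) \cap \overline{H^0(X,\Omega^1_X\otimes\mathcal L^*)}$, which is zero by the twisted Hodge decomposition; matching the leading polar coefficients shows the scalar is forced, giving uniqueness. I would present these steps in the order: (a) construct $\mathcal L$ and $\nabla$; (b) construct $\omega$ and normalize its cohomology class; (c) prove non-vanishing of $[\omega]$ using Steinness; (d) prove uniqueness.
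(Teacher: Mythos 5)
Your overall architecture --- a flat unitary $(\mathcal L,\nabla)$ extending the flat structure on $N_Y$, then a twisted version of the construction of Theorem \ref{T:realizaII} (this is exactly what Remark \ref{R:thmB} and \cite[Theorem B]{MR3940902} provide, and what the paper's proof simply invokes), normalization via the unitary-twisted Hodge decomposition, non-vanishing via Steinness, and uniqueness --- matches the paper's proof in outline. The genuine gap is in your construction of $\mathcal L$, which is precisely the step where the hypotheses ``$X-Y$ Stein'' and ``$\dim X\ge 3$'' do the work in the paper. There, one observes that Lazarsfeld's proof of the Lefschetz theorem on homotopy groups applies when the complement is Stein, so $\iota_*:\pi_1(Y)\to\pi_1(X)$ is an isomorphism (here $\dim X\ge 3$ is used); the unitary character $\rho_Y:\pi_1(Y)\to S^1$ of the unique flat unitary connection on the numerically trivial bundle $N_Y$ then extends to $\rho_X:\pi_1(X)\to S^1$, and $(\mathcal L,\nabla)$ is the flat unitary bundle with monodromy $\rho_X$, which restricts to $N_Y$ on $Y$. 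Your route does not reach this: your first formula $\mathcal L=\mathcal O_X(Y)\otimes\mathcal M^{-1}$ with $\restr{\mathcal M}{Y}=N_Y$ gives $\restr{\mathcal L}{Y}\simeq\mathcal O_Y$, not $N_Y$, and has $c(\mathcal L)=c(\mathcal O_X(Y))\neq 0$; the subsequent claim that Steinness forces $c(\mathcal O_X(Y))$ to be ``torsion-or-flat'' is false, since $c(\mathcal O_X(Y))\cdot[\theta]^{n-1}=\int_Y\theta^{n-1}>0$ for any Kähler form $\theta$; and the $\Pic^0$-surjectivity idea, even if justified via a Lefschetz-type vanishing, does not treat the case where $c_1(N_Y)$ is a non-zero torsion class (numerical triviality only kills the real Chern class), for which one again needs the $\pi_1$ (or torsion $H^2$) Lefschetz statement. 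So items (1)--(2) are not established, and you mis-locate where Steinness and the dimension hypothesis genuinely enter: they are needed already here, not only for the non-vanishing of $[\omega]$.

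A secondary gap: in your non-vanishing argument, the assertion that a global $h$ with $d_\nabla h=\omega$ ``has no zeros or poles on $X-Y$'' is unjustified; since $c(\mathcal L)=0$ and $(h)_\infty=Y$, the zero divisor $(h)_0$ is non-empty (its class is $[Y]$) but could a priori meet $Y$, so one cannot immediately exhibit a compact hypersurface inside the Stein complement, and the monodromy contradiction is only gestured at. The paper subsumes all of item (3), including the non-vanishing and the normalization of the class, into the citation of \cite[Theorem B]{MR3940902} (see also \cite[Theorem B]{MR4333432}); compare Subsection \ref{SS:formal}, where the analogous non-vanishing is obtained by ruling out that the divisor is a fibre of a fibration, which in the present situation is indeed incompatible with $X-Y$ Stein. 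Your uniqueness sketch is essentially sound, provided you justify the initial scaling, namely that the leading polar parts of two such forms along $Y$ glue to a non-zero constant because $\restr{\mathcal L}{Y}\simeq\restr{\mathcal O_X(Y)}{Y}$ and $Y$ is compact and connected, so that the difference has pole order at most one and, being residue-free, is holomorphic.
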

\begin{proof}
    Since $Y$ is Kähler and $N_Y$ is numerically trivial, there exists a unique flat unitary connection on $N_Y$.
    Let $\rho_Y : \pi_1(Y) \to S^1$ be its monodromy and  $\iota:Y\to X$ be the natural inclusion.
    The proof of Lefschetz theorem for homotopy groups of the complement of ample divisors presented in \cite[Theorem 3.1.21]{MR2095471} works equally well to prove $\iota_*:\pi_i(Y) \to \pi_i(X)$ is an isomorphism that for $i \le \dim X-2$ whenever $X-Y$ is Stein. In particular,
    there exists a representation $\rho_X:\pi_1(X) \to S^1$ such that $\rho_X \circ \iota_* = \rho_Y$. The unique flat unitary line-bundle $(\mathcal L,\nabla)$ on $X$ with monodromy $\rho_X$ satisfies $\restr{\mathcal L}{Y} = N_Y$.

    The existence of the $\nabla$-closed meromorphic $1$-form $\omega$ with the properties listed in Item (\ref{I:xxx3})  follows from \cite[Theorem B]{MR3940902}, see also \cite[Theorem B]{MR4333432}, as previously indicated in Remark \ref{R:thmB}.
\end{proof}

\subsection{Leaves of foliations on Stein manifolds}
The following lemma is well-known to experts. The proof we present here is adapted from \cite[proof of Theorem 6.4]{MR924673}.

\begin{lemma}\label{L:Stein}
    Let $\F$ be a holomorphic foliation of positive dimension (\ie not a foliation by points) on a Stein manifold $X$. If $L$ is a leaf of $\F$ then $L$ intersects the complement of every compact subset of $X$.
\end{lemma}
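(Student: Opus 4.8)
The statement is that a leaf $L$ of a positive-dimensional holomorphic foliation $\F$ on a Stein manifold $X$ cannot be relatively compact. The plan is to argue by contradiction: suppose the closure $\overline L$ is contained in a compact subset $K \subset X$. First I would pick a point $v$ in the tangent space $T_{\F,p}$ at some smooth point $p$ of the leaf, and more usefully work with a local holomorphic vector field tangent to $\F$, or simply with the fact that $L$ is a positive-dimensional complex submanifold (immersed) of $X$. The key tool is that $X$, being Stein, admits a proper holomorphic embedding into some $\mathbb C^N$, or more elementarily, that $X$ carries many global holomorphic functions and in particular a strictly plurisubharmonic exhaustion function $\varphi : X \to \mathbb R$.

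\textbf{Main argument.} Consider the restriction $\restr{\varphi}{L}$ of such an exhaustion (or, after embedding, the restriction of a coordinate function, or of $\sum |z_i|^2$). Since $\overline L \subset K$ is compact, $\restr{\varphi}{L}$ is bounded above and attains its supremum at some point $q$ of $\overline L$. The heart of the matter is to produce a holomorphic function on (a piece of) $L$ whose modulus has an interior maximum, contradicting the maximum principle — this requires knowing that through $q$ (or through a nearby point where the sup over $L$ is nearly attained) the leaf contains a nontrivial holomorphic disc. Concretely: take $p \in L$ where $\varphi$ is close to its supremum; flow along a local holomorphic vector field $v$ tangent to $\F$ with $v(p) \neq 0$ to get a nonconstant holomorphic map $\psi : \mathbb D \to L \subset X$ from the unit disc. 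Then $\varphi \circ \psi$ is subharmonic on $\mathbb D$; since $\varphi \leq \varphi(q)$ on all of $\overline L$ and the values $\varphi\circ\psi$ come arbitrarily close to this bound (by choosing $p$ appropriately and iterating the flow), one forces $\varphi\circ\psi$ to be constant, hence (by strict plurisubharmonicity of $\varphi$) the disc $\psi(\mathbb D)$ is a point — contradicting $v(p)\neq 0$. The cleanest packaging follows \cite[proof of Theorem 6.4]{MR924673}: use the embedding $X \hookrightarrow \mathbb C^N$, let $r(x) = \|x\|^2$, pick a point $q \in \overline L$ maximizing $r$ on the compact set $\overline L$, note $q \in L$ or is a limit of leaf points, and apply the maximum principle to the holomorphic functions $\langle \psi(\cdot), q\rangle$ (linear coordinates) along a holomorphic disc in $L$ through a point near $q$: since $\mathrm{Re}\langle x, q\rangle \le r(q)$ on $\overline L$ with equality forced near $q$, the linear function is constant on the disc, so the disc lies in the affine hyperplane $\{\mathrm{Re}\langle x,q\rangle = r(q)\}$, and repeating shows the disc is constant.

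\textbf{Expected obstacle.} The only subtlety is the transition from "supremum attained on $\overline L$" to "maximum principle violated on an honest holomorphic disc inside $L$" — i.e. handling the possibility that the supremum is attained only on $\overline L \setminus L$, or at a singular point of $\F$. This is dealt with by choosing a sequence $p_n \in L$ with $r(p_n) \to \max_{\overline L} r$, $p_n \to q$, using that the leaf is positive-dimensional to get holomorphic discs $\psi_n : \mathbb D \to L$ with $\psi_n(0) = p_n$ of definite size (the flow-box size can be taken uniform near the compact set $\overline L$ away from $\sing\F$, and one may arrange $p_n \notin \sing\F$ since $\sing \F$ is a proper analytic subset that the dense leaf $L$ is not contained in), and passing to the limit, or directly running the maximum-principle argument on each $\psi_n$ to contradict $r(p_n)$ being within $\epsilon$ of the max while the disc has radius bounded below. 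This is the step I would write out most carefully; everything else is routine.
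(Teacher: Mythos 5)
Your approach (a strictly plurisubharmonic exhaustion plus the maximum principle along the closure of the leaf) is genuinely different from the paper's: the paper embeds $X$ in $\mathbb C^n$, uses Cartan's Theorems A and B to produce a global section $v \in H^0(X,T_{\F})$ with $v(p)\neq 0$ and to extend it to an ambient holomorphic vector field, and then observes that if the orbit through $p$ stays in a compact set, the uniform existence time for holomorphic ODEs forces the maximal orbit to be an entire bounded map $\mathbb C \to \mathbb C^n$, hence constant by Liouville, contradicting $v(p)\neq 0$; in that argument $\sing(\F)$ plays no role. Your argument is complete in the case where the maximum of $\varphi$ over $\overline L$ is attained at some $q\notin\sing(\F)$: the plaque through $q$ is contained in $\overline L$, and the restriction of $\varphi$ to it is strictly plurisubharmonic with an interior maximum, which is impossible.

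The gap is exactly the case you flag, when the maximum set of $\varphi$ on $\overline L$ lies inside $\sing(\F)$, and your proposed fix does not close it. The remark that one may arrange $p_n\notin\sing(\F)$ is beside the point: leaf points never lie in $\sing(\F)$; the problem is that a maximizing sequence $p_n\in L$ may accumulate on it. Near $\sing(\F)$ there is no uniform flow box, and, more seriously, there are in general no holomorphic discs $\psi_n:\mathbb D \to L$ with $\psi_n(0)=p_n$, image in a fixed compact set, and derivative bounded below: for the one-dimensional foliation generated by $z_1\partial_{z_1}+z_2\partial_{z_2}$ on $\mathbb C^2$ and $p_n\to 0$, the intersection of the leaf through $p_n$ with the unit ball is a punctured disc, and the Schwarz lemma for its Poincar\'e metric gives $|\psi_n'(0)| \le C\, |p_n|\log(1/|p_n|) \to 0$; so your normal-families limit disc may well be constant and yields no contradiction (this is a counterexample to the step, not to the lemma). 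The singular case can be repaired within your framework, for instance by replacing $\varphi$ with $\varphi+\log|f|$, where $f\in\mathcal O(X)$ vanishes on $\sing(\F)$ and $f(x_0)\neq 0$ for some $x_0\in L$ (such $f$ exists because $X$ is Stein): this upper semicontinuous function attains its maximum on $\overline L$ at a point outside $\{f=0\}\supset\sing(\F)$, where the plaque argument applies. As written, however, the singular case is not proved, whereas the paper's flow-plus-Liouville argument avoids the issue altogether.
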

\begin{proof}
    Embed $X$ as a closed analytic subset of $\mathbb C^n$. Let $p\in L$ be an arbitrary point. Since $T_{\F}$ is coherent, it is generated
    by global sections according to Cartan's Theorem A. Hence there exists a vector field $v \in H^0(X,T_{\F})$ that does not vanish at $p$.
    Using Cartan's Theorem B, we can extend $v$ to a holomorphic vector field on $\mathbb C^n$.

    Aiming at a contradiction, let us assume that
    the orbit of $v$ through $p$ is contained in a compact subset $K\subset X$. Let $\varphi : \mathbb D(0,r) \to \mathbb C^n$ be an orbit of $v$ centered at $p$, \ie     $\varphi'(x) = v_{\varphi(x)}$ and $\varphi(0)=p$, with maximal possible $r$.  The theorem of existence and uniqueness of solutions of holomorphic differential equations implies the existence of a real number $\delta>0$ (depending only on the norm of $\restr{v}{K'}$, where $K' \subset \C^n$ is any compact set with interior containing $K$)  such that for every point $x \in K$, there exists an orbit of $v$ centered at $x$ defined on the disc $\mathbb D(0,\delta)$.
    Using this, we see that $r$ must be equal to infinity. But if $r = \infty$, then Liouville's theorem implies that $\varphi$ is constant contradicting the non-vanishing of $v$ at $p$.
\end{proof}

\subsection{Proof of Theorem \ref{T:Steindim2}/Theorem \ref{THM:Stein}}
Aiming at a contradiction, assume that $\dim X \ge 3$. Let $\omega$ be $\nabla$-closed meromorphic $1$-form given by
Lemma \ref{L:omegadim3}. If we choose an open covering $\mathcal U=\{U_i\}$ of $X$ and a trivilization
of $\mathcal L$ over $\mathcal U$ with transition functions $\lambda_{ij}$ equal to constant functions of modulus one, then
$\omega$ is represented by a collection of closed meromorphic $1$-forms $\{\omega_i \in \Omega^1_X(2Y)(U_i)\}$
such that $\omega_i = \lambda_{ij} \omega_j$.

Let $\alpha \in H^0(X,\Omega^1_X \otimes \mathcal L^*)$ be the $1$-form with coefficients in $\mathcal L^*$ representing
the same class as $\omega$ in $H^1(X,\mathbb L)$. Hence $\alpha$ may be represented in the covering $\mathcal U$ above
by a collection of closed holomorphic $1$-forms $\{ \alpha_i \in \Omega^1_X(U_i) \}$ such that $\alpha_i = \overline{\lambda_{ij}} \alpha_j$
since complex conjugation coincides with the inverse for complex number of modulus one.

Because the class of $\omega - \overline{\alpha}$ in $H^1(X,\mathbb L)$ is zero, we can choose {\it meromorphic} pluriharmonic functions $f_i$ such that the holomorphic parts have simple poles along $Y\cap U_i$,  the anti-holomorphic parts have no poles,  $df_i = \omega_i - \overline{\alpha}$ and $f_i = \lambda_{ij} f_j$. Let $F : X - Y \to [0,\infty)$ be
the $C^{\infty}$-function defined over $U_i$ as $F = |f_i|^2$. Observe that $F$ is  an exhaustion of $X-Y$ and thus its  level sets  are compact.

Consider the codimension two holomorphic foliation $\mathcal G$ defined on $X$ by the twisted $1$-forms $\omega$ and $\alpha$. The leaves of $\restr{\mathcal G}{X-Y}$ are contained in  level sets of $F$. But this is impossible, since the leaves of holomorphic foliations on
Stein manifolds scape every compact subset according to Lemma \ref{L:Stein}. \qed

\providecommand{\bysame}{\leavevmode\hbox to3em{\hrulefill}\thinspace}
\providecommand{\MR}{\relax\ifhmode\unskip\space\fi MR }
% \MRhref is called by the amsart/book/proc definition of \MR.
\providecommand{\MRhref}[2]{%
  \href{http://www.ams.org/mathscinet-getitem?mr=#1}{#2}
}
\providecommand{\href}[2]{#2}

\end{document}